\documentclass[twoside,11pt]{amsart}

\usepackage[T1]{fontenc}
\usepackage[lf]{Baskervaldx} 
\usepackage[bigdelims,vvarbb]{newtxmath} 
\usepackage[cal=boondoxo]{mathalfa} 

\usepackage{a4wide}
\usepackage{mathtools,amsmath,amsthm,amscd,amsfonts,color,mathtools}
\usepackage{booktabs}
\usepackage{enumitem}
\usepackage{tikz}
\usepackage{tikz-cd}
\usetikzlibrary{arrows.meta,bending,arrows,matrix,decorations.pathmorphing} 
\usetikzlibrary{calc}
\usepackage{thmtools,thm-restate}
\usepackage{hyperref}
\hypersetup{
	colorlinks,%
	citecolor=black,%
	filecolor=black,%
	linkcolor=black,%
	urlcolor=black
}
\usepackage{multirow}
\usepackage{easyReview}
\usepackage{cleveref}
\usepackage{accents}
\usepackage{newtxtext}
\usepackage{orcidlink}
\usepackage{xargs}

\DeclareFontFamily{U}{mathx}{}
\DeclareFontShape{U}{mathx}{m}{n}{<-> mathx10}{}
\DeclareSymbolFont{mathx}{U}{mathx}{m}{n}
\DeclareMathAccent{\widehat}{0}{mathx}{"70}
\DeclareMathAccent{\widecheck}{0}{mathx}{"71}

\newcommand{\bigast}{%
	\mathop{%
		\begin{tikzpicture}[baseline=(O.center)]
			\node (O) at (0,0) {};
			\node at (0,.1) {\huge $*$};
			\useasboundingbox (-0.1,-0.1) rectangle (0.1,0.1);
		\end{tikzpicture}%
	}%
}

\theoremstyle{plain}
\newtheorem{theorem}{Theorem}[section]
\newtheorem{proposition}[theorem]{Proposition}
\newtheorem{lemma}[theorem]{Lemma}
\newtheorem{corollary}[theorem]{Corollary}
\numberwithin{equation}{section}

\theoremstyle{definition}
\newtheorem{definition}[theorem]{Definition}
\newtheorem{remark}[theorem]{Remark}

\newtheorem{example}[theorem]{Example}

\newcommand{\cal}[1]{\mathcal{#1}}

\newcommand{\N}{\mathbb{N}}
\newcommand{\Sym}{\mathbb{S}}
\newcommand{\pt}{pt}

\newcommand{\Scomp}{\mathsf{scpx}}
\newcommand{\RelScomp}{\mathsf{relscpx}}
\newcommand{\Ucomp}{\mathsf{ucpx}}
\newcommand{\Transv}{\mathsf{transv}}
\newcommand{\Hypg}{\mathsf{hypg}}

\newcommand{\rHypg}{\overline{\Hypg}}
\newcommand{\rScomp}{\overline{\Scomp}}
\newcommand{\rUcomp}{\overline{\Ucomp}}
\newcommand{\rTransv}{\overline{\Transv}}

\newcommand{\Set}{\mathsf{Set}}
\newcommand{\Op}{\mathsf{Op}}
\newcommand{\Topo}{\mathsf{Top}}
\newcommand{\PairTop}{\mathsf{pairTop}}
\newcommand{\opi}[1]{\operatorname{\mathit{#1}}}

\renewcommand*{\emptyset}{\varnothing}
\newcommand{\comp}{\ensuremath{\complement}}

\newcommand{\barpunt}{\mathbin{\tikz[baseline=-0.1ex]{
  \draw(0,-1ex) -- (0,1.5ex);
  \draw(-0.2em,1.5ex) -- (0.2em,1.5ex);}}}

\newcommandx{\binop}[2][2=.2]{\begin{tikzpicture}[scale=#2,baseline=(O.base)]
        \node (O) at (0,-0.5) {};
        \draw[] (0,0)  -- (1,1);
        \draw[] (0,0) -- (-1,1);
        \draw[] (0,0) --node[right] {\footnotesize $#1$} (0,-1);
\end{tikzpicture}}

\newcommandx{\binopun}[3][3=.3]{\begin{tikzpicture}[scale=#3,baseline=(O.base)]
        \node (O) at (0,.5) {};
        \draw[] (0,1)  -- (1,2.5);
        \draw[] (0,1) -- (-1,2.5);
        \draw[] (0,1) --node[above right = -2pt and 2pt ] {\footnotesize $#1$} (0,0);
        \draw[] (0,-1) -- node[right] {\footnotesize $#2$} (0,0) ;
\end{tikzpicture}}

\newcommandx{\binoprelleft}[3][3=.3]{\begin{tikzpicture}[scale=#3,baseline=(O.base)]
        \node (O) at (0,1) {};
        \draw[] (0,0)  -- (1,1);
        \draw[] (0,0) -- (-1,1);
        \draw[] (0,0) --node[right] {\footnotesize $#1$} (0,-1);
        \begin{scope}[shift={(-1,2)}]
            \draw[] (0,0)  -- (1,1);
            \draw[] (0,0) -- (-1,1);
            \draw[] (0,0) --node[left] {\footnotesize $#2$} (0,-1);
        \end{scope}
\end{tikzpicture}}

\newcommandx{\binoprelright}[3][3=.3]{\begin{tikzpicture}[scale=#3,baseline=(O.base)]
        \node (O) at (0,1) {};
        \draw[] (0,0)  -- (1,1);
        \draw[] (0,0) -- (-1,1);
        \draw[] (0,0) --node[right] {\footnotesize $#1$} (0,-1);
        \begin{scope}[shift={(1,2)}]
            \draw[] (0,0)  -- (1,1);
            \draw[] (0,0) -- (-1,1);
            \draw[] (0,0) --node[right] {\footnotesize $#2$} (0,-1);
        \end{scope}
\end{tikzpicture}}

\newcommandx{\binoprelleftun}[3][3=.3]{\begin{tikzpicture}[scale=#3,baseline=(O.base)]
        \node (O) at (0,1) {};
        \draw[] (0,0)  -- (1,1);
        \draw[] (0,0) -- (-1,1);
        \draw[] (0,0) --node[right=-3pt] {\footnotesize $#1$} (0,-1);
        \begin{scope}[shift={(-1,2)}]
            \draw[] (0,1) --node[left=-3pt] {\footnotesize $#2$} (0,-1);
        \end{scope}
\end{tikzpicture}}

\newcommandx{\binoprelrightun}[3][3=.3]{\begin{tikzpicture}[scale=#3,baseline=(O.base)]
        \node (O) at (0,1) {};
        \draw[] (0,0)  -- (1,1);
        \draw[] (0,0) -- (-1,1);
        \draw[] (0,0) --node[right=-3pt] {\footnotesize $#1$} (0,-1);
        \begin{scope}[shift={(1,2)}]
            \draw[] (0,1) --node[right=-3pt] {\footnotesize $#2$} (0,-1);
        \end{scope}
\end{tikzpicture}}

\newcommandx{\binoprelleftrightun}[3][3=.3]{\begin{tikzpicture}[scale=#3,xscale=1.2,baseline=(O.base)]
        \node (O) at (0,1) {};
        \draw[] (0,0)  -- (1,1);
        \draw[] (0,0) -- (-1,1);
        \draw[] (0,0) --node[right] {\footnotesize $#1$} (0,-1);
        \begin{scope}[shift={(1,2)}]
            \draw[] (0,1) --node[right=-2.5pt] {\footnotesize $#2$} (0,-1);
        \end{scope}
        \begin{scope}[shift={(-1,2)}]
            \draw[] (0,1) --node[left=-3pt] {\footnotesize $#2$} (0,-1);
        \end{scope}
\end{tikzpicture}}

\newcommandx{\binopleftright}[4][4=.5]{\begin{tikzpicture}[scale=#4,baseline=(O.base)]
        \node (O) at (0,1) {};
        \draw[] (0,0)  -- (1.2,1);
        \draw[] (0,0) -- (-1.2,1);
        \draw[] (0,0) --node[right] {\footnotesize $#1$} (0,-1);
        \begin{scope}[shift={(1.2,2)}]
            \draw[] (0,0)  -- (1,1);
            \draw[] (0,0) -- (-1,1);
            \node at (0,.7) {$\cdots$};
            \draw[] (0,0) --node[right] {\footnotesize $#3$} (0,-1);
        \end{scope}
        \begin{scope}[shift={(-1.2,2)}]
            \draw[] (0,0)  -- (1,1);
            \draw[] (0,0) -- (-1,1);
            \node at (0,.7) {$\cdots$};
            \draw[] (0,0) --node[right] {\footnotesize $#2$} (0,-1);
        \end{scope}
\end{tikzpicture}}

\DeclareMathOperator{\wed}{wed}

\DeclareMathOperator{\mor}{mor}
\DeclareMathOperator{\id}{id}

\DeclareMathOperator*{\colim}{colim}

\let\oldtocsection=\tocsection%
\let\oldtocsubsection=\tocsubsection%

\renewcommand{\tocsection}[2]{\hspace{0em}\vspace{0.1em}\rule{0pt}{14pt}\oldtocsection{#1}{#2}\bf}
\renewcommand{\tocsubsection}[2]{\hspace{2em}\oldtocsubsection{#1}{#2}}

\title[]{Power set operads}

\author{\orcidlinki{Mathieu Vallée}{0000-0001-6336-9225}}
\address{Universit\'e libre de Bruxelles (ULB), Brussels, Belgium}
\email{mathieu.vallee@protonmail.com}

\date{\today}
\subjclass[2020]{\emph{Primary: }18M70, 55U10. \emph{Secondary: }18M05, 57Q05, 55P99, 06A07.}
\keywords{Power set, operads, simplicial complex, polyhedral product, algebra over an operad, PL topology}

\begin{document}
	\begin{abstract}
		We introduce a systematic method for constructing set-theoretic operads via iterated application of the power set functor, and use it to uncover a hierarchy connecting several classical operads.

Starting from the permutative operad, the first iteration recovers the commutative triassociative operad. The second iteration produces the substitution operad and the composition operad on simplicial complexes, two structures introduced by Ayzenberg and Abramyan--Panov in the theory of polyhedral products; we prove that both are infinitely generated.

This hierarchy yields a conceptual explanation for the multiplicity of polyhedral product constructions: the arrows of any cocontinuous cocomplete symmetric monoidal category carry natural algebra structures over both operads, recovering the Cartesian, smash, and join polyhedral products as instances for different monoidal structures on topological spaces.

Going further, we construct a new operad on relative simplicial complexes, governed by the join polyhedral product, which contains both the composition and the substitution operads as suboperads. As an application, pairs of piecewise-linear balls without interior vertices with their boundary spheres form a suboperad, extending the stability of the $J$-construction on piecewise-linear~spheres.
	\end{abstract}
	{\let\newpage\relax\maketitle}
	\tableofcontents
	
\section*{Introduction}

Our motivation lies at the intersection of set-theoretic operads and
the theory of polyhedral products. This article pursues two related
goals: (i)~to introduce a systematic construction of set-theoretic
operads via the power set functor, and (ii)~to make explicit the
operadic structure underlying polyhedral products, extending
observations that have appeared implicitly or explicitly in the
literature~\cite{Ayzenberg_2014,Bahri_Bendersky_Cohen_Gitler_2015,%
Vidaurre_2018,Abramyan_Panov_2019,Eldridge_2025}.

\subsection*{Set operads}

Operads encode collections of abstract operations that can be composed
and that act on categories of algebras. Introduced in the 1970s by
May~\cite{May_1972} and Boardman--Vogt~\cite{Boardman_Vogt_1973} to
describe the structure of iterated loop spaces, they have since become
a central organizational tool across algebra, topology, and
combinatorics.

\emph{Set-theoretic} (symmetric) operads are those whose compositions
take place in the monoidal category of sets equipped with the Cartesian
product; this is the framework we work in throughout. Classical
examples include the commutative, associative, and permutative
operads~\cite{Chapoton_2001}, as well as the commutative triassociative
operad~\cite{Vallette_2007} (the non-symmetric version $\opi{Trias}$
was first considered by Loday--Ronco~\cite{Loday_Ronco_2002}). These
operads, their linearized forms, and their Koszul duals --- such as the
(Pre-)Lie operad --- were originally motivated by algebraic topology
and K-theory, notably through the work of
Loday~\cite{Loday_1995,Loday_2001}.
Some set operads can be generated from a monoid~\cite{Giraudo_2015};
the permutative operad, which plays a central role here, is one such
example.
The commutative triassociative operad have been shown to govern the combinatorics of cooperative games~\cite{Mermoud_Lucio_2025}.

In combinatorics more broadly, the operadic framework provides the
right language for studying compositions of combinatorial objects, with
applications ranging from random matrices~\cite{Male_2020} and
graph operads~\cite{Aval_Samuele_Karaboghossian_Tanasa_2025} to
mathematical physics, where for instance the notion of a Feynman category due to Kaufmann and Ward~\cite{Kaufmann_Ward_2017} encodes the combinatorics of Feynman diagrams. The Gröbner basis theory for operads in
Feynman categories was developed in~\cite{Coron_2025}, whose underlying
combinatorial objects are geometric lattices, a notion closely related to matroid theory. A concrete instance is~\cite{Dotsenko_Keilthy_Lyskov_2024},
which treats the case of boolean lattices and their building sets and
provides an operadic structure on associated toric varieties.

A natural and, until now, largely unexplored source of set operads
arises from the theory of polyhedral products.

\subsection*{Polyhedral products and simplicial complex operads}

A \emph{polyhedral product}~\cite{Bahri_Bendersky_Cohen_Gitler_2010}
is a topological space built from a simplicial complex $K$ on the
vertex set $[n] = \{1,\ldots,n\}$ and a sequence of pairs of topological spaces
$(\underline{X},\underline{Y}) \coloneqq ((X_i,Y_i))_{i\in[n]}$, that is $Y_i\hookrightarrow X_i$ for all $i\in[n]$, via
\[
    \cal Z(K;(\underline{X},\underline{Y}))
    \coloneqq \bigcup_{I\in K} \cal z(I;(\underline{X},\underline{Y})),
    \qquad
    \cal z(I;(\underline{X},\underline{Y}))
    \coloneqq \prod_{i\in[n]} Z_i^I,
    \qquad
    Z_i^I \coloneqq \begin{cases} X_i & i\in I,\\ Y_i & i\notin I,\end{cases}
\]
where the ambient monoidal structure is the Cartesian product of
topological spaces. Natural variants arise from other monoidal
structures, such as the smash product~\cite{Bahri_Bendersky_Cohen_Gitler_2010}
or the join~\cite{Ayzenberg_2014}. The central example is the
\emph{moment-angle complex}, introduced in toric
topology~\cite{Bukhshtaber_Panov_1998,Buchstaber_Panov_1999} as the
polyhedral product for the pair $(D^2, S^1)$; the standard reference
is~\cite{Buchstaber_Panov_2015}. Polyhedral products with all pairs
equal to $(\mathbb{C}, \mathbb{C}^\ast)$, or $(\mathbb{R}, \mathbb{R}^\ast)$, recover complements of coordinate subspace arrangements~\cite[Section~4.7]{Buchstaber_Panov_2015}.
Recently, the motivic homotopy theory version of polyhedral product appeared in~\cite{Hornslien_2024}.
Other cryptomorphic constructions have appeared such as the \emph{dual polyhedral product}~\cite{Theriault_2018}, polyhedral product over finite posets~\cite{Kishimoto_Levi_2022}, or the \emph{polyhedral coproduct}~\cite{Stanton_Amelotte_Hornslien_2025}.

Two operads on simplicial complexes play a central role in this theory.
The \emph{substitution operad} --- defined on transversal families by
Billera~\cite{Billera_1971} and later reformulated for simplicial
complexes --- and the \emph{composition operad}, first described by
Erokhovets for building sets (see~\cite[Construction~1.5.19]{Buchstaber_Panov_2015})
and studied in the polyhedral product context by Ayzenberg~\cite{Ayzenberg_2014},
have proved to be powerful organizing principles.

The composition operad underlies the topological construction of
infinite families of toric manifolds via the \emph{$J$-construction}
and \emph{canonical extension} on characteristic
maps~\cite{Bahri_Bendersky_Cohen_Gitler_2015}, later generalized
in~\cite{Choi_Park_2016,Choi_Park_2017} for the classification of
toric manifolds of fixed Picard number --- carried out for Picard
number~$4$ in~\cite{Choi_Jang_Vallée_2024_pic_4,Choi_Jang_Vallée_2025}.
We show here that the $J$-construction is a right action of a suboperad
of the composition operad on the collection of simplicial complexes,
and we extend its stability on \emph{piecewise-linear spheres} --- the
simplicial complexes underlying (topological) toric
manifolds~\cite{Ishida_Fukukawa_Masuda_2013} --- by considering an operad whose operations are pairs formed by a PL~ball and its boundary PL~sphere.

The substitution operad, on the other hand, governs higher Whitehead
products for polyhedral products~\cite{Abramyan_Panov_2019,Grbic_Simmons_Staniforth_2023}.
In~\cite{Ayzenberg_2014}, Ayzenberg moreover showed that both operadic
compositions arise as polyhedral join products, and subsequent work
studied the cohomology~\cite{Vidaurre_2018}
and homotopy type~\cite{Eldridge_2025} of polyhedral products built
from such simplicial complexes. In this article, we modify the join
polyhedral product construction to produce a new operad by passing to \emph{relative} simplicial complexes.

\medskip

Motivated by the operadic calculus --- Koszul duality, operadic
Gröbner bases~\cite{Dotsenko_Khoroshkin_2010,Dotsenko_Vejdemo-Johansson_2010}
--- we aim to study these operads and their algebras systematically.
Our key observation is that both the substitution and composition
operads on simplicial complexes arise as the second iteration of the
power set functor applied to the permutative operad, placing them
in a natural hierarchy generated by a single construction.

\subsection*{Contributions}

The contributions of this article are organized around four themes.

\subsubsection*{The power set operad functor.}

Our central observation is that the (reduced) power set functor $\overline{\wp}$
and its variants are lax monoidal endofunctors of the monoidal category $(\Sym\text{-}\mathsf{Coll}, \circ, \mathsf{I})$ of $\Sym$-collections equipped with the composition product. By a classical result on monoidal monads~\cite{Kock_1972,Seal_2013}, they lift to endofunctors --- and in fact forms a monad --- on the category $\Op(\Set)$ of set-theoretic operads (\Cref{definition:power_set_operad}, \Cref{proposition:operad_lax_monoidal}, \Cref{proposition:monad}).

Applied to the permutative operad, the first iteration recovers the commutative triassociative operad:
\[
    \overline{\wp}(\opi{Perm}) \cong \opi{ComTrias}.
\]

\subsubsection*{Simplicial complex operads.}

The second iteration $\overline{\wp}^2(\opi{Perm})$ and its variants produce operads on simplicial complexes that we identify with the substitution operad $(\Scomp, \{\circ_i\})$ and the composition operad $(\Scomp, \{\circ_i^c\})$ of~\cite{Abramyan_Panov_2019,Ayzenberg_2014}.
We prove that all of these --- including their unreduced and non-empty
variants --- have \emph{infinitely many generators}
(\Cref{theorem:inf_gen}). We describe suboperads isomorphic to
$\opi{Com}$ and provide some of the left and right modules of each, expressed in terms of classical operations on simplicial complexes: ghost vertices, disjoint unions, duplicate vertices, wedge operations, joins, and the $J$-construction (\Cref{remark:modules}).

\subsubsection*{Algebras and polyhedral products.}

The arrows of any small cocomplete cocontinuous symmetric monoidal
category $(\mathrm{C}, \otimes, e)$ carry algebra structures
over both $(\Scomp_{\neq\emptyset}, \{\circ_i\})$ and
$(\Scomp_{\neq\emptyset}, \{\circ_i^c\})$
(\Cref{theorem:algebra_subst}, \Cref{theorem:algebra_comp}), with the
\emph{polyhedral product maps} $\overrightarrow{\cal Z}_n$ and
$\overrightarrow{\cal Z}_n^c$ as structure maps. This operadic
framework simultaneously accounts for the polyhedral product,
the polyhedral smash product, the polyhedral join product, and the
simplicial join product as instances of the same construction for
different monoidal structures (\Cref{table:recap_polyhedral_prod}).
Dual statements for small complete continuous categories yield polyhedral
limit maps $\overrightarrow{\cal L}_n$ and $\overrightarrow{\cal L}_n^c$
(\Cref{theorem:algebra_op}).

\subsubsection*{The simplicial join operad and relative simplicial complexes.}
Relative simplicial complexes were introduced by Stanley~\cite{Stanley_1996} and have recently come to the fore via relative Stanley--Reisner theory~\cite{Adiprasito_Sanyal_2016}, playing a central role in Adiprasito's proof of the $g$-conjecture for simplicial spheres~\cite{Adiprasito_2019}.
We further introduce the \emph{simplicial join operad} $(\RelScomp, \{\circ_k^\ast\})$ on relative simplicial complexes, whose composition is built upon the simplicial join product.
It contains the substitution and composition operads as suboperads (\Cref{proposition:subst_comp_subop}), and the arrows of $(\mathrm{C}, \otimes, e)$ carry a natural algebra structure over it, with the join polyhedral product map $\overrightarrow{\cal M}_n$ as structure map (\Cref{theorem:algebra_pair}).
As an application, neat PL~pairs $(B, \partial B)$ --- classical objects of PL~topology~\cite{Hudson_1969,Rourke_Sanderson_1972} consisting of PL~balls without interior vertices paired with their boundary spheres --- form a suboperad of $(\RelScomp, \{\circ_k^\ast\})$ (\Cref{proposition:neat_PL_pair_subop}), extending the stability of the $J$-construction on PL~spheres and providing a new operadic framework for PL~topology.

\subsection*{Structure of the article}

We start by \Cref{section:recol} that recalls all definitions, or point to references, that are used in this article.

\Cref{section:power-set-operads} develops the two main characters of this article: the power set functor and its reduced version. In particular, we give its first properties in~\Cref{sec:power-set-operad-functor}, then apply once the reduced power set functor to classical set operad in \Cref{section:examples_power_one_red}, and to the unreduced case in \Cref{section:example_power_one_unred}. The case of the permutative operad is computed independently in \Cref{section:special_case_perm}, as it is of major importance in this article, as well as algebras over its powers.

In \Cref{section:simplicial_complex_operads} we look at power two operads obtained by the permutative operad and recover operads on simplicial complexes: the substitution operad in \Cref{section:subst_op_power_2} and in \Cref{section:subst_op}, and the composition operad in \Cref{section:comp_op}. We then compute some of their suboperads in \Cref{section:subops} and prove that both are infinitely generated in \Cref{section:inf_gene}. We end this section with \Cref{section:algebras_subst_comp} by constructing algebras over these two operads, which one example is the polyhedral product construction.

\Cref{section:operad_pairs} is devoted to introducing the simplicial join operad on relative simplicial complexes, which is based upon the simplicial join product, whose definitions are recalled in \Cref{section:oplyhedral_join_prod}. The definition and some properties of the simplicial join operad are given in~\Cref{section:simplicial_join_op}. A suboperad focusing on piecewise-linear topology is described in~\Cref{section:PL_subop}. Finally, we describe algebras over the simplicial join operad in \Cref{section:Alg_join_op}.

\subsection*{Acknowledgements}
I am grateful to my advisor Bruno Vallette for introducing me to operad theory, for his constant support and invaluable comments, and for many fruitful discussions on the content of this paper.
I also thank Anthony Bahri for motivating conversations about polyhedral products, which guided me toward the writing of this article.
		
	\section{Recollections}\label{section:recol}
	For the sake of self-completeness and to accommodate the different communities addressed by this article, we recall here some notions from set, category, and operad theory, and also fix the relevant notation.

	\subsection{Set theoretical notions}
		For a set $S$, the \emph{power set of $S$} is the set of subsets of $S$ and is denoted by $\wp (S)$. 
		It describes a functor $\wp$ in the category of sets, denoted by $\Set$.
		We also define the \emph{reduced power set} functor $\overline{\wp}\colon \Set\to\Set$ by $\overline{\wp}(S) \coloneqq \wp(S)\setminus\{\emptyset\}$.
		Note that $\wp(\emptyset)=\{\emptyset\}$ and $\overline{\wp}(\emptyset)= \emptyset$.
		For $X\in \wp (S)$ we denote by $\comp X \coloneqq S\setminus X$ the \emph{complementary} of $X$ in $S$.
		Note that for a set $S$, $\comp\colon \wp (S) \to \wp(S)$ is an involution.
	
		When $S\in\Set$ is finite, we have $|\wp(S)| = 2^{|S|}$ and $|\overline{\wp}(S)| =2^{|S|}-1$.
		By convention, $\wp^0(S)\coloneqq S$.
		A set is \emph{based on} $S$ if it is an element of $\wp^k(S)$, for some $k\geq 0$, called its \emph{power}.
		We define a scale of complement operations for sets based on $S$ with any power $k$ as follows.
		\begin{definition}[Derivative of complement operations]
			Let $S$ be a set and $k\geq 1$. For $1\leq \ell \leq k$, we define recursively the involution $\comp^{(\ell)}\colon \wp^{k}(S)\to \wp^{k}(S)$ by setting:
			\begin{itemize}
				\item $\comp^{(1)}\coloneqq\comp$, and
				\item $\comp^{(\ell)}(K) \coloneqq \left\{\comp^{(\ell-1)}I\mid I\in K\right\}$, when $\ell>1$.
			\end{itemize}
		\end{definition}
		For convenience, we will write $\comp'\coloneqq \comp^{(2)}$.
		
		
		The \emph{disjoint union} of two sets $S_1$ and $S_2$ with empty intersection is denoted by $S_1\sqcup S_2$.

	\subsection{Families of subsets}	
	A \emph{family of subsets} on $S$ is a set based on $S$ with power~$2$.
	We now recall some well-known and useful families of subsets in the case of finite sets.
	Let $n$ be a positive integer, recall that $[n]$ denotes the set $\{1,\ldots,n\}$.
	
	\begin{definition}[Hypergraph]
		A \emph{hypergraphs} on $[n]$ is an element $H\in \wp^2 ([n])$. The elements of $H$ are called \emph{hyperedges}.
		The category of hypergraphs on $[n]$ with arrows being the inclusion $H_1\subseteq H_2$, for $H_1$ and $H_1$ two hypergraphs, is denoted by $\Hypg(n)$.
	\end{definition}
	\begin{definition}[Abstract simplicial complex]
		An \emph{(abstract) simplicial complex} on $[n]$ is an element $K\in\wp^2([n])$ such that:
		\begin{itemize}
			\item  $\forall I\in K,\forall J\subseteq I, J\in K$.\hfill (downward closure)
		\end{itemize}
		The elements of $K$ are called its \emph{faces}.
		The category of simplicial complexes on $[n]$ is the subcategory of $\Hypg(n)$ restricted to simplicial complexes, and is denoted by $\Scomp(n)$.
	\end{definition}
	The \emph{empty simplicial complex} is $\emptyset$ and the \emph{trivial simplicial complex} is $\{\emptyset\}$.
	\begin{remark}
		Throughout, we consider that the empty set is a simplicial complex since it is useful in the context of operads. 
		For instance, without this assumption, the maps $\operatorname{MNF}$ and $\operatorname{MNU}$ defined after are not well-defined.
		
		However, in the context of polyhedral product and Stanley-Reisner theory, we require simplicial complexes to be non-empty, and we will denote by $\Scomp_{\neq \emptyset}$ the category of simplicial complexes which are not empty.
	\end{remark}
	\begin{definition}[Upward complex]
		An \emph{upward complex} on $[n]$ is an element $U\in \wp^2([n])$ such that:
		\begin{itemize}
			\item $\forall I\in U,\forall I\subseteq J\subseteq [n], J\in U$.\hfill (upward closure)
		\end{itemize}
		The elements of $U$ are called its \emph{upfaces}.
		The category of upward complexes on $[n]$ is the subcategory of $\Hypg(n)$ restricted to upward complexes, and is denoted by $\Ucomp(n)$.
	\end{definition}

	Simplicial and upward complexes are in bijection by two involutions as follows.
	\begin{proposition}
		The following functors are involution from the category of simplicial complexes to the one of upward complexes on $[n]$:
		\begin{align*}
			\comp: \Scomp(n) &\to \Ucomp(n)\\
			K&\mapsto \comp K \coloneqq \wp([n])\setminus K, \text{ which is contravariant,}
		\end{align*}
		and 
		\begin{align*}
			\comp': \Scomp(n) &\to \Ucomp(n)\\
			K&\mapsto \comp' K \coloneqq \{\comp I\mid I\in K\},\text{ which is covariant.}
		\end{align*}
	\end{proposition}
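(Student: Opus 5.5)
We check, for each of the two maps, four things: it lands in $\Ucomp(n)$, it is involutive, it has the stated variance on inclusions, and it is a bijection onto upward complexes. The same formulas also send upward complexes to simplicial complexes, so each map is an involution of $\Hypg(n)$ that exchanges the full subcategories $\Scomp(n)$ and $\Ucomp(n)$. Throughout we use only that $\comp\colon\wp([n])\to\wp([n])$ is an order-reversing involution.

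\textbf{The map $\comp$.} Let $K\in\Scomp(n)$ and take $I\in\comp K$, i.e.\ $I\notin K$. Let $I\subseteq J\subseteq[n]$. If $J$ were in $K$, downward closure would give $I\in K$, a contradiction. So $J\in\comp K$, and $\comp K$ is upward closed. The symmetric argument shows that the complement of an upward complex is downward closed. Taking complements twice in $\wp([n])$ gives back the original family, so $\comp\circ\comp=\id$ on families of subsets. This yields mutually inverse bijections between $\Scomp(n)$ and $\Ucomp(n)$. For variance, $K_1\subseteq K_2$ implies $\wp([n])\setminus K_2\subseteq \wp([n])\setminus K_1$. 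Since arrows in these categories are inclusions, this is a contravariant functor.

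\textbf{The map $\comp'$.} Again let $K\in\Scomp(n)$. Take $\comp I\in\comp' K$ with $I\in K$, and let $\comp I\subseteq J$. Then $\comp J\subseteq I$, so $\comp J\in K$ by downward closure. Hence $J=\comp(\comp J)\in\comp' K$, and $\comp'K$ is upward closed. The dual argument sends upward complexes to simplicial complexes. Since $\comp$ is an involution on $\wp([n])$, we get $\comp'\comp' K=\{\comp\comp I\mid I\in K\}=K$. Finally, $K_1\subseteq K_2$ clearly implies $\comp'K_1\subseteq\comp'K_2$, so $\comp'$ is covariant.

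The argument is routine. The one point needing care is the ``involution'' statement: the domain and codomain differ, so it must be read as saying that the same formula defines the inverse map $\Ucomp(n)\to\Scomp(n)$. This is why we check closure in both directions.
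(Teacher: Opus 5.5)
Your proof is correct and follows the paper's argument. For $\comp$ it uses the same contradiction via downward closure, and for $\comp'$ the same step that complementation reverses inclusion, with variance read off from monotonicity. You also check the reverse direction (upward complexes to simplicial complexes) and the identities $\comp\comp=\id$, $\comp'\comp'=\id$, which the paper leaves implicit, so your version is slightly more complete.
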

	\begin{proof}
		Let $I\in\comp K$, let $I'\supseteq I$, if $I'\in K$ we would have $I\in K$ since $I\subseteq I'$ and $K$ is a simplicial complex, a contradiction.
		Hence, $\comp K$ is an upward complex.
		The contravariance is direct.
		
		Let $I\in\comp' K$, so we have $I =[n]\setminus J$ for some $J\in K$.
		Let $I'\supseteq I$, hence $J'\coloneqq [n]\setminus I' \subseteq [n]\setminus I = J$, thus $J'\in K$ since $K$ is a simplicial complex, and hence $I' = [n]\setminus J'$ is in $\comp'K$, as desired.
		Suppose that $L\subseteq K$, then  for all $I\in \comp'L\Leftrightarrow I\comp I\in L$, hence $\comp I\in K \Leftrightarrow I\in \comp' K$. Therefore, $\comp'L\subseteq\comp' K$, that is the functor $\comp'$ is covariant.
	\end{proof}
	
	
	The last class of family of subsets we will use in this article is as follows.
	
	\begin{definition}[Transversal family]
		A \emph{transversal family} on $[n]$ is an element $T\in\wp^2([n])$ such that:
		\begin{itemize}
			\item $\forall I,J\in T,I\subseteq J\Rightarrow I=J$,\hfill (no set contains any other)
		\end{itemize}
		or equivalently
		\begin{itemize}
			\item $\forall I,J\in T,I\neq J\Rightarrow I\setminus J\neq \emptyset\text{ and }J\setminus I \neq \emptyset$,\hfill (no set contains any other)
		\end{itemize}
		The set of transversal families on $[n]$ is denoted by $\Transv(n)$.
	\end{definition} 
	\begin{proposition}
		The map $\comp'$ is an involution of $\Transv(n)$, for every $n\geq 1$.
	\end{proposition}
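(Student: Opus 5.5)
The plan is to check three things: $\comp'$ sends $\Transv(n)$ into itself, $\comp'$ is injective on subsets of $[n]$, and $\comp'$ squares to the identity. The only tool needed is that $\comp\colon \wp([n])\to\wp([n])$, $I\mapsto [n]\setminus I$, is an order-reversing involution. Concretely, $I\subseteq J \iff \comp J\subseteq \comp I$, and $\comp\comp I = I$.

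\textbf{Well-definedness.} Let $T\in\Transv(n)$ and take two elements of $\comp' T$. By definition they have the form $\comp I$ and $\comp J$ with $I,J\in T$. Suppose $\comp I\subseteq \comp J$. Since $\comp$ reverses inclusions, this gives $J\subseteq I$. The transversal condition on $T$ then forces $I=J$, so $\comp I=\comp J$. Hence no element of $\comp' T$ contains another, and $\comp' T\in\Transv(n)$. The same argument shows that a strict containment among elements of $\comp'T$ would pull back to a strict containment among elements of $T$, which is the second (equivalent) form of the definition.

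\textbf{Involution.} Apply the definition twice:
\[
\comp'\comp' T=\{\comp\comp I\mid I\in T\}=\{I\mid I\in T\}=T,
\]
because $\comp$ is an involution on $\wp([n])$. It follows that $\comp'$ is a bijection of $\Transv(n)$ onto itself and is its own inverse.

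There is no real obstacle. The only point needing care is the order reversal of $\comp$, which is exactly what makes the antichain condition (no set contains any other) survive the passage from $T$ to $\comp' T$. This argument does not depend on any structure beyond $T$ being an antichain; for instance, it does not require $T$ to be a simplicial complex.
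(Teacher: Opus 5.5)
Your proof is correct and follows essentially the same route as the paper. Both rest on the fact that complementation reverses inclusions (equivalently, $I\setminus J = \comp J\setminus\comp I$), so the antichain condition transfers between $T$ and $\comp' T$. The paper writes this as a chain of equivalences using the set-difference form of the definition and leaves $\comp'\comp' T = T$ implicit. You instead prove one direction with the inclusion form and check the involution identity explicitly, which also gives the converse.
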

	\begin{proof}
		Let $T\in\wp^2(n)$.
		We have:
		\begin{align*}
		\text{$T$ is a transversal family }&\Leftrightarrow (\forall I,J\in T, I\neq J \Rightarrow I\setminus J\neq \emptyset \text{ and } J\setminus I\neq \emptyset)\\
		&\Leftrightarrow (\forall I,J\in T, \comp I\neq \comp J \Rightarrow (\comp I) \setminus (\comp J)\neq \emptyset \text{ and } (\comp J)\setminus (\comp I)\neq \emptyset)\\
		&\Leftrightarrow (\forall I,J\in \comp' T, I\neq J \Rightarrow I\setminus J\neq \emptyset \text{ and } J\setminus I\neq \emptyset)\\
		&\Leftrightarrow \comp'T\text{ is a transversal family}.
		\end{align*}
	\end{proof}

	The set of transversal families is in bijection with both simplicial and upward complexes, as follows.
	Given a simplicial complex $K$, a \emph{facet} of $K$ is an inclusionwise maximal face of $K$.
	The set of \emph{facets} of $K$, denoted by $\widehat{K}$, is a transversal family.
	Similarly, given an upward complex $U$, the set of inclusionwise minimal elements of $U$, denoted by $\widecheck{U}$, is also a transversal family.
	The set of \emph{minimal non-faces} of $K$ is denoted by $\operatorname{MNF}(K) = \widecheck{\left(\comp K\right)}$ and is a transversal family.
	Similarly, the set of \emph{maximal non-upfaces} of an upward complex $U$ is denoted by $\operatorname{MNU}(U)$, equals $\widehat{\left(\comp U\right)}$, and is a transversal family.

	We denote by ${(-)}^\downarrow$, resp. ${(-)}^\uparrow$, the functor left adjoint to $\Scomp\hookrightarrow\Hypg$, resp. $\Ucomp\hookrightarrow\Hypg$.
	More explicitly, given a hypergraph $H$, the \emph{downward closure}, resp. \emph{upward closure}, of $H$ is the simplicial complex $H^{\downarrow}$ which is minimal for the inclusion of simplicial complexes, resp. upward complex $H^\uparrow$ which is minimal for the inclusion of upward complexes, that contains $H$. 	
	Finally, we obtain the commutative diagram in \Cref{figure:diagram_fam_sets}.
	
	\begin{figure}[ht]
		\centering
		\begin{tikzpicture}[scale=1.4]
			\node (A) at (0,1) {$\Scomp(n)$};
			\node (B) at (5,-1) {$\Ucomp(n)$};
			\node (C) at (5,1) {$\Transv(n)$};
			\node (D) at (0,-1) {$\Transv(n)$};
			\node (E) at (8,0) {$\Hypg(n)$};
			
			\draw[transform canvas={xshift=.5ex,yshift=.5ex},->] (A) -- node[above right] {$\comp\ \&\ \comp '$} (B) ;
			\draw[transform canvas={xshift=-.5ex,yshift=-.5ex},->] (B) -- node[below left] {$\comp\ \&\ \comp '$} (A) ;
			
			\draw[transform canvas={yshift=0.5ex},->] (A) -- node[above] {$\widehat{(-)}$} (C);
			\draw[transform canvas={yshift=-0.5ex},->] (C) -- node[below] {$(-)^\downarrow$} (A);
			
			\draw[transform canvas={yshift=0.5ex},->] (B) -- node[above] {$\widecheck{(-)}$} (D);
			\draw[transform canvas={yshift=-0.5ex},->] (D) -- node[below] {$(-)^\uparrow$} (B);
			
			\draw[->] (A)  --node[left] {$\operatorname{MNF}$}  (D);
			
			\draw[->] (B) -- node[right]  {$\operatorname{MNU}$}  (C);
			
			\draw[{Hooks[right]}->] (C) -- (E.north west);
			\draw[{Hooks[right]}->] (D) to[bend right=35] (E);
			\draw[{Hooks[right]}->,transform canvas={yshift=.9ex,xshift=.5ex}] (A) to[bend left=35] node[below=-2pt,pos=0.38,scale=.8] {$\top$} (E);
			\draw[{Hooks[right]}->,transform canvas={yshift=.9ex,xshift=-.5ex}] (B) to node[below=-2pt,scale=.8,rotate = 20] {$\top$}(E.south west);
			
			\draw[->>,transform canvas={yshift=-.5ex}] (E) to[bend right=35] node[below,pos=0.605] {$(-)^\downarrow$} (A);
			\draw[->>,transform canvas={yshift=-.5ex}] (E) to node[below] {$(-)^\uparrow$} (B);
		\end{tikzpicture}
		\caption{Commutative diagram between families of subsets of $[n]$, for every $n\geq 1$. Note that all maps are functorial, except those landing in $\Transv(n)$.\label{figure:diagram_fam_sets}}
	\end{figure}
	\subsection{Reduced families of subsets}

All the families of subsets defined above have a natural \emph{reduced} variant, obtained by restricting to $\overline{\wp}^2([n])$.
\begin{definition}[Reduced family]
    A family of subsets $F\in\wp^2([n])$ is \emph{reduced} if it is non empty and $\emptyset\notin F$, i.e., $F\in\overline{\wp}^2([n])$.
    We denote by $\rHypg(n)$, $\rUcomp(n)$, and $\rTransv(n)$ the subcategories of reduced hypergraphs, upward complexes, and transversal families on $[n]$, respectively.
\end{definition}

\begin{definition}[Reduced simplicial complex]
	A \emph{reduced simplicial complex} is an element $K\in\overline{\wp}^2([n])$ such that:
	\begin{itemize}
	\item  $\forall I\in K,\forall \emptyset\neq J\subseteq I, J\in K$.\hfill (reduced downward closure)
	\end{itemize}
\end{definition}

The upward closure ${(-)}^\uparrow$ restricts without modification to reduced families, since the upward closure of a family of nonempty sets never introduces $\emptyset$. However, the usual downward closure ${(-)}^\downarrow$ does introduce $\emptyset$, so we define a reduced variant.

\begin{definition}[Reduced downward closure]
    Given a reduced hypergraph $H\in\rHypg(n)$, its \emph{reduced downward closure} is
    \[H^{\downarrow} \coloneqq \{\emptyset\neq J\subseteq I \mid I\in H\},\]
    which is the smallest reduced simplicial complex containing $H$, and is left adjoint to the inclusion $\rScomp(n)\hookrightarrow\rHypg(n)$.
\end{definition}

\begin{remark}
    The reduced downward closure and the usual downward closure differ precisely by 
    the empty face. Among the maps in the diagram of \Cref{figure:diagram_fam_sets}, 
    only ${(-)}^\uparrow$, $\widehat{(-)}$, and $\widecheck{(-)}$ restrict directly to 
    the reduced subcategories. The complement operations $\comp$ and $\comp'$ do not: 
    $\comp K = \wp([n])\setminus K$ always contains $\emptyset$ when $K$ is reduced, 
    and $\comp' K$ contains $\emptyset$ whenever $[n]\in K$. Consequently, 
    $\operatorname{MNF}$ and $\operatorname{MNU}$ do not restrict to reduced families either. 
\end{remark}

	\subsection{Face posets and their categories}

The Boolean lattice $(\wp([n]),\subseteq)$, viewed as a category, has subsets
of $[n]$ as objects, and a unique morphism $J \to I$ whenever $J \subseteq I$.

\begin{definition}[Face category]\label{def:face_category}
    Let $H$ be a (reduced) hypergraph on $[n]$.
    The \emph{face category} of $H$, denoted $\operatorname{cat}(H)$, is the full subcategory of $(\wp([n]),\subseteq)$ on the objects $H \cup \{\emptyset\}$.
    The \emph{opposite face category} is ${\operatorname{cat}(H)}^{\mathrm{op}}$.

    When $K$ is a (reduced) simplicial complex on $[n]$, we call
    $\operatorname{cat}(K)$ the face category of $K$; the empty set is
    the initial object.
\end{definition}

\begin{remark}
    For a simplicial complex $K$, the opposite face category
    ${\operatorname{cat}(K)}^{\mathrm{op}}$ coincides with the face category
    of the upward complex $\comp'K$.
\end{remark}

	\subsection{Definitions of Operads}\label{section:operads}
	The purpose of operads is to encode universally all the operations and their compositions acting on any category of type of algebras.
	In order to do so, one encodes abstractly the set of all operations of arity $n$ together with their symmetric by a set $\cal O(n)$ equipped with a (right) action of the symmetric group $\Sym_n$.
	It comes with a way of composing its elements $\cal O(m)\times\cal O(n)\to \cal O(n+m-1)$ that is compatible with the action of the symmetric group, see~\cite[Chapter~5]{Loday_Vallette_2012} for more details.
	In this paper, we consider only set-theoretical operads. 
	
	\begin{definition}[$\Sym$-collection]
		An $\Sym$-collection is a collection of sets $\cal O ={\{\cal O(n)\}}_{n\in\N}$ equipped with a right action of the collection of symmetric groups $\Sym\coloneqq{\{\Sym_n\}}_{n\in \N}$, that is $\cal O(n)\curvearrowleft \Sym_n$, for every $n\in \N$.
	\end{definition}

	\begin{definition}[Set-theoretical  operad]
			A \emph{set(-theoretical) operad} is an $\Sym$-collection $\cal O={\{\cal O(n)\}}_{n\in \N}$, together with composition maps $\{\circ_i\}$ as follows:
		\begin{align*}
			\circ_i\colon \cal O(n)\times \cal O(m) &\to \cal O(n+m-1)\\
			(A,B) &\mapsto A\circ_i B,
		\end{align*}
		for every $n\geq 1$, $m\in \N$, and $i=1,\ldots,n$, which is compatible with the action of $\Sym_n$ and $\Sym_m$, see~\cite[Chapter~5]{Loday_Vallette_2012}.
		Moreover, the composition maps must satisfy the following two conditions,
		for every $(A,B,C)\in \cal O(\ell)\times \cal O(n)\times \cal O(m)$, we have:
		\begin{itemize}
			\item $(A\circ_i B) \circ_{j + m -1} C = (A\circ_j C) \circ_{i} B$, for every $1\leq i<j \leq \ell$ (parallel axiom),
			\item $A\circ_i(B\circ_{j}C) = (A\circ_i B)\circ_{j+i-1}C$, for every $1\leq i\leq \ell,1\leq j\leq m$ (sequential axiom).
		\end{itemize}
	\end{definition}
	An operad is \emph{unital} when there is a distinguished element in $\cal{O}(1)$, denoted by $\id$, which satisfies $\id\circ_1 A=A\circ_i\id = A$.
	Removing the operation $\id$ from a unital operad $\cal O$ does not always yield an operad $\overline{\cal O}$.
	When it is the case, we say that $\cal O$ is an \emph{augmented} unital operad, and $\cal O$ is the \emph{augmentation} of $\overline{\cal O}$.

	\begin{example}
		A first non-trivial operad is the commutative operad $\opi{Com}$, which has a single operation of arity $n$ with trivial $\Sym_n$-action, for $n\geq 1$ and no  operation with arity $0$.
		Adding an operation of arity $0$ to $\opi{Com}$ yields an operad that we denote by $\opi{pCom}$.
	\end{example}

	\subsection{Monoid, monads, and operads}
	We refer to \cite[Appendix~E]{Loday_Vallette_2012} and \cite[Chapter~VII]{MacLane_1978} for more details.

	\begin{definition}[Monoid]
		Let $(\mathrm{C}, \otimes, e)$ be a monoidal category.
		A \emph{monoid} in $\mathrm{C}$ is a triple $(M, \mu, \eta)$ consisting of an object
		$M \in \mathrm{C}$, a \emph{multiplication} $\mu \colon M \otimes M \to M$, and a
		\emph{unit} $\eta \colon e \to M$, satisfying the standard unitality and associativity axioms.
	\end{definition}

	\begin{example}
		The category of $\Sym$-collections, denoted by $\Sym\text{-}\mathsf{Coll}$, is equipped with the \emph{composition	product} $\circ$, defined for two $\Sym$-collections $\cal O$ and $\cal Q$ by
		\[
			(\cal O \circ \cal Q)(n) \coloneqq \bigsqcup_{k \geq 0} \cal O(k) \times
			\bigsqcup_{\substack{n_1 + \cdots + n_k = n}} \cal Q(n_1) \times \cdots \times \cal Q(n_k),
		\]
		with unit $\mathsf{I}$ concentrated in arity $1$ with $\mathsf{I}(1) = \{*\}$.
		A set-theoretical operad is then equivalently a monoid in 	$(\Sym\text{-}\mathsf{Coll}, \circ, \mathsf{I})$, see~\cite[Section~5.2]{Loday_Vallette_2012}.
	\end{example}

	\begin{definition}[Monad]
		A \emph{monad} on a category $\mathrm{C}$ is a triple $(\mathsf{T}, \eta, \mu)$ consisting of an endofunctor $\mathsf{T} \colon \mathrm{C} \to \mathrm{C}$, a \emph{unit}
		$\eta \colon \id_{\mathrm{C}} \to \mathsf{T}$, and a \emph{multiplication} $\mu \colon \mathsf{T}^2 \to \mathsf{T}$, satisfying the standard unitality and associativity axioms.
	\end{definition}

	\begin{example}
		The power set functor $\wp \colon \Set \to \Set$ is a monad $(\wp, \eta, \mu)$
		with unit $\eta \colon \id_\Set \to \wp$ given by $\eta(x) \coloneqq \{x\}$, and
		multiplication $\mu \colon \wp^2 \to \wp$ given by $\mu(\cal A) \coloneqq \bigcup \cal A$.
		The reduced power set functor $\overline{\wp} \colon \Set \to \Set$ is likewise a monad,
		with the same unit and multiplication restricted to non-empty sets.
	\end{example}

	\begin{definition}[Lax monoidal functor]
		Let $(\mathrm{C}, \otimes, e)$ and $(\mathrm{D}, \otimes', e')$ be monoidal categories.
		A \emph{lax monoidal functor} is a functor $F \colon \mathrm{C} \to \mathrm{D}$ equipped
		with a natural transformation $\varphi_{X,Y} \colon F(X) \otimes' F(Y) \to F(X \otimes Y)$
		and a unit map $\varphi_0 \colon e' \to F(e)$, satisfying the associativity and unit
		coherence diagrams; see~\cite[Appendix~B.3.3]{Loday_Vallette_2012}.
	\end{definition}

	\begin{example}\label{ex:powerset-commutative-monad}
		The power set functor $\wp \colon \Set \to \Set$ is a lax monoidal endofunctor of
		$(\Set, \times, \{*\})$, with structure maps
		\begin{align*}
			\varphi_{X,Y} \colon \wp(X) \times \wp(Y) &\to \wp(X \times Y) & \varphi_0 \colon \{*\} &\to \wp(\{*\})\\
			(I, J) &\mapsto I \times J, & * &\mapsto \{*\}.
		\end{align*}
		The reduced power set functor $\overline{\wp}$ is likewise a lax monoidal endofunctor,
		with the same structure maps restricted to non-empty sets. Both $\eta$ and $\mu$ are
		monoidal natural transformations, making $\wp$ and $\overline{\wp}$ \emph{commutative
		monads} in the sense of Kock~\cite{Kock_1972}; see~\cite[Theorem~2.3]{Kock_1972}.
	\end{example}

	Finally, we have the following.
	\begin{proposition}[{\cite[Proposition~B.3.4]{Loday_Vallette_2012}}]\label{proposition:lax_monoidal_image}
		The image of a monoid in a category under a lax monoidal functor is a monoid in the image category.
	\end{proposition}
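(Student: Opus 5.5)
The statement is the classical fact that a lax monoidal functor $F\colon(\mathrm{C},\otimes,e)\to(\mathrm{D},\otimes',e')$ with structure maps $\varphi_{X,Y}$ and $\varphi_0$ carries a monoid $(M,\mu,\eta)$ in $\mathrm{C}$ to a monoid in $\mathrm{D}$. I would take as candidate structure on $F(M)$ the multiplication
\[
\mu_F \coloneqq F(\mu)\circ\varphi_{M,M}\colon F(M)\otimes' F(M)\to F(M),
\]
and the unit
\[
\eta_F\coloneqq F(\eta)\circ\varphi_0\colon e'\to F(M).
\]
The proof then reduces to checking the associativity and the two unitality diagrams, each by pasting together three ingredients: functoriality of $F$, naturality of $\varphi$, and the coherence axioms of a lax monoidal functor.

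For associativity, I would compare $\mu_F\circ(\mu_F\otimes'\id)$ with $\mu_F\circ(\id\otimes'\mu_F)\circ\alpha'$, where $\alpha'$ is the associator in $\mathrm{D}$. First expand $\mu_F\circ(\mu_F\otimes'\id)$ as
\[
F(\mu)\circ\varphi_{M,M}\circ\bigl(F(\mu)\otimes'\id\bigr)\circ\bigl(\varphi_{M,M}\otimes'\id\bigr).
\]
Naturality of $\varphi$ in its first variable rewrites $\varphi_{M,M}\circ(F(\mu)\otimes'\id)$ as $F(\mu\otimes\id)\circ\varphi_{M\otimes M,M}$. This gives
\[
F\bigl(\mu\circ(\mu\otimes\id)\bigr)\circ\varphi_{M\otimes M,M}\circ(\varphi_{M,M}\otimes'\id).
\]
The associativity coherence of $(F,\varphi)$ turns $\varphi_{M\otimes M,M}\circ(\varphi_{M,M}\otimes'\id)$ into $F(\alpha^{-1})\circ\varphi_{M,M\otimes M}\circ(\id\otimes'\varphi_{M,M})\circ\alpha'$. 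Associativity of $\mu$ in $\mathrm{C}$ then lets us replace $\mu\circ(\mu\otimes\id)\circ\alpha^{-1}$ by $\mu\circ(\id\otimes\mu)$. Finally, applying naturality of $\varphi$ in the second variable in reverse yields the other side.

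Unitality is handled the same way. Expand $\mu_F\circ(\eta_F\otimes'\id)$ and use naturality of $\varphi$ to move $F(\eta)$ past it, which produces $F(\mu\circ(\eta\otimes\id))\circ\varphi_{e,M}\circ(\varphi_0\otimes'\id)$. The left unit coherence of $F$ identifies $\varphi_{e,M}\circ(\varphi_0\otimes'\id)$ with $F(\lambda^{-1})\circ\lambda'$. The unit law in $\mathrm{C}$ says $\mu\circ(\eta\otimes\id)\circ\lambda^{-1}=\id_M$, so the composite reduces to the left unitor $\lambda'$, as required. The right unit law is symmetric.

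The only real obstacle is bookkeeping: the associators and unitors have to be placed correctly, and the coherence axioms have to be applied in the orientation stated in the reference. If the categories involved are strict, or if one invokes Mac Lane's coherence theorem, these isomorphisms can be suppressed and the verification becomes a direct diagram chase. Alternatively, the statement can be quoted as \cite[Proposition~B.3.4]{Loday_Vallette_2012}.
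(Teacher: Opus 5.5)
Your proof is correct and is the standard argument. The paper gives no proof of its own and defers to \cite[Proposition~B.3.4]{Loday_Vallette_2012}, where $F(M)$ receives the same multiplication $F(\mu)\circ\varphi_{M,M}$ and unit $F(\eta)\circ\varphi_0$, checked by the same combination of naturality of $\varphi$, the lax coherence axioms and the monoid axioms; your orientations of the associator and unitor coherence squares are consistent throughout.
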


	\subsection{Presentation of an operad}
	The previous definitions of an operad are given ``top-to-bottom'', in the sense that we describe a global object whose elements can be composed together.
	Here, we consider a ``bottom-to-top'' definition: we will construct the free operad generated by a given set of operations and quotient it by given relations.
	An operad is an algebraic structure like many others, coming with a forgetful functor from operads to $\Sym$-collections that forgets the $\circ_i$ compositions.
	The left adjoint to this forgetful functor is called the \emph{free operad functor}, see~\cite[Section~5.5]{Loday_Vallette_2012}.
	For a given collection $M={\{M(n)\}}_{n\in \N}$ , we call $\cal T M$ the \emph{free operad} on the collection~$M$.
	
	One canonical construction to encode the free operad is given by using rooted tree.
	We denote by $\cal T M$ the \emph{free tree module on $M$}. 
	It is the operad on rooted trees, with decorated internal nodes, defined recursively as follows.
	The operations of $\cal T M$ are 	
	\begin{itemize}
		\item the unit operation $|\id$,
		\item the atomic corollas with $n$ leaves, each of them with internal node decorated by an element of $M(n)$, or
		\item (finite) trees having one root corolla, say $C_R$, and each elements grafted to the leaves of $C_R$ are elements of $\cal T M$, with each internal node decorated by an element of $M$, up to the action of $\Sym_n$.
	\end{itemize}
	
	A  \emph{relation} between trees $T_1$ and $T_2$, decorated by elements of $M$, with the same arity is written as $T_1\equiv T_2$.
	Given a tree $T$ of $\cal T M$ and a relation $T_1\equiv T_2$, if $T_1$ is a subtree of $T$, then, the \emph{rewriting} $T_1\rightarrow T_2$ in $T$ is obtained by replacing $T_1$ by $T_2$ in $T$.	
	The \emph{quotient} of the tree module $\cal T M$ by a set of relations $R$ is denoted by $\cal T M/\langle R\rangle$, and is obtained by the quotient with the following equivalence relation: two trees $T$ and $T'$ are equivalent if and only if there is a sequence of rewriting from $T$ to $T'$ using the relations in $R$.
	When an operad $\cal O$ is isomorphic to a quotient $\cal TM /\langle R\rangle$, we call the latter a \emph{presentation} of $\cal O$.

	\begin{example}
		The commutative operad $\opi{Com}$ is presented by the free operad on a single binary operation $\mu$ modulo the relation $\mu\circ_1 \mu \equiv \mu\circ_2 \mu$.

		The operad $\opi{pCom}$ is presented by the free operad on a binary operation $\mu$ and a nullary operation $\bullet$ modulo the relations $\mu\circ_1 \mu \equiv \mu\circ_2 \mu$ and $\mu\circ_1 \bullet\equiv \mu\circ_2 \bullet \equiv \id$.
	\end{example}
	
	\subsection{A motivated example: the permutative operad}
	\begin{definition}[Permutative operad~\cite{Chapoton_2001}]
		The \emph{permutative operad}, denoted by $\opi{Perm}$, is a set theoretical operad without any operation of arity $0$ and whose set of operations of arity $n\geq 1$ has $n$ elements, denoted by $1,2,\ldots,n$, that is:
		\[\opi{Perm}(0)=\emptyset\quad \text{and}\quad \opi{Perm}(n) \coloneqq[n], \text{ for }n\ge1.\]
		The $\Sym$-collection structure is the natural action of $\Sym_n$ on $[n]$.
		(Note that, for distinct $n$ and $m$, the operations $i$ of arity $n$ and $m$ are different. We omit this in our notations for the sake of convenience.) For $k=1,\ldots,n$, the composition of $(i,j)\in\opi{Perm}(n)\times \opi{Perm}(m)$ is given by: \[i \circ_k j = \begin{cases}
			i+m-1 &\text{ if } k<i,\\
			i+j - 1 &\text{ if } k=i,\\
			i &\text{ if }k>i.
		\end{cases}\]
	\end{definition}
		
		A graphical illustration is given in \Cref{figure:perm}.
		\begin{figure}[ht]
			
			\begin{tabular}{crcc}
				$k<i$ & \begin{tikzpicture}[baseline=(C)]
					\node (A) at (0,0) {$3=\square\square\blacksquare\square\square$};
					\node (B) at (-.3,1) {$2=\underbrace{\square\blacksquare\square}$};				
					\node at (.28,.4) {$\downarrow\circ_2$} ;
					\node (C) at (0,.4) {};
				\end{tikzpicture} & $\rightarrow$ &\begin{tikzpicture}[baseline=(C)]
				\node (C) at (4.5,.4) {$3\circ_2 2 = \square\underbrace{\square\square\square}\blacksquare\square\square = 5$};
				\end{tikzpicture}\tabularnewline\midrule%
				$k=i$ & 	\begin{tikzpicture}[baseline=(C)]

					\node (A) at (0,0) {$3=\square\square\blacksquare\square\square$} ;
					\node (B) at (0,1) {$2=\underbrace{\square\blacksquare\square}$};
					\node at (.58,.4) {$\downarrow\circ_3$};
					\node (C) at (0,.4) {};
				\end{tikzpicture} & $\rightarrow$ & \begin{tikzpicture}[baseline=(C)]
				\node (C) at (0,.4) {$3\circ_3 2=\square\square\underbrace{\square\blacksquare\square}\square\square = 4$};
				\end{tikzpicture} \tabularnewline\midrule%
				$k>i$ &\begin{tikzpicture}[baseline=(C)]
					\node (A) at (0,0) {$3=\square\square\blacksquare\square\square$};
					\node (B) at (.3,1) {$2=\underbrace{\square\blacksquare\square}$};
					\node at (.88,.4) {$\downarrow\circ_4$};
					\node (C) at (0,.4) {};
				\end{tikzpicture} & $\rightarrow$ & \begin{tikzpicture}[baseline=(C)]
				\node (C) at (4.1,.4) {$3\circ_4 2 = \square\square\blacksquare\underbrace{\square\square\square}\square = 3 $};
				\end{tikzpicture} 
			\end{tabular}
			\caption{An illustration of the compositions of two elements of $\opi{Perm}$. The element $i\in\opi{Perm}(n)$ is represented by a sequence of $n$ squares, where only the $i$th one is colored in black.\label{figure:perm}}
		\end{figure}
		\begin{lemma}[\cite{Chapoton_2001}]
			The operad $\opi{Perm}$ is presented as follows: 
			\[\opi{Perm} \cong\frac{\cal T \left(\binop{1},\binop{2}\right) }{\left\langle \binoprelleft{1}{1}\equiv\binoprelright{1}{1} \equiv \binoprelright{1}{2},\binoprelleft{2}{1} \equiv \binoprelleft{2}{2}\equiv\binoprelright{2}{2},\binoprelleft{1}{2} \equiv \binoprelright{2}{1}\right\rangle}.\]
			The transposition $(12)$ sends $\binop{1}$ to $\binop{2}$.
		\end{lemma}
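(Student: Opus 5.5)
The plan is the standard argument for a quadratic presentation of a set operad. I will construct a morphism from the presented operad onto $\opi{Perm}$, and then show that the quotient has no more than $n$ elements in arity $n$.

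\emph{Step 1: define the morphism.} Write $\mu_1,\mu_2$ for the two generating binary corollas, labelled $1$ and $2$. Define $\Phi\colon\cal T(\mu_1,\mu_2)\to\opi{Perm}$ on generators by $\mu_1\mapsto 1\in\opi{Perm}(2)$ and $\mu_2\mapsto 2\in\opi{Perm}(2)$. This is $\Sym_2$-equivariant, since the transposition $(12)$ swaps $1$ and $2$ in $[2]$; this also gives the last sentence of the statement. By freeness, $\Phi$ extends to a morphism of operads.

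\emph{Step 2: the relations hold and $\Phi$ is surjective.} Using the composition formula of $\opi{Perm}$, compute every arity-$3$ composite of the form $a\circ_1 b$ and $a\circ_2 b$ with $a,b\in\{1,2\}$.
\begin{itemize}
\item $1\circ_1 1=1$, $1\circ_2 1=1$ and $1\circ_2 2=1$: the three trees in the first relation all go to $1$.
\item $2\circ_1 1=3$, $2\circ_1 2=3$ and $2\circ_2 2=3$: the three trees in the second relation all go to $3$.
\item $1\circ_1 2=2$ and $2\circ_2 1=2$: the third relation holds.
\end{itemize}
The left/right-grafting pictures must be read with the correct leaf ordering (left graft means $\circ_1$, right graft means $\circ_2$). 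So $\Phi$ factors through the quotient $\cal P\coloneqq\cal T(\mu_1,\mu_2)/\langle R\rangle$. Surjectivity follows by induction on arity: $i\in[n]$ equals $2\circ_2(i-1)$ when $i\geq 2$, and $1=1\circ_2(\text{anything})$. Arity $1$ is the unit.

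\emph{Step 3: bound the size, the main obstacle.} I would show that every tree in arity $n$ is equivalent, in $\cal P$ and up to the symmetric action, to a normal form. The normal forms are the $n$ left-comb monomials with leaves in the standard order, in which the distinguished leaf is determined by the labels. For example, take right combs $\mu_{a_1}\circ_2(\mu_{a_2}\circ_2\cdots)$ with the shape $\mu_2\cdots\mu_2\mu_1\mu_1\cdots$, indexed by the position $i$ of the first $\mu_1$ (or $i=n$ if there is none).

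The rewriting argument runs as follows. The relations say that a composite's output depends only on which leaf is reached by following, from the root, the label $1$ (go left) or $2$ (go right). Equivalently, once a subtree lies off the distinguished path, its labels are irrelevant (relations 1 and 2). Relation 3 lets one re-bracket along the path. Using these, one rotates any tree to the normal comb.

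A cleaner alternative is to define the \emph{distinguished leaf} of a tree as the leaf reached by the path described above. One then shows this leaf is invariant under $R$ and that two trees with the same distinguished leaf are related by $R$-rewrites, by induction on the number of internal nodes. The symmetric group action must be tracked, since $\cal T$ consists of trees with labelled leaves. Concretely, $|\cal P(n)|\le n$ because trees in the free symmetric operad can be rewritten into one fixed planar shape with leaves $1,\dots,n$ in order, carrying a single marked leaf.

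Combining the size bound with the surjectivity from Step 2 gives the isomorphism $\cal P\cong\opi{Perm}$.
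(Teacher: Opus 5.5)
Your proposal is correct and is essentially the paper's argument made explicit. Your distinguished leaf (go left at $\mu_1$, right at $\mu_2$) is exactly the paper's bold path from the root, and your Step~2 computation is the paper's observation that the three relations identify precisely the arity-$3$ planar trees whose bold path ends at the left, right, and middle leaf, respectively; your explicit morphism, surjectivity argument and normal-form count go beyond the paper's sketch (which defers to Chapoton), but the completeness step --- equal distinguished leaf implies $R$-equivalence --- is left implicit in the paper and only outlined in yours, and it closes by induction on the number of internal nodes and then on the size of the off-path subtree at the root, using relation~1 to graft pieces of that subtree onto the path and a symmetric translate of relation~3 to reorder the off-path leaves.
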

		\begin{proof}[Sketch of the proof]
			If we write $\begin{tikzpicture}[scale=0.2,baseline=(O.base)]
				\node (O) at (0,-0.5) {};
				\draw[] (0,0)  -- (1,1);
				\draw[very thick] (0,0) -- (-1,1);
				\draw[very thick] (0,0) -- (0,-1);
			\end{tikzpicture} = \binop{1}$ and $\begin{tikzpicture}[scale=0.2,baseline=(O.base)]
			\node (O) at (0,-0.5) {};
			\draw[very thick] (0,0)  -- (1,1);
			\draw[] (0,0) -- (-1,1);
			\draw[very thick] (0,0) -- (0,-1);
			\end{tikzpicture} = \binop{2}$, then, any sequence of compositions of these two generators yields a binary tree with a unique leaf connected by a bold path from the root. The permutative relations, in order of appearance, correspond to the bold path ending at the left leaf, the right leaf, and the middle leaf, respectively.
		\end{proof}
		
\subsection{Algebra over an operad}
To every set $X$, there is a canonical operad $\opi{End}_X$, whose operations of arity $n$ are given by:
\[\opi{End}_X(n) \coloneqq \operatorname{Hom}_{\Set}(X^n,X),\]
with the action of $\Sym_n$ obtained by permuting the entries.
The compositions are given by 
\begin{align*}
	f\circ_i g(x_1,\ldots,x_{i-1},&y_1,\ldots,y_m,x_{i+1},\ldots,x_n) \\= &f(x_1,\ldots,x_{i-1},g(y_1,\ldots,y_m),x_{i+1},\ldots,x_n),
\end{align*}
for $n\geq 1,m\in \N,f\in \opi{End}_X(n),g\in\opi{End}_X(m)$, and $x_1,\ldots,\hat{x}_i\ldots,x_n,y_1,\ldots y_m \in X$.

Given an operad $\cal O$, an \emph{$\cal O$-algebra} structure on $X$ is a morphism of operads $\cal z\colon\cal O\to \opi{End}_X$.
Equivalently, it is described by a sequence of maps $\cal z = {(\cal z_n)}_{n\geq 1}$ as follows:
\[\begin{array}{rll}
	\cal z_n\colon & \cal{O}(n)\times X^n &\to X\\
	&(A;x_1,\ldots,x_n) &\mapsto \cal z_n(A;x_1,\ldots,x_n),
\end{array}\]
which are compatible with the right action of $\Sym_n$ on $\cal O(n)$ and the left action of $\Sym_n$ on $X^n$ (obtained by permuting the entries).

	\section{Power set operads: definition, examples, properties}\label{section:power-set-operads}
	In this section, we study how the power set functor behaves with respect to set operads.
	\subsection{Power set operad functor}\label{sec:power-set-operad-functor}

	\begin{definition}[Power of a collection]
		Given an $\Sym$-collection $\cal O = {\{\cal O(n)\}}_{n\in\N}$, its \emph{power collection} is:
		\[\wp\cal O = {\{\wp\cal O(n)\}}_{n\in\N} \coloneqq {\{\wp(\cal O(n))\}}_{n\in\N},\]
		with the natural induced action of $\Sym$.
	    We define the reduced power collection $\overline{\wp}\cal O$ in the same way. 
	\end{definition}

	We can iterate the (reduced) power set maps.
	\begin{definition}[Iterated power of a collection]
		Let $\cal O$ be an $\Sym$-collection.
		We say an  $\Sym$-collection $\cal Q$ is \emph{based on $\cal O$ with power} $k\geq 0$ when it is is a subcollection of $\wp^k\cal O$. By convention $\wp^0\cal O = \cal O$.
	\end{definition}

	\begin{proposition}\label{proposition:operad_lax_monoidal}
		The endofunctors $\wp$ and $\overline{\wp}$ are lax monoidal in the monoidal category
		$(\Sym\text{-}\mathsf{Coll}, \circ, \mathsf{I})$.
	\end{proposition}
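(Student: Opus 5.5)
I will build the lax monoidal structure explicitly, using the lax monoidal structure of $\wp$ on $(\Set,\times,\{*\})$ from \Cref{ex:powerset-commutative-monad}. The plan is to construct the unit and structure maps, then check naturality, associativity and unitality.

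\textbf{Unit map.} I set $\varphi_0\colon \mathsf I\to\wp\mathsf I$ to send $*\mapsto\{*\}$ in arity $1$. This map lands in $\overline{\wp}\mathsf I$ as well.

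\textbf{Structure map.} I need a natural map
\[\varphi_{\cal O,\cal Q}\colon (\wp\cal O\circ\wp\cal Q)(n)\to\wp(\cal O\circ\cal Q)(n).\]
An element on the left consists of an arity $k$, a subset $A\subseteq\cal O(k)$, a decomposition $n_1+\cdots+n_k=n$, and subsets $B_j\subseteq\cal Q(n_j)$. I send it to the product subset
\[A\times B_1\times\cdots\times B_k\subseteq \cal O(k)\times\cal Q(n_1)\times\cdots\times\cal Q(n_k)\subseteq(\cal O\circ\cal Q)(n),\]
which is the iterated map $\varphi_{X,Y}$ of $\wp$ followed by the inclusion of a summand. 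Nonempty factors give a nonempty product, so the same formula works for $\overline{\wp}$.

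The $\Sym_n$-action on $\cal O\circ\cal Q$ is really defined on the quotient (coinvariants) of this disjoint union. I will therefore work at the level of representatives and check that the map is compatible with the identifications made by $\Sym_k$ and with the block-permutation action of $\Sym_n$. This holds because $\wp$ applied to a permutation of coordinates carries products of subsets to the correspondingly permuted products. Naturality in $\cal O$ and $\cal Q$ follows from the naturality of $\varphi_{X,Y}$ and of the coproduct inclusions.

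\textbf{Coherence.} Associativity reduces to the statement that both ways of bracketing, from $\wp\cal O\circ\wp\cal Q\circ\wp\cal R$ to $\wp(\cal O\circ\cal Q\circ\cal R)$, send a tree decorated by subsets to the product of all its decorations. That is precisely the associativity of the lax structure of $\wp$ on $(\Set,\times)$, namely the associativity of the Cartesian product of subsets. Unitality holds because $\{*\}\times B = B$ under the canonical isomorphisms.

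\textbf{Main obstacle.} I expect the main difficulty to be the symmetric-group bookkeeping. $(\cal O\circ\cal Q)(n)$ carries an induced action, so I will need to show that the product-of-subsets map respects the coinvariants under $\Sym_k$ and is $\Sym_n$-equivariant. My approach is to check equivariance on representatives and use the fact that $\wp$ preserves the coproduct inclusions used to form the composition product.
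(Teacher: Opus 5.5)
Your proposal is correct and follows the paper's proof. It uses the same unit $*\mapsto\{*\}$, the same structure map sending $(A;B_1,\ldots,B_k)$ to the set of composites $p\circ(q_1,\ldots,q_k)$ with $p\in A$, $q_j\in B_j$, and the same reduction of coherence to the Cartesian lax structure of $\wp$ on $(\Set,\times,\{*\})$. That structure map is the direct image of $A\times B_1\times\cdots\times B_k$ along the quotient map onto the coinvariants, not a literal summand inclusion, as you yourself note. The $\Sym_k$-coinvariance and $\Sym_n$-equivariance checks you plan are bookkeeping the paper leaves implicit, and they go through as you describe.
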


	\begin{proof}
		We construct the lax monoidal structure maps. The unit map
		$\varphi_0 \colon \mathsf{I} \to \wp\mathsf{I}$ is given by $* \mapsto \{*\}$ in arity $1$.

		For the natural transformation, we need to construct, for every two $\Sym$-collections
		$\cal O$ and $\cal Q$, a morphism
		\[
			\varphi_{\cal O, \cal Q} \colon (\wp\cal O) \circ (\wp\cal Q) \longrightarrow \wp(\cal O \circ \cal Q).
		\]
		At arity $n$, an element of $((\wp\cal O)\circ(\wp\cal Q))(n)$ consists of a tuple
		$(A; B_1, \ldots, B_k)$ where $k \geq 0$, $A \in \wp(\cal O(k))$,
		$B_j \in \wp(\cal Q(n_j))$ for $j \in [k]$, $n_1 + \cdots + n_k = n$, and all $n_i\geq 0$.
		We define
		\[
			\varphi_{\cal O,\cal Q}(A; B_1, \ldots, B_k)
			\coloneqq \bigl\{\, p \circ (q_1, \ldots, q_k) \mid p \in A,\, q_j \in B_j \text{ for } j\in[k] \,\bigr\}
			\;\in\; \wp\bigl((\cal O \circ \cal Q)(n)\bigr),
		\]
		where $p \circ (q_1, \ldots, q_k)$ denotes the composition in $\cal O \circ \cal Q$.
		This is exactly the image of $(A; B_1, \ldots, B_k)$ under the iterated application of the
		lax monoidal structure map $\varphi_{X,Y}(I,J) = I \times J$ of $\wp$ on $(\Set, \times, \{*\})$
		from \Cref{ex:powerset-commutative-monad}, applied to each factor of the product
		$\cal O(k) \times \cal Q(n_1) \times \cdots \times \cal Q(n_k)$.

		The associativity and unit coherence axioms for $(\varphi_{\cal O, \cal Q}, \varphi_0)$
		follow from those of $\wp$ on $(\Set, \times, \{*\})$ together with the associativity of the
		composition product $\circ$. The case of $\overline{\wp}$ is identical, noting that
		$A \neq \emptyset$ and $B_j \neq \emptyset$ for all $j$ imply
		$\varphi_{\cal O, \cal Q}(A; B_1, \ldots, B_k) \neq \emptyset$.
	\end{proof}

	By \Cref{proposition:operad_lax_monoidal} and \Cref{proposition:lax_monoidal_image}, the (reduced) power set is an endofunctor in the category of set operads $\Op(\Set)$, as follows.
	\begin{definition}[Power set operad functor]\label{definition:power_set_operad}
		The \emph{power set operad functor} is the following endofunctor:
		\begin{align*}\wp\colon \Op(\Set) &\to \Op(\Set)\\
			(\cal O,\{\circ_i\}) &\mapsto (\wp \cal O,\{\circ^\wp_i\}),
		\end{align*}
		where the compositions $\circ^\wp_i$, are as follows:
		\begin{align*}
			\circ^\wp_i\colon \wp  \cal O(n)\times\wp \cal{O}(m) &\to \wp \cal{O}(n+m-1) \\
			(A,B) &\mapsto A\circ_i^\wp B \coloneqq \{a\circ_i b\mid a\in A, b\in B\}.
		\end{align*}

		We call $(\wp \cal O,\{\circ^\wp_i\})$ the \emph{power} of the operad $(\cal O,\{\circ_i\})$.
		Similarly, we denote by $\overline{\wp}$ the reduced power set operad functor, which produces the \emph{reduced power} operad $(\overline{\wp} \cal O,\{\circ^{\overline{\wp}}_i\})$.
	\end{definition}

	\begin{remark}
		The functor also works with nonsymmetric operads. We focus here on symmetric operads.
	\end{remark}

	\begin{remark}
		The functor $\wp$ only adds the empty set to each arity of the $\Sym$-collections obtained from $\overline{\wp}$, which creates a distinguished $\Sym$-collection.
		For convenience, we introduce the $\Sym$-collection of empty sets ${\{\emptyset_n\}}_{n\in\N}$, one for each arity $n\in\N$.
		By definition, when either $A$ or $B$ is the empty set, the composition $A\circ_i^\wp B$ produces the empty set.
	\end{remark}

	\begin{proposition}
		Let $(\mathcal{O},\{\circ_i\})$ be a set operad. Then:
		\begin{enumerate}
			\item The $\Sym$-collection ${\{\emptyset_n\}}_{n\in \N}$ equipped with $\{\circ_i^\wp\}$ forms a non-unital ideal suboperad of $(\wp\mathcal{O},\{\circ_i^\wp\})$, isomorphic to $\opi{pCom}$ as a non-unital operad.
			\item The suboperad $\overline{\wp}\mathcal{O}\hookrightarrow \wp\mathcal{O}$ is unital, contains the unit $\{\mathrm{id}_{\mathcal{O}}\}$ of $\wp\mathcal{O}$, and provides a canonical splitting. In particular, there is a short exact sequence of non-unital operads
			\begin{equation}
				0 \longrightarrow \opi{pCom} \longrightarrow \wp\mathcal{O} \longrightarrow \overline{\wp}\mathcal{O} \longrightarrow 0,
			\end{equation}
			which splits in the category of non-unital operads, yielding a decomposition $\wp\mathcal{O}(n) = \overline{\wp}\mathcal{O}(n) \sqcup \{\emptyset_n\}$ at each arity.
		\end{enumerate}
	\end{proposition}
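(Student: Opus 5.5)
The plan is to verify everything directly from the formula $A\circ_i^\wp B=\{a\circ_i b\mid a\in A,\ b\in B\}$ of \Cref{definition:power_set_operad}. The key observation is that this set is empty if and only if $A=\emptyset$ or $B=\emptyset$.

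For item~(1), first check closure and absorption. The identity $\emptyset_n\circ_i^\wp\emptyset_m=\emptyset_{n+m-1}$ shows that ${\{\emptyset_n\}}_{n\in\N}$ is closed under all $\circ_i^\wp$. The identities $A\circ_i^\wp\emptyset_m=\emptyset$ and $\emptyset_n\circ_i^\wp B=\emptyset$, valid for arbitrary $A,B$, show that it is a two-sided ideal. The $\Sym_n$-action fixes $\emptyset_n$, so the action is trivial. The parallel and sequential axioms are inherited from $\wp\mathcal O$.

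To identify this collection with $\opi{pCom}$, I would send $\emptyset_n$ to the unique operation of arity $n$ of $\opi{pCom}$, including arity $0$. This is a bijection on each arity, is $\Sym$-equivariant, and intertwines compositions, since in both operads the composite of the unique operations of arities $n$ and $m$ at any slot is the unique operation of arity $n+m-1$. It is therefore an isomorphism of non-unital operads. The caveat is that the unit of $\opi{pCom}$ is not matched by a unit of $\wp\mathcal O$: indeed $\emptyset_1\neq\{\id\}$. This is why the statement is only non-unital.

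For item~(2), I would first show that $\overline{\wp}\mathcal O$ is a suboperad. If $A,B\neq\emptyset$, pick $a\in A$ and $b\in B$; then $a\circ_i b\in A\circ_i^\wp B$, so the composite is nonempty. This is exactly the closure remark at the end of the proof of \Cref{proposition:operad_lax_monoidal}. Next, $\{\id\}$ is nonempty and satisfies $\{\id\}\circ_1^\wp A=A=A\circ_i^\wp\{\id\}$ by unitality of $\mathcal O$, so it is the unit of both $\wp\mathcal O$ and $\overline{\wp}\mathcal O$.

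I would then build the exact sequence. Define $\pi\colon\wp\mathcal O\to\overline{\wp}\mathcal O$ as the quotient by the ideal of item~(1), that is, collapsing each $\emptyset_n$. Since $\pi$ is the quotient by an operadic ideal, it is a morphism of non-unital operads, and its kernel is $\opi{pCom}$. The inclusion $s\colon\overline{\wp}\mathcal O\hookrightarrow\wp\mathcal O$ is an operad morphism with $\pi\circ s=\id$, which gives the splitting. The set-level decomposition $\wp\mathcal O(n)=\overline{\wp}\mathcal O(n)\sqcup\{\emptyset_n\}$ holds by definition of $\overline{\wp}$.

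The main obstacle is making sense of a ``short exact sequence'' of set operads, since $\Set$ is not abelian. I would interpret it as follows: the map $\opi{pCom}\to\wp\mathcal O$ is the inclusion of an ideal, $\overline{\wp}\mathcal O$ is the quotient of $\wp\mathcal O$ by that ideal, and the cokernel is formed in pointed collections by identifying the ideal to a basepoint. Under this reading, the quotient is naturally identified with $\overline{\wp}\mathcal O$ with the basepoint adjoined, and compositions landing in the ideal are exactly those involving an empty argument. The check I expect to need the most care is that this identification is compatible with compositions. It reduces once more to the equivalence that $A\circ_i^\wp B=\emptyset$ exactly when $A=\emptyset$ or $B=\emptyset$.
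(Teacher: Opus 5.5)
Your treatment of item (1), and of the suboperad and unit claims in item (2), is the same direct verification from $A\circ_i^\wp B=\{a\circ_i b\mid a\in A,\ b\in B\}$ that the paper uses. The paper dismisses item (2) as a direct consequence of the definition. Your equivalence, $A\circ_i^\wp B=\emptyset$ if and only if $A=\emptyset$ or $B=\emptyset$, is exactly the fact that makes it work.

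The step to fix is the map $\pi$ and the splitting. As you first state it, $\pi\colon\wp\mathcal O\to\overline{\wp}\mathcal O$ ``collapsing each $\emptyset_n$'' with $\pi\circ s=\id$ does not exist in set operads.
\begin{itemize}
\item There is no element of $\overline{\wp}\mathcal O(n)$ for $\emptyset_n$ to go to. When $\mathcal O(n)=\emptyset$ (e.g.\ $\opi{Perm}(0)$ or $\opi{I}(2)$), we have $\overline{\wp}\mathcal O(n)=\emptyset$ while $\wp\mathcal O(n)=\{\emptyset_n\}$, so there is no function at all.
\item Even in arities $\geq 1$ no retraction exists in general. For $\mathcal O=\opi{Perm}$, a morphism $\pi$ restricting to the identity on $\overline{\wp}\mathcal O$ must satisfy $\pi(\emptyset_2)=\pi(\emptyset_1\circ_1^\wp B)=\pi(\emptyset_1)\circ_1^\wp B=\{1\}\circ_1^\wp B=B$ for every nonempty $B\subseteq[2]$. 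This is impossible, since there are three such $B$.
\end{itemize}

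So the unpointed version of that paragraph is false, and only your final pointed reading survives. In that reading the target is $(\overline{\wp}\mathcal O)_+$, not $\overline{\wp}\mathcal O$. Since the ideal has a single element in each arity, collapsing it changes nothing, and $\pi$ is a bijection $\wp\mathcal O\to(\overline{\wp}\mathcal O)_+$. The whole content of the ``split short exact sequence'' is then that this bijection is an isomorphism of operads in $(\Set_*,\wedge)$, where $\wp\mathcal O$ is pointed by the $\emptyset_n$. That is precisely your emptiness equivalence: nonempty composites of nonempty sets, and basepoint whenever an input is the basepoint.

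State it that way from the outset rather than asserting $\pi\circ s=\id$ into $\overline{\wp}\mathcal O$. With that correction your proof is complete, and on this point more careful than the paper's.
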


\begin{proof}
    For the first point, we check that $\{\emptyset_n\}$ is closed under $\{\circ_i^\wp\}$: indeed $\emptyset_n\circ_i^\wp \emptyset_m = \emptyset_{n+m-1}$, and the ideal property follows from $\emptyset_n\circ_i^\wp A = B\circ_j \emptyset_m = \emptyset_{n+m-1}$ for any $A\in\wp\mathcal{O}(m)$ and $B\in\wp\mathcal{O}(n)$. The $\Sym$-collection structure is trivial since there is exactly one element at each arity. The second point is a direct consequence of \Cref{definition:power_set_operad}. The unit of $\wp\mathcal{O}$ is $\{\mathrm{id}_{\mathcal{O}}\}\in\overline{\wp}\mathcal{O}(1)$, confirming that the splitting does not extend to unital operads.
\end{proof}
	
	\begin{remark}\label{remark:transport_compl}
			Recall that in $\wp\cal O$, we have a bijection $\comp$, and hence we can introduce the following other composition, which is the transport of $\{\circ^\wp_i\}$ by $\comp$ and yields a different functor isomorphic to $\wp$, with the following formula.
		\begin{align*}
			\circ^{c\wp}_i\colon \wp  \cal O(n)\times\wp \cal{O}(m) &\to \wp \cal{O}(n+m-1) \\
			(A,B) &\mapsto A\circ_i^{c\wp} B \coloneqq \comp\{a\circ_i b\mid a\in \comp A, b\in \comp B\}.
		\end{align*}
	\end{remark}

	\begin{proposition}[{\cite[Theorem~2.3]{Kock_1972}, \cite[Theorem~3.6]{Seal_2013}}]
		\label{prop:monoidal-monad-lifts}
			Let $(M, \eta, \mu)$ be a monad on a monoidal category $(\mathrm{C}, \otimes, e)$ such that $M$ is a lax monoidal functor and both $\eta$ and $\mu$ are monoidal natural transformations. Then $M$ lifts to a monad on the category of monoids $\mathsf{Mon}(\mathrm{C}, \otimes, e)$.
	\end{proposition}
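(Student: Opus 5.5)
The proof proceeds in three steps. First, a monoid $(A,m,u)$ in $\mathrm{C}$ is sent to $(MA,\, M(m)\circ\varphi_{A,A},\, M(u)\circ\varphi_0)$, where $(\varphi,\varphi_0)$ is the lax monoidal structure of $M$. By \Cref{proposition:lax_monoidal_image}, this is again a monoid. Naturality of $\varphi$ shows that $M(f)$ is a monoid morphism whenever $f$ is, so $M$ lifts to an endofunctor $\widetilde M$ of $\mathsf{Mon}(\mathrm{C},\otimes,e)$ commuting strictly with the forgetful functor $U\colon\mathsf{Mon}(\mathrm{C})\to\mathrm{C}$.

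Second, we show that the components $\eta_A\colon A\to MA$ and $\mu_A\colon M^2A\to MA$ are monoid morphisms, so that they define natural transformations $\id\Rightarrow\widetilde M$ and $\widetilde M^2\Rightarrow\widetilde M$. For $\eta_A$, multiplicativity is the identity
\[
\eta_A\circ m = M(m)\circ\varphi_{A,A}\circ(\eta_A\otimes\eta_A),
\]
and unitality is the corresponding statement with $u$. Monoidality of $\eta$ means precisely $\varphi_{A,A}\circ(\eta_A\otimes\eta_A)=\eta_{A\otimes A}$ and $\varphi_0=\eta_e$. Combining this with naturality of $\eta$, namely $M(m)\circ\eta_{A\otimes A}=\eta_A\circ m$, gives the identity.

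For $\mu_A$ we must be careful, because the multiplication on $M^2A$ is built from the composite lax structure of $M^2$:
\[
\varphi^{(2)}_{X,Y}=M(\varphi_{X,Y})\circ\varphi_{MX,MY}, \qquad \varphi^{(2)}_0=M(\varphi_0)\circ\varphi_0 .
\]
Monoidality of $\mu$ says
\[
\mu_{X\otimes Y}\circ M(\varphi_{X,Y})\circ\varphi_{MX,MY}=\varphi_{X,Y}\circ(\mu_X\otimes\mu_Y),
\]
together with the unit analogue. We paste this square with the naturality square $M(m)\circ\mu_{A\otimes A}=\mu_A\circ M^2(m)$. This shows that $\mu_A$ intertwines $M^2(m)\circ\varphi^{(2)}_{A,A}$ with $M(m)\circ\varphi_{A,A}$, and the unit is handled the same way.

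Third, since $U$ is faithful and $\widetilde M$, $\widetilde\eta$, $\widetilde\mu$ map under $U$ to $M$, $\eta$, $\mu$, the monad axioms for the lift hold because they hold in $\mathrm{C}$. Naturality of $\widetilde\eta$ and $\widetilde\mu$ follows the same way from faithfulness of $U$.

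The main obstacle is the second step for $\mu$. One has to verify that the multiplication on $\widetilde M^2A$, obtained by applying the lift twice, really uses the composite lax structure $\varphi^{(2)}$, so that the monoidality hypothesis on $\mu$ applies verbatim. I would first write out $\widetilde M(\widetilde M A)$ explicitly to check that its multiplication is $M(M(m)\circ\varphi_{A,A})\circ\varphi_{MA,MA}$. By functoriality of $M$, this equals $M^2(m)\circ\varphi^{(2)}_{A,A}$. With this identification, the remaining verifications are diagram chases using only the stated hypotheses.
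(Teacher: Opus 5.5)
Your proposal is correct. The paper gives no proof of its own and simply cites Kock and Seal, and your argument is exactly the standard verification behind those references: you lift $M$ to monoids via $(\varphi,\varphi_0)$, then check that $\eta_A$ and $\mu_A$ are monoid morphisms, using monoidality of $\mu$ with respect to the composite lax structure $\varphi^{(2)}$ on $M^2$ (which you rightly identify, by functoriality, as the structure underlying the multiplication of $\widetilde M^2A$), and finally transfer the monad axioms along the faithful forgetful functor. Your argument also uses no symmetry of $\otimes$, which is what the paper's application to the non-symmetric monoidal category $(\Sym\text{-}\mathsf{Coll},\circ,\mathsf{I})$ requires.
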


	Such a monad is called a \emph{monoidal monad}~\cite{Kock_1972}.
	Since a set-theoretical operad is equivalently a monoid in
	$(\Sym\text{-}\mathsf{Coll}, \circ, \mathsf{I})$, applying
	\Cref{prop:monoidal-monad-lifts} and \Cref{proposition:lax_monoidal_image} to $\wp$ and $\overline{\wp}$ immediately yields a monad on $\Op(\Set)$.
	More explicitly, the \emph{unit} and \emph{product} are two operations on the endofunctors of $\Op(\Set)$ given by
		\begin{center}
		\begin{tabular}{ccc}
			$\begin{array}{rrl}
				\eta_{\mathcal{O}}\colon &\mathcal{O}&\to\wp \mathcal{O}\\
				& a&\mapsto \{a\},
			\end{array}$ 
			& \quad\text{and}\quad&
			$\begin{array}{rrl}
				\mu_{\mathcal{O}}\colon& \wp^2 \mathcal{O}&\to\wp \mathcal{O}\\
				&\{A_1,\ldots,A_k\}&\mapsto \bigcup_{i=1}^k A_i,
			\end{array}$
		\end{tabular}
	\end{center}
	for every set operad $(\cal O,\{\circ_i\})$.
	
	\begin{corollary}\label{proposition:monad}
		The unit $\eta$ and the product $\mu$ endow the endofunctors $\wp$ and $\overline{\wp}$ with a monad structure.
	\end{corollary}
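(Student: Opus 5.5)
The plan is to verify the monad axioms directly in $\Op(\Set)$, using the fact that a monad structure on the underlying $\Sym$-collections which consists of operad morphisms automatically gives a monad on $\Op(\Set)$. This is because the forgetful functor $\Op(\Set)\to\Sym\text{-}\mathsf{Coll}$ is faithful. Concretely, I will proceed in three steps, and do them first for $\wp$; the case of $\overline{\wp}$ follows by restriction.

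\textbf{Step 1: $\wp$ and $\overline{\wp}$ are monads on $\Sym$-collections.} Arity-wise, $\eta_{\cal O}$ and $\mu_{\cal O}$ are the unit $x\mapsto\{x\}$ and multiplication $\cal A\mapsto\bigcup\cal A$ of the power set monad on $\Set$ recalled in the Recollections. They are $\Sym_n$-equivariant because $\sigma$ acts elementwise, and images and unions commute with bijections. Hence the unitality and associativity axioms
\[
\mu\circ\eta_{\wp}=\mu\circ\wp\eta=\id, \qquad \mu\circ\mu_{\wp}=\mu\circ\wp\mu
\]
hold arity by arity. For $\overline{\wp}$ one only notes that singletons and unions of non-empty families of non-empty sets are non-empty.

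\textbf{Step 2: $\eta_{\cal O}$ and $\mu_{\cal O}$ are morphisms of operads.} For $\eta$ we have $\{a\}\circ^\wp_i\{b\}=\{a\circ_i b\}$, and the unit $\id$ is sent to $\{\id\}$, which is the unit of $\wp\cal O$. For $\mu$, take $\cal A=\{A_1,\dots,A_k\}\in\wp^2\cal O(n)$ and $\cal B=\{B_1,\dots,B_l\}\in\wp^2\cal O(m)$. By definition of $\circ_i^{\wp}$ applied twice,
\[
\mu(\cal A\circ_i^{\wp\wp}\cal B)=\bigcup_{s,t}\{a\circ_i b\mid a\in A_s,\ b\in B_t\}=\Bigl(\bigcup_s A_s\Bigr)\circ_i^\wp\Bigl(\bigcup_t B_t\Bigr)=\mu(\cal A)\circ^\wp_i\mu(\cal B).
\]
The unit is preserved since $\mu(\{\{\id\}\})=\{\id\}$. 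This is the conceptual content of Example~\ref{ex:powerset-commutative-monad}: $\eta$ and $\mu$ are monoidal transformations for the lax structure of Proposition~\ref{proposition:operad_lax_monoidal}.

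\textbf{Step 3: naturality in $\Op(\Set)$.} For an operad morphism $f$, the lifted functor acts by $\wp f(A)=f(A)$. The identities $\wp f(\{a\})=\{f(a)\}$ and $\wp f\bigl(\bigcup A_s\bigr)=\bigcup f(A_s)$ give naturality of $\eta$ and $\mu$. That $\wp f$ is itself an operad morphism follows from $f(A)\circ^\wp_i f(B)=f(A\circ^\wp_i B)$. Combining Steps 1--3, $(\wp,\eta,\mu)$ is a monad on $\Op(\Set)$, which is exactly the lift given by Proposition~\ref{prop:monoidal-monad-lifts}.

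The only step requiring care is the union computation in Step 2. This is where commutativity of the power set monad enters, through the identity $\bigl(\bigcup_s A_s\bigr)\times\bigl(\bigcup_t B_t\bigr)=\bigcup_{s,t}A_s\times B_t$, and it is what would fail for a non-commutative monad. For infinite families the same computation holds verbatim, so the argument does not depend on the finite listing $\{A_1,\ldots,A_k\}$.
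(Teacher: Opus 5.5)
Your argument is correct, but it is not the route the paper takes. The paper gives no separate verification. It observes that $\wp$ and $\overline{\wp}$ are lax monoidal on $(\Sym\text{-}\mathsf{Coll},\circ,\mathsf{I})$ (\Cref{proposition:operad_lax_monoidal}) and that $\eta$ and $\mu$ are monoidal transformations. The latter is asserted in \Cref{ex:powerset-commutative-monad}, strictly speaking only for $(\Set,\times,\{*\})$. The paper then invokes the Kock--Seal lifting theorem for monoidal monads (\Cref{prop:monoidal-monad-lifts}) to obtain the monad on monoids, that is, on $\Op(\Set)$.

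You instead check by hand the consequences of that theorem that are actually needed. You show that $\eta_{\mathcal{O}}$, $\mu_{\mathcal{O}}$ and $\wp f$ are operad morphisms. You then inherit the monad laws from the arity-wise power set monad on $\Set$, using faithfulness of the forgetful functor $\Op(\Set)\to\Sym\text{-}\mathsf{Coll}$.

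What each approach buys:
\begin{itemize}
\item The paper's route is a one-liner and applies verbatim to any commutative monad on $\Set$.
\item Yours is self-contained and supplies the step the paper leaves implicit: passing from monoidality of $\mu$ for $\times$ on $\Set$ to compatibility of $\mu$ with the composition product. Your Step~2 makes clear that this reduces exactly to the distributivity $\bigl(\bigcup_s A_s\bigr)\times\bigl(\bigcup_t B_t\bigr)=\bigcup_{s,t}A_s\times B_t$.
\item Your restriction argument for $\overline{\wp}$ (non-emptiness of singletons, of unions of non-empty families of non-empty sets, and of compositions) is also more explicit than the paper's ``the case of $\overline{\wp}$ is identical''.
\end{itemize}
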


	\begin{remark}
		Note that the map $\eta$ includes $\cal O$ as a suboperad of $\wp\cal O$.
	\end{remark}
	
	\begin{definition}[Augmentation of an operad]
		A morphism of operads $\rho\colon \wp \cal O\to \cal O$ is called an \emph{augmentation}.
	\end{definition}
	
	\begin{remark}
			In the case when there is an augmentation, the successive iterations of $\wp$ have a simplicial structure and we can consider its simplicial bar construction~\cite{Godement_1958}. It would be interesting to study the homology or homotopy of this bar construction.
	\end{remark}

	
	In the rest of the article, for the sake of simplifying notations, we will denote the compositions $\{\circ^\wp_i\}$ by $\{\circ_i\}$.
	
	\subsection{Examples of reduced power one set operads}\label{section:examples_power_one_red}
	Let us apply the reduced power set functor to the set operads corresponding to some of the operads from the literature.
	
	\subsubsection*{Trivial operad} The trivial operad $\opi{I}$ consists only of the operation $\id$ of arity $1$. We have $\overline{\wp} \opi{I}(1) = \{\{\id\}\}$, and $\{\id\}\circ_1\{\id\}=\{\id\}$. Hence, $\overline{\wp}\opi{I} \cong \opi{I}$.
	
	\subsubsection*{Commutative operad}
	Recall that the commutative operad $\opi{Com}$ has a single operation of arity $n$ with trivial $\Sym_n$-action, for every $n\geq 1$.
	Hence, we have $\overline{\wp}(\opi{Com}) \cong \opi{Com}$.
	Similarly, $\overline{\wp}(\opi{pCom}) \cong \opi{pCom}$.
	\medskip

	Before moving to the next case, we define some convenient notations.
	For $I\subseteq [n]$, and $k\in [n]$, we define $I^{<k} \coloneqq \{i\in I\mid i<k\}$ and $I^{>k} \coloneqq \{i\in I\mid i>k\}$.
	Moreover, for an integer $\ell\geq 0$, we define $I+\ell \coloneqq \{i+\ell\mid i\in I\}$.
	
	\subsubsection*{Permutative operad}
	Let us now apply the functors $\overline{\wp}$ to the operad $\opi{Perm}$.
	
	We have $\overline{\wp}(\opi{Perm}(n)) = \overline{\wp}([n])$, for $n\geq 1$. 
	Let $(I,J) \in \overline{\wp}([n]) \times \overline{\wp}([m])$, and let $k\in[n]$, we have:
	\begin{align*}
		I\circ_k J =& \{i\circ_k j\mid i\in I,j\in J\}\\
		= &\{i\circ_k j\mid i\in I,j\in J, i<k\} \sqcup \{i\circ_k j \mid i\in I,j\in J, i=k\}\\&\sqcup \{i\circ_k j \mid i\in I,j\in J, i>k\} \\
		= &\{i\mid i\in I,i<k\} \sqcup \{k+j-1\mid k\in I,j\in J\}\\& \sqcup \{i+m-1\mid i\in I, i>k\}\\
		= & \begin{cases}
			I^{<k}  \sqcup (J+k-1) \sqcup (I^{>k}+m-1) & \text{when }k\in I,\\
			I^{<k}  \sqcup (I^{>k}+m-1) & \text{when }k\notin I.\\			
		\end{cases}
	\end{align*}
	An illustration of the composition in $\overline{\wp}(\opi{Perm})$ is given in \Cref{figure:trias}.
	\begin{figure}[ht]
		\begin{tabular}{lcc}
			\begin{tikzpicture}[baseline=(C)]
				\node (A) at (0,0) {$\square\blacksquare\blacksquare\square\blacksquare$} ;
				\node (B) at (0,1) {$\underbrace{\blacksquare\square\blacksquare}$};				
				\draw[->] (B) node[above left=-5pt and .4cm] {$y=$}-- node[right] {$\circ_3$} (A) node[left=.7cm] {$x=$};
				\node (C) at (0,.4){};
			\end{tikzpicture} & $\rightarrow$ & \begin{tikzpicture}[baseline=(C)]
				\node (C) at (4,.4) {$x\circ_3 y=\square\blacksquare\underbrace{\blacksquare\square\blacksquare}\square\blacksquare$};
			\end{tikzpicture}\tabularnewline%
			\midrule
			\begin{tikzpicture}[baseline=(C)]
				\node (A) at (0,0) {$\square\blacksquare\blacksquare\square\blacksquare\phantom{\square\square}$};
				\node (B) at (0,1) {$\underbrace{\blacksquare\square\blacksquare}$};				
				\draw[->] (B) node[above left=-5pt and .4cm] {$y=$}-- node[right] {$\circ_4$} (A) node[left=1cm] {$x=$};
				\node (C) at (0,.4) {};
			\end{tikzpicture} & $\rightarrow$ & \begin{tikzpicture}[baseline=(C)]
				\node (C) at (4,.4) {$x\circ_4 y=\square\blacksquare\blacksquare\underbrace{\square\square\square}\blacksquare$};
			\end{tikzpicture}
		\end{tabular}
		\caption{An illustration of the composition in $\overline{\wp}(\opi{Perm})$. The elements of a subset are colored in black and the elements of the complementary are in white. Inserting in a white square ``forgets the information''.}\label{figure:trias}
	\end{figure}
	
	\begin{proposition}
		The operad $\overline{\wp}(\opi{Perm})$ is generated by three binary operations $\binop{\{1\}},\binop{\{2\}},$ and $\binop{\{1,2\}}$ that satisfy $11$ relations, that is:
		\[\overline{\wp}(\opi{Perm})\cong\frac{\cal T\left(\binop{\{1\}},\binop{\{2\}}, \binop{\{1,2\}} \right)}{\left\langle R\right\rangle},\]
		where $R$ is made up of the following relations:
		
		$\begin{array}{rccccccc}
			(\blacksquare\square\square) & \binoprelleft{\{1\}}{\{1\}} & \equiv & \binoprelright{\{1\}}{\{1\}} &\equiv& \binoprelright{\{1\}}{\{2\}}&\equiv&\binoprelright{\{1\}}{\{1,2\}},\\
			(\square\square\blacksquare) & \binoprelright{\{2\}}{\{2\}}&\equiv &\binoprelleft{\{2\}}{\{1\}} &\equiv& \binoprelleft{\{2\}}{\{2\}} &\equiv&\binoprelleft{\{2\}}{\{1,2\}},\\
			(\square\blacksquare\square) & \binoprelleft{\{1\}}{\{2\}} &\equiv& \binoprelright{\{2\}}{\{1\}},\\
			(\blacksquare\square\blacksquare)&\binoprelleft{\{1,2\}}{\{1\}} &\equiv& \binoprelright{\{1,2\}}{\{2\}}, \\
			(\square\blacksquare\blacksquare)&\binoprelleft{\{1,2\}}{\{2\}} &\equiv& \binoprelright{\{2\}}{\{1,2\}}, \\
			(\blacksquare\blacksquare\square)&\binoprelleft{\{1\}}{\{1,2\}} &\equiv& \binoprelright{\{1,2\}}{\{1\}},&\text{ and} \\
			(\blacksquare\blacksquare\blacksquare)&\binoprelleft{\{1,2\}}{\{1,2\}}&\equiv& \binoprelright{\{1,2\}}{\{1,2\}},
		\end{array}$
		
		where, the transposition $(12)$ sends $\binop{\{1\}}$ to $\binop{\{2\}}$ and $\binop{\{1,2\}}$ to itself, respectively.
	\end{proposition}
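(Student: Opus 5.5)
The strategy is the standard one for presentations of set operads: build a surjective morphism from the quotient of the free operad onto $\overline{\wp}(\opi{Perm})$, then show it is injective by exhibiting a normal form whose count in each arity is $2^n-1$.

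\emph{Step 1: the morphism is well defined.} Map $\binop{\{1\}}\mapsto\{1\}$, $\binop{\{2\}}\mapsto\{2\}$ and $\binop{\{1,2\}}\mapsto\{1,2\}$ in $\overline{\wp}([2])$. This assignment is $\Sym_2$-equivariant, since $(12)$ swaps $\{1\}$ and $\{2\}$ and fixes $\{1,2\}$. Using the composition formula $I\circ_k J=I^{<k}\sqcup(J+k-1)\sqcup(I^{>k}+m-1)$ when $k\in I$ (and without the middle term otherwise), one checks each relation in $R$. Both sides of every relation evaluate to the subset of $[3]$ encoded by the black squares in its label. For example, $\{1\}\circ_1\{1\}=\{1\}$, $\{1\}\circ_2\{2\}=\{1\}$ and $\{1\}\circ_2\{1,2\}=\{1\}$ all give $\blacksquare\square\square$. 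Since all eleven relations are binary compositions, this is a finite check, so the morphism factors through $\cal T(\ldots)/\langle R\rangle$.

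\emph{Step 2: surjectivity.} Argue by induction on $n$. For $n=1$ the image contains $\{\id\}$. For $n\geq 2$ and $\emptyset\neq I\subseteq[n]$, write $I'=I\cap[n-1]$.
\begin{itemize}
\item If $I'\neq\emptyset$ and $n\in I$, then $I=\{1,2\}\circ_1 I'$.
\item If $I'\neq\emptyset$ and $n\notin I$, then $I=\{1\}\circ_1 I'$.
\item If $I=\{n\}$, then $I=\{2\}\circ_1 J$ for any $J\in\overline{\wp}([n-1])$.
\end{itemize}
This shows that every element is a composite of generators, namely a left comb.

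\emph{Step 3: injectivity, the main obstacle.} The plan is to show that in the quotient every tree of arity $n$ is equivalent to a left comb $g_{1}\circ_1(g_2\circ_1(\cdots))$, with a prescribed normal form. The count of normal forms in arity $n$ must be exactly $2^n-1$, matching $|\overline{\wp}([n])|$.

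\emph{Rewriting to combs.} Orient the relations so that right-grafted compositions $\mu\circ_2\nu$ are rewritten into left-grafted ones $\mu'\circ_1\nu'$. Every relation in $R$ with a right-grafted member admits such an orientation. For instance, $\binoprelright{\{1\}}{\{1\}}\to\binoprelleft{\{1\}}{\{1\}}$ and $\binoprelright{\{2\}}{\{1\}}\to\binoprelleft{\{1\}}{\{2\}}$. Combined with the $\Sym$-action, this lets any tree be rewritten into a left comb whose leaves are labelled by some permutation. Since only the underlying subset matters, use equivariance to push permutations onto the generators, and apply induction on the number of vertices together with the sequential and parallel axioms.

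\emph{Normalizing combs.} On left combs, the $(\square\square\blacksquare)$ family identifies every $\{2\}\circ_1 X$ with a single representative. Choose $\{2\}\circ_1(\{2\}\circ_1\cdots)$, which uses $\binoprelleft{\{2\}}{\{1\}}\equiv\binoprelleft{\{2\}}{\{2\}}\equiv\binoprelleft{\{2\}}{\{1,2\}}$. The normal forms are then exactly the encodings of Step 2, one per nonempty subset. Hence there are at most $2^n-1$ classes, and surjectivity forces bijectivity.

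The delicate point is verifying that the oriented rewriting system really terminates in combs for all trees, not just binary two-vertex ones. Since the relations are quadratic, this is where I would either run a Diamond-lemma critical-pair check in arity $4$, or, more simply, argue by upper bound: show that every tree is equivalent to a normal form by induction on arity, grafting the rightmost subtree at the root. The counting then finishes the proof without needing confluence.
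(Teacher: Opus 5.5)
Your Steps 1 and 2 and the normalization of standard left combs $g_1\circ_1(g_2\circ_1(\cdots))$ are correct, including the count $2^{n-1}+\sum_{j=1}^{n-1}2^{j-1}=2^n-1$. The overall strategy (spanning normal forms plus surjectivity) is the same kind of argument the paper imports from Giraudo via binary words with a nonzero letter. The gap is the passage from an arbitrary tree to a left comb with leaves in the order $1,\dots,n$. The nine oriented relations $\mu\circ_2\nu\to\mu'\circ_1\nu'$ relate planar trees with the same planar leaf order, so they only produce a left comb with \emph{some} leaf order $\sigma$. Neither of your justifications for discarding $\sigma$ works. Vertex-wise equivariance only swaps the two inputs of a single vertex, and the arity-$3$ trees $(\mu\circ_1\nu)\cdot(23)$, whose upper vertex carries leaves $1$ and $3$, have no planar representative with leaf order $1,2,3$ at all. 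And ``only the underlying subset matters'' is exactly the injectivity you are proving, so it is not available in $\mathcal{T}(\ldots)/\langle R\rangle$.

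The missing ingredient is that classes of standard left combs are stable under $\Sym_n$ modulo $\langle R\rangle$. This uses the $\Sym_3$-translates of $R$, which lie in $\langle R\rangle$ because operadic ideals are $\Sym$-stable. In arity $3$, every $\mu\circ_1\nu$ is related in $R$ to some $\mu''\circ_2\nu''$, and $(\mu''\circ_2\nu'')\cdot(23)=\mu''\circ_2(\nu''\cdot(12))$. So the $(23)$-translate of that relation turns $(\mu\circ_1\nu)\cdot(23)$ into a right-grafted tree, which your orientation then makes left-grafted; for example $(\{2\}\circ_1\{1\})\cdot(23)\equiv\{2\}\circ_2\{1\}\equiv\{1\}\circ_1\{2\}$. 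In arity $n$, a transposition $(j\ j{+}1)$ with $j\geq 2$ acts on a standard comb only through the arity-$3$ subcomb formed by the two vertices carrying leaves $j$ and $j+1$, where it acts by $(23)$. For $j=1$ it acts on the top vertex by $(12)$. Hence adjacent transpositions, and so all of $\Sym_n$, preserve standard combs up to equivalence. With this your induction closes. Write a tree as $\bigl((\mu\circ_1 C_1)\circ_{a+1} C_2\bigr)\cdot\sigma$ with $C_1,C_2$ standard combs by induction. Absorb $C_2$ one vertex at a time using $d\circ_2 h\to d'\circ_1 h'$ together with the parallel and sequential axioms, then absorb $\sigma$ by the stability just proved. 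This gives at most $2^n-1$ classes in arity $n$, and surjectivity finishes the proof.
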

	\begin{proof}
		We consider the correspondence of non-empty subsets of $[n]$ with binary words having at least one nonzero letter, the proof is then the same as the proof of Giraudo~\cite[Proposition~4.24]{Giraudo_2015}
	\end{proof}
	
This operad is known in the literature as the commutative triassociative operad~\cite{Vallette_2007}. Its nonsymmetric version, the triassociative operad was introduced by Loday and Ronco in~\cite{Loday_Ronco_2002}.
It has reemerged recently in the work of Mermoud and Roca i Lucio~\cite{Mermoud_Lucio_2025} as they showed that it governs the algebraic structure of cooperative games.

	\begin{definition}[Commutative triassociative algebra~\cite{Vallette_2007,Loday_Ronco_2002}]
		The \emph{commutative triassociative operad}, denoted by $\opi{ComTrias}$, is the set theoretical operad $\cal T \left(\dashv,\vdash,\perp\right)/ \langle R\rangle$, where $R$ is made up of $11$ relations between the binary operations $\dashv,\vdash$, and $\perp$ defined in~\cite[Section~1.2]{Loday_Ronco_2002} which are the exact same relations satisfied by $\binop{\{1\}},\binop{\{2\}}$, and $\binop{\{1,2\}}$, respectively.
	\end{definition}
	Since the operad $\overline{\wp}(\opi{Perm})$ has the same presentation as $\opi{ComTrias}$ we obtain the following.
	\begin{proposition} We have
		\[\overline{\wp}(\opi{Perm})\cong \opi{ComTrias},\]
		seen as set operads.
	\end{proposition}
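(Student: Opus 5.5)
The isomorphism follows by matching presentations, using the preceding proposition. That proposition presents $\overline{\wp}(\opi{Perm})$ as $\cal T(\binop{\{1\}},\binop{\{2\}},\binop{\{1,2\}})/\langle R\rangle$. We match the generators by $\dashv\mapsto\binop{\{1\}}$, $\vdash\mapsto\binop{\{2\}}$ and $\perp\mapsto\binop{\{1,2\}}$.

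First, we check that this assignment is $\Sym_2$-equivariant. The transposition $(12)$ swaps $\dashv$ and $\vdash$ and fixes $\perp$ in $\opi{ComTrias}$, which is the commutative (symmetric) version of $\opi{Trias}$. By the preceding proposition, $(12)$ acts the same way on $\binop{\{1\}},\binop{\{2\}},\binop{\{1,2\}}$. So the assignment extends to an isomorphism of free symmetric operads $\cal T(\dashv,\vdash,\perp)\cong\cal T(\binop{\{1\}},\binop{\{2\}},\binop{\{1,2\}})$.

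Second, we check that this isomorphism carries the relation set of $\opi{ComTrias}$ onto $R$. We take the eleven relations of~\cite[Section~1.2]{Loday_Ronco_2002}, written in tree form, and substitute. Each should land on one line of $R$. The five-term chains $(\blacksquare\square\square)$ and $(\square\square\blacksquare)$ should correspond to the usual $\dashv$- and $\vdash$-associativity and absorption relations, for example $(x\dashv y)\dashv z=x\dashv(y\dashv z)=x\dashv(y\vdash z)=x\dashv(y\perp z)$. The remaining two-term relations should correspond to the mixed ones, for example $(x\vdash y)\dashv z=x\vdash(y\dashv z)$.

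Counting equalities confirms the match: $3+3+1+1+1+1+1=11$ on each side. This count also shows that both relation sets generate the same sub-$\Sym$-collection of relations. Since isomorphic presentations give isomorphic quotients, the isomorphism of free operads descends to $\opi{ComTrias}\cong\overline{\wp}(\opi{Perm})$ as set operads.

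The only real obstacle is bookkeeping. We must match conventions for the order of composition and the reading of the tree notation $\binoprelleft{a}{b}$ (inner operation on the left input). We must also confirm that the symmetric version really closes the relations under $(12)$ as displayed, so that, for instance, the $\perp$–$\dashv$ relations fall exactly into the $(\blacksquare\square\blacksquare)$ and $(\blacksquare\blacksquare\square)$ lines. If a convention mismatch appears, composing with the involution $(12)$ on generators, which swaps $\dashv$ and $\vdash$, repairs it.

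As a cross-check independent of conventions, we can verify arity by arity. The operation $x_1\ast\cdots\ast x_n$ in $\opi{ComTrias}$ is determined by the nonempty set of positions its arrows point to, giving $2^n-1$ operations. The composition formula $I\circ_k J$ computed above matches the $\opi{ComTrias}$ composition on these normal forms.
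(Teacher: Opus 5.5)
Your proposal is correct and takes the paper's approach: the paper's one-line proof is that, under $\dashv\mapsto\{1\}$, $\vdash\mapsto\{2\}$, $\perp\mapsto\{1,2\}$, the eleven relations defining $\opi{ComTrias}$ are exactly the relations $R$ in the preceding presentation of $\overline{\wp}(\opi{Perm})$, so the quotients coincide. One caution: the count $3+3+1+1+1+1+1=11$ does not by itself show that the two relation sets agree. The line-by-line substitution is what proves it, and it does succeed: Loday--Ronco's relations $1,2,6$ and $4,5,10$ fill the two three-equality chains, and relations $3,8,9,7,11$ each match one of the remaining five lines.
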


	We now define a functor of interest that will play an important role in the algebras we will construct over power set operads.

	\begin{definition}[Cubical functor]\label{definition:cubical_prod_funct}
  Let $(C,\otimes,e)$ be a monoidal category,
  $n \geq 1$, and $\underline{f}\coloneqq {(f_i \colon Y_i\to X_i)}_{i \in [n]}$ a sequence
  of morphisms of $\mathrm{C}$.
  The \emph{cubical functor} is given by
  \[
    \cal z_n(-;\underline{f}) \;\colon\; (\wp([n]),\subseteq) 
    \longrightarrow \mathrm{C}
  \]
  by setting, on objects,
  \[
    \cal z_n(I;\underline{f}) \;\coloneqq\; \bigotimes_{i=1}^n Z_i^I, \quad Z_i^I\coloneqq\begin{cases} X_i & i \in I, \\ Y_i & i \notin I, \end{cases}
  \]
  and on a morphism $I \subseteq J$,
  \[
    \cal z_n(I \subseteq J;\underline{f}) \;\coloneqq\; 
    \bigotimes_{i=1}^n \phi_i^{I,J},
    \qquad
    \phi_i^{I,J} \;\coloneqq\; 
    \begin{cases} 
      \mathrm{id}_{X_i} & i \in I, \\ 
      f_i              & i \in J \setminus I, \\ 
      \mathrm{id}_{Y_i} & i \notin J.
    \end{cases}
  \]
\end{definition}

	\begin{proposition}\label{proposition:algebra_over_P_bar_Perm}
		Let $(C,\otimes,e)$ be a small strict symmetric monoidal category.
		Then, the set $\mor(\mathrm{C})$ of all arrows of $\mathrm{C}$ carries a 
		$\overline{\wp}(\opi{Perm})$-algebra structure given by:
		\[
			\zeta_n \colon \overline{\wp}(\opi{Perm})(n)\times {\mor(\mathrm{C})}^n 
			\to \mor(\mathrm{C}), \qquad
			\zeta_n(I;\underline{f}) \;\coloneqq\; \cal z_n(\emptyset\subseteq I;\underline{f}).
		\]
	\end{proposition}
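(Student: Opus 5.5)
We need to show that $\zeta$ defines a morphism of operads $\overline{\wp}(\opi{Perm})\to\opi{End}_{\mor(\mathrm{C})}$. Concretely, $\zeta_n(I;\underline f)=\bigotimes_{i=1}^n g_i$, where $g_i=f_i$ if $i\in I$ and $g_i=\id_{Y_i}$ if $i\notin I$, viewed as an arrow $\bigotimes Y_i\to \bigotimes Z_i^I$. Two things must be checked: equivariance, and compatibility with the partial compositions $\circ_k$ computed earlier. The plan is to verify both directly from the explicit formula for $I\circ_k J$, using strictness so that tensor products can be regrouped without coherence isomorphisms.

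\emph{Equivariance.} Let $\sigma\in\Sym_n$. Permuting the entries of $\underline f$ and permuting the subset $I$ by the same $\sigma$ permutes the tensor factors. The two sides therefore agree up to the symmetry isomorphism of $\mathrm{C}$. This is where the symmetric structure enters. I would state it as the usual convention: the $\Sym_n$-action on $\opi{End}$ is implemented by the symmetry $\tau_\sigma$. Strictly speaking, $\tau_\sigma\circ\zeta_n(I;\underline f)\circ\tau_\sigma^{-1}=\zeta_n(I\sigma;\sigma\cdot\underline f)$. Literal set-theoretic equivariance on $\mor(\mathrm{C})$ needs this identification, and I expect to handle it by this remark or by reading the statement up to this canonical conjugation.

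\emph{Composition.} The key step is
\[
\zeta_n(I;\underline f)\circ_k \zeta_m(J;\underline h)=\zeta_{n+m-1}(I\circ_k J;\underline f\circ_k\underline h),
\]
where the left-hand side substitutes the arrow $\zeta_m(J;\underline h)$ into slot $k$. I split into two cases.
\begin{itemize}
\item \textbf{Case $k\in I$.} Slot $k$ receives $\bigotimes_j h_j^J$, where $h_j^J=h_j$ for $j\in J$ and $h_j^J=\id$ otherwise. The resulting tensor string has $f_i$ exactly at positions in $I^{<k}\sqcup(J+k-1)\sqcup(I^{>k}+m-1)$ and identities elsewhere. This set is $I\circ_k J$ by the computation above.
\item \textbf{Case $k\notin I$.} The $k$-th factor of $\zeta_n(I;\underline f)$ is an identity. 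The delicate point is what inserting an arrow into an identity slot means; this is the step I regard as the main obstacle. The figure's slogan, that inserting into a white square ``forgets the information'', says the inserted arrow must be replaced by the identity on its source $\bigotimes_j Y'_j$. This matches $I\circ_k J=I^{<k}\sqcup(I^{>k}+m-1)$, which has no $J$-part.
\end{itemize}

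To make the second case work, I would define the arity-$n$ operation associated to $I$ as the map $(g_1,\dots,g_n)\mapsto \bigotimes_i g_i^{I}$, where $g^I=g$ if $i\in I$ and $g^I=\id_{\mathrm{source}(g)}$ otherwise. Under this definition, plugging a composite into an unmarked slot yields the identity on the tensor of its sources. That object is precisely $\bigotimes_j Y'_j$, the source of $\zeta_m(J;\underline h)$. Both cases then reduce to bookkeeping of positions.

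Finally, the sequential and parallel axioms need no separate check. Once compatibility with every $\circ_k$ and with the unit $\{1\}\mapsto\id$ is established, $\zeta$ is a map of operads. The unit check holds since $\zeta_1(\{1\};f)=f$.
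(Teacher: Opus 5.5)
Your argument is the paper's own. It makes the same case split on $k\in I$ versus $k\notin I$, using strictness to regroup the three index blocks. In the case $k\notin I$ the paper simply notes that $\zeta_n(I;\underline h)=\cal z_n(\emptyset\subseteq I;\underline h)$ already places $\mathrm{id}_{\operatorname{dom}(h_k)}$ in slot $k$, by the definition of the cubical functor. So your convention $g^I=\mathrm{id}_{\mathrm{source}(g)}$ is the given definition, not a new one, and the step you flag as the main obstacle is immediate.

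The paper's proof says nothing about $\Sym_n$-equivariance. You are right that it holds only up to conjugation by the symmetry of $\mathrm{C}$: for instance $\zeta_2(\{2\};f_1,f_2)=\mathrm{id}_{Y_1}\otimes f_2$, whereas $\zeta_2(\{1\};f_2,f_1)=f_2\otimes\mathrm{id}_{Y_1}$. This is a caveat on the statement itself, not a gap in your proof.
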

\begin{proof}
  Let $\underline{f} = {(f_j)}_{j\in[n+m-1]}$, $\underline{f}_{[k,k+m-1]} = {(f_{j+k-1})}_{j\in[m]}$
  the subsequence at position $k$, and $\underline{h} = {(h_j)}_{j\in[n]}$ defined by
  $h_j = f_j$ for $j < k$, $h_k = \zeta_m(J;\underline{f}_{[k,k+m-1]})$, and $h_j = f_{j+m-1}$ for $j>k$.
  We must show the algebra axiom:
  \[
    \zeta_{n+m-1}(I\circ_k J;\,\underline{f})
    \;=\;
    \zeta_n(I;\,\underline{h}).
  \]
  Since $\otimes$ is a bifunctor, both sides factor over the three index
  blocks $[k-1]$, $[k,k+m-1]$, $[k+m,n+m-1]$.
  
  \medskip\noindent
  \textbf{Case $k\in I$.}
  Here $I\circ_k J = I^{<k}\sqcup(J+k-1)\sqcup(I^{>k}+m-1)$, so:
  \[
    \zeta_{n+m-1}(I\circ_k J;\,\underline{f})
    \;=\;
    \zeta_{k-1}(I^{<k};\,\underline{f}_{[k-1]})
    \;\otimes\;
    \zeta_m(J;\,\underline{f}_{[k,k+m-1]})
    \;\otimes\;
    \zeta_{n-k}(I^{>k};\,\underline{f}_{[k+m,n+m-1]}).
  \]
  Since $k\in I$, the factor $h_k = \zeta_m(J;\underline{f}_{[k,k+m-1]})$
  is applied in $\zeta_n(I;\underline{h})$, yielding the same expression.

  \medskip\noindent
  \textbf{Case $k\notin I$.}
  Here $I\circ_k J = I^{<k}\sqcup(I^{>k}+m-1)$, so the middle block
  contributes $\mathrm{id}_{\cal z_m(\emptyset;\,\underline{f}_{[k,k+m-1]})}$, giving:
  \[
    \zeta_{n+m-1}(I\circ_k J;\,\underline{f})
    \;=\;
    \zeta_{k-1}(I^{<k};\,\underline{f}_{[k-1]})
    \;\otimes\;
    \mathrm{id}_{\cal z_m(\emptyset;\,\underline{f}_{[k,k+m-1]})}
    \;\otimes\;
    \zeta_{n-k}(I^{>k};\,\underline{f}_{[k+m,n+m-1]}).
  \]
  Since $k\notin I$, the factor $h_k$ appears as
  $\mathrm{id}_{\mathrm{dom}(h_k)} = \mathrm{id}_{\cal z_m(\emptyset;\,\underline{f}_{[k,k+m-1]})}$
  in $\zeta_n(I;\underline{h})$, and both sides coincide.
\end{proof}
	

	\subsection{Examples of unreduced power one set operads}\label{section:example_power_one_unred}

	\subsubsection*{Trivial operad}

	We have  $\wp\opi{I}(0) = \{\emptyset_0\}$, $\wp\opi{I}(1) = \{\emptyset_1,\{\id\}\}$, and $\wp \opi{I}(n) = \{\emptyset_n\}$ for $n\geq 2$.
	The compositions are $\emptyset_n \circ_i \emptyset_m =\emptyset_{n+m-1}$.


	\subsubsection*{Commutative operad}

	We have $\wp\opi{Com}(0)=\{\emptyset_0\}$ and $\wp\opi{Com}(n) = \{\emptyset_n,\{\mu_n\}\}$, for every $n\geq 1$.
	For $n,m\geq 1$, and $1\leq n$, the compositions are $\emptyset_n \circ_i \emptyset_m = \{\mu_n\}\circ_i \emptyset_m =\emptyset_n\circ_i \{\mu_m\} = \emptyset_{n+m-1}$, and $\{\mu_n\}\circ_i \{\mu_m\} = \{\mu_{n+m-1}\}$.

	\subsubsection*{Permutative operad}

	Now, we consider $\wp \opi{Perm}$, which is presented as follows. 
	\begin{proposition}
		The operad $\wp (\opi{Perm})$ has the following presentation:
		\[\wp(\opi{Perm}) \cong \frac{\cal T\left(\barpunt\emptyset_0,\binop{\{1\}},\binop{\{2\}}, \binop{\{1,2\}} \right)}{\left\langle R,R_{\emptyset}\right\rangle},\]
		Where $R_{\emptyset}$ consists of all relations of the form:
		\begin{itemize}
			\item $I\circ_1 \emptyset_0 \equiv I'\circ_2 \emptyset_0$, for every $I,I'\in \left\{\binop{\{1\}},\binop{\{2\}}, \binop{\{1,2\}}\right\}$, and
			\item $(I\circ_1 \emptyset_0) \circ_1 (I\circ_1 \emptyset_0) \equiv I\circ_1 \emptyset_0$, for every $I\in \left\{\binop{\{1\}},\binop{\{2\}}, \binop{\{1,2\}}\right\}$, and
			\item $(I\circ_1 \emptyset_0)\circ_1 J \equiv J'\circ_1 (I\circ_1 \emptyset_0) \equiv J''\circ_2 (I\circ_1 \emptyset_0)$, for every $I,J,J',J''\in  \left\{\binop{\{1\}},\binop{\{2\}}, \binop{\{1,2\}}\right\}$,				
		\end{itemize}
		where the transposition $(12)$ sends $\binop{\{1\}}$ to $\binop{\{2\}}$ and $\binop{\{1,2\}}$ to itself, respectively.
	\end{proposition}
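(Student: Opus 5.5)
We will prove the presentation by combining the splitting $\wp\mathcal{O}(n)=\overline{\wp}\mathcal{O}(n)\sqcup\{\emptyset_n\}$ from the earlier proposition with the presentation of $\overline{\wp}(\opi{Perm})\cong\opi{ComTrias}$. Let $\mathcal P$ denote the quotient $\cal T(\emptyset_0,\{1\},\{2\},\{1,2\})/\langle R,R_\emptyset\rangle$. We construct a morphism $\Phi\colon\mathcal P\to\wp(\opi{Perm})$ by sending each generator to the element of the same name, show that it is surjective, and then show that $\mathcal P(n)$ has at most $2^n$ elements, which is $|\wp([n])|$.

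\textbf{Well-definedness.} The relations $R$ hold in $\wp(\opi{Perm})$ because they hold in the suboperad $\overline{\wp}(\opi{Perm})$. For $R_\emptyset$ we compute in $\wp(\opi{Perm})$. First, $I\circ_i\emptyset_0=\emptyset_1$ for every $I$ and every $i$, since composing with the empty set gives the empty set. This gives the first family. The second family reads $\emptyset_1\circ_1\emptyset_1=\emptyset_1$. The third family says that each of $\emptyset_1\circ_1 J$, $J'\circ_1\emptyset_1$ and $J''\circ_2\emptyset_1$ equals $\emptyset_2$. The $\Sym_2$-equivariance follows as in the reduced case.

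\textbf{Surjectivity.} In arity $0$, $\emptyset_0$ is a generator. In arity $n\geq1$, the nonempty subsets come from $\overline{\wp}(\opi{Perm})$, which is generated by the binary generators. The element $\emptyset_1$ is $\{1\}\circ_1\emptyset_0$, and for $n\geq2$ we get $\emptyset_n=\emptyset_1\circ_1 J$ for any binary word $J$ of arity $n$.

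\textbf{Counting.} This is the main obstacle. In $\mathcal P$, let $e:=I\circ_1\emptyset_0$. By the first family of relations, $e$ does not depend on the choice of $I$ or of the slot. The second family makes $e$ idempotent. The third family says that $e$ absorbs on both sides: $e\circ_1 J$ equals $J'\circ_i e$ for every binary generator $J$, $J'$ and every slot $i$. We then argue by induction on trees. Take any tree in $\mathcal P$ that contains an $\emptyset_0$ leaf. We show that it reduces to $e$ composed with some fixed tree of binary generators of the correct arity, so that all such trees of a given arity are identified with a single element, $\emptyset_n$. In arity $0$ these relations should collapse everything to $\emptyset_0$. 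The delicate point is to push $e$ upward or downward through arbitrary positions of the tree using only the relations with one specified slot. We plan to handle this with the $\Sym_2$-action together with the $R$-relations, which let us move $e$ to the root as $e\circ_1(\text{tree})$. The absorption relations then kill any difference between the binary trees above $e$, because $e\circ_1 J\equiv e\circ_1 J_0$ for all $J$.

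\textbf{Conclusion.} Trees with no $\emptyset_0$ leaf give at most $2^n-1$ classes, by the $\opi{ComTrias}$ presentation. Adding the single class $\emptyset_n$ gives at most $2^n$ classes in arity $n\geq1$, and $\mathcal P(0)$ has one element. Since $\Phi$ is surjective onto sets of exactly these sizes, $\Phi$ is an isomorphism.
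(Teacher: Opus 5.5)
\textbf{The arity-$0$ count fails, and the statement is false there.} Your three-step strategy (the relations hold, $\Phi$ is onto, and $|\mathcal P(n)|\le|\wp(\opi{Perm})(n)|$) is the right one. It also goes further than the paper, whose proof only says where each relation comes from and never checks that the list is complete. But no argument can rescue arity $0$, because the presentation as stated is wrong there.

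Every relation in $R\cup R_\emptyset$ has at least one binary vertex on each side. So no rewriting step can be applied to, or can produce, the one-vertex tree $\emptyset_0$, and its class in $\mathcal P(0)$ is a singleton. Yet for any binary generator $b$, the arity-$0$ tree $b(\emptyset_0,\emptyset_0)=(b\circ_1\emptyset_0)\circ_1\emptyset_0=e\circ_1\emptyset_0$ is a different tree, and $\Phi$ sends it to $\emptyset_1\circ_1\emptyset_0=\emptyset_0$. Hence $|\mathcal P(0)|\geq 2>1=|\wp(\opi{Perm})(0)|$ and $\Phi$ is not injective. Your sentence ``in arity $0$ these relations should collapse everything to $\emptyset_0$'' is exactly the false step.

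The presentation needs the extra relation $(I\circ_1\emptyset_0)\circ_1\emptyset_0\equiv\emptyset_0$. With it, arity $0$ collapses by induction on the root: $b(S_1,S_2)\equiv b(\emptyset_0,\emptyset_0)\equiv\emptyset_0$. Nothing changes in positive arity, where $\Phi$ becomes bijective once your count there is completed.

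\textbf{Arity $n\geq 1$: the count is correct but only planned, and the plan misplaces the difficulty.} Moving $e$ toward the root needs neither $\Sym_2$ nor $R$. The third family, together with the first, already gives $b'\circ_i e\equiv e\circ_1 b$ for all binary $b,b'$ and $i=1,2$.

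What does need $R$ and the symmetric action is showing $e\circ_1 T\equiv e\circ_1 T'$ for binary trees $T,T'$ of arbitrary shape and leaf order. The relation $e\circ_1 b\equiv e\circ_1 b_0$ only handles the root generator of $T$. Argue by induction using $e\circ_1 b(T_1,T_2)\equiv \mu(e\circ_1T_1,T_2)\equiv\mu(T_1,e\circ_1T_2)$ with $\mu=\{1,2\}$. This reduces to $\mu$-only trees, which are identified by $\mu\circ_1\mu\equiv\mu\circ_2\mu$ and $\mu\cdot(12)=\mu$.

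You must also treat arity-$0$ subtrees inside positive-arity trees, e.g.\ $b\circ_1 b'(\emptyset_0,\emptyset_0)$. By induction on the number of vertices, $b'\circ_1 S\equiv e$ for every arity-$0$ tree $S$, so $b'(S,S')\equiv e\circ_1 S'$. After that, $e$ can be pulled to the root and absorbed using $e\circ_1 e\equiv e$.
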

	\begin{proof}
		For every arity $n\geq 1$, $\wp(\opi{Perm})(n)$ has one more element than $\overline{\wp}(\opi{Perm})(n)$, which is $\emptyset_n$, and whose composition to the left or to the right of any subset $I\subset [n]$ yields an empty set, by definition.
		The subsets $\{1\},\{2\},\{1,2\}\subseteq[2]$ are generators since they are needed to generate the non-empty subsets of $[n]$, for every $n\geq 1$, as in the case of $\overline{\wp}(\opi{Perm})$.
		The relations in $R$ are unchanged since they involve only non-empty subsets.
		The next relations come from the fact that $A\circ_i \emptyset_0 = \emptyset_n$ for every $A\in \wp(\opi{Perm})(n+1)$, and every $i\in[n+1]$.
	\end{proof}

	\subsection{The special case of the permutative operad}\label{section:special_case_perm}
	\subsubsection*{Substitution}
	A key feature of $\opi{Perm}$ is that $\opi{Perm}(n)=[n]$, so that the operation $i\in[n]$ can be identified with its unique input. In particular, we get the following.
	\begin{remark}\label{remark:perm is particular}
		The composition $i\circ_k j$ is independent of $j$ whenever 
		$i\neq k$, which gives the power set composition the unified form
		\begin{equation}\label{equation:compo_idem_com}
			I\circ_k J = \begin{cases}
			I^{<k} \sqcup (J+k-1) \sqcup(I^{>k}+m-1) & \text{if } k\in I,\\
			I^{<k} \sqcup (I^{>k}+m-1) & \text{if } k\notin I,\\	
		\end{cases}
		\end{equation}
		valid for all $J\in\wp([m])$, including $J=\emptyset$.
	\end{remark}

	\begin{proposition}
		The $\Sym$-collection ${(\wp(\opi{Perm})(n))}_{n\geq 1}$ with the compositions $\{\circ_k\}$ of \Cref{equation:compo_idem_com}
		forms an operad that we denote by $\opi{IdemCom}$.
	\end{proposition}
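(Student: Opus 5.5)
The plan is to recast the composition \eqref{equation:compo_idem_com} in terms of words, in the spirit of Giraudo's construction of operads from monoids~\cite{Giraudo_2015}, and then check the operad axioms there.

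First, identify a subset $I\subseteq[n]$ with its characteristic word $x=x_1\cdots x_n\in\{0,1\}^n$, where $x_i=1$ if and only if $i\in I$. Consider $\{0,1\}$ as the multiplicative monoid, with $x\cdot y$ the logical ``and''. Reading \eqref{equation:compo_idem_com} position by position, the word of $I\circ_k J$ is
\[
x_1\cdots x_{k-1}\,(x_k y_1)\cdots(x_k y_m)\,x_{k+1}\cdots x_n .
\]
Indeed, if $x_k=1$ the middle block is $y$, which is $J+k-1$. If $x_k=0$ the middle block is $0^m$, which is the case $k\notin I$. The positions before and after the block are $I^{<k}$ and $I^{>k}+m-1$. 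The formula makes sense for every word $y$, including the zero word $J=\emptyset$. This is exactly what \Cref{remark:perm is particular} records, and it is the only difference from $\wp(\opi{Perm})$, where $I\circ_k\emptyset=\emptyset$ is forced instead.

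Next, verify the axioms on words, since the formula now involves only one monoid multiplication per letter. For the sequential axiom, with $A=x$, $B=y$, $C=z$, both $A\circ_i(B\circ_j C)$ and $(A\circ_i B)\circ_{j+i-1}C$ give the same word. It consists of $x$ outside position $i$, with position $i$ replaced by the word $x_i y_1\cdots x_i y_{j-1}\,(x_i y_j z_1)\cdots(x_i y_j z_p)\,x_i y_{j+1}\cdots x_i y_m$. The equality of the middle letters is exactly associativity $x_i(y_j z_l)=(x_iy_j)z_l$ in the monoid. For the parallel axiom with $i<j$, the two insertions act on disjoint letters $x_i$ and $x_j$ of $x$. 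Both sides equal $x$ with $x_i$ expanded into $x_i y$ and $x_j$ expanded into $x_j z$. The only thing to check is the index shift $j\mapsto j+m-1$, which is routine.

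Then treat the remaining structure. The unit is $\{1\}\in\wp([1])$, the word $1$. We have $1\circ_1 y=y$ because $1$ is the monoid unit, and $x\circ_i 1=x$ trivially. The action of $\Sym_n$ permutes the letters of words. Equivariance of $\circ_k$ holds because the formula is defined letterwise: permuting $x$ just moves which letter is expanded, and permuting $y$ permutes the inserted block. This gives the usual block-permutation compatibility of~\cite[Chapter~5]{Loday_Vallette_2012}.

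I expect no conceptual obstacle. The main care is in the index bookkeeping for the parallel axiom and for equivariance under block permutations. The word encoding is chosen precisely so that these reduce to concatenation identities. Alternatively, one can invoke directly that Giraudo's functor $\mathsf{T}$ sends any monoid $M$ to a set operad with $\mathsf{T}M(n)=M^n$ for $n\geq1$~\cite{Giraudo_2015}, and observe that $\opi{IdemCom}\cong\mathsf{T}(\{0,1\},\cdot)$. The operation is commutative and idempotent, which explains the name.
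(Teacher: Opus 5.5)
Your proof is correct, but it takes a different route from the paper's. The paper argues by reduction. Restricted to nonempty subsets, the formula is the composition of $\overline{\wp}(\opi{Perm})\cong\opi{ComTrias}$, which is already known to be an operad. The cases where some argument is $\emptyset$ are dismissed as a ``straightforward computation''.

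You instead encode subsets as words over the Boolean monoid $(\{0,1\},\wedge)$. You recognize the composition as Giraudo's letterwise formula $x_1\cdots x_{k-1}(x_ky_1)\cdots(x_ky_m)x_{k+1}\cdots x_n$, so empty and nonempty subsets are handled uniformly:
\begin{itemize}
\item the sequential axiom becomes associativity of $\wedge$;
\item the parallel axiom becomes disjointness of the expanded letters;
\item the unit $\{1\}$ comes from the monoid unit.
\end{itemize}

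Your approach gives a case-free verification of exactly the empty-set cases the paper leaves unwritten. It also explains the name, since the underlying monoid is commutative and idempotent. Finally, it exhibits $\opi{ComTrias}$ inside $\opi{IdemCom}$ as the words containing at least one $1$, which fits the paper's own appeal to Giraudo for the presentation of $\overline{\wp}(\opi{Perm})$. The paper's route is shorter because it recycles the power-set machinery, but its completeness rests on that unwritten check.

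One small caution: if you rely only on the citation of $\mathsf{T}$, make sure the symmetric structure is covered there. Your explicit equivariance check settles this anyway, because the monoid product only ever pairs $x_k$ with letters of $y$, and letter permutations never change that pairing.
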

	\begin{proof}
		This operad coincides with $\opi{ComTrias}$ on $\overline{\wp}(\opi{Perm})$ when we do not consider the empty sets. A straightforward computation shows that the compositions with empty sets also satisfy the parallel and sequential axioms.
	\end{proof}

	\begin{proposition}
		The operad $\opi{IdemCom}$ has the following presentation:
		\[\opi{IdemCom}\cong \frac{\cal T\left(|\emptyset_1,\binop{\{1,2\}} \right)}{\left\langle |\emptyset_1 \equiv \begin{array}{|l}
				\emptyset_1\\\emptyset_1
			\end{array},\binoprelleft{\{1,2\}}{\{1,2\}}\equiv \binoprelright{\{1,2\}}{\{1,2\}},\binopun{\{1,2\}}{\emptyset_1}\equiv
			\binoprelleftrightun{\{1,2\}}{\emptyset_1}\right\rangle},\]
		such that the transposition $(12)$ sends $\binop{\{1,2\}}$ to itself.
	\end{proposition}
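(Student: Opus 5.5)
The argument compares a free operad modulo relations with $\opi{IdemCom}$, using normal forms and a cardinality count.

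\textbf{The morphism.} Let $\cal P$ denote the presented operad. Send the generator $\emptyset_1$ to $\emptyset\in\wp([1])$ and $\{1,2\}$ to $[2]\in\wp([2])$. We check that the three relations hold in $\opi{IdemCom}$ using \Cref{equation:compo_idem_com}.

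The first relation reads $\emptyset\circ_1\emptyset=\emptyset$, which holds since $1\notin\emptyset$.

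The second is associativity: $[2]\circ_1[2]=[3]=[2]\circ_2[2]$.

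For the third relation, $[2]\circ_1\emptyset = \{1\}\circ$-type computation gives $\emptyset\circ_1[2]=\emptyset_2$. Indeed $\emptyset\circ_1 [2]=\emptyset$, since $1\notin\emptyset$. On the other side, $([2]\circ_1\emptyset)\circ_2\emptyset = \{2\}\circ_2\emptyset=\emptyset$. So both sides equal $\emptyset_2\in\wp([2])$.

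Here I am reading the pictures as follows: $\binopun{\{1,2\}}{\emptyset_1}$ is $\emptyset_1\circ_1\{1,2\}$, and the right-hand side is $\{1,2\}$ with $\emptyset_1$ grafted on both leaves. The $\Sym_2$-invariance of $[2]$ matches the stated action. This gives a morphism $\Phi\colon\cal P\to\opi{IdemCom}$.

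\textbf{Surjectivity.} Every $I\subseteq[n]$ must be reached. Let $\mu_n$ be the iterated composite of $\{1,2\}$ (with $\mu_1=\id$), so $\mu_n\mapsto[n]$. Then
\[
\mu_n\circ_{i_1}\emptyset_1\circ\cdots\circ_{i_r}\emptyset_1\mapsto [n]\setminus\{i_1,\dots,i_r\},
\]
because composing $\emptyset\in\wp([1])$ at position $k$ simply deletes $k$ from $I$, by \Cref{equation:compo_idem_com} with $m=1$. This reaches every subset, including $\emptyset_n$ for $n\geq1$.

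\textbf{Injectivity.} This is the main step. I would show that every tree in $\cal P(n)$ can be rewritten into the normal form $\mu_n$ with $\emptyset_1$ placed on the leaves of some subset $\comp I$ of $[n]$. Such normal forms are in bijection with $\wp([n])$, so $|\cal P(n)|\le 2^n=|\opi{IdemCom}(n)|$, and surjectivity then forces $\Phi$ to be an isomorphism.

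The rewriting proceeds as follows.
\begin{enumerate}
\item Every unary generator $\emptyset_1$ in a tree sits either on a leaf, on an internal edge, or at the root. Use the third relation, read from left to right, to push any $\emptyset_1$ sitting below a $\{1,2\}$ node up onto both of its children. Iterating pushes all $\emptyset_1$'s to the leaves.
\item Use the first relation to collapse stacked $\emptyset_1$'s to a single one. This makes the leaf decoration a subset.
\item Use associativity together with the symmetric invariance of $\{1,2\}$ to rewrite the binary skeleton as $\mu_n$. The $\emptyset_1$ labels on the leaves travel along with the leaves, since $\emptyset_1$ is unary.
\end{enumerate}

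The expected obstacle is the bookkeeping in the first step. Pushing $\emptyset_1$ upward terminates because each application moves unary nodes strictly closer to the leaves; duplication occurs, but the depth measure still decreases. One must also handle $n=1$, where there are no binary nodes: there, the first relation alone gives $\{\id,\emptyset_1\}$, matching $\wp([1])$.

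Arity $0$ is excluded by the definition of $\opi{IdemCom}$ (it only has $n\geq1$), and correspondingly no nullary generator is needed.
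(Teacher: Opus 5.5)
Your proof is correct and follows the paper's route: you check the three relations in $\opi{IdemCom}$, and you reach every $I\subseteq[n]$ by grafting $\emptyset_1$ onto the iterated product $[n]$ at the positions in $\comp I$. The paper stops at this generation step, whereas your normal-form rewriting (push $\emptyset_1$ to the leaves, collapse, straighten the binary skeleton) gives the bound $|\cal P(n)|\le 2^n$, which supplies the injectivity that the paper leaves implicit; only the garbled first sentence in your check of the third relation needs cleaning up.
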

	\begin{proof}
		The relations satisfied by $|\emptyset_1$ and $\binop{\{1,2\}}$ are straightforward.
		It is obvious that we can express $[n]$ by $n-1$ composition of $\binop{\{1,2\}}$.
		An element of $\opi{IdemCom}(n)$ corresponds to a subset $I\subseteq[n]$.
		Say that $\comp I=\{i_1,\ldots,i_k\}$, then we have:
		\[I=(\ldots(([n]\circ_{i_1} \emptyset_1)\circ_{i_2} \emptyset_1) \circ_{i_3} \ldots )\circ_{i_k}\emptyset_1,\]
		That is, $I$ is obtained by composing $[n]$ with $\emptyset_1$ at every $j\notin I$.
	\end{proof}
	
	\begin{proposition}
		Let $C=(C,\otimes,e)$ be a small strict symmetric monoidal category.
		Then, the collection $\mor(\mathrm{C})$ of all arrows of $\mathrm{C}$ up carries an $\opi{IdemCom}$-algebra structure, defined in the same way as \Cref{proposition:algebra_over_P_bar_Perm}.
	\end{proposition}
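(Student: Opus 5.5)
We define the structure maps exactly as in \Cref{proposition:algebra_over_P_bar_Perm}, now allowing $I=\emptyset$:
\[
\zeta_n(I;\underline{f}) \coloneqq \cal z_n(\emptyset\subseteq I;\underline{f}) = \bigotimes_{i=1}^n \phi_i^{\emptyset,I},
\]
for every $I\in\opi{IdemCom}(n)=\wp([n])$. In particular $\zeta_n(\emptyset_n;\underline{f}) = \mathrm{id}_{Y_1\otimes\cdots\otimes Y_n}$, the identity on the common domain.

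First we check equivariance. A permutation $\sigma\in\Sym_n$ acts on $I$ by relabelling and on $\underline{f}$ by permuting the entries. Both sides are tensor products of the same factors $f_i$ or $\mathrm{id}_{Y_i}$, only reordered, so they agree up to the symmetry isomorphisms of $\mathrm{C}$. As in the earlier proposition, the intended meaning is that $\mathrm{C}$ is strict symmetric, so that these symmetry isomorphisms are absorbed into the identification of $\mor(\mathrm{C})^n$ with its reordering. We will state this convention explicitly rather than reprove it.

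The main step is the algebra axiom $\zeta_{n+m-1}(I\circ_k J;\underline{f}) = \zeta_n(I;\underline{h})$, with $\underline{h}$ defined as in the proof of \Cref{proposition:algebra_over_P_bar_Perm}. We use the unified formula \eqref{equation:compo_idem_com} from \Cref{remark:perm is particular} and split into the same two cases $k\in I$ and $k\notin I$.

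\emph{Case $k\notin I$.} The earlier argument never used $J\neq\emptyset$. Position $k$ contributes $\mathrm{id}_{\mathrm{dom}(h_k)}$, and since $\mathrm{dom}(\zeta_m(J;-)) = \bigotimes Y_{j}$ holds for every $J$, this equals $\mathrm{id}_{\cal z_m(\emptyset;\underline{f}_{[k,k+m-1]})}$. This is exactly the middle block on the left-hand side.

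\emph{Case $k\in I$.} Bifunctoriality of $\otimes$ splits both sides over the three index blocks. The middle block is $\zeta_m(J;\underline{f}_{[k,k+m-1]})$ on both sides. When $J=\emptyset$ this middle block is an identity, and the identity $I\circ_k\emptyset = I^{<k}\sqcup(I^{>k}+m-1)$ is consistent with this.

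The only genuinely new points compared with \Cref{proposition:algebra_over_P_bar_Perm} are two edge cases: $J=\emptyset_m$, and the degenerate blocks when $k=1$ or $k=n$. For the latter, we read an empty tensor product as $\mathrm{id}_e$, which is harmless by strictness of the unit. We expect the main obstacle to be purely bookkeeping: making sure that the domain of $\zeta_m(J;-)$ is independent of $J$, including $J=\emptyset$, so that the two cases match on the nose.
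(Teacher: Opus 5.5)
Your proposal is correct and follows the paper's route: the paper's proof just says that the argument for $\overline{\wp}(\opi{Perm})$ carries over verbatim, and your two-case check is what justifies that claim. Your observations that $\mathrm{dom}\,\zeta_m(J;-)$ does not depend on $J$, and that $I\circ_k\emptyset=I^{<k}\sqcup(I^{>k}+m-1)$ in $\opi{IdemCom}$ (matching the identity middle block), are the points that make it work; your caveat that $\Sym_n$-equivariance holds only up to the symmetry isomorphisms of $\mathrm{C}$ is a real subtlety, but the paper leaves it unaddressed too, so it is not a gap relative to the paper's proof.
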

	\begin{proof}
		The proof is exactly the same as the one for $\overline{\wp}(\opi{Perm})$.
	\end{proof}
	
	\subsubsection*{Composition}
	Recall that we have the following bijection $\comp \colon \wp([n]) \to \wp([n])$. 
	\begin{remark}
		By \Cref{remark:transport_compl}, the transport of $\{\circ_i\}$ through $\comp$ yields another isomorpic operad on the $\Sym$-collection $(\wp([n])$, with following composition for every $I\in \wp([n]), J\in\wp([m])$, and $k\in[n]$:
		\begin{equation}\label{equation:compo_idem_com}
			I\circ_k^c J = \begin{cases}
			I^{<k} \sqcup ([m]+k-1) \sqcup(I^{>k}+m-1) & \text{if } k\in I,\\
			I^{<k} \sqcup (J+k-1)\sqcup (I^{>k}+m-1) & \text{if } k\notin I.\\	
		\end{cases}
		\end{equation}
		We denote this operad by $\opi{IdemCom}^c$.
	\end{remark}

	\begin{proposition}
		Let $C=(C,\otimes,e)$ be a small strict symmetric monoidal category.
		Then, the collection $\mor(\mathrm{C})$ of all arrows of $\mathrm{C}$ up carries an $\opi{IdemCom}^c$-algebra structure given by:
		\[
			\xi_n \colon \opi{IdemCom}^c(n)\times {\mor(\mathrm{C})}^n 
			\to \mor(\mathrm{C}), \qquad
			\xi_n(I;\underline{f}) \;\coloneqq\; \cal z_n(I\subseteq [n];\underline{f}).
		\]
	\end{proposition}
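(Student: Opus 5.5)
We mimic the proof of \Cref{proposition:algebra_over_P_bar_Perm}. Fix $I\in\wp([n])$, $J\in\wp([m])$, $k\in[n]$ and $\underline{f}={(f_j)}_{j\in[n+m-1]}$. Let $\underline{h}={(h_j)}_{j\in[n]}$ be given by $h_j=f_j$ for $j<k$, $h_k=\xi_m(J;\underline{f}_{[k,k+m-1]})$, and $h_j=f_{j+m-1}$ for $j>k$. We must show
\[
  \xi_{n+m-1}(I\circ^c_k J;\underline{f})=\xi_n(I;\underline{h}).
\]
Recall that $\xi_n(I;\underline{f})=\bigotimes_i \phi_i^{I,[n]}$, where the $i$th factor is $\mathrm{id}_{X_i}$ if $i\in I$ and $f_i$ if $i\notin I$. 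Thus $\xi_n(I;\underline f)$ is a morphism $\cal z_n(I;\underline f)\to\cal z_n([n];\underline f)$.

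Since $\otimes$ is a bifunctor and $\mathrm{C}$ is strict, both sides split as a tensor product over the three blocks $[k-1]$, $[k,k+m-1]$ and $[k+m,n+m-1]$. In each of the cases below, the first and third blocks of $I\circ^c_kJ$ are $I^{<k}$ and $I^{>k}+m-1$. The outer factors therefore agree with the corresponding factors of $\xi_n(I;\underline h)$, namely $\xi_{k-1}(I^{<k};\underline f_{[k-1]})$ and $\xi_{n-k}(I^{>k};\underline f_{[k+m,n+m-1]})$. It remains only to compare the middle factors.

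\textbf{Case $k\in I$.} The middle block of $I\circ^c_kJ$ is the whole of $[m]+k-1$. The left side therefore contributes $\bigotimes_{j=k}^{k+m-1}\mathrm{id}_{X_j}=\mathrm{id}_{\cal z_m([m];\underline f_{[k,k+m-1]})}$. On the right side, $k\in I$ gives the factor $\mathrm{id}_{X'_k}$, where $X'_k$ is the codomain of $h_k$. That codomain is $\cal z_m([m];\underline f_{[k,k+m-1]})$, so the two factors agree.

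\textbf{Case $k\notin I$.} The middle block is $J+k-1$, so the left side contributes $\xi_m(J;\underline f_{[k,k+m-1]})$. On the right side, $k\notin I$ means the factor $h_k$ itself is applied, and $h_k$ equals this by definition.

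The main point to check is the domain and codomain bookkeeping. The convention here is that $i\in I$ yields the identity on the \emph{target} $X_i$, not on the source $Y_i$ as in the $\zeta$ case. The codomain of $h_k$ must therefore be identified with $\cal z_m([m];\cdot)$, which holds because every $\xi$ lands in $\cal z(\text{full set})$.

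It also remains to check $\Sym_n$-equivariance. This follows because the symmetric monoidal structure permutes the tensor factors compatibly with the action on $I$. Finally, if $\mathrm{C}$ is not strict, one inserts the coherence isomorphisms; by Mac Lane coherence these do not affect the argument.
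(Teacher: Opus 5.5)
Your check of the composition axiom is correct, and it is the argument the paper intends when it says the proof is similar to that of \Cref{proposition:algebra_over_P_bar_Perm}. You split over the three index blocks and then compare the middle factor. When $k\in I$ it is the identity on $\operatorname{cod}(h_k)=\bigotimes_{j=k}^{k+m-1}X_j$; when $k\notin I$ it is $h_k=\xi_m(J;\underline f_{[k,k+m-1]})$ itself. Your attention to the fact that $i\in I$ now produces the identity on the \emph{target} is exactly the bookkeeping that distinguishes this case from the $\zeta$ case.

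The step that does not hold as written is your closing claim of $\Sym_n$-equivariance. The transposition $(12)$ fixes $\emptyset$ and $[2]$ in $\opi{IdemCom}^c(2)$. Strict equivariance would therefore force $\xi_2(\emptyset;f_1,f_2)=\xi_2(\emptyset;f_2,f_1)$, that is $f_1\otimes f_2=f_2\otimes f_1$. Likewise it would force $\mathrm{id}_{X_1\otimes X_2}=\mathrm{id}_{X_2\otimes X_1}$. Both fail in general, since already $Y_1\otimes Y_2$ and $Y_2\otimes Y_1$ are in general distinct objects, even in a strict symmetric monoidal category. The symmetry only gives $\tau_{X_1,X_2}\circ(f_1\otimes f_2)=(f_2\otimes f_1)\circ\tau_{Y_1,Y_2}$, which is equivariance up to conjugation by the symmetry isomorphisms.

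The paper's proof checks only the composition axiom and never addresses equivariance. What both arguments actually establish is an algebra over the underlying nonsymmetric operad; the paper notes the power set construction works for nonsymmetric operads. Alternatively, one gets a symmetric algebra only up to these canonical isomorphisms. You should state the conclusion that way rather than assert strict equivariance.
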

	\begin{proof}
		The proof is similar to the one of \Cref{proposition:algebra_over_P_bar_Perm}.
	\end{proof}

	\section{Simplicial complex operads and their algebras}\label{section:simplicial_complex_operads}
	We are now interested in power two set operads: those obtained after two iterations of the (reduced) power set functor.
	In particular, we obtain composition on objects which are families of subsets, such as hypergraphs, simplicial complexes, upward complexes, and transversal families.
	We focus on operads based on $\opi{Perm}$ at power two, and those based on $\opi{IdemCom}$ and $\opi{IdemCom}^c$ at power one.

	\subsection{Reduced power two of $\opi{Perm}$: reduced substitution operads}\label{section:subst_op_power_2}

	We now iterate the reduced power set functor on $\opi{Perm}$ and study the resulting operads on combinatorial objects.
	Recall that $\overline{\wp}(\opi{Perm})(n) = \overline{\wp}([n])$ is the collection of nonempty subsets of $[n]$.
	Applying $\overline{\wp}$ once more, we obtain that $\overline{\wp}^2(\opi{Perm})(n) = \rHypg(n)$.
	We denote this operad by $(\rHypg, \{\circ_k\})$, where for $K\in\rHypg(n)$, $L\in\rHypg(m)$, and $k\in[n]$, the composition is given by \Cref{definition:power_set_operad} as follows:
	\begin{equation}\label{equation:compo_substitution}
		K\circ_k L=
		\{I^{<k} \sqcup (J+k-1)\sqcup (I^{>k}+m-1) \mid I\in K, J\in L, k\in I\}
		\cup 
		\{I^{<k} \sqcup (I^{>k}+m-1) \mid I\in K, k\notin I\},
	\end{equation}
	and its identity is $\{\{1\}\}\in\rHypg(1)$.
	Note that this is precisely the substitution of $L$ into $K$ at vertex $k$ as defined in~\cite{Abramyan_Panov_2019}, but in the case of reduced hypergraphs.
	We first have.
	\begin{proposition}\label{proposition:reduced_substitution_suboperad}
		The pairs $(\rTransv,\{\circ_k\})$ and $(\rScomp,\{\circ_k\})$ form suboperads of $(\rHypg,\{\circ_k\})$.
	\end{proposition}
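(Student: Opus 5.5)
The plan is to check directly that each subcollection contains the unit, is stable under the $\Sym$-action, and is closed under the compositions $\circ_k$ of \Cref{equation:compo_substitution}. The operad axioms are then inherited from $(\rHypg,\{\circ_k\})$, which is an operad by \Cref{definition:power_set_operad}. The unit $\{\{1\}\}\in\rHypg(1)$ is both a reduced simplicial complex and a transversal family. Moreover, the $\Sym_n$-action relabels vertices, and so it preserves inclusions between hyperedges; hence both defining conditions are $\Sym$-stable.

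For closure I fix $K\in\rHypg(n)$, $L\in\rHypg(m)$ and $k\in[n]$. I sort the hyperedges of $K\circ_k L$ into two kinds, according to which block of \Cref{equation:compo_substitution} they come from:
\begin{itemize}
\item \emph{type~1} hyperedges $F(I,J)=I^{<k}\sqcup(J+k-1)\sqcup(I^{>k}+m-1)$, with $I\in K$, $k\in I$ and $J\in L$;
\item \emph{type~2} hyperedges $F(I)=I^{<k}\sqcup(I^{>k}+m-1)$, with $I\in K$ and $k\notin I$.
\end{itemize}
Every subset $G\subseteq[n+m-1]$ splits into three blocks: an outer-left part $G_1\subseteq[k-1]$, a middle part $G_2=J'+k-1$ with $J'\subseteq[m]$, and an outer-right part $G_3$. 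The middle block of a type~1 hyperedge is nonempty, because $L$ is reduced; the middle block of a type~2 hyperedge is empty. First I check that $K\circ_k L$ is again reduced. It is nonempty since $K$ is nonempty. A type~1 hyperedge contains $J+k-1\neq\emptyset$. A type~2 hyperedge is nonempty because $\emptyset\neq I\not\ni k$.

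\emph{Simplicial complexes.} Let $K,L$ be reduced simplicial complexes, let $F$ be a hyperedge of $K\circ_k L$, and let $\emptyset\neq G\subseteq F$ with blocks $G_1,J',G_3$ as above.
\begin{itemize}
\item If $F=F(I,J)$ and $J'\neq\emptyset$: then $J'\subseteq J$ gives $J'\in L$, and the preimage $G_1\cup\{k\}\cup(G_3-m+1)$ is a nonempty subset of $I$, so it lies in $K$. Hence $G$ is a type~1 hyperedge.
\item If $F=F(I,J)$ and $J'=\emptyset$: the preimage $G_1\cup(G_3-m+1)$ is a nonempty subset of $I$ not containing $k$. Hence $G$ is a type~2 hyperedge.
\item If $F=F(I)$: the same preimage argument applies directly, and $G$ is again a type~2 hyperedge.
\end{itemize}

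\emph{Transversal families.} Let $K,L$ be reduced transversal families, and suppose $F\subseteq F'$ for hyperedges $F,F'$ of $K\circ_k L$. I compare outer blocks and middle blocks.
\begin{itemize}
\item \emph{Type~1 inside type~2} is impossible, since the middle block of $F$ is nonempty and that of $F'$ is empty.
\item \emph{Both of type~1}, say $F=F(I,J)$ and $F'=F(I',J')$: the outer blocks give $I\setminus\{k\}\subseteq I'\setminus\{k\}$. Since $k$ lies in both, $I\subseteq I'$, so $I=I'$ by transversality of $K$. The middle blocks give $J\subseteq J'$, so $J=J'$ by transversality of $L$. Hence $F=F'$.
\item \emph{Both of type~2}: the outer blocks give $I\subseteq I'$, so $I=I'$ and $F=F'$.
\item \emph{Type~2 inside type~1}, say $F=F(I)$ and $F'=F(I',J')$: the outer blocks give $I\subseteq I'\setminus\{k\}\subsetneq I'$. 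This contradicts transversality of $K$, since $I\neq I'$ (one contains $k$, the other does not).
\end{itemize}

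The argument is elementary. The only point needing care is this bookkeeping of blocks, in particular the two mixed-type cases. The type~1/type~2 dichotomy is where reducedness matters: it is exactly what keeps the middle blocks of type~1 hyperedges nonempty and so separates the two types.
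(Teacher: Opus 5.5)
Your proof is correct and follows the same route as the paper. Both split each composite hyperedge along the index ranges $[k-1]$, $[k,k+m-1]$, $[k+m,n+m-1]$ and case on whether $k$ lies in the hyperedges, with reducedness of $L$ separating the two types. You are more careful than the paper on two points: you write out the downward-closure check it calls ``easily verified,'' and you settle the case ``type~2 inside type~1'' via transversality of $K$ (since $I\subseteq I'\setminus\{k\}$ makes $I$ a proper subset of $I'$), whereas the paper calls that case symmetric to the range-based contradiction, which does not apply there.
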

	\begin{proof}
		Let $K\in\rTransv(n)$ and $L\in\rTransv(m)$, and suppose $I_1\circ_k J_1\subseteq I_2\circ_k J_2$ for some $I_1,I_2\in K$ and $J_1,J_2\in L$. We show this forces $I_1=I_2$ and $J_1=J_2$, hence $K\circ_k L$ is a reduced transversal system.
		Since the ranges of the three pieces $I^{<k}$, $(J+k-1)$, and $(I^{>k}+m-1)$ partition into $[k-1]$, $[k,k+m-1]$, and $[k+m,n+m-1]$ respectively, the inclusion decomposes by range:
		\begin{itemize}
			\item If $k\notin I_1$ and $k\notin I_2$, then $I_1^{<k}\sqcup(I_1^{>k}+m-1)\subseteq I_2^{<k}\sqcup(I_2^{>k}+m-1)$, which gives $I_1\subseteq I_2$, hence $I_1=I_2$.
			\item If $k\in I_1$ and $k\in I_2$, then comparing each piece separately gives $I_1^{<k}\subseteq I_2^{<k}$, $J_1\subseteq J_2$, and $I_1^{>k}\subseteq I_2^{>k}$, hence $I_1\subseteq I_2$ and $J_1\subseteq J_2$, so $I_1=I_2$ and $J_1=J_2$.
			\item If $k\in I_1$ and $k\notin I_2$, then $J_1+k-1$ contributes elements in $[k,k+m-1]$ to the left-hand side, but the right-hand side $I_2^{<k}\sqcup(I_2^{>k}+m-1)$ has no elements in this range. Since $J_1\neq\emptyset$, this is a contradiction.
		\end{itemize}
		The case $k\notin I_1$, $k\in I_2$ is symmetric to the last case.

		It is easily verified from \Cref{equation:compo_substitution} that $\circ_k$ preserves reduced simplicial complexes.
	\end{proof}

	\begin{remark}
		The substitution composition does not preserve upward closure in general: for instance $\{\{1\},\{2\},\{1,2\}\}\circ_2\{\{1,2\}\} = \{\{1\},\{2,3\},\{1,2,3\}\}$ is not upward-closed since $\{1,2\}$ and $\{1,3\}$ are missing.
	\end{remark}

	However, since the upward closure ${(-)}^\uparrow$ restricts to $\rHypg$ without modification, we may transport the operad structure via:
	\[K\circ_k^\uparrow L \coloneqq {(K\circ_k L)}^\uparrow.\]

	\begin{proposition}
		The pair $(\rUcomp,\{\circ_k^\uparrow\})$ forms an operad.
	\end{proposition}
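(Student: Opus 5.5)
The strategy is to show that the upward closure ${(-)}^\uparrow\colon(\rHypg,\{\circ_k\})\to(\rUcomp,\{\circ_k^\uparrow\})$ behaves like an operad morphism. Concretely, I would prove the compatibility identity
\[
{(K\circ_k L)}^\uparrow = {(K^\uparrow\circ_k L^\uparrow)}^\uparrow
\qquad\text{for all } K\in\rHypg(n),\ L\in\rHypg(m),\ k\in[n].
\]
The operad axioms for $\circ_k^\uparrow$ are then transported from those of $(\rHypg,\{\circ_k\})$. For instance, with $U,V,W$ upward complexes,
\[
(U\circ_i^\uparrow V)\circ_j^\uparrow W = {({(U\circ_i V)}^\uparrow\circ_j W)}^\uparrow = {((U\circ_i V)\circ_j W)}^\uparrow,
\]
where the second equality uses the identity together with $W^\uparrow=W$. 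The same holds on the other side, so the parallel and sequential axioms of $\rHypg$ give those of $\rUcomp$.

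Equivariance follows because ${(-)}^\uparrow$ commutes with relabelling of $[n]$. The unit is $\{\{1\}\}$, which is already upward closed in $\rUcomp(1)$. We have $\{\{1\}\}\circ_1 U=U$ and $U\circ_k\{\{1\}\}=U$, hence the unit laws hold after closure since $U^\uparrow=U$. Nonemptiness and the absence of $\emptyset$ are preserved, as already noted in the paper.

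\textbf{Key step: the compatibility identity.} The inclusion $\subseteq$ is immediate from monotonicity of $\circ_k$ in both arguments and of ${(-)}^\uparrow$. For $\supseteq$, it suffices to show that every element of $K^\uparrow\circ_k L^\uparrow$ contains some element of $K\circ_k L$, i.e.\ lies in ${(K\circ_k L)}^\uparrow$. Take $I'\supseteq I\in K$ and $J'\supseteq J\in L$, and split into cases according to \Cref{equation:compo_substitution}.
\begin{itemize}
\item If $k\in I$, then $k\in I'$, and $I'\circ_k J'\supseteq I\circ_k J$ piecewise on the three blocks.
\item If $k\notin I$ and $k\notin I'$, then $I'\circ_k J'\supseteq I\circ_k J$ again holds piecewise.
\item If $k\notin I$ but $k\in I'$, then
\[
I'\circ_k J' = I'^{<k}\sqcup(J'+k-1)\sqcup(I'^{>k}+m-1)\supseteq I^{<k}\sqcup(I^{>k}+m-1)=I\circ_k J,
\]
because the middle block of $I\circ_k J$ is empty.
\end{itemize}
In all cases $I'\circ_k J'\in{(K\circ_k L)}^\uparrow$.

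The main obstacle is this mixed case. It is exactly where the failure of upward closure observed in the remark arises. The identity is still fine there because an empty middle block is dominated by any middle block. Nothing else seems delicate, so the proof should consist of this case analysis followed by the transport argument above.
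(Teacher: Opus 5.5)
Your proof is correct, but it takes a different route from the paper's. The paper argues through the bijection $M=\widecheck{(-)}\colon\rUcomp\to\rTransv$. It writes $M\bigl((U\circ_k V)^\uparrow\bigr)=M\bigl(M(U)\circ_k M(V)\bigr)$. It then uses \Cref{proposition:reduced_substitution_suboperad} to see that $M(U)\circ_k M(V)$ is already transversal. The conclusion is that $(\rUcomp,\{\circ_k^\uparrow\})$ is isomorphic, via $M$, to the suboperad $(\rTransv,\{\circ_k\})$ of $\rHypg$.

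You instead show that $(-)^\uparrow\colon\rHypg\to\rUcomp$ intertwines $\circ_k$ with $\circ_k^\uparrow$ and is surjective. You then push the axioms forward from $\rHypg$, so transversality is never needed.

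The paper's equality $M\bigl((U\circ_k V)^\uparrow\bigr)=M\bigl(M(U)\circ_k M(V)\bigr)$ amounts to $(U\circ_k V)^\uparrow=\bigl(M(U)\circ_k M(V)\bigr)^\uparrow$. This is exactly your compatibility identity applied to $K=M(U)$ and $L=M(V)$, using $M(U)^\uparrow=U$ and $M(V)^\uparrow=V$. Your case analysis amounts to the monotonicity of $\circ_k$ on $\overline{\wp}(\opi{Perm})$ in both variables, including the mixed case $k\notin I$, $k\in I'$. The paper asserts that equality without justification, so your version is the more self-contained one.

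What the paper's route gains is the stronger statement $(\rUcomp,\{\circ_k^\uparrow\})\cong(\rTransv,\{\circ_k\})$ as operads. You can recover it by combining your identity with the closure of reduced transversal families under $\circ_k$. What your route gains is that $\rUcomp$ appears as a quotient of $\rHypg$, by an argument that uses nothing beyond monotonicity of the underlying compositions.
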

	\begin{proof}
		Via the bijection $M=\widecheck{(-)}:\rUcomp\to\rTransv$, we have for $U,V\in\rUcomp$:
		\[M(U\circ_k^\uparrow V) = M({(U\circ_k V)}^\uparrow) = M(M(U)\circ_k M(V)),\]
		where the last term is a transversal family by the previous proposition applied to
		$M(U),M(V)\in\rTransv$. The operad axioms for $\{\circ_k^\uparrow\}$ then follow
		from those of $\{\circ_k\}$ via this bijection.
	\end{proof}

	Note that in the case of reduced families, we cannot use the functors $\comp$ and $\comp'$.

	\subsection{Power one of $\opi{IdemCom}$: substitution operads}\label{section:subst_op}

	We now consider the unreduced case.
	Note that the formula \Cref{equation:compo_substitution} works for the two operads $\wp(\opi{IdemCom})$ and $\overline{\wp}(\opi{IdemCom})$, which corresponds to hypergraphs and non-empty hypergraphs, respectively.

	\begin{remark}
		First, note that contrary to the reduced case, the composition is not stable in the $\Sym$-collection of (non-empty) transversal families.
		Indeed, we have $\{\{1,2\},\{2,3\}\}\circ_3 \{\emptyset\} = \{\{1,2\},\{2\}\}$ which is not a transversal family.
	\end{remark}

	However, we have a suboperad in the case of (non-empty) simplicial complexes.
	\begin{proposition}
		The pair $(\Scomp,\{\circ_k\})$ forms a suboperad of $(\Hypg,\{\circ_k\})$, with unit $\pm\coloneqq\{\emptyset,\{1\}\}$. 
		The pair $(\Scomp_{\neq\emptyset},\{\circ_k\})$ forms a suboperad of $(\Hypg_{\neq \emptyset},\{\circ_k\})$, with unit $\pm$.
	\end{proposition}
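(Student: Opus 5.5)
Since $(\Hypg,\{\circ_k\})=\wp(\opi{IdemCom})$ is already known to be an operad (by \Cref{definition:power_set_operad} applied to $\opi{IdemCom}$), the operad axioms and equivariance come for free. It only remains to check three things: that $\Scomp$ is closed under each $\circ_k$ and under the $\Sym_n$-action; that $\pm$ is a two-sided unit; and that non-emptiness is preserved. The symmetric group action relabels vertices, so it obviously preserves downward closure.

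\emph{Closure.} Let $K\in\Scomp(n)$, $L\in\Scomp(m)$ and $k\in[n]$. An element of $K\circ_k L$ given by \Cref{equation:compo_substitution} has one of two forms:
\[
F=I^{<k}\sqcup(J+k-1)\sqcup(I^{>k}+m-1)\ \text{with } k\in I,\qquad\text{or}\qquad F=I^{<k}\sqcup(I^{>k}+m-1)\ \text{with } k\notin I.
\]
Take $F'\subseteq F$ and decompose it along the three ranges $[k-1]$, $[k,k+m-1]$ and $[k+m,n+m-1]$:
\[
F'=A\sqcup(B+k-1)\sqcup(C+m-1),\qquad A\subseteq I^{<k},\quad B\subseteq J\ (\text{or } B=\emptyset),\quad C\subseteq I^{>k}.
\]
Put $I'=A\cup C$, adding $k$ exactly when $B\neq\emptyset$. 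Then $I'\subseteq I$, so $I'\in K$.
\begin{itemize}
\item If $B\neq\emptyset$, then $k\in I$, so $B\subseteq J$ and $B\in L$. Hence $F'=I'\circ_k B$ is of the first form.
\item If $B=\emptyset$ and $k\notin I'$, then $F'$ is of the second form and lies in $K\circ_k L$.
\end{itemize}
So downward closure is inherited.

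\emph{Unit.} The case $B=\emptyset$ explains why the second clause of \Cref{equation:compo_substitution} is needed, and it also matters for the unit. For the unit $\pm=\{\emptyset,\{1\}\}$, compute both sides directly.
\begin{itemize}
\item $\pm\circ_1 L=\{J\mid J\in L\}\cup\{\emptyset\}=L$, using $\emptyset\in L$ when $L\neq\emptyset$. If $L=\emptyset$, the first part is empty but $\emptyset\in\pm$ still contributes $\{\emptyset\}$.
\item $K\circ_k\pm$ gives, for each $I\ni k$, both $I$ (from $J=\{1\}$) and $I\setminus\{k\}$ (from $J=\emptyset$). It gives $I$ for each $I\not\ni k$. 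The result is $K$ by downward closure.
\end{itemize}

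\emph{The main obstacle.} The delicate point is the empty complex, which the paper counts as a simplicial complex. We have $\pm\circ_1\emptyset=\{\emptyset\}\neq\emptyset$. So $\pm$ fails to be a left unit on $\emptyset$, and I would check carefully whether the intended statement needs the unit property only for the non-empty part.

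My plan is to state the unit property for all $K$ on the right, and on the left whenever $L\neq\emptyset$. For $\Scomp_{\neq\emptyset}$ this difficulty disappears: every non-empty simplicial complex contains $\emptyset$. Moreover, if $K,L\neq\emptyset$, then $\emptyset\in K$ has $k\notin\emptyset$, so $\emptyset\in K\circ_k L$ and the composite is non-empty. Thus $\Scomp_{\neq\emptyset}$ is closed under composition, and $\pm$ is a genuine two-sided unit there. Being a sub-collection of $\Hypg_{\neq\emptyset}$, it inherits all the remaining axioms.
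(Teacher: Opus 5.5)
Your closure argument, which splits a subface along the blocks $[k-1]$, $[k,k+m-1]$, $[k+m,n+m-1]$, is correct and is a more explicit version of what the paper calls ``easily verified''. The right-unit computation and the non-emptiness argument for $\Scomp_{\neq\emptyset}$ are also fine. The problem is your ``main obstacle'': it is spurious, and it leads you to prove a weaker statement than the one asked. You identify $(\Hypg,\{\circ_k\})$ with $\wp(\opi{IdemCom})$ so that the operad axioms come for free. The composition is therefore $K\circ_k L=\{I\circ_k J\mid I\in K,\ J\in L\}$, as in \Cref{definition:power_set_operad}. The second clause of \Cref{equation:compo_substitution} is a faithful rewriting of this only when $L\neq\emptyset$. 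It silently uses some $J\in L$, whose value does not matter because $k\notin I$. The paper's remark that the formula ``works'' for $\wp(\opi{IdemCom})$ should be read with this proviso. When $L=\emptyset$ there is no such $J$, and every composite with the empty hypergraph is empty. This is stated in the remark following \Cref{definition:power_set_operad}, and it is exactly what the paper's proof uses: composing with the empty complex always yields the empty complex. Hence $\pm\circ_1\emptyset=\emptyset$, not $\{\emptyset\}$, because the element $\emptyset\in\pm$ contributes $\emptyset\circ_1 J$ for each $J\in L$, i.e.\ nothing.

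With the correct composition, $\pm\circ_1 L=\{\emptyset\circ_1 J,\ \{1\}\circ_1 J\mid J\in L\}$. This equals $\{\emptyset\}\cup L=L$ when $L\neq\emptyset$, since then $\emptyset\in L$, and equals $\emptyset=L$ when $L=\emptyset$. So $\pm$ is a genuine two-sided unit on all of $\Scomp$, and the first claim holds as stated. Your proposal instead asserts that this claim fails and proves the left-unit property only for $L\neq\emptyset$, so it does not establish the proposition. Your argument is also internally inconsistent. If you really used the literal formula at $L=\emptyset$, the composition would no longer be that of $\wp(\opi{IdemCom})$, and the operad axioms would not come for free.
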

	\begin{proof}
		It is easily seen that the unit is $\{\emptyset,\{1\}\}$.
		The only difference with the proof of \Cref{proposition:reduced_substitution_suboperad} is when we compose with the empty simplicial complex, which always yields the empty simplicial complex (on a possibly bigger vertex set), and when composing with each trivial simplicial complex $\{\emptyset\}$ on the vertex set $[n]$, for $n\geq 1$. 
		For $K\in \Scomp(m)$,  have $\{\emptyset\}\circ_k K =\{\emptyset\}$, and $K\circ_k\{\emptyset\}$ is the restriction of $K$ to $[m]\setminus \{k\}$, to which vertices $i\in [k+1,m]$ are relabeled to $i+n-1$, which is still a simplicial complex.
	\end{proof}

	\subsection{Power one of $\opi{IdemCom}^c$: composition operads}\label{section:comp_op}
	
	We still denote by $\{\circ^c_k\}$ the compositions obtained after taking the power of the compositions $\{\circ^c_k\}$ in $\opi{IdemCom}^c$, and call it the \emph{composition} of hypergraphs, as follows, for $H_1\in\Hypg(n)$, $H_2\in\Hypg(m)$, and $k\in[n]$:
	\begin{equation*}
		\begin{array}{rl}
			H_1\circ^c_k H_2=& \{I\circ^c_k J\mid I\in H_1,J\in H_2\}\\
			=& \{I^{<k} \sqcup (J+k-1)\sqcup (I^{>k}+m-1) \mid I\in H_1, J\in H_2, k\notin I\} \\
			&\sqcup 	\{I^{<k} \sqcup ([m]+k-1) \sqcup (I^{>k}+m-1) \mid I\in H_1, k\in I\}.\\
		\end{array}
	\end{equation*}
	Note that its unit is $\{\emptyset\}$ on the vertex set $[1]$.
	
	\begin{proposition}
		The pair $(\Ucomp,\{\circ^c_i\})$ forms a suboperad of $(\Hypg,\{\circ^c_i\})$ with unit $\{\emptyset,\{1\}\}$.
	\end{proposition}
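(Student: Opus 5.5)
The cleanest route is to transport the already established suboperad $(\Scomp,\{\circ_k\})$ of $(\Hypg,\{\circ_k\})$ along the covariant involution $\comp'$. So I would not check upward closure under $\circ^c_k$ by hand; I would instead show that $\comp'$ intertwines the substitution composition with the composition $\circ^c_k$.

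\textbf{Step 1: elementwise identity.} I first prove, for $I\in\wp([n])$, $J\in\wp([m])$ and $k\in[n]$,
\[
\comp(I\circ_k J)=(\comp I)\circ^c_k(\comp J)
\]
in $\opi{IdemCom}^c$, with complements taken in $[n+m-1]$, $[n]$ and $[m]$ respectively. This is just \Cref{remark:transport_compl} at power one, but it can be checked directly on the three blocks $[k-1]$, $[k,k+m-1]$ and $[k+m,n+m-1]$.
\begin{itemize}
\item If $k\in I$, then $k\notin\comp I$, and both sides equal $(\comp I)^{<k}\sqcup(\comp J+k-1)\sqcup((\comp I)^{>k}+m-1)$.
\item If $k\notin I$, the middle block of $I\circ_k J$ is empty. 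Its complement is the full block $[m]+k-1$, which is exactly what $\circ^c_k$ inserts when $k\in\comp I$.
\end{itemize}

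\textbf{Step 2: lift to hypergraphs.} Applying the identity elementwise gives, for all hypergraphs $H_1\in\Hypg(n)$ and $H_2\in\Hypg(m)$,
\[
\comp'(H_1\circ_k H_2)=\comp'H_1\circ^c_k\comp'H_2.
\]
Since $\comp'$ is a bijection commuting with the $\Sym_n$-actions, it is an isomorphism of operads $(\Hypg,\{\circ_k\})\to(\Hypg,\{\circ^c_k\})$.

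\textbf{Step 3: restrict to upward complexes.} By the proposition in the recollections, $\comp'$ restricts to a bijection $\Scomp(n)\to\Ucomp(n)$. The image of the suboperad $(\Scomp,\{\circ_k\})$ is therefore exactly $(\Ucomp,\{\circ^c_k\})$, which makes it a suboperad. The parallel and sequential axioms are inherited through the isomorphism. The unit is $\comp'(\pm)=\comp'\{\emptyset,\{1\}\}=\{\{1\},\emptyset\}$, which is the stated unit $\{\emptyset,\{1\}\}$.

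The only genuine work is the case analysis in Step 1. The main obstacle I expect is making sure the ambient sets for the complements are the correct ones, so that the middle block really becomes $[m]+k-1$ when $k\notin I$.

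As a sanity check on the unit, I would also verify directly that $H\circ^c_k\{\emptyset,\{1\}\}=H\cup\{I\cup\{k\}\mid I\in H\}$. This equals $H$ precisely because $H$ is upward closed.

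The left unit law for the empty upward complex $\emptyset\in\Ucomp(m)$ needs care. A direct computation gives $\{\emptyset,\{1\}\}\circ^c_1\emptyset=\{[m]\}\neq\emptyset$, since the term coming from $I=\{1\}$ does not depend on $H_2$. I therefore expect to argue the unit axioms through the transport rather than by direct computation. I would then reconcile this with the paper's convention on the unit, or else restrict to non-empty upward complexes, mirroring the $\Scomp_{\neq\emptyset}$ statement.
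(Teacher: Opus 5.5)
Your Steps 1--3 are the paper's proof. The paper observes that $\comp'$ carries $(\Ucomp,\{\circ^c_i\})$ onto the suboperad $(\Scomp,\{\circ_i\})$ of $(\Hypg,\{\circ_i\})$. Your elementwise identity $\comp(I\circ_k J)=(\comp I)\circ^c_k(\comp J)$ is the verification it leaves implicit. That argument is complete as it stands, and it gives both unit laws via the transported unit $\comp'\{\emptyset,\{1\}\}=\{\emptyset,\{1\}\}$.

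Your last paragraph, however, is mistaken and should be dropped.

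\begin{itemize}
\item The composition on $(\Hypg,\{\circ^c_k\})$ is the power of $\opi{IdemCom}^c$: $H_1\circ^c_k H_2=\{I\circ^c_k J\mid I\in H_1,\,J\in H_2\}$. This is empty as soon as $H_2=\emptyset$.
\item The summand $\{I^{<k}\sqcup([m]+k-1)\sqcup(I^{>k}+m-1)\mid I\in H_1,\,k\in I\}$ in the expanded formula tacitly assumes $H_2\neq\emptyset$.
\item Hence $\{\emptyset,\{1\}\}\circ^c_1\emptyset=\emptyset$, and the left unit law holds on the nose. This is the same convention the paper uses for $(\Scomp,\{\circ_i\})$, where composing with the empty complex yields the empty complex.
\end{itemize}

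Your plan to ``argue the unit axioms through the transport rather than by direct computation'' could not have rescued anything. $\comp'$ is an isomorphism of operads, so a failure of the unit law on one side transports to a failure on the other. Indeed, reading the expanded formula for $\circ_k$ the same way gives $\{\emptyset,\{1\}\}\circ_1\emptyset=\{\emptyset\}$, whose image under $\comp'$ is exactly your $\{[m]\}$.

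So the only issue is which definition is in force. With the power-set definition, the unit law holds, and no restriction to non-empty upward complexes is needed.
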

	\begin{proof}
		Simply note that $\comp'(\Ucomp,\{\circ^c_i\}) = (\Scomp,\{\circ_i\})$, and since $(\Scomp,\{\circ_i\})$ is a suboperad of $(\Hypg,\{\circ_i\})$, the pair $(\Ucomp,\{\circ^c_i\})$ is a suboperad of $(\Hypg,\{\circ^c_i\})$.
	\end{proof}
	Up to downward closing, we recover the formula for the composition of $L$ in $K$ at vertex $k$, as defined by Ayzenberg in~\cite{Ayzenberg_2014} in the case of simplicial complexes, as follows.
	\begin{equation} \label{equation:compo_composition}
		\begin{array}{rl}
			(K\circ^c_k L)^\downarrow& ={\{I\circ^c_k J\mid I\in K,J\in L\}}^\downarrow\\
			& =\{I^{<k} \sqcup (J+k-1)\sqcup (I^{>k}+m-1) \mid I\in K, J\in L, k\notin I\} \\
			&\sqcup \ 	\{I^{<k} \sqcup (A+k-1) \sqcup (I^{>k}+m-1) \mid I\in K, A\subseteq[m], k\in I\}.\\
		\end{array}
	\end{equation}	
	We still denote by $\circ^c_i$ the composition ${(- \circ^c_i-)}^\downarrow$ on the category of simplicial complexes.	
	\begin{proposition}\label{proposition:nice_diagram}
			The pair $(\Scomp,\{\circ^c_i\})$ is an operad. 
	\end{proposition}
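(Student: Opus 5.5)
The plan is to deduce the operad axioms for $(\Scomp,\{\circ^c_i\})$ from those of $(\Hypg,\{\circ^c_i\})$. The latter is an operad because it is the power of $\opi{IdemCom}^c$ (\Cref{definition:power_set_operad}). I will show that the downward closure ${(-)}^\downarrow$ is compatible with the hypergraph compositions. Concretely, for all hypergraphs $H_1\in\Hypg(n)$, $H_2\in\Hypg(m)$ and $k\in[n]$, I aim to prove
\[
{\bigl(H_1^\downarrow\circ^c_k H_2^\downarrow\bigr)}^\downarrow={\bigl(H_1\circ^c_k H_2\bigr)}^\downarrow .
\]
Once this identity holds, the quotient map ${(-)}^\downarrow\colon\Hypg\to\Scomp$ behaves like a morphism onto the closed objects. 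Each axiom then follows by closing both sides.

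\textbf{Key step (monotonicity).} The main step is to show that the elementary composition $(I,J)\mapsto I\circ^c_k J$ of $\opi{IdemCom}^c$ is monotone: if $I'\subseteq I\subseteq[n]$ and $J'\subseteq J\subseteq[m]$, then $I'\circ^c_k J'\subseteq I\circ^c_k J$. The three index blocks $[k-1]$, $[k,k+m-1]$, $[k+m,n+m-1]$ are disjoint, so the inclusion can be checked block by block. The outer blocks are handled directly by $I'\subseteq I$. For the middle block there are three cases:
\begin{itemize}
\item if $k\notin I$, then $k\notin I'$, and the middle blocks are $J'+k-1\subseteq J+k-1$;
\item if $k\in I'$, then $k\in I$, and both middle blocks are $[m]+k-1$;
\item if $k\in I\setminus I'$, the middle blocks are $J'+k-1\subseteq[m]+k-1$.
\end{itemize}
So the middle block is always included. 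Given monotonicity, every element of $H_1^\downarrow\circ^c_k H_2^\downarrow$ lies below an element of $H_1\circ^c_k H_2$. This gives the inclusion $\subseteq$ of the displayed identity. The reverse inclusion is immediate since $H_i\subseteq H_i^\downarrow$.

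\textbf{Deducing the axioms.} For the sequential axiom, with $K,L,M$ simplicial complexes, the identity gives
\[
{\bigl({(K\circ^c_i L)}^\downarrow\circ^c_{j+i-1}M\bigr)}^\downarrow={\bigl((K\circ^c_i L)\circ^c_{j+i-1}M\bigr)}^\downarrow .
\]
The right-hand side equals ${\bigl(K\circ^c_i(L\circ^c_j M)\bigr)}^\downarrow$ by the hypergraph axiom. Applying the identity once more turns this into the iterated simplicial-complex composition $K\circ^c_i(L\circ^c_j M)$. The parallel axiom is handled identically. Equivariance holds because both $\circ^c_k$ and ${(-)}^\downarrow$ commute with relabelling vertices. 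The result is a simplicial complex by construction.

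\textbf{Unit (expected obstacle).} The main point needing care is the unit, which has to be chosen correctly. The candidate $\{\emptyset,\{1\}\}$ fails against the empty complex, since it produces $\{[m]\}^\downarrow$. Instead I take the hypergraph unit $\{\emptyset\}$ on $[1]$, which is already downward closed. It is a two-sided unit in $\Hypg$, because $\{\emptyset\}\circ^c_1 L=L$ and $K\circ^c_k\{\emptyset\}=K$. Hence it remains a unit after closure, including for the empty complex and the trivial complexes. I would double-check these edge cases explicitly, including compositions with $\emptyset$, since that is where the hypergraph and complex pictures could diverge.
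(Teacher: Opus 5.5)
Your argument is correct, and it takes a genuinely different route from the paper, which in fact gives no proof of \Cref{proposition:nice_diagram}. The paper only points to Ayzenberg's definition and to the $\comp'$-diagram that follows the statement. That diagram merely transports the question to $(\Ucomp,\{\circ_i\})$, where the substitution must itself be followed by an upward closure, so it relocates the compatibility issue rather than settling it. The closest argument in the paper is the reduced upward case, handled through minimal elements.

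A posteriori, the non-empty part can also be read off from \Cref{proposition:subst_comp_subop}, which realizes $K\circ^c_k L$ as the second component of $(\Delta_{[n]},K)\circ^\ast_k(\Delta_{[m]},L)$ in the simplicial join operad. That route depends on the face description of $\triangleleft^\ast_k$ (\Cref{lemma:faces_join}). In exchange, it exhibits the substitution and composition operads as suboperads of one structure carrying the polyhedral product algebra.

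Your route stays inside the power-set framework. Monotonicity of the elementary composition of $\opi{IdemCom}^c$ makes $(-)^\downarrow$ a surjective, equivariant map out of the operad $(\Hypg,\{\circ^c_i\})$ that intertwines $\circ^c_k$ with compose-then-close. Hence the axioms and the unit $\{\emptyset\}$ descend at once, empty complex included. The same argument, using the equally monotone composition of $\opi{IdemCom}$ together with $(-)^\uparrow$, is exactly what makes the upward-closed operad $(\Ucomp,\{\circ_i\})$ in the diagram well defined.

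One small correction to your unit discussion. With the definition you use, $\pm\circ^c_1\emptyset=\emptyset$, so the empty complex is not what rules out $\pm$. What rules it out is that $\pm\circ^c_1 L=(L\cup\{[m]\})^\downarrow=\Delta_{[m]}$ for every non-empty $L$. The value $\{[m]\}^\downarrow$ you quote comes from the paper's expanded display of $(K\circ^c_k L)^\downarrow$, whose $k\in I$ part silently drops the condition $J\in L$. That display agrees with the literal definition $\{I\circ^c_k J\mid I\in K,\,J\in L\}^\downarrow$ only when $L\neq\emptyset$, and your proof correctly concerns the literal one.
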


	Finally, we have the following diagram of operads:
		\begin{center}
			\begin{tikzpicture}[scale=1.2]
				\node (A) at (0,1) {$(\Scomp,\{\circ_i\})$};
				\node (B) at (0,-1) {$(\Ucomp,\{\circ_i^c\})$};
				\node (C) at (5,1) {$(\Ucomp,\{\circ_i\})$};
				\node (D) at (5,-1) {$(\Scomp,\{\circ_i^c\})$};
				\node (E) at (2.5,1) {$(\Hypg,\{\circ_i\})$};
				\node (F) at (2.5,-1) {$(\Hypg,\{\circ_i^c\})$};;
				
				\draw[transform canvas={xshift=0.5ex},->] (A) -- node[ right] {$\comp'$} (B) ;
				\draw[transform canvas={xshift=-0.5ex},->] (B) -- node[  left] {$\comp'$} (A) ;
				
				\draw[transform canvas={xshift=0.5ex},->] (C) -- node[ right] {$\comp'$} (D) ;
				\draw[transform canvas={xshift=-0.5ex},->] (D) -- node[  left] {$\comp'$} (C) ;
				
				\draw[transform canvas={xshift=0.5ex},->] (E) -- node[ right] {$\comp'$} (F) ;
				\draw[transform canvas={xshift=-0.5ex},->] (F) -- node[  left] {$\comp'$} (E) ;
				
				\draw[{Hooks[right]}->] (A) to (E);
				\draw[{Hooks[right]}->] (B) to (F);
			\end{tikzpicture}
		\end{center}

	\begin{proposition}\label{proposition:spern_suboperad}
		 The transport of $\{\circ_i\}$ by the bijection $\widehat{(-)}\colon \Scomp\to \Transv$, denoted by $\{\hat\circ_i\}$, yields an operad $(\Transv,\{\hat\circ_i\})$.
		 The transport of $\{\circ_i\}$ by the bijection $\widecheck{(-)}\colon \Ucomp\to \Transv$, denoted by $\{\check \circ_i\}$, yields an operad $(\Transv,\{\check{\circ}_i\})$.
		 The same holds for $\{\circ^c_i\}$.
		 Moreover we have $\comp'(\Transv,\{\hat{\circ}_i\}) = (\Transv,\{\check{\circ}^c_i\})$ and $\comp'(\Transv,\{\hat{\circ}^c_i\}) = (\Transv,\{\check{\circ}_i\})$
	\end{proposition}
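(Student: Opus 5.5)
The plan rests on the fact that transport of structure along a bijection of $\Sym$-collections always yields an operad. Concretely, given an operad $(\cal P,\{\circ_i\})$ and an $\Sym$-equivariant family of bijections $\beta_n\colon \cal P(n)\to \cal Q(n)$, define
\[
x\,\tilde\circ_i\, y \coloneqq \beta\bigl(\beta^{-1}(x)\circ_i\beta^{-1}(y)\bigr).
\]
The parallel and sequential axioms, the equivariance and the unit then follow immediately by conjugation, and $\beta$ becomes an isomorphism of operads. I would state this once as a general observation and apply it four times.

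First I will check that the relevant maps are $\Sym$-equivariant bijections arity by arity. The facet map $\widehat{(-)}\colon\Scomp(n)\to\Transv(n)$ has inverse $(-)^\downarrow$. The minimal element map $\widecheck{(-)}\colon\Ucomp(n)\to\Transv(n)$ has inverse $(-)^\uparrow$. Both commute with relabelling vertices by $\sigma\in\Sym_n$, since inclusion-maximality and inclusion-minimality are preserved by permutations. The operads being transported are then:
\begin{itemize}
\item $(\Scomp,\{\circ_i\})$, the substitution operad established above;
\item $(\Scomp,\{\circ^c_i\})$, given by \Cref{proposition:nice_diagram};
\item $(\Ucomp,\{\circ^c_i\})$, the suboperad of $(\Hypg,\{\circ^c_i\})$;
\item $(\Ucomp,\{\circ_i\})$, which by the diagram is the $\comp'$-transport of $(\Scomp,\{\circ^c_i\})$.
\end{itemize}
This produces the four operads $(\Transv,\{\hat\circ_i\})$, $(\Transv,\{\hat\circ^c_i\})$, $(\Transv,\{\check\circ^c_i\})$ and $(\Transv,\{\check\circ_i\})$.

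For the last claim, I will show that $\comp'$ intertwines the transported structures. The key combinatorial identity is
\[
\comp'\bigl(\widehat K\bigr)=\widecheck{\bigl(\comp' K\bigr)}
\]
for every simplicial complex $K$. This holds because $\comp$ on $\wp([n])$ is an order-reversing involution, so it sends maximal faces of $K$ to minimal upfaces of $\comp'K$. Granting this, take $T=\widehat K$ and $S=\widehat L$. Then
\[
\comp'(T\,\hat\circ_i\, S)=\comp'\,\widehat{(K\circ_i L)}=\widecheck{\bigl(\comp'(K\circ_i L)\bigr)}.
\]
By the diagram of operads above, $\comp'$ is an isomorphism $(\Scomp,\{\circ_i\})\to(\Ucomp,\{\circ^c_i\})$. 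Hence
\[
\comp'(K\circ_i L)=\comp'K\circ^c_i\comp'L,
\]
and so
\[
\comp'(T\,\hat\circ_i\, S)=\widecheck{\bigl(\comp'K\bigr)}\;\check\circ^c_i\;\widecheck{\bigl(\comp'L\bigr)}=\comp'T\;\check\circ^c_i\;\comp'S.
\]
The second equality $\comp'(\Transv,\{\hat\circ^c_i\})=(\Transv,\{\check\circ_i\})$ is obtained in the same way from the isomorphism $\comp'\colon(\Scomp,\{\circ^c_i\})\to(\Ucomp,\{\circ_i\})$.

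The main obstacle is making sure that the operads being transported are genuinely defined on all of $\Scomp$ and $\Ucomp$, including the degenerate objects. The trivial complex $\{\emptyset\}$ has facet set $\{\emptyset\}$, so it causes no difficulty. The empty complex, however, has no facets, so $\widehat{\emptyset}=\emptyset$; this is fine only if $\emptyset\in\Transv(n)$, which the definition allows vacuously. I would also check carefully that the composition $\circ^c$ on $\Scomp$ from \Cref{proposition:nice_diagram} (with its downward closure) corresponds under $\comp'$ to the $\Ucomp$ structure actually displayed in the diagram. If this fails, the last claim should instead be read with the diagram taken as definitional.
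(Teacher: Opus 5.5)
Your proposal is correct and matches what the paper leaves implicit (the paper states this proposition without a separate proof): transport of structure along the $\Sym$-equivariant bijections $\widehat{(-)}$ and $\widecheck{(-)}$, combined with the $\comp'$-isomorphisms of the preceding diagram and the identity $\comp'\widehat{K}=\widecheck{(\comp' K)}$. The check you flag at the end does succeed, so you do not need to fall back on reading the diagram as definitional: on hypergraphs $\comp'(H_1\circ^c_k H_2)=\comp' H_1\circ_k\comp' H_2$ and $\comp'(H^\downarrow)=(\comp' H)^\uparrow$. Hence $(\Ucomp,\{\circ_i\})$ is the upward closure of substitution, and $\comp'$ is a genuine isomorphism $(\Scomp,\{\circ^c_i\})\to(\Ucomp,\{\circ_i\})$.
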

	\begin{remark}\label{remark:comp_spern_not_stable}
		Note that we need to transport $\{\circ_i\}$ since composing a transversal family to the right with the transversal family $\{\emptyset\}$ might not yield a transversal family. For example, we have that $\{\{1,2\},\{2,3\}\} \circ_3\{\emptyset_1\} = \{\{1,2\},\{2\}\}$ is not a transversal family because $\{2\}\subseteq\{1,2\}$, then using either $\{\hat \circ_i\}$  or $\{\check \circ_i\}$ helps us choose either $\{1,2\}$ or $\{2\}$, respectively.
		However, If we denote by $\Transv_{\neq \{\emptyset\}}(n) = \Transv(n)\setminus\{\{\emptyset_n\}\}$, then $(\Transv_{\neq \{\emptyset\}},\{\circ_i\})$ is an operad, which coincides with the two of \Cref{proposition:spern_suboperad} restricted to $\Transv_{\neq \emptyset}$. Similarly, $(\Transv_{\neq \{[n]\}},\{\circ^c_i\})$ coincides with both $(\Transv_{\neq \{[n]\}},\{\hat \circ^c_i\})$ and $(\Transv_{\neq \{[n]\}},\{\check\circ^c_i\})$.
	\end{remark}

%
Note that that it is enough to only consider the facets when computing the compositions $K\circ_i L$ or $K\circ_i^c L$, of two non-empty non-trivial simplicial complexes. In particular we have the following.
	\begin{proposition}\label{proposition:facets}
		Let $K$ and $L$ be non-empty non-trivial simplicial complexes on $[n]$ and $[m]$, respectively, and let $i\in[n]$.
		Then, the facets of $K\circ_i L$ are as follows.
		\begin{itemize}
			\item 	$F^{<i} \sqcup (F'+i-1) \sqcup (F^{>i}+m-1)$, for every facet $F$ of $K$ containing $i$, and every facet $F'$ of $L$, and
			\item $F^{<i} \sqcup (F^{>i}+m-1)$, for every facet $F$ of $K$ not containing $i$.
		\end{itemize}
		The facets of $K\circ_i^c L$ are as follows.
		\begin{itemize}
			\item 	$F^{<i} \sqcup ([m]+i-1) \sqcup (F^{>i}+m-1)$, for every facet $F$ of $K$ containing $i$, and
			\item $F^{<i}\sqcup (F'+i-1) \sqcup (F^{>i}+m-1)$, for every facet $F$ of $K$ not containing $i$, and every facet $F'$ of $L$.
		\end{itemize}
	\end{proposition}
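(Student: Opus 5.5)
The strategy is to identify the downward closures of the two claimed families with $K\circ_i L$ and $K\circ_i^c L$, respectively, and then to show that every member of each family is inclusion-maximal.

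For each face $I$ of $K$ and face $J$ of $L$, write $\Phi(I,J)\coloneqq I^{<i}\sqcup (J+i-1)\sqcup (I^{>i}+m-1)$ and $\Psi(I)\coloneqq I^{<i}\sqcup (I^{>i}+m-1)$. These split across the three disjoint blocks $[i-1]$, $[i,i+m-1]$ and $[i+m,n+m-1]$. The only tool needed is the block-wise comparison from the proof of \Cref{proposition:reduced_substitution_suboperad}. An inclusion between such sets holds exactly when it holds block by block. Moreover, a set with nonempty middle block is never contained in a set with empty middle block.

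\textbf{Substitution.} By \Cref{equation:compo_substitution}, $K\circ_i L$ consists of the sets $\Phi(I,J)$ with $i\in I$, $J\in L$, together with the sets $\Psi(I)$ with $i\notin I$.

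First, every face lies below a candidate. Take a face $\Phi(I,J)$ with $i\in I$. Choose a facet $F\supseteq I$ of $K$, so $i\in F$, and a facet $F'\supseteq J$ of $L$. Block-wise comparison gives $\Phi(I,J)\subseteq\Phi(F,F')$. Now take a face $\Psi(I)$ with $i\notin I$, and choose a facet $F\supseteq I$. If $i\notin F$, then $\Psi(I)\subseteq \Psi(F)$. If $i\in F$, pick any facet $F'$ of $L$; then $\Psi(I)\subseteq \Phi(F,F')$.

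Second, the candidates are pairwise incomparable. Compare $\Phi(F_1,F_1')\subseteq\Phi(F_2,F_2')$: block-wise, this gives $F_1\subseteq F_2$ and $F_1'\subseteq F_2'$, so equality holds by maximality. The comparison $\Psi(F_1)\subseteq\Psi(F_2)$ likewise forces $F_1=F_2$.

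The main obstacle is the mixed inclusion $\Psi(F_1)\subseteq \Phi(F_2,F_2')$ with $i\notin F_1$ and $i\in F_2$. Block-wise this only gives $F_1\setminus\{i\}\subseteq F_2$, hence $F_1\subseteq F_2$ since $i\notin F_1$. This contradicts maximality of $F_1$, because $F_2$ contains $i$ and $F_1$ does not, so $F_1\neq F_2$.

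The opposite mixed inclusion $\Phi(F_1,F_1')\subseteq\Psi(F_2)$ is excluded by the middle block, provided $F_1'\neq\emptyset$. This is exactly where the hypotheses on $L$ are used. Since $L$ is non-empty and non-trivial, it has a nonempty face, so its facets are nonempty.

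The hypotheses on $K$ guarantee that facets of $K$ exist and that $\Phi$ and $\Psi$ are taken over genuine facets. With both steps done, the candidate set is an antichain whose downward closure equals $K\circ_i L$, hence it is exactly $\widehat{K\circ_i L}$.

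\textbf{Composition.} I will run the same scheme on \Cref{equation:compo_composition}. Its faces are $\Phi(I,J)$ with $i\notin I$, $J\in L$, and $\Phi(I,A)$ with $i\in I$, $A\subseteq[m]$.

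Every face of the second kind lies below $\Phi(F,[m])$ for a facet $F\supseteq I$ of $K$. A face $\Phi(I,J)$ with $i\notin I$ lies below $\Phi(F,F')$ if the facet $F\supseteq I$ avoids $i$, and below $\Phi(F,[m])$ otherwise.

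For incomparability, the only new case is $\Phi(F_1,F_1')\subseteq\Phi(F_2,[m])$ with $i\notin F_1$ and $i\in F_2$. Block-wise this gives $F_1\subseteq F_2$, and $F_1\neq F_2$ since only $F_2$ contains $i$, again contradicting maximality of $F_1$.

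The reverse inclusion $\Phi(F_2,[m])\subseteq\Phi(F_1,F_1')$ would force $F_1'=[m]$, making $L$ the full simplex. In that case it also gives $F_2\setminus\{i\}\subseteq F_1$, so $F_2\subseteq F_1\cup\{i\}$, while $F_1$ and $F_2$ are distinct facets of $K$ with $i\in F_2$ and $i\notin F_1$. I expect this degenerate subcase to need the most care. I will handle it by checking directly whether such an inclusion can occur, and if it can, by noting that it identifies two candidates rather than producing a non-maximal one.
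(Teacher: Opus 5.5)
You are right to single out the subcase you leave for the end, but it is a genuine gap that cannot be closed, because the statement is false there. Your planned remedy does not work. Suppose $\Phi(F_2,[m])\subseteq\Phi(F_1,[m])$ with $i\in F_2$ and $i\notin F_1$. Equality would mean $F_1=F_2\setminus\{i\}$, so $F_1\subseteq F_2$ with $F_1\neq F_2$, contradicting the maximality of $F_1$. Hence whenever this inclusion occurs it is strict, and $\Phi(F_2,[m])$ is not a facet.

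The inclusion does occur. Take $K=\{\emptyset,\{1\},\{2\}\}$ on $[2]$, $L=\Delta_{[1]}=\{\emptyset,\{1\}\}$ (non-empty and non-trivial), and $i=2$. Then $K\circ_2^c L=\{\emptyset,\{1\},\{2\},\{1,2\}\}$, whose only facet is $\{1,2\}$. The statement, however, also lists $\Phi(\{2\},[1])=\{2\}$. Similarly, $K=\partial\Delta_{[3]}$, $i=1$, $L=\Delta_{[1]}$ gives $\Delta_{[3]}$, yet the list contains $\{1,2\}$ and $\{1,3\}$. This agrees with the paper's own remark that composing with $\Delta_{[1]}$ turns $i$ into a universal vertex. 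The paper's one-line ``direct inspection'' overlooks this case as well.

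Everything else in your cover-plus-antichain argument is correct. It covers the whole substitution half and the composition half whenever $L\neq\Delta_{[m]}$. The fix is to add the hypothesis $L\neq\Delta_{[m]}$ (equivalently $[m]\notin L$) to the composition half. The only bad inclusion forces $F_1'=[m]$, so your case analysis is then complete.

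If you also want to describe the case $L=\Delta_{[m]}$, there is a clean answer. Every face of $K\circ_i^c\Delta_{[m]}$ has the form $\Phi(G,A)$ with $G\in K\setminus i$ and $A\subseteq[m]$, and conversely. Comparing block by block, the facets are exactly the sets $\Phi(G,[m])$ for $G$ a facet of the restriction $K\setminus i$.
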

	\begin{proof}
		This comes from the fact that $K$ and $L$ are non-empty and non-trivial and a direct inspection of \Cref{equation:compo_substitution} and \Cref{equation:compo_composition}. 
	\end{proof}
	
	\subsection{Suboperads and their modules on the substitution and composition operads}\label{section:subops}
	In addition to the ideal generated by the empty simplicial complexes, which is a suboperad of both $(\Scomp,\{\circ_i\})$ and $(\Scomp,\{\circ_i^c\})$, we have the following three suboperads.
	Moreover, their left or right actions onto $(\Scomp,\{\circ_i\})$ are well understood.
	Recall that the unit of $(\Scomp,\{\circ_i\})$ is the simplicial complex $\pm\coloneqq \{\emptyset,\{1\}\}$.
	Given a simplicial complex $K$ on $[m]$, then an element $i\in[m]$ is a \emph{ghost vertex} of $K$ if $\{i\}$ is not a face of $K$.
	Moreover, we denote by $\Delta_{[n]}$ the \emph{$n$-simplex}, whose facet is $[n]$ are all the subsets of $[n]$, and $\partial\Delta_{[n]}$ the \emph{boundary of the $n$-simplex} whose facets are the subsets of size $n-1$ of~$[n]$.
	\begin{proposition}\label{proposition:two_suboperads_subst}
		The following three $\Sym$-collections, equipped with the composition~$\{\circ_i\}$, are suboperads of $(\Scomp,\{\circ_i\})$ and $(\Scomp_{\neq \emptyset},\{\circ_i\})$:
		\begin{enumerate}
			\item $\cal T_n=\{\{\emptyset\}\}$, for $n\geq 1$, whose only operation with arity $n$ is the trivial simplicial complex on $n$ vertices (non-unital), 
			\item $\cal D_n = \{\{\emptyset,\{1\},\ldots,\{n\}\}\}$, for $n\geq 1$, whose only operation with arity $n$ is a simplicial complex having $n$ discrete points (unital), and
			\item $\cal S_n = \{\Delta_{[n]}\}$,  for $n\geq 1$, whose only operation with arity $n$ is the $n$-simplex (unital).
		\end{enumerate}
		Furthermore, these operads are isomorphic to $\opi{Com}$ as they have only one operation per arity.
	\end{proposition}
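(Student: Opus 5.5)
We verify closure directly from \Cref{equation:compo_substitution}. Since each collection has exactly one element per arity $n\geq 1$ and this element is invariant under $\Sym_n$, closure under $\{\circ_k\}$ alone gives a suboperad. The parallel and sequential axioms are inherited from $(\Scomp,\{\circ_i\})$. None of the three complexes is empty, so the same argument works verbatim inside $(\Scomp_{\neq\emptyset},\{\circ_i\})$. It therefore suffices to show, for each family, that the composite of the arity-$n$ and arity-$m$ elements at any $k\in[n]$ is the arity-$(n+m-1)$ element. The isomorphism with $\opi{Com}$ then follows, because a collection with one $\Sym$-invariant element in each arity $n\geq1$ and nothing in arity $0$ admits exactly one operad structure, namely that of $\opi{Com}$.

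\textbf{Trivial complexes.} For $K=\{\emptyset\}$ on $[n]$, the only face is $I=\emptyset$, and $k\notin\emptyset$. Only the second set in \Cref{equation:compo_substitution} contributes, giving $\{\emptyset^{<k}\sqcup(\emptyset^{>k}+m-1)\}=\{\emptyset\}$ on $[n+m-1]$, whatever $L$ is. So this family is closed. It does not contain the unit $\pm$ in arity $1$, which is why it is non-unital.

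\textbf{Discrete complexes.} Take $K=\{\emptyset,\{1\},\dots,\{n\}\}$ and $L=\{\emptyset,\{1\},\dots,\{m\}\}$. Faces $I$ of $K$ with $k\notin I$ are $\emptyset$ and the singletons $\{i\}$ with $i\neq k$. They contribute $\emptyset$, the singletons $\{i\}$ for $i<k$, and the singletons $\{i+m-1\}$ for $i>k$. The only face containing $k$ is $I=\{k\}$, and it contributes $J+k-1$ for $J\in L$, namely $\emptyset$ and $\{k\},\dots,\{k+m-1\}$. Together these are exactly $\emptyset$ and all singletons of $[n+m-1]$. The arity-$1$ element is $\{\emptyset,\{1\}\}=\pm$, so this suboperad is unital.

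\textbf{Simplices.} Take $K=\wp([n])$ and $L=\wp([m])$. Each subset $S\subseteq[n+m-1]$ splits uniquely into blocks $S\cap[k-1]$, $S\cap[k,k+m-1]$ and $S\cap[k+m,n+m-1]$. If the middle block is nonempty, write it as $J+k-1$ with $J\subseteq[m]$ and let $I$ be the corresponding subset containing $k$; then $S$ lies in the first set of \Cref{equation:compo_substitution}. If the middle block is empty, $S$ arises either from an $I$ with $k\notin I$, or from $I\ni k$ with $J=\emptyset$. Hence $K\circ_k L=\wp([n+m-1])=\Delta_{[n+m-1]}$. The arity-$1$ element $\Delta_{[1]}=\{\emptyset,\{1\}\}$ is again the unit $\pm$.

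The only point needing care is the bookkeeping of the empty face $J=\emptyset$ in the last two cases: it is what makes the middle block empty. These are routine checks, so I expect no real obstacle beyond this.
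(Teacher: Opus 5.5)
Your proof is correct, but it takes a different route from the paper. You check closure by computing each composite directly from the explicit substitution formula. You also verify by hand the $\Sym_n$-invariance, the unit $\pm$ in arity $1$, and the identification with $\opi{Com}$ (a collection with one element per arity $n\geq 1$admits a unique operad structure).

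The paper argues structurally instead:
\begin{itemize}
\item $\cal T$ is the image, under the monad unit $\eta$, of the non-unital suboperad $(\emptyset_n)_{n\geq 1}$ of $\opi{IdemCom}$, so closure comes from $\eta$ being a morphism of operads.
\item $\cal S$ is the downward closure of $\eta$ applied to the suboperad $([n])_{n\geq 1}$ of $\opi{IdemCom}$.
\item $\cal D$ is the downward closure of the hypergraphs $\{\{1\},\ldots,\{n\}\}$, which the paper writes loosely as $\eta(\eta(\opi{Perm}))^\downarrow$.
\end{itemize}

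The paper's approach shows where these copies of $\opi{Com}$ sit in the power-set hierarchy. However, for $\cal D$ and $\cal S$ it tacitly needs $(H_1\circ_k H_2)^\downarrow = H_1^\downarrow\circ_k H_2^\downarrow$. This identity holds for non-empty $H_2$, but the paper never states or checks it. Your direct computation is self-contained and avoids this dependence.

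One small quibble: in the trivial case, your phrase ``whatever $L$ is'' should exclude $L=\emptyset$. By the power-set definition, composing with the empty complex yields the empty complex. Only $L=\{\emptyset\}$ is needed there, so the argument is unaffected.
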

	\begin{proof}
		The $\Sym$-collection $(\emptyset_n)_{n\geq 1}$ trivially forms a non-unital suboperad of $\opi{IdemCom}$, and $\cal T$ is its image by the endofunctor $\eta$, which hence yields a (non-unital) suboperad of $(\Scomp_{\neq\emptyset},\{\circ_i\})$ by \Cref{proposition:monad}.
		Then, we have $\cal D = \eta(\eta(\opi{Perm}))^\downarrow$.
		Finally, $\cal S = \eta (\cal O)^\downarrow$ for $\cal O$ the unital suboperad of $\opi{IdemCom}$ generated by the $\Sym$-collection $([n])_{n\geq 1}$.
	\end{proof}

	Recall that the unit of $(\Scomp,\{\circ_i^c\})$ is the simplicial complex $\{\emptyset\}$ on the vertex set $[1]$.
	Similarly, we have four suboperads of $(\Scomp,\{\circ_i^c\})$ which appear as follows.
	\begin{proposition}
		The following three $\Sym$-collections, equipped with the composition $\{\circ_i^c\}$, are suboperads of $(\Scomp,\{\circ_i^c\})$ and $(\Scomp_{\neq \emptyset},\{\circ_i^c\})$:
		\begin{enumerate}
			\item $\cal T$ (unital),
			\item $\cal S_n=\{\Delta_{[n]}\}$, for $n\geq 1$ (non-unital), and
			\item $\partial\cal S_n = \{\partial\Delta_{[n]}\}$, for $n\geq 1$ (unital).
		\end{enumerate}
		Furthermore, these operads are isomorphic to $\opi{Com}$ as they have only one operation per arity.
	\end{proposition}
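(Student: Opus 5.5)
The statement has two parts: closure of each collection under $\{\circ^c_i\}$, and the resulting isomorphism type (plus the unit claims). I would prove closure directly from the explicit formula \Cref{equation:compo_composition}, or equivalently from the facet description of \Cref{proposition:facets} when both arguments are non-empty and non-trivial. I would then read off the isomorphism with $\opi{Com}$ from the fact that each arity carries exactly one operation. The only thing to check is that $K\circ^c_k L$ lands back in the same one-element set, and which arity-one element serves as identity.

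For $\cal T$, take $K=\{\emptyset\}$ on $[n]$ and $L=\{\emptyset\}$ on $[m]$. No face of $K$ contains $k$, so only the first family of \Cref{equation:compo_composition} contributes, giving $\{\emptyset\sqcup(\emptyset+k-1)\sqcup\emptyset\}=\{\emptyset\}$ on $[n+m-1]$. The arity-one element $\{\emptyset\}$ on $[1]$ is exactly the unit of $(\Scomp,\{\circ^c_i\})$, so $\cal T$ is unital.

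For $\cal S$, apply \Cref{proposition:facets}. The unique facet $[n]$ of $\Delta_{[n]}$ contains $k$, so the unique facet of the composite is $[k-1]\sqcup([m]+k-1)\sqcup([n]^{>k}+m-1)=[n+m-1]$, and the composite is $\Delta_{[n+m-1]}$. In arity one we have $\Delta_{[1]}=\{\emptyset,\{1\}\}\neq\{\emptyset\}$, so the operad unit is not in $\cal S$; it is therefore non-unital, although isomorphic to $\opi{Com}$ as a non-unital operad.

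For $\partial\cal S$, the facets of $\partial\Delta_{[n]}$ are the sets $[n]\setminus\{j\}$. The facet $[n]\setminus\{k\}$ is the only one not containing $k$, and it combines with each facet $[m]\setminus\{j'\}$ of $\partial\Delta_{[m]}$. This yields the sets $[n+m-1]\setminus\{j'+k-1\}$. Each facet $[n]\setminus\{j\}$ with $j\neq k$ contains $k$, and yields $[n+m-1]$ with the single relabelled element removed. Together these are all $(n+m-2)$-subsets of $[n+m-1]$, so the composite is $\partial\Delta_{[n+m-1]}$. Care is needed for $n=1$ or $m=1$, where $\partial\Delta_{[1]}=\{\emptyset\}$ is trivial and \Cref{proposition:facets} does not apply. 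In those cases I would instead use \Cref{equation:compo_composition} directly, or simply note that $\{\emptyset\}$ on $[1]$ is the operad unit, so composing with it acts as the identity. This also shows $\partial\cal S$ is unital.

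All three collections consist of non-empty complexes, so the statements hold in $\Scomp_{\neq\emptyset}$ as well. Symmetric group actions are trivial, since each element is invariant under relabeling. A collection with one element per arity, closed under composition, is therefore $\opi{Com}$. The only mildly delicate step is the boundary-of-simplex bookkeeping, in particular the degenerate arity-one cases, where the facet proposition cannot be used.
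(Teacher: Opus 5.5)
Your proof is correct. For $\cal T$ and $\cal S$ it is the paper's argument: a one-line evaluation of the composition formula, e.g.\ $\{\emptyset\}\circ^c_k\{\emptyset\}=\{\emptyset\}$ and $\Delta_{[n]}\circ^c_k\Delta_{[m]}=\Delta_{[n+m-1]}$.

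For $\partial\cal S$ you take a genuinely different route. The paper does no facet bookkeeping. It writes $(\partial\cal S,\{\circ^c_i\})$ as $\bigl(\comp'(\widehat{\cal D},\{\circ_i\})\bigr)^\downarrow$: take the discrete-point suboperad $\cal D$ of the substitution operad, pass to its facets $\{\{1\},\ldots,\{n\}\}$, complement elementwise to get $\{[n]\setminus\{j\}\}$, and close downward. Closure under $\circ^c_i$ is then inherited from closure of $\cal D$ under $\circ_i$, via \Cref{proposition:nice_diagram}, \Cref{proposition:spern_suboperad} and \Cref{remark:comp_spern_not_stable}. This explains structurally why $\partial\cal S$ appears: it is the complement-dual of $\cal D$. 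However, it depends on transport facts the paper states essentially without proof, namely that $\comp'$ intertwines $\circ_i$ with $\circ^c_i$ on hypergraphs, and that downward closure is compatible with $\circ^c_i$.

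Your enumeration of the facets $[n+m-1]\setminus\{x\}$ is elementary and self-contained. It also makes explicit two points the paper leaves implicit. First, the arity-one cases, where $\partial\Delta_{[1]}=\{\emptyset\}$ is the unit and the facet description does not apply. Second, the unit bookkeeping showing why $\cal S$ is non-unital while $\cal T$ and $\partial\cal S$ are unital.

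One small caution: the sets listed in \Cref{proposition:facets} need not all be maximal in general, for instance when $L$ is a simplex. What you actually use is that the composite is the downward closure of those sets, which follows from \Cref{equation:compo_composition}. In your case the listed sets are pairwise distinct $(n+m-2)$-subsets, so nothing is lost.
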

	\begin{proof}
		First, $(\cal T,\{\circ_i^c\})$ is a suboperad of  $(\Scomp_{\neq \emptyset},\{\circ_i^c\})$ since the only face of $\{\emptyset\}$ is $\emptyset$ and contain no $i\in[n]$, for every $n\geq 1$, hence we have $\{\emptyset\}\circ_i \{\emptyset\} = \{\emptyset\}$. 
		The fact that $(\cal S,\{\circ_i^c\})$ is a suboperad of $(\Scomp,\{\circ_i^c\})$ comes from $\Delta_{[n]}\circ_i \Delta_{[m]} =\Delta_{[n+m-1]}$.
		Then, we have $(\partial \cal S,\{\circ_i^c\}) = {(\comp'(\widehat{\cal D},\{\circ_i\}))}^\downarrow$, which hence is a suboperad of $(\Scomp_{\neq\emptyset},\{\circ_i^c\})$ as $(\cal D,\{\circ_i\})$ is a suboperad of $(\Scomp_{\neq\emptyset},\{\circ_i\})$ by \Cref{proposition:two_suboperads_subst}, and since $\comp'$ transforms $\{\circ_i\}$ to $\{\circ_i^c\}$, by \Cref{proposition:nice_diagram}, and together with \Cref{proposition:spern_suboperad} and \Cref{remark:comp_spern_not_stable} since $(\widehat{\cal D},\{\circ_i\})$ is a suboperad of $(\Transv_{\neq \{\emptyset\}},\{\circ_i\})$.	
	\end{proof}

	\begin{remark}\label{remark:modules} As all these suboperads $\cal O$ are isomorphic to $\opi{Com}$ and are hence generated by their operations of arity~$2$.
	Thus, we can completely describe left and right $\cal O$-modules on either $(\Scomp,\{\circ_i\})$, $(\Scomp,\{\circ_i^c\})$, $(\Scomp_{\neq\emptyset},\{\circ_i\})$, or $(\Scomp_{\neq\emptyset},\{\circ_i^c\})$, by computing the left or right composition with their operations with arity $2$.
	Let $T_n$, resp. $D_n$, be the unique operation of $\cal T$, resp. $\cal D$ with arity $n$.
	We describe every left and right modules in \Cref{table:modules}, and provide more detailed explanations below.
	\begin{table}[h]
		\caption{Relations between the left and right modules on the two operads $(\Scomp,\{\circ_i\})$ and $(\Scomp,\{\circ_i^c\})$ and classical operations on simplicial complexes.\label{table:modules}}
			\begin{center}
			\begin{tabular}{p{3cm}p{1.5cm}p{2cm}p{3.2cm}p{2cm}}\toprule
				$\cal O$  & $\cal T$& $\cal D$&$\cal S$&$\partial \cal S$\tabularnewline\midrule
				element of $\cal O(1)$  & $\{\emptyset\}$ & $\{\emptyset,\{1\}\}$ & $\{\emptyset,\{1\}\}$ & $\{\emptyset\}$\tabularnewline%
				element of $\cal O(2)$  & $\{\emptyset\}$ & $\{\emptyset,\{1\},\{2\}\}$ & $\{\emptyset, \{1\},\{2\},\{1,2\}\}$ & $\{\emptyset,\{1\},\{2\}\}$\tabularnewline\midrule
				left $\cal O$-module &&&& \tabularnewline%
				\textbullet\ on $(\Scomp,\{\circ_i\})$ & $\cal T$& disjoint\newline unions& join& - \tabularnewline%
				\textbullet\ on $(\Scomp,\{\circ_i^c\})$ & disjoint\newline unions & - & $\cal S$ & join \tabularnewline\midrule
				right $\cal O$-module   &&&& \tabularnewline%
				\textbullet\ on $(\Scomp,\{\circ_i\})$ & ghost\newline vertices& parallel\newline copies& duplicate vertices&- \tabularnewline%
				\textbullet\ on $(\Scomp,\{\circ_i^c\})$ & duplicate vertices &-& wedge operation &universal\newline duplicated vertices \tabularnewline\bottomrule
			\end{tabular}
		\end{center}
	\end{table}

		\begin{enumerate}
			\item The left $\cal T$-module on $(\Scomp_{\neq\emptyset},\{\circ_i\})$ has all its elements at all arity equal to $\{\emptyset\}$.
		The right $\cal T$-module on $(\Scomp_{\neq\emptyset},\{\circ_i\})$ consists in replacing a vertex $i$ of some simplicial $K$ by ghost vertices.
			\item The left $\cal T$-module on $(\Scomp_{\neq\emptyset},\{\circ_i^c\})$ consists in taking disjoint unions of simplicial complexes, as follows:
		\[T_1\circ_1^c K = K, \quad (T_2\circ_2^c L)\circ_1^c K = K\sqcup(L+n),\]
		for $K$ on $[n]$ and $L$ on $[m]$.		
		The right $\cal T$-module on $(\Scomp_{\neq\emptyset},\{\circ_i^c\})$ consists in duplicating vertices, as follows:
		\[K\circ_i^c T_1 = K \quad K\circ_i^c T_2 = \operatorname{dup}_i(K),\]
		for $K$ on $[n]$, $i\in[n]$ and $\operatorname{dup}_i(K)$ being the simplicial complex on $[n+1]$ obtained by first relabeling the vertices $j>i$ as $j+1$ in all the facets of $K$, and then adding the vertex $i+1$ to each facet containing $i$.
			\item The left $\cal D$-module on $(\Scomp_{\neq\emptyset},\{\circ_i\})$ consists in taking disjoint unions of simplicial complexes.		
		The right $\cal D$-module on $(\Scomp_{\neq\emptyset},\{\circ_i\})$ consists in creating parallel vertices, as follows:
		\[K\circ_i^c T_1 = K \quad K\circ_i^c T_2 = \operatorname{para}_i(K),\]
		for $K$ on $[n]$, $i\in[n]$ and $\operatorname{para}_i(K)$ being the simplicial complex on $[n+1]$ obtained by first relabeling the vertices $j>i$ as $j+1$ in all the facets of $K$, and then creating two copies of the facets containing $i$, and replacing $i$ by $i+1$ in the second copies.
		\item The left $\cal S$-module on $(\Scomp_{\neq\emptyset},\{\circ_i\})$ consists in taking join of simplicial complexes, as follows:
		\[\Delta_{[1]}\circ_1 K = K, \quad (\Delta_{[2]}\circ_2 L)\circ_1 K = K\ast L,\]
		The right $\cal S$-module on $(\Scomp_{\neq\emptyset},\{\circ_i\})$ consists in duplicating vertices.
		\item The left $\cal S$-module on $(\Scomp_{\neq\emptyset},\{\circ_i^c\})$ has all its elements at all arity equal to $\Delta_{[n]}$.
		The right $\cal S$-module on $(\Scomp_{\neq\emptyset},\{\circ_i^c\})$ consists in creating universal duplicated vertices as follows.
		First, the facets of $K\circ_i \Delta_{[1]}$ are $F\cup\{i\}$, for $F$ a facet of $K$, in particular, $i$ is included in all the facets of $K\circ_i^c \Delta_{[1]}$.
		Then, the facet of $K\circ_i \Delta_{[2]}$ are obtained by first relabeling the vertices $j>i$ as $j+1$ in all the facets of $K$ and then adding $\{i,i+1\}$ in all those facets.
		\item The left $\partial\cal S$-module on $(\Scomp_{\neq\emptyset},\{\circ_i^c\})$ consists in taking the join of simplicial complexes.
		The right $\partial\cal S$-module on $(\Scomp_{\neq\emptyset},\{\circ_i^c\})$ corresponds to  the $J$-construction of~\cite{Bahri_Bendersky_Cohen_Gitler_2015}, as follows:
		\[K\circ_i\partial\Delta_{[2]} =\wed_i(K),\]
		for $K$ on $[n]$, $i\in[n]$, and $\wed_i(K)$ being the simplicial complex on $[n]$ whose facets are obtained by relabeling all the vertices $j>i$ to $j+1$ and by adding $i+1$ to the facets which contain $i$ and by creating two copies on the facets which do not contain $i$, and then adding $i$ to the first copy and $i+1$ to the second copy.
		\end{enumerate}
	\end{remark}

	\subsection{Theses operads are infinitely generated}\label{section:inf_gene}
	
	We grade simplicial complexes by their dimension.
	\begin{definition}[Dimension]
		Let $K$ be a simplicial complex.
		The \emph{dimension} of a face $I\in K$ is equal to its size minus $1$.
		The \emph{dimension} of a simplicial complex $K$ is the maximum of the dimension of its faces, and is denoted by $\dim(K)$.
	\end{definition}
	\begin{definition}[Pure simplicial complex]
		A simplicial complex is \emph{pure} if all its facets have the same dimension.
	\end{definition}
	\begin{definition}[Complete pure simplicial complex]
		The \emph{complete pure simplicial complex} of dimension $k-1$ on $[n]$ is the simplicial complex whose facets are all the subsets of size $k$ of $[n]$.
		We denote it by $\Delta_{[n]}^{(k-1)}$.
	\end{definition}

	We provide here two useful lemmas on the dimension of the composition of two simplicial complexes.
	\begin{lemma}\label{lemma:dimension_substitution}
		Let $K$ and $L$ be non-empty, non-trivial simplicial complexes on $[n]$ and $[m]$, respectively. Then 
		\begin{align*}
			\dim (K\circ_k L)&=\max(\dim(K),\{\dim(I)+\dim(L)\mid I\in K, k\in I\})\\
			&\in[\dim(K),\dim(K)+\dim(L)].
		\end{align*}
		In particular, if $K$ is pure, then $\dim(K\circ_k L)=\dim(K)+\dim(L) $.
	\end{lemma}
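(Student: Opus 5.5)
The plan is to read off the dimension directly from the description of faces of $K\circ_k L$ in \Cref{equation:compo_substitution}; by \Cref{proposition:facets} it even suffices to look at facets. Every face of $K\circ_k L$ is of one of two kinds. The first kind is $I^{<k}\sqcup(I^{>k}+m-1)$ for $I\in K$ with $k\notin I$. The second kind is $I^{<k}\sqcup(J+k-1)\sqcup(I^{>k}+m-1)$ for $I\in K$ with $k\in I$ and $J\in L$. Since the three pieces live in the disjoint ranges $[k-1]$, $[k,k+m-1]$ and $[k+m,n+m-1]$, sizes add. A face of the first kind has size $|I|$. A face of the second kind has size $|I|-1+|J|$, so its dimension is $(|I|-1)+|J|-1=\dim(I)+\dim(J)$.

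For the maximum I would first fix $I$ and maximize over $J\in L$, which gives $\dim(I)+\dim(L)$. Here non-emptiness of $L$ is needed so that some $J$ exists. Note also that $\dim(I)+\dim(L)\geq \dim(I)$, since $L$ is non-trivial and hence $\dim(L)\geq 0$.

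Next I would check that every face $I$ of $K$ has its dimension realized in $K\circ_k L$, whether or not $k\in I$. If $k\notin I$, the face of the first kind has dimension $\dim(I)$. If $k\in I$, take $J=\{1\}$ if $1\in L$, or more generally any vertex of $L$, which exists since $L$ is non-trivial. This gives a face of size $|I|$.

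Combining these gives
\[\dim(K\circ_k L)=\max\bigl(\dim(K),\{\dim(I)+\dim(L)\mid I\in K,\ k\in I\}\bigr).\]
The bounds follow at once. The lower bound $\dim(K)$ is one of the terms in the maximum. The upper bound holds because $\dim(I)\le\dim(K)$ for every $I\in K$.

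For the pure case, the argument needs a facet $F$ of $K$ containing $k$, which then has $\dim(F)=\dim(K)$ by purity. This is the delicate step. It requires $k$ to be a non-ghost vertex of $K$. If $k$ is a ghost vertex, the second set in the maximum is empty and the dimension is just $\dim(K)$. So I would make explicit that the pure statement is read with $\{k\}\in K$, which is the implicit hypothesis. Under it, the maximum equals $\dim(K)+\dim(L)$. Apart from this point, I expect the only bookkeeping obstacle to be the size count $|I|-1+|J|$, that is, the removal of $k$ itself.
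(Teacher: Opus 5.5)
Your argument is correct and follows the paper's route: the paper reads off the sizes $|F|$ and $|F|+|F'|-1$ of the facets listed in \Cref{proposition:facets}, while you work with all faces and make explicit the step the paper leaves implicit, namely that $\dim(L)\ge 0$ makes each term $\dim(I)+\dim(L)$ dominate $\dim(I)$, so the maximum can be written with $\dim(K)$. Your caveat on the pure case is also justified, because the final clause fails when $k$ is a ghost vertex of $K$ and $\dim(L)\ge 1$: for instance $K=\{\emptyset,\{1\}\}$ on $[2]$, $L=\Delta_{[2]}$ and $k=2$ give $K\circ_2 L=\{\emptyset,\{1\}\}$ on $[3]$, of dimension $0\neq 1=\dim(K)+\dim(L)$, so the hypothesis $\{k\}\in K$ is genuinely needed.
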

	\begin{proof}
		Recall that $\widehat{K}$ denotes the set of facets of $K$.
		By \Cref{proposition:facets}, the facets of $K\circ_k L$ are of the form:
		\[\begin{cases}
			F^{<k} \sqcup (F^{>k}+m-1),& F\in \widehat K,  \text{ if }k\notin F,\\
			F^{<k} \sqcup (F'+ k - 1)\sqcup(F^{>k}+n-1),&F\in \widehat K,F'\in \widehat L ,\text{ if } k\in F,
		\end{cases}\] of respective size 
		\[\begin{cases}
			|F|,& F\in \widehat K  \text{ if }k\notin F,\\
			|F\setminus\{k\}|+|F'| = |F|+|F'|-1,&F\in \widehat K,F'\in \widehat L ,\text{ if } k\in F,
		\end{cases}\] hence their respective dimensions are
		\[\begin{cases}
		|F|-1=\dim(F),& F\in \widehat K,  \text{ if }k\notin F\\
		|F|+|F'|-2 = \dim(F) + \dim(F'),&F\in \widehat K,F'\in \widehat L ,\text{ if } k\in F,
		\end{cases}\]
		which concludes.
	\end{proof}
	
	\begin{lemma}\label{lemma:dimension_composition}
		Let $K$ and $L$ be non-empty, non-trivial simplicial complexes on $[n]$ and $[m]$, respectively. Then 
		\begin{align*}
			\dim (K\circ^c_i L)&=\max(\dim(K)+\dim(L)+1,\{\dim(I)+m-1\mid I\in K, k\in I\})\\
			&\in[\dim(K)+\dim(L)+1,\dim(K)+m-1].
		\end{align*}
		In particular, if $K$ is pure, then $\dim(K\circ_i^c L)=\dim(K)+ m-1 $.
	\end{lemma}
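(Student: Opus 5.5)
The plan is to mirror the proof of \Cref{lemma:dimension_substitution}, reading off the dimension of $K\circ^c_i L$ directly from its facets as listed in \Cref{proposition:facets}. Since $K$ and $L$ are non-empty and non-trivial, that proposition applies. It says the facets of $K\circ^c_i L$ are of exactly two kinds:
\begin{itemize}
\item $F^{<i}\sqcup([m]+i-1)\sqcup(F^{>i}+m-1)$, for a facet $F\in\widehat K$ with $i\in F$;
\item $F^{<i}\sqcup(F'+i-1)\sqcup(F^{>i}+m-1)$, for $F\in\widehat K$ with $i\notin F$ and $F'\in\widehat L$.
\end{itemize}
The dimension of a simplicial complex is the maximum of the dimensions of its facets, so it suffices to compute the size of each such facet.

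Because the three pieces live in the disjoint ranges $[i-1]$, $[i,i+m-1]$ and $[i+m,n+m-1]$, sizes simply add.
\begin{itemize}
\item In the first case the size is $|F|-1+m$, so the dimension is $\dim(F)+m-1$.
\item In the second case the size is $|F|+|F'|$, so the dimension is $\dim(F)+\dim(F')+1$.
\end{itemize}
Maximising over $F'\in\widehat L$ in the second case replaces $\dim(F')$ by $\dim(L)$. This gives the facet-wise formula
\[
\dim(K\circ^c_i L)=\max\bigl(\{\dim(F)+\dim(L)+1\mid F\in\widehat K,\ i\notin F\}\cup\{\dim(F)+m-1\mid F\in\widehat K,\ i\in F\}\bigr).
\]
This is the core identity. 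The displayed expression in the statement, with its bracketing interval, should then follow by bounding each term using $\dim(F)\le\dim(K)$.

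I expect the main obstacle to be matching this identity with the exact bounds as written, and here I would be careful. One has $\dim(L)\le m-1$, with equality exactly when $L=\Delta_{[m]}$. So a facet avoiding $i$ can reach $\dim(K)+m$, which exceeds the stated upper endpoint $\dim(K)+m-1$. Conversely, if every maximal-dimensional facet of $K$ contains $i$, the term $\dim(K)+\dim(L)+1$ need not be attained. I would therefore first try to prove the statement in the form above, which follows from the facet list alone. I would then derive the interval bounds under whatever hypothesis the intended reading requires: for instance, taking the first term only over facets not containing $i$, and using $\dim(L)\le m-1$ for the upper bound. In the same way I would check the pure case on the two facet types separately. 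If $K$ is pure and $i$ lies in some facet (indeed in all top facets, by purity of dimension), the first type gives $\dim(K)+m-1$. Whether the second type can exceed this depends only on whether $\dim(L)+1>m-1$, i.e.\ on $\dim(L)\ge m-1$. So the pure-case formula $\dim(K)+m-1$ holds precisely when no facet avoiding $i$ beats it, and I would state and verify this condition explicitly rather than assert it unconditionally.
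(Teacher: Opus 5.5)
Your route is the paper's own. Its proof lists the two facet types of Proposition~\ref{proposition:facets} and defers to the substitution case, and your size count ($|F|-1+m$ and $|F|+|F'|$) and the resulting facet-wise identity are correct. One caveat concerns maximality. When $L=\Delta_{[m]}$ the type-one sets need not be maximal: for $K=\{\emptyset,\{1\},\{2\}\}$ and $i=1$ one gets $[m]\subsetneq[m+1]$. So the list is not literally the facet set. Every face of $K\circ^c_iL$ lies in a listed set, and every listed set is a face, so your maximum is still the dimension.

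Your suspicion is justified: the lemma is false as written, and the paper's sketch does not notice this. For $K=\Delta_{[n]}$ and $L=\Delta_{[m]}$ one has $K\circ^c_iL=\Delta_{[n+m-1]}$, of dimension $n+m-2$. The displayed maximum is instead $\dim(K)+\dim(L)+1=n+m-1$. The interval $[\dim(K)+m,\dim(K)+m-1]$ is empty, as it is for every $K$ once $L=\Delta_{[m]}$.

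The pure-case claim fails in two independent ways.
\begin{itemize}
\item For $K=\{\emptyset,\{1\},\{2\}\}$, $i=1$, $L=\Delta_{[m]}$, the result is $\Delta_{[m+1]}$, of dimension $m\neq m-1$.
\item For $K=\{\emptyset,\{2\}\}$ on $[2]$ with ghost vertex $i=1$, and $L$ three discrete points, the dimension is $1$, not $2$.
\end{itemize}
You handle the first mode explicitly. You handle the ghost-vertex mode only implicitly, by assuming $i$ lies in some facet; you should name it.

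Then, instead of ``whatever hypothesis the intended reading requires'', commit to what your identity gives.
\begin{itemize}
\item If $L\neq\Delta_{[m]}$, i.e.\ $\dim(L)\le m-2$, the displayed equality and both bounds hold. If some top-dimensional facet avoids $i$, the two maxima coincide. Otherwise both equal the attained value $\dim(K)+m-1$, since $\dim(K)+\dim(L)+1\le\dim(K)+m-1$. So your non-attainment worry only bites when $L=\Delta_{[m]}$.
\item For pure $K$, the formula $\dim(K)+m-1$ holds whenever $\{i\}\in K$ and either $L\neq\Delta_{[m]}$ or $i$ lies in every facet of $K$.
\end{itemize}

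Your parenthetical ``indeed in all top facets, by purity'' is wrong as written. Purity only makes every facet through $i$ top-dimensional. ``$i$ in every facet'' is exactly the extra condition needed when $L=\Delta_{[m]}$.
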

	\begin{proof}
		The facets of $K\circ^c_i L$ are of the form:
		\[\begin{cases}
			F^{<i} \sqcup (F'+ i - 1)\sqcup(F^{>i}+m-1),&F\in \widehat K,F'\in \widehat L, \text{ if } i\notin I,\\
			F^{<i} \sqcup (A+i-1) \sqcup(F^{>i}+m-1),& F\in \widehat K ,A\subseteq [n], \text{ if }i\in I.
		\end{cases}\]
		The rest of the proof is similar to the one of \Cref{lemma:dimension_substitution}.
	\end{proof}

	An operation of an operad is called \emph{irreducible} whenever it cannot be written as the composition of two operations different from the unit.
	Irreducible operations are of course generators of the operad.
	
	For the substitution operad, we have the following result.
	\begin{lemma}\label{lemma:inf_indec_subst}
		For $1 < k < n$, the complete pure simplicial complex 
		$\Delta_{[n]}^{(k-1)}$ is indecomposable in $(\Scomp, \{\circ_i\})$.
	\end{lemma}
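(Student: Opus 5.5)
The plan is to argue by contradiction, using the arity bookkeeping together with the description of facets in \Cref{proposition:facets}. We work with the collection indexed by arities $\geq 1$, as for $\opi{IdemCom}$. Suppose
\[
\Delta_{[n]}^{(k-1)} = K\circ_i L,
\]
with $K\in\Scomp(p)$, $L\in\Scomp(q)$, $p,q\geq 1$, $p+q-1=n$, $i\in[p]$, and neither $K$ nor $L$ equal to the unit $\pm$. Write $B\coloneqq[i,i+q-1]$ for the block of vertices coming from $L$. Note that $|B|=q$ and $|[n]\setminus B|=p-1$.

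\textbf{Degenerate cases.} First I dispose of the degenerate arities and degenerate complexes.

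If $p=1$, then $K\in\{\emptyset,\{\emptyset\},\pm\}$. The choice $K=\pm$ is excluded. For the other two choices, $K\circ_1 L$ is $\emptyset$ or $\{\emptyset\}$, and neither equals $\Delta_{[n]}^{(k-1)}$ since $k\geq 2$.

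If $L=\emptyset$ or $L=\{\emptyset\}$ (this includes every non-unit $L$ with $q=1$), then by \Cref{equation:compo_substitution} no face of $K\circ_i L$ meets $B$. So the vertices of $B$ are ghost vertices. This is impossible, since every singleton is a face of $\Delta_{[n]}^{(k-1)}$.

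Likewise, $K$ cannot be empty or trivial, otherwise the composite is $\emptyset$ or $\{\emptyset\}$. Hence $p,q\geq 2$, and $K$ and $L$ are non-empty and non-trivial. In particular \Cref{proposition:facets} applies.

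\textbf{Key structural observation.} By \Cref{proposition:facets}, a facet of $K\circ_i L$ that meets $B$ must be of the first type,
\[
F^{<i}\sqcup(F'+i-1)\sqcup(F^{>i}+q-1),
\]
with $i\in F\in\widehat K$ and $F'\in\widehat L$. Its trace on $B$ is therefore a shifted facet $F'$ of $L$. Moreover, keeping the outside part $S$ fixed and letting $F'$ range over $\widehat L$ produces facets $S\sqcup(F'+i-1)$ for every $F'\in\widehat L$.

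Applied to $\Delta_{[n]}^{(k-1)}$, whose facets are exactly the $k$-subsets of $[n]$, this gives a rigidity statement. Every $k$-subset $G$ with $G\cap B\neq\emptyset$ satisfies $G\cap B=F'+i-1$ for some facet $F'$ of $L$. Hence $|G\cap B|\in\{|F'| : F'\in\widehat L\}$. Conversely, the trace $T=G\cap B$ can be replaced by any other facet of $L$ while $G$ remains a $k$-subset. So all facets of $L$ have the same size $|T|$, and this size is the same for every such $G$. In short, among all $k$-subsets $G$ of $[n]$ meeting $B$, the cardinality $|G\cap B|$ must be constant.

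\textbf{Counting step (the main check).} The achievable values of $t=|G\cap B|$ for a $k$-subset $G$ meeting $B$ are exactly the integers in
\[
\max(1,\,k-p+1)\;\leq\; t\;\leq\;\min(k,\,q),
\]
since the part outside $B$ has size $k-t\leq p-1$. This interval must reduce to a single point. I check that this forces a forbidden value, in two cases.

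If $\min(k,q)=k$, then $k=\max(1,k-p+1)$ forces $k=1$ or $p=1$. Both are excluded.

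If $\min(k,q)=q<k$, then $q=\max(1,k-p+1)$ forces $q=1$, which is excluded, or $k=p+q-1=n$, which is also excluded.

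This contradiction would prove indecomposability. The part I expect to need the most care is the first step: ruling out degenerate $L$ (empty or trivial, arity $1$) before invoking \Cref{proposition:facets}, and the fact that the statement implicitly excludes arity-$0$ operations. Indeed, $\Delta_{[n+1]}^{(k-1)}\circ_i\{\emptyset\}$ with $\{\emptyset\}\in\Scomp(0)$ recovers $\Delta_{[n]}^{(k-1)}$, so arity $0$ must not be allowed.
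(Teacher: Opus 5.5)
Your proof is correct, and it follows a different route from the paper's. The paper argues by dimension. It claims that both kinds of facets of \Cref{proposition:facets} occur in $K\circ_i L$, and deduces from purity that $\dim(K)=\dim(K)+\dim(L)=k-1$. Hence $L$ is discrete on $q\geq 2$ vertices, and no edge inside the block $[i,i+q-1]$ is a face, whereas $\Delta_{[n]}^{(k-1)}$ contains all edges since $k>1$.

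You never use $\dim(K)$. Fixing the part outside $B$ and varying the facet of $L$ forces $L$ to be pure and $|G\cap B|$ to be constant over all $k$-subsets $G$ meeting $B$. An elementary count of the achievable values in $[\max(1,k-p+1),\min(k,q)]$ then finishes the argument.

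The paper's version is shorter, but its dimension step tacitly needs two things. First, facets of $K$ through $i$ and facets of $K$ avoiding $i$ must have equal size. Second, facets avoiding the block must exist at all, which fails whenever $k\geq p$. For example, with $n=4$, $k=3$, $p=2$, $q=3$, every $3$-subset meets the block. Your argument needs no purity of $K$ and handles all $(p,q)$ uniformly. It also treats the degenerate factors explicitly and shows exactly where $1<k<n$ and $p,q\geq 2$ are used.

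Your closing caveat is also right. The lemma depends on the collection being indexed by arities $\geq 1$, as for $\opi{IdemCom}$, since $\Delta_{[n+1]}^{(k-1)}\circ_i\{\emptyset\}$ with $\{\emptyset\}$ in arity $0$ equals $\Delta_{[n]}^{(k-1)}$.
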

	\begin{proof}
		Suppose $\Delta_{[n]}^{(k-1)} = K \circ_i L$ with $K$ on $[p]$, $L$ on $[q]$, 
		$p + q - 1 = n$, both different from the unit $\mathrm{pt}$.
		Since $\Delta_{[n]}^{(k-1)}$ has no ghost vertices, neither does $K$ and $L$.
		Since $\Delta_{[n]}^{(k-1)}$ is pure of dimension $k-1$ and $1 < k < n$, 
		there exist facets of $\Delta_{[n]}^{(k-1)}$ both containing and not containing $i$,
		so both cases of facets in \Cref{proposition:facets} occur.
		For $K \circ_i L$ to be pure of dimension $k-1$, we need 
		$\dim(K) = \dim(K) + \dim(L) = k-1$, hence $\dim(L) = 0$, 
		that is $L$ is a discrete complex on $q \geq 2$ vertices.
		But then any edge between two vertices in $\{i, \ldots, i+q-1\}$, 
		corresponding to two distinct vertices of $L$, is not a face of $K \circ_i L$, 
		since all faces of $K \circ_i L$ arising from $L$ are singletons.
		Hence $\Delta_{[n]}^{(k-1)}$, which contains all edges on $[n]$ since $k > 1$, 
		cannot equal $K \circ_i L$, a contradiction.
	\end{proof}

	For the composition operad, we have the following result.
	\begin{lemma}\label{lemma:inf_indec_comp}
    For $1 \leq k < n-1$, the complete pure simplicial complex 
    $\Delta_{[n]}^{(k-1)}$ is indecomposable in $(\Scomp, \{\circ_i^c\})$.
\end{lemma}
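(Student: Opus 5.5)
We argue by contradiction, following the pattern of \Cref{lemma:inf_indec_subst}. Suppose $\Delta_{[n]}^{(k-1)} = K\circ^c_i L$ with $K$ on $[p]$, $L$ on $[q]$, $p+q-1=n$, and neither $K$ nor $L$ equal to the unit $\{\emptyset\}$ on $[1]$. Write $B\coloneqq[i,i+q-1]$ for the block of vertices coming from $L$. By \Cref{equation:compo_composition}, a face $S$ of $K\circ^c_i L$ can arise in only two ways:
\begin{itemize}
\item either $S\supseteq B$, or
\item $S=I^{<i}\sqcup(J+i-1)\sqcup(I^{>i}+q-1)$ with $i\notin I\in K$ and $J\in L$.
\end{itemize}
In particular, every face that meets $B$ \emph{properly} (nonempty intersection, but not containing $B$) is of the second kind. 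This is the structural fact I will exploit.

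\textbf{Degenerate arities.} First I rule out $p=1$ and $q=1$.
\begin{itemize}
\item If $p=1$, then $K\in\{\emptyset,\ \{\emptyset,\{1\}\}\}$, since $K=\{\emptyset\}$ is the unit. The composition is then either empty or the full simplex $\Delta_{[q]}$. Neither equals $\Delta_{[n]}^{(k-1)}$, because $k<n$.
\item If $q=1$, then $L\in\{\emptyset,\{\emptyset,\{1\}\}\}$. The first choice yields a complex in which vertex $i$ lies in every facet. The second yields the complex obtained from $K$ by adding $i$ to every face, so again $i$ lies in every facet. Both are impossible, since $k<n$ forces some $k$-subset to avoid any given vertex.
\end{itemize}
Hence $p,q\ge 2$ from now on.

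\textbf{Join inequality.} Because of the product form of faces of the second kind, the faces meeting $B$ properly form a join. Precisely, if $i\notin I\in K$ and $J\in L$ with $J\ne[q]$, then the set built from $I$ and $J+i-1$ is a face of $K\circ^c_iL$. Set
\[
c\coloneqq\max\{|I| : i\notin I\in K\},\qquad a\coloneqq\max\{|J| : J\in L,\ \emptyset\ne J\subsetneq[q]\}.
\]
Choosing a maximizer for each and combining them gives a face of size $a+c$, so $a+c\le k$. In the other direction, I will use that every $k$-subset of $[n]$ is a face:
\begin{itemize}
\item a $k$-subset consisting of one vertex of $B$ and $k-1$ vertices outside $B$ forces $c\ge k-1$;
\item a $k$-subset lying in $B$, or meeting $B$ properly in as many vertices as possible, forces $a\ge\min(k,q-1)$.
\end{itemize}
Together these give $a+c\ge k-1+\min(k,q-1)$, which exceeds $k$ as soon as $k\ge2$ and $q\ge3$. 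That is the desired contradiction in the generic case.

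\textbf{Remaining cases.}
\begin{itemize}
\item \emph{Case $k=1$.} Any facet of $K$ containing $i$ would produce a face containing $B$, which has size $\ge2$, so $i$ is a ghost vertex of $K$. Each singleton $\{b\}$ with $b\in B$ then forces the corresponding singleton to lie in $L$. Each vertex $x\notin B$ (one exists since $p\ge2$) forces a face $I\ni x$ of $K$ with $i\notin I$. Combining the two gives the edge $\{x,b\}$ as a face, contradicting $k=1$.
\item \emph{Case $q=2$.} Here $B=\{i,i+1\}$, and the hypothesis $k<n-1$ is what makes the argument work. I would take a $(k+1)$-set $T\cup\{b\}$ with $T$ a $k$-set outside $B$ and $b\in B$. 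I expect to show that it is forced to be a face, either through the join inequality above, or through a facet of $K$ containing $i$, which contributes all of $B$ together with a $(k-1)$-set.
\end{itemize}

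\textbf{Main obstacle.} The delicate point is the lower bound $c\ge k-1$. It needs at least $k-1$ vertices outside $B$, that is $p-1\ge k-1$. When $p$ is small this fails, and the needed large faces must instead come from faces containing all of $B$. In that regime I would use the first kind of face directly: a facet of $K$ containing $i$ produces a face containing $B$ of size at least $q\ge n-k+1$, and I expect a separate count of this kind, again using $k<n-1$, to give the contradiction there. Checking that these cases together exhaust all possibilities is where the real care is needed.
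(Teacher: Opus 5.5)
Your size-counting framework can be made to work, but the proposal has two problems.

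\emph{The structural fact is false for faces.} A face coming from some $I\in K$ with $i\in I$ has an \emph{arbitrary} block part $A\subseteq[q]$. It can therefore meet $B$ properly without being of the second kind. For example, take $K=\Delta_{[2]}$, $i=1$ and $L=\{\emptyset\}$ on $[2]$: then $\{1\}$ is a face of $K\circ^c_1L$ meeting $B=\{1,2\}$ properly, yet $\{1\}\notin L$. Two weaker statements are true, and they are all your bounds need.
\begin{enumerate}
\item[(i)] The part outside $B$ of \emph{any} face is, after relabelling, a face of $K$ avoiding $i$, by downward closure of $K$.
\item[(ii)] A face $S$ with $|S|=k$ and $S\not\supseteq B$ is of the second kind. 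Otherwise $S\cup B$ would be a face of $\Delta^{(k-1)}_{[n]}$ of size $>k$. Equivalently, argue with facets as in \Cref{proposition:facets}.
\end{enumerate}
With (i) and (ii), your inequality $a+c\le k$ and your bound $a\ge\min(k,q-1)$ are justified.

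\emph{The real gap: the case analysis is not closed.} The case $q=2$ is only sketched, and the regime $p\le k-1$ is left as an expectation. For instance, $n=6$, $k=4$, $p=2$, $q=5$ is not covered by anything you prove.

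The missing idea is a better test set for $c$. You only used $k$-subsets with exactly one vertex in $B$, which caps you at $c\ge k-1$ and needs $p\ge k$. Instead, take a $k$-subset with $\min(k,p-1)$ vertices outside $B$. By (i) this gives $c\ge\min(k,p-1)$, and hence
\[
a+c\;\ge\;\min(k,p-1)+\min(k,q-1)\;\ge\;k+1
\]
for $p,q\ge 2$. Indeed, if one minimum equals $k$, the other is at least $1$; otherwise the sum is $p+q-2=n-1>k$. This is exactly where $k<n-1$ is used. It settles $k=1$, $q=2$ and small $p$ uniformly, leaving only your (correct) $p=1$ and $q=1$ checks.

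The paper argues differently. It shows that every $\ell$-subset of $[q]$, for all $\ell$ in an interval determined by $k,p,q$, must be a facet of $L$. The antichain property of facets then forces this interval to be a single point, i.e.\ $k=n-1$. Your join-size inequality, once corrected as above, is an equally valid and more uniform route.
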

\begin{proof}
    Suppose $\Delta_{[n]}^{(k-1)} = K \circ_i^c L$ with $K$ on $[p]$, $L$ on $[q]$, 
    $p + q - 1 = n$, both different from the unit $\{\emptyset\}$.
	First, we may suppose that $q\geq 2$: otherwise, $L=\emptyset$, which is excluded, 
    or $L=\pm$, and in that case the composition $K\circ_i^c L$ transforms the vertex $i$ of $K$ into a universal vertex (see \Cref{remark:modules}~(5)), however, $\Delta_{[n]}^{(k-1)}$ does not have such a vertex since $k < n-1$.
    By \Cref{proposition:facets}, the facets of $K \circ_i^c L$ are:
    \[\begin{cases}
        F^{<i} \sqcup ([q]+i-1) \sqcup (F^{>i}+q-1),
        &\text{for every facet } F \text{ of } K \text{ containing } i,\\
        F^{<i} \sqcup (F'+i-1) \sqcup (F^{>i}+q-1),
        &\text{for every facet } F \text{ of } K \text{ not containing } i 
        \text{ and every facet } F' \text{ of } L.
    \end{cases}\]
    If $k=1$, type 1 facets have size $|F|-1+q \geq q \geq 2 > 1$ and 
    type 2 facets have size $|F|+|F'| \geq 2 > 1$, contradicting all 
    facets of $\Delta_{[n]}^{(0)}$ having size $1$.
    For $k \geq 2$, since $\Delta_{[n]}^{(k-1)}$ contains all $k$-subsets of $[n]$, 
    the facets containing all of $[q]+i-1$ force the facets of $K$ 
    containing $i$ to be exactly the sets $A\cup\{i\}$ for 
    $A\in\binom{[p]\setminus\{i\}}{k-q}$.
    The remaining facets of $\Delta_{[n]}^{(k-1)}$, those having a proper 
    intersection with $[q]+i-1$, must be type 2 facets.
    Since $\Delta_{[n]}^{(k-1)}$ contains all $k$-subsets, every subset 
    of size $\ell$ of $[q]+i-1$ must appear as the block part of some 
    type 2 facet, for every $\ell \in [\max(0,k-p+1), q-1]$.
    This forces $L$ to contain all subsets of $[q]$ of every size $\ell$ 
    in this range.
    However, the facets of $L$ form a transversal family, so no facet of 
    $L$ can be contained in another, and since any $\ell_1$-subset is 
    contained in an $\ell_2$-subset for $\ell_1 < \ell_2$, the range must 
    consist of a single value, that is $\max(0,k-p+1) = q-1$.
    Since $q \geq 2$, we have $q-1 \geq 1 > 0$, so $k-p+1 = q-1$, 
    hence $k = p+q-2 = n-1$, a contradiction.
\end{proof}

	\begin{theorem}\label{theorem:inf_gen}
		The four operads $(\Scomp,\{\circ_i\})$, $(\Scomp_{\neq \emptyset},\{\circ_i\})$, $(\Scomp,\{\circ_i^c\})$ and $(\Scomp_{\neq \emptyset},\{\circ_i^c\})$ have infinitely many generators. 
	\end{theorem}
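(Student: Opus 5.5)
The plan is to show that each of the four operads has irreducible operations in infinitely many arities. Any generating set must contain every irreducible operation (up to the $\Sym$-action), so this yields infinitely many generators. The two lemmas just proved provide the families: \Cref{lemma:inf_indec_subst} shows that $\Delta_{[n]}^{(k-1)}$ is indecomposable in $(\Scomp,\{\circ_i\})$ for $1<k<n$. Taking for instance $k=2$ and $n\geq 3$, the complete graphs $\Delta_{[n]}^{(1)}$ give one indecomposable operation in every arity $n\geq 3$. Similarly, \Cref{lemma:inf_indec_comp} with $k=1$ and $n\geq 3$ shows that the discrete complexes $\Delta_{[n]}^{(0)}$ are indecomposable in $(\Scomp,\{\circ_i^c\})$ for every $n\geq 3$.

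The first step is to make the passage from indecomposability to generators precise. Suppose $G$ is a generating set. Every operation of arity $n$ is either the unit, a ($\Sym$-translate of a) generator, or a nontrivial iterated composite. Any composite tree with at least two non-unit vertices can be written as $K\circ_i L$ with neither factor equal to the unit. The one case to rule out is that an iterated composite collapses to the unit, or that a factor equals the unit; I handle it by arity counting. Composing with nullary or unary non-unit operations needs care, because arity alone does not exclude $K\circ_i L$ with $L$ of arity $0$ or $1$. However, the lemmas' definition of indecomposability already excludes every decomposition with both factors different from the unit, whatever the arities. So each $\Delta_{[n]}^{(k-1)}$ in the families must lie in the $\Sym$-orbit of an element of $G$. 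The families contain one element in each arity $n\geq 3$, so $G$ meets infinitely many arities and is therefore infinite.

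The second step transfers the result to the non-empty variants $\Scomp_{\neq\emptyset}$. These are suboperads with the same unit, and the chosen complexes are non-empty. A decomposition inside $\Scomp_{\neq\emptyset}$ is in particular a decomposition inside $\Scomp$, so indecomposability in the larger operad implies indecomposability in the suboperad. The same argument then applies verbatim.

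The main obstacle is that the lemmas' proofs implicitly assume that both factors are non-empty and non-trivial, since they rely on \Cref{proposition:facets} and the dimension lemmas. I would therefore add a short check of the degenerate factorizations. If $K$ or $L$ is $\emptyset$, the composite is $\emptyset$, which is not $\Delta_{[n]}^{(k-1)}$. If $K=\{\emptyset\}$, the composite is trivial. If $L=\{\emptyset\}$ in the substitution case, the vertex $i$ becomes a ghost vertex (or is deleted when $q=0$), whereas our complexes have no ghost vertices. In the composition case, $\{\emptyset\}$ on one vertex is the unit. For $\{\emptyset\}$ with $q\geq 2$, the vertex $i$ is duplicated, and the resulting pair of vertices appears in exactly the same facets; this is impossible in $\Delta_{[n]}^{(0)}$ for $n\geq 2$. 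With these cases disposed of, the theorem follows.
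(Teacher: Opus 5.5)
Your route is the paper's: the theorem is read off directly from the two indecomposability lemmas. Your specializations ($\Delta^{(1)}_{[n]}$ for $\{\circ_i\}$ and $\Delta^{(0)}_{[n]}$ for $\{\circ_i^c\}$, $n\geq 3$) are fine. So are the passage from indecomposables to generators (cleanest as an induction on the number of vertices of a tree, using that these complexes are $\Sym_n$-invariant) and the transfer to the $\Scomp_{\neq\emptyset}$ versions. The problems are confined to your last paragraph, which contains two incorrect claims.

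\textbf{First claim.} For $\{\circ_i^c\}$, the composite with $K=\{\emptyset\}$ on $[p]$ is not trivial. Every face of $K$ avoids $i$, so $K\circ_i^c L=\{J+i-1\mid J\in L\}$: this is $L$ padded with $p-1$ ghost vertices. For $p=1$ it is $L$ itself, as it must be, since $K$ is then the unit. The case is still excluded, but for a different reason: ghost vertices when $p\geq 2$, and $K$ being the unit when $p=1$.

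\textbf{Second claim.} More seriously, your dismissal of $L=\{\emptyset\}$ on $[0]$ does not hold up, because deleting a vertex creates no ghost vertex. If this nullary operation existed, you would have $\Delta^{(1)}_{[n]}=\Delta^{(1)}_{[n+1]}\circ_{n+1}\{\emptyset\}$ and $\Delta^{(0)}_{[n]}=\Delta^{(0)}_{[n+1]}\circ^c_{n+1}\{\emptyset\}$. In fact every complex of positive arity would then be decomposable: add a ghost vertex, then delete it. Your ``whatever the arities'' step, and with it the whole indecomposability strategy, would collapse.

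What actually disposes of this case is that the operation does not exist. This also covers the $q=0$ case, which you do not treat for $\{\circ_i^c\}$. Since $\opi{IdemCom}$ lives only in arities $n\geq 1$, the operads $(\Scomp,\{\circ_i\})$ and $(\Scomp,\{\circ_i^c\})$ have exactly one nullary operation, the empty complex, which annihilates. The $\Scomp_{\neq\emptyset}$ versions have no nullary operation at all. Replace your parenthetical with this observation.
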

	\begin{proof}
		This is a direct consequence of \Cref{lemma:inf_indec_subst} and \Cref{lemma:inf_indec_comp}.
	\end{proof}

	\begin{remark}
		As the complete pure simplicial complexes are very symmetrical and forms a well identified infinite $\Sym$-subcollection of the simplicial complexes, the author is curious to see which are the suboperads of $(\Scomp,\{\circ_k\})$ and $(\Scomp,\{\circ_k^c\})$ they generate.
	\end{remark}

	\subsection{Algebras over the substitution and composition operads}\label{section:algebras_subst_comp}
	In this section, we consider the polyhedral product and coproduct constructions, that requires to work in the category of non-empty simplicial complexes $\Scomp_{\neq\emptyset}$. Note that all results work without modification in the category of reduced simplicial complexes.
	\subsubsection{Polyhedral product}
  A \emph{pair of topological spaces} $(X,Y)\in\Topo^2$ is a pair such 
  that $Y\subseteq X$. We denote by $\PairTop$ the category of pairs of 
  topological spaces.
  Let $n\geq 1$ and $(\underline{X},\underline{Y})={((X_i,Y_i))}_{i\in [n]}$ 
  be a sequence of pairs of topological spaces. Denote by $\underline{\iota} = {(\iota_i\colon Y_i\hookrightarrow X_i)}_{i\in[n]}$
  the sequence of inclusion maps.
  In particular, recall that  $\cal z_n(-;\underline{\iota})\colon(\wp([n]),\subseteq)\to\Topo$ is the cubical functor of \Cref{definition:cubical_prod_funct},
  whose value on $I\subseteq[n]$ is
  \[
    \cal z_n(I;\underline{\iota}) = \prod_{i\in[n]}\begin{cases}
      X_i & i\in I,\\ Y_i & i\notin I,
    \end{cases}
  \]
  and whose value on an inclusion $I\subseteq J$ is the natural inclusion
  $\cal z_n(I\subseteq J;\underline{\iota})\colon 
  \cal z_n(I;\underline{\iota})\hookrightarrow\cal z_n(J;\underline{\iota})$.

  The \emph{polyhedral product} associated to $(\underline{X},\underline{Y})$ 
  is the functor
  \[
    \cal Z_n(-;\underline{\iota})\colon \Scomp_{\neq\emptyset}(n)\to\Topo
  \]
  which associates to each simplicial complex $K$ the colimit
  \[
    \cal Z_n(K;\underline{\iota})\coloneqq
    \operatorname*{colim}_{I\in\operatorname{cat}(K)}\cal z_n(I;\underline{\iota}).
  \]

	In~\cite{Ayzenberg_2014}, the author proves an equivalent formulation of the following equality of topological spaces.
	
\begin{proposition}[\cite{Ayzenberg_2014}]
  \label{proposition:operad_polyhedral_prod}
  Let $K$ and $L$ be two non-empty simplicial complexes on $[n]$ and $[m]$,
  respectively, and let $k\in[n]$. For every sequence of pairs of topological 
  spaces ${((X_i,Y_i))}_{i\in[n+m-1]}$ with inclusion maps 
  $\underline{\iota} = {(\iota_i\colon Y_i\hookrightarrow X_i)}_{i\in[n+m-1]}$,
  we have:
  \[
    \cal Z_{n+m-1}(K\circ^c_k L;\,\underline{\iota}) 
    \;=\; 
    \cal Z_n\!\left(K;\;
      \underline{\iota}_{[k-1]},\;
      \left(\cal Z_m(L;\,\underline{\iota}_{[k,k+m-1]})\to \prod_{j=k}^{k+m-1} X_j\right),\;
      \underline{\iota}_{[k+m,n+m-1]}
    \right).
  \]
\end{proposition}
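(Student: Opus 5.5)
We work inside the ambient space $P\coloneqq\prod_{j\in[n+m-1]}X_j$, where every space in sight is a union of subspaces of the form $\cal z_{n+m-1}(I;\underline{\iota})$, so the colimits defining polyhedral products are unions of subspaces of $P$. Polyhedral products are unions taken inside this product, and the colimit over $\operatorname{cat}(K)$ agrees with the union, since all maps are inclusions of subspaces closed under intersection ($\cal z(I)\cap\cal z(J)=\cal z(I\cap J)$). The proof therefore reduces to an equality of two subsets of $P$. We prove it by expanding each side as a union of cubical pieces $\cal z_{n+m-1}(I';\underline{\iota})$, indexed by faces $I'$, and then comparing the resulting families of faces via \Cref{equation:compo_composition}.

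\textbf{Right-hand side.} Write $Z_L\coloneqq\cal Z_m(L;\underline{\iota}_{[k,k+m-1]})$ and $X_L\coloneqq\prod_{j=k}^{k+m-1}X_j$. Since $\times$ distributes over unions, a face $I\in K$ contributes the product over three blocks:
\begin{itemize}
\item $\cal z_{k-1}(I^{<k};\underline{\iota}_{[k-1]})$ on the first block;
\item $X_L$ on the middle block if $k\in I$, and $Z_L$ if $k\notin I$;
\item $\cal z(I^{>k};\dots)$ on the last block.
\end{itemize}
Now $X_L=\cal z_m([m];\dots)$, and $Z_L=\bigcup_{J\in L}\cal z_m(J;\dots)$. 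Hence the piece indexed by $I$ equals
\[
\cal z_{n+m-1}\bigl(I^{<k}\sqcup([m]+k-1)\sqcup(I^{>k}+m-1);\underline{\iota}\bigr)\quad\text{if }k\in I,
\]
and
\[
\bigcup_{J\in L}\cal z_{n+m-1}\bigl(I^{<k}\sqcup(J+k-1)\sqcup(I^{>k}+m-1);\underline{\iota}\bigr)\quad\text{if }k\notin I.
\]
Taking the union over $I\in K$, the right-hand side is the union of $\cal z(I')$ over $I'\in K\circ^c_k L$, which is the undownward-closed hypergraph composition.

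\textbf{Left-hand side.} It is the union of $\cal z(I')$ over $I'\in(K\circ^c_kL)^\downarrow$. Since $I'\subseteq I''$ implies $\cal z(I')\subseteq\cal z(I'')$, taking the downward closure does not change the union. Both sides therefore coincide.

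The main technical point is justifying that the colimit over $\operatorname{cat}(K)$ in $\Topo$ is the union with the correct topology. For a non-empty complex, the colimit of this diagram of closed inclusions (when the $Y_i$ are closed) is the union topologized as a quotient of the disjoint union. This is the standard identification from \cite{Bahri_Bendersky_Cohen_Gitler_2010}, and we invoke it. We also need to check that the nested polyhedral product on the right, where one pair is $(X_L,Z_L)$, is again such a union of closed subspaces, so that the same identification applies. Everything else is the bookkeeping of the three index blocks, done as in the proof of \Cref{proposition:algebra_over_P_bar_Perm}, using non-emptiness of $L$ so that $Z_L$ is non-empty.
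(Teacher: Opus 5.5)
Your argument is correct and is essentially the paper's: the paper defers this statement to Ayzenberg and proves its categorical generalization (\Cref{theorem:algebra_comp}) by the same decomposition, rewriting the colimit over $\operatorname{cat}(K\circ^c_k L)$ as a colimit over $\operatorname{cat}(K)$ of cubical pieces whose $k$-th slot is $\prod_{j=k}^{k+m-1}X_j$ or the colimit over $\operatorname{cat}(L)$, which you carry out concretely as unions inside $\prod_j X_j$. The closedness of the $Y_i$ that you invoke is needed only to identify colimits with unions; with the union-in-the-product definition from the introduction, your set-level comparison already proves the statement for arbitrary pairs, whereas the paper's categorical route requires the analogous hypothesis that $\times$ preserve these colimits in each variable.
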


	The collection of maps $\{\cal Z_n\}_{n\geq 1}$ hence satisfies the axiom that resembles the one for being the structure maps of an algebra over the composition operad on simplicial complexes.
	The only issue is that it takes pair of spaces and returns a space.
	We therefore introduce the following functor which will play the role of this structure map.
	The \emph{composition polyhedral product map} associated to $(\underline{X},\underline{Y})$ is the functor
  \[
    \overrightarrow{\cal Z}_n^c(-;\underline{\iota})\colon \Scomp_{\neq\emptyset}(n)\to\PairTop
  \]
  which associates to each simplicial complex $K$ the pair
  \[
    \overrightarrow{\cal Z}_n^c(K;\underline{\iota})\coloneqq
    \left(\prod_{i\in [n]}X_i,\operatorname*{colim}_{I\in\operatorname{cat}(K)}\cal z_n(I;\underline{\iota})\right).
  \]

	We can now rephrase \Cref{proposition:operad_polyhedral_prod} as: \textit{the category of topological pairs is endowed with a structure of $(\Scomp_{\neq \emptyset},\{\circ_k^c\})$-algebra with structure map being the composition polyhedral product map}.
	This property actually holds in a more general framework and for both the substitution and composition operads on non-empty simplicial complexes, as we will see now.
	\medskip

	Let us start with the substitution operad.


\begin{theorem}\label{theorem:algebra_subst}
  Let $(\mathrm{C},\otimes,e)$ be a small strict cocomplete cocontinuous
  symmetric monoidal category. Then $\mor(\mathrm{C})$ carries a
  $(\Scomp_{\neq\emptyset},\{\circ_k\})$-algebra structure given by the following structure map
  \[\overrightarrow{\cal Z}_n\colon \Scomp_{\neq \emptyset}(n)\times {\mor(\mathrm{C})}^n \to \mor(\mathrm{C}),\]
  where
  \[
    \overrightarrow{\cal Z}_n(K;\underline{f}) 
    \;\coloneqq\; 
    \Bigl(\cal z_n(\emptyset;\underline{f}) 
    \to \colim_{I\in\operatorname{cat}(K)} \cal z_n(I;\underline{f})\Bigr)
  \]
  is the canonical morphism to the colimit of 
  $\cal z_n(-;\underline{f})\colon\operatorname{cat}(K)\to\mathrm{C}$.
\end{theorem}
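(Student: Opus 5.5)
The plan is to verify the two operad-algebra axioms directly: equivariance and unitality are quick, and the real content is compatibility with the partial compositions $\circ_k$.

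\textbf{Equivariance and unit.} For $\sigma\in\Sym_n$, the relabeling of vertices induces an isomorphism $\operatorname{cat}(K\cdot\sigma)\cong\operatorname{cat}(K)$. Under it, $\cal z_n(-;\sigma\cdot\underline{f})$ corresponds to $\cal z_n(-;\underline{f})$ via the symmetry isomorphisms of $\otimes$. In the strict setting the paper works with (arrows of $\mathrm{C}$ up to this canonical identification, as in \Cref{proposition:algebra_over_P_bar_Perm}), this gives $\overrightarrow{\cal Z}_n(K\cdot\sigma;\underline f)=\overrightarrow{\cal Z}_n(K;\sigma\cdot\underline f)$. For the unit $\pm=\{\emptyset,\{1\}\}$, $\operatorname{cat}(\pm)$ has the terminal object $\{1\}$. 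The colimit is therefore $X_1$, and the canonical map $Y_1\to X_1$ is $f_1$.

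\textbf{Composition axiom.} Fix $K\in\Scomp_{\neq\emptyset}(n)$, $L\in\Scomp_{\neq\emptyset}(m)$, $k\in[n]$ and $\underline f=(f_j)_{j\in[n+m-1]}$. Let $\underline h$ be as in the proof of \Cref{proposition:algebra_over_P_bar_Perm}, but with $h_k\coloneqq\overrightarrow{\cal Z}_m(L;\underline f_{[k,k+m-1]})$. The domain of $h_k$ is $\bigotimes_{j} Y_j$ and its codomain is $Z_L\coloneqq\colim_{J\in\operatorname{cat}(L)}\cal z_m(J;\underline f_{[k,k+m-1]})$. Both sides of the axiom have domain $\cal z_{n+m-1}(\emptyset;\underline f)$, so it suffices to identify the codomains compatibly with the canonical maps from that common domain. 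For $I\in\operatorname{cat}(K)$:
\begin{itemize}
\item if $k\notin I$, then $\cal z_n(I;\underline h)=A_I\otimes\bigotimes_j Y_j\otimes B_I$, and this equals $\cal z_{n+m-1}(I^{<k}\sqcup(I^{>k}+m-1);\underline f)$;
\item if $k\in I$, then $\cal z_n(I;\underline h)=A_I\otimes Z_L\otimes B_I$, and by cocontinuity of $\otimes$ in each variable this is $\colim_{J\in\operatorname{cat}(L)}\cal z_{n+m-1}(I^{<k}\sqcup(J+k-1)\sqcup(I^{>k}+m-1);\underline f)$.
\end{itemize}
Hence $\colim_{\operatorname{cat}(K)}\cal z_n(-;\underline h)$ is an iterated colimit. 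It is the colimit of $\cal z_{n+m-1}(-;\underline f)$ over a diagram $\mathcal G$ fibred over $\operatorname{cat}(K)$: the fibre over $I$ is $\operatorname{cat}(L)$ when $k\in I$ and a point (namely $J=\emptyset$) otherwise. The Grothendieck construction $\mathcal G$ maps to $\operatorname{cat}(K\circ_k L)$ by $(I,J)\mapsto I\circ_k J$, which is exactly \Cref{equation:compo_substitution}. Since $\emptyset\in L$, the map is well defined also on the pairs with $k\in I$, $J=\emptyset$.

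\textbf{Main obstacle: finality.} The step I expect to be hardest is showing that $\Phi\colon\mathcal G\to\operatorname{cat}(K\circ_k L)$ is final (cofinal), so that the colimits agree. I would first show $\Phi$ is surjective on objects and full on morphisms, since $(I,J)\le(I',J')$ exactly when $I\circ_kJ\subseteq I'\circ_kJ'$. I would then check that for each face $S$ of $K\circ_k L$ the comma category $S\downarrow\Phi$ is connected. The objects of $S\downarrow\Phi$ are the pairs $(I,J)$ with $S\subseteq I\circ_k J$. The decomposition of $S$ into the three blocks $[k-1]$, $[k,k+m-1]$, $[k+m,n+m-1]$ determines a minimal pair $(I_S,J_S)$ below all of them, namely $I_S$ = the block parts of $S$, plus $k$ if $S$ meets the middle block, and $J_S$ = the middle part. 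A minimal object gives connectedness. Alternatively, since these are subposets of Boolean lattices and all functors send inclusions to the maps $\phi_i^{I,J}$, one can argue that both colimits compute the union of the images. That, however, needs $\mathrm{C}$ to be concrete-like, so I would rely on the finality argument. The canonical maps from $\cal z_{n+m-1}(\emptyset;\underline f)$ correspond under these identifications because the object $(\emptyset,\emptyset)$ is initial in $\mathcal G$ and maps to $\emptyset$. This identifies $\overrightarrow{\cal Z}_{n+m-1}(K\circ_kL;\underline f)$ with $\overrightarrow{\cal Z}_n(K;\underline h)$.
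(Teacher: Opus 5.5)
Your proposal is correct and follows the paper's route: both sides have the common domain $\bigotimes_j Y_j$, and the colimit over $\operatorname{cat}(K\circ_k L)$ is rewritten, using cocontinuity of $\otimes$, as an iterated colimit over $\operatorname{cat}(K)$ and $\operatorname{cat}(L)$. Your Grothendieck construction $\mathcal G$, together with the initial object $(I_S,J_S)$ of each comma category $S\downarrow\Phi$, supplies the justification for the ``decomposition'' that the paper only asserts, and your unit and equivariance checks hold up to the same canonical identifications the paper tacitly uses.

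One harmless slip: $\Phi$ is not full, and the equivalence $(I,J)\le(I',J')\iff I\circ_kJ\subseteq I'\circ_kJ'$ fails. For $k\in I$, the objects $(I,\emptyset)$ and $(I\setminus\{k\},\emptyset)$ of $\mathcal G$ have the same image under $\Phi$, yet $(I,\emptyset)\not\le(I\setminus\{k\},\emptyset)$. Finality needs only your initial-object argument, so nothing depends on this claim.
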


\begin{proof}
  Let $\underline{f} = {(f_j)}_{j\in[n+m-1]}$. Both sides of the algebra axiom
  \[
    \overrightarrow{\cal Z}_{n+m-1}\!\left(K\circ_k L;\,\underline{f}\right)
    \;=\;
    \overrightarrow{\cal Z}_m\!\left(K;\,
      \underline{f}_{[k-1]},\;
      \overrightarrow{\cal Z}_n(L;\,\underline{f}_{[k,k+n-1]}),\;
      \underline{f}_{[k+n,\,n+m-1]}
    \right)
  \]
  have domain $\cal z_{n+m-1}(\emptyset;\underline{f}) = \bigotimes_{j=1}^{n+m-1} Y_j$,
  since $\mathrm{C}$ is strict and
  $\operatorname{dom}(\overrightarrow{\cal Z}_n(L;\,\underline{f}_{[k,k+n-1]})) = \bigotimes_{j=k}^{k+n-1} Y_j$.
  For the codomains, since $\otimes$ preserves colimits in each variable
  and $\operatorname{cat}(K \circ_k L)$ decomposes over $\operatorname{cat}(K)$
  and $\operatorname{cat}(L)$:
  \begin{align*}
    \colim_{I\in\operatorname{cat}(K\circ_k L)} \cal z_{n+m-1}(I;\underline{f})
    &= \colim_{I\in\operatorname{cat}(K)} \cal z_m\!\left(I;\,
      \underline{f}_{[k-1]},\;
      \colim_{J\in\operatorname{cat}(L)} \cal z_n(J;\,\underline{f}_{[k,k+n-1]}),\;
      \underline{f}_{[k+n,\,n+m-1]}\right)\\
    &= \colim_{I\in\operatorname{cat}(K)} \cal z_m\!\left(I;\,
      \underline{f}_{[k-1]},\;
      \overrightarrow{\cal Z}_n(L;\,\underline{f}_{[k,k+n-1]}),\;
      \underline{f}_{[k+n,\,n+m-1]}\right),
  \end{align*}
  which shows that the two sides have equal codomains and hence coincide
  by the universal property of the colimit.
\end{proof}
For the composition operad, we have the following similar construction.
	
\begin{theorem}\label{theorem:algebra_comp}
  Let $(\mathrm{C},\otimes,e)$ be a small strict cocomplete cocontinuous
  symmetric monoidal category. Then $\mor(\mathrm{C})$ carries a
  $(\Scomp_{\neq\emptyset},\{\circ_k^c\})$-algebra structure given by the following structure map
  \[\overrightarrow{\cal Z}_n^c\colon \Scomp_{\neq \emptyset}(n)\times {\mor(\mathrm{C})}^n \to \mor(\mathrm{C}),\]
  where
  \[
    \overrightarrow{\cal Z}_n^c(K;\underline{f}) 
    \;\coloneqq\; 
    \Bigl(\colim_{I\in\operatorname{cat}(K)} \cal z_n(I;\underline{f})
    \to \cal z_n([n];\underline{f})\Bigr)
  \]
  is the canonical morphism from the colimit of
  $\cal z_n(-;\underline{f})\colon\operatorname{cat}(K)\to\mathrm{C}$
  to $\cal z_n([n];\underline{f}) = \bigotimes_{j=1}^n X_j$.
\end{theorem}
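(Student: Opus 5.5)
I will mirror the proof of \Cref{theorem:algebra_subst}, now checking the algebra axiom
\[
  \overrightarrow{\cal Z}^c_{n+m-1}\!\left(K\circ^c_k L;\,\underline{f}\right)
  =
  \overrightarrow{\cal Z}^c_n\!\left(K;\,
    \underline{f}_{[k-1]},\;
    \overrightarrow{\cal Z}^c_m(L;\,\underline{f}_{[k,k+m-1]}),\;
    \underline{f}_{[k+m,\,n+m-1]}
  \right)
\]
for $K$ on $[n]$, $L$ on $[m]$ and $\underline{f}={(f_j\colon Y_j\to X_j)}_{j\in[n+m-1]}$. Compatibility with the symmetric group actions is immediate, since permuting vertices permutes the tensor factors via the symmetry of $\mathrm{C}$. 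The roles of domain and codomain are exchanged relative to the substitution case. The codomains agree trivially: the right side has codomain $\bigotimes_{j<k}X_j\otimes\operatorname{cod}(\overrightarrow{\cal Z}^c_m(L;\dots))\otimes\bigotimes_{j\geq k+m}X_j$, and $\operatorname{cod}(\overrightarrow{\cal Z}^c_m(L;\dots))=\bigotimes_{j=k}^{k+m-1}X_j$, so by strictness both equal $\bigotimes_{j=1}^{n+m-1}X_j$. Since both morphisms are canonical maps out of a colimit into this object, they coincide once the domains are identified compatibly with the cocone maps, by the universal property of the colimit.

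The main step is therefore
\[
  \colim_{I\in\operatorname{cat}(K\circ^c_k L)}\cal z_{n+m-1}(I;\underline{f})
  \cong
  \colim_{I\in\operatorname{cat}(K)}\cal z_n\!\left(I;\,\underline{f}_{[k-1]},\,g,\,\underline{f}_{[k+m,n+m-1]}\right),
  \qquad g\coloneqq \overrightarrow{\cal Z}^c_m(L;\underline{f}_{[k,k+m-1]}).
\]
I will use the description \eqref{equation:compo_composition} of the faces of $K\circ^c_k L$. For $I\in K$ with $k\in I$, the factor at position $k$ on the right is $X_k^{g}=\bigotimes_{j=k}^{k+m-1}X_j$, which is $\cal z_m([m])$, the terminal object of the full cube $\operatorname{cat}(\Delta_{[m]})$. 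For $k\notin I$, the factor is $\operatorname{dom}(g)=\colim_{J\in\operatorname{cat}(L)}\cal z_m(J)$.

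Using that $\otimes$ is cocontinuous in each variable, I will expand the right-hand side as an iterated colimit over the Grothendieck-type category of pairs $(I,J)$. Here $I\in\operatorname{cat}(K)$, and $J$ ranges over $\operatorname{cat}(L)$ when $k\notin I$ and over all of $\wp([m])$ when $k\in I$; in the latter case the colimit over the full cube is just evaluation at $[m]$. These pairs correspond exactly to the faces $I^{<k}\sqcup(J+k-1)\sqcup(I^{>k}+m-1)$ listed in \eqref{equation:compo_composition}. A colimit of colimits over such a fibred diagram is the colimit over the total category, and I will show that this total category is cofinal in $\operatorname{cat}(K\circ^c_k L)$. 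The colimit is then a colimit over a poset of faces, and the faces of $K\circ^c_k L$ are obtained by gluing the two types along inclusions, giving the left-hand side. The morphisms at position $k$ between the two cases are the inclusions $I\setminus\{k\}\subseteq I$. On the right these are encoded by $g$ itself, the map $\colim_{\operatorname{cat}(L)}\cal z_m\to\bigotimes X_j$. On the left they are the maps induced by $J\subseteq[m]$, so the two agree.

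I expect the main obstacle to be this cofinality and gluing argument: making rigorous that the mixed diagram on $\operatorname{cat}(K)$, whose value at $k$ switches between a colimit and $\bigotimes X_j$ according to whether $k\in I$, has the same colimit as the diagram on $\operatorname{cat}(K\circ^c_k L)$. My first attempt will be to write both sides as coequalizers of coproducts indexed by faces and their pairwise intersections. The alternative is to check directly that the cocones match, using that every face of $K\circ^c_k L$ lies below a unique minimal pair $(I,J)$ of the above form.

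Finally, I expect the construction to reduce to \Cref{proposition:operad_polyhedral_prod} in the topological case, which serves as a consistency check. It also follows by analogy with \Cref{theorem:algebra_subst} applied to the opposite cube, consistent with the $\comp'$ duality between $\circ_k$ and $\circ_k^c$ in \Cref{proposition:nice_diagram}.
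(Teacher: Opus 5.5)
Your proposal is correct and follows the same route as the paper. Codomains agree by strictness, domains agree by decomposing the colimit over $\operatorname{cat}(K\circ^c_k L)$ using cocontinuity of $\otimes$ in each variable, and the two canonical maps then coincide by the universal property. Your version is more explicit than the paper's one-line appeal to the substitution case, and the finality step you flag goes through as you suggest: for a face $G=G_1\sqcup(G_2+k-1)\sqcup(G_3+m-1)$ with $I_0=G_1\sqcup G_3$, the comma category of pairs $(I,J)$ lying over $G$ has least element $(I_0,G_2)$ if $G_2\in L$, and $(I_0\cup\{k\},G_2)$ otherwise.
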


\begin{proof}
  The structure map is well-defined because for every $I\in\operatorname{cat}(K)$
  the inclusion $I\subseteq [n]$ yields a natural morphism
  $\cal z_n(I\subseteq [n];\underline{f})\colon
  \cal z_n(I;\underline{f})\to\cal z_n([n];\underline{f})$,
  forming a cocone under $\cal z_n(-;\underline{f})$.
  By cocompleteness of $\mathrm{C}$, the universal property of the colimit
  provides the unique arrow $\overrightarrow{\cal Z}_n^c(K;\underline{f})$.

  Let $\underline{f} = {(f_j)}_{j\in[n+m-1]}$. Both sides of the algebra axiom
  \[
    \overrightarrow{\cal Z}_{n+m-1}^c\!\left(K\circ_k^c L;\,\underline{f}\right)
    \;=\;
    \overrightarrow{\cal Z}_m^c\!\left(K;\,
      \underline{f}_{[k-1]},\;
      \overrightarrow{\cal Z}_n^c(L;\,\underline{f}_{[k,k+n-1]}),\;
      \underline{f}_{[k+n,\,n+m-1]}
    \right)
  \]
  have codomain $\cal z_{n+m-1}([n+m-1];\underline{f}) = \bigotimes_{j=1}^{n+m-1} X_j$,
  since $\mathrm{C}$ is strict and
  $\operatorname{cod}(\overrightarrow{\cal Z}_n^c(L;\,\underline{f}_{[k,k+n-1]})) = \bigotimes_{j=k}^{k+n-1} X_j$.
  The equality of domains follows by the same colimit decomposition as in
  \Cref{theorem:algebra_subst}, with domains and codomains exchanged and
  $\circ_k$ replaced by $\circ_k^c$ throughout.
  Hence the two sides coincide by the universal property of the colimit.
\end{proof}

	\begin{remark}
		The strictness hypothesis can be dropped in both \Cref{theorem:algebra_comp} and \Cref{theorem:algebra_subst}. Indeed, by Mac Lane's coherence 
		theorem~\cite[XI.3]{Mac-Lane_1978}, every symmetric monoidal category is monoidally 
		equivalent to a strict one, and the $(\Scomp_{\neq\emptyset},\{\circ_i\})$-algebra 
		structure on $\mor(\mathrm{C})$ is preserved by monoidal equivalences. Hence 
		\Cref{theorem:algebra_subst} and \Cref{theorem:algebra_comp} hold for any small cocontinuous cocomplete 
		symmetric monoidal category.
	\end{remark}
	We recover many types of polyhedral product depending on $(\mathrm{C},\otimes,e)$, see \Cref{table:recap_polyhedral_prod}. 
	In particular, we recover \Cref{proposition:operad_polyhedral_prod} in the case of $(\Topo,\times,\{*\})$,~\cite[Proposition~2.8]{Vidaurre_2018} in the case of $(\Topo,\wedge,S^0)$ and~\cite[Theorem~2.9]{Vidaurre_2018} in the case of $(\Scomp_{\neq\emptyset},*,\{\emptyset\})$.

	\begin{table}[h]
		\caption{Different polyhedral product constructions as examples of algebras over the two simplicial complex operads. Note that both $\widetilde{\Topo}$ and $\widetilde{\Topo_*}$ mean small subcategories of $\Topo$ and $\Topo_*$, restricted to objects of interests for instance. \label{table:recap_polyhedral_prod}}
		\renewcommand*{\arraystretch}{1.2}
		\begin{tabular}{lcr}\toprule
			Monoidal category & Name & Defined by\\\midrule
			$(\widetilde{\Topo},\times,\{*\})$ & polyhedral product & Bahri, Benderski, Cohen, and Gitler~\cite{Bahri_Bendersky_Cohen_Gitler_2010}\\
			$(\widetilde{\Topo_*},\wedge,S^0)$ & polyhedral smash product &Bahri, Benderski, Cohen, and Gitler~\cite{Bahri_Bendersky_Cohen_Gitler_2010}\\
			$(\widetilde{\Topo},\ast,\{*\})$ & polyhedral join product & Ayzenberg~\cite{Ayzenberg_2014}\\
			$(\Scomp_{\neq\emptyset},*,\{\emptyset\})$ & simplicial join product & Ayzenberg~\cite{Ayzenberg_2014}\\\bottomrule		
		\end{tabular}
	\end{table}

\begin{remark}
	It would be interesting to study the $\infty$-category case of the algebras of \Cref{theorem:algebra_subst} and \Cref{theorem:algebra_comp}, as a generalization of~\cite{Hornslien_2024}.
\end{remark}

\begin{remark}
	By~\cite{Kishimoto_Levi_2022}, the author believes the algebra structures on both the substitution and composition operads on simplicial complexes can be naturally extended to hypergraphs, and more generally to finite posets.
\end{remark}

\subsubsection{Polyhedral limit}
We can also work with reversed arrows by considering functors from the 
opposite face category, as in~\cite{Stanton_Amelotte_Hornslien_2025}, 
where the construction is called a \emph{polyhedral coproduct}. We prefer 
the name \emph{polyhedral limit} since it uses categorical limits 
rather than colimits.

\begin{theorem}\label{theorem:algebra_op}
  Let $(\mathrm{C},\otimes,e)$ be a small strict complete continuous 
  symmetric monoidal category. Then:
  \begin{enumerate}[label=(\roman*)]
    \item There is a $(\Scomp_{\neq\emptyset},\{\circ_k\})$-algebra 
    structure on $\mor(\mathrm{C})$ given by
    \[
      \overrightarrow{\cal L}_n\colon \Scomp_{\neq\emptyset}(n)\times{\mor(\mathrm{C})}^n
      \to\mor(\mathrm{C}),
    \]
    where
    \[
      \overrightarrow{\cal L}_n(K;\underline{f})
      \;\coloneqq\;
      \Bigl(\lim_{I\in{\operatorname{cat}(K)}^{\mathrm{op}}}
      \cal z_n(I;\underline{f})
      \to \cal z_n([n];\underline{f})\Bigr)
    \]
    is the canonical morphism from the limit of
    $\cal z_n(-;\underline{f})\colon{\operatorname{cat}(K)}^{\mathrm{op}}
    \to\mathrm{C}$ to $\cal z_n([n];\underline{f})=\bigotimes_{j=1}^n X_j$.

    \item There is a $(\Scomp_{\neq\emptyset},\{\circ_k^c\})$-algebra 
    structure on $\mor(\mathrm{C})$ given by
    \[
      \overrightarrow{\cal L}_n^c\colon \Scomp_{\neq\emptyset}(n)\times{\mor(\mathrm{C})}^n
      \to\mor(\mathrm{C}),
    \]
    where
    \[
      \overrightarrow{\cal L}_n^c(K;\underline{f})
      \;\coloneqq\;
      \Bigl(\cal z_n(\emptyset;\underline{f}) \to
      \lim_{I\in{\operatorname{cat}(K)}^{\mathrm{op}}}
      \cal z_n(I;\underline{f})\Bigr)
    \]
    is the canonical morphism from $\cal z_n(\emptyset;\underline{f})
    =\bigotimes_{j=1}^n Y_j$ to the limit of
    $\cal z_n(-;\underline{f})\colon{\operatorname{cat}(K)}^{\mathrm{op}}
    \to\mathrm{C}$.
  \end{enumerate}
\end{theorem}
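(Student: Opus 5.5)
We plan to deduce \Cref{theorem:algebra_op} from \Cref{theorem:algebra_subst} and \Cref{theorem:algebra_comp} by duality. The opposite category $(\mathrm{C}^{\mathrm{op}},\otimes,e)$ is again a small strict symmetric monoidal category. It is cocomplete and cocontinuous precisely because $\mathrm{C}$ is complete and $\otimes$ preserves limits in each variable. An arrow $f\colon Y\to X$ of $\mathrm{C}$ is the arrow $f^{\mathrm{op}}\colon X\to Y$ of $\mathrm{C}^{\mathrm{op}}$, so $\mor(\mathrm{C})=\mor(\mathrm{C}^{\mathrm{op}})$ as sets.

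The key bookkeeping is the cubical functor under this identification. Write $\underline{f}^{\mathrm{op}}$ for the reversed sequence, whose $i$th arrow has domain $X_i$ and codomain $Y_i$. Then $\cal z_n(I;\underline{f}^{\mathrm{op}})$ computed in $\mathrm{C}^{\mathrm{op}}$ equals $\cal z_n(\comp I;\underline{f})$, and on morphisms we get $\cal z_n(I\subseteq J;\underline{f}^{\mathrm{op}})=\cal z_n(\comp J\subseteq \comp I;\underline{f})^{\mathrm{op}}$. Hence the functor $\cal z_n(-;\underline{f}^{\mathrm{op}})$ restricted to $\operatorname{cat}(\comp' K)$ is the opposite of $\cal z_n(-;\underline{f})$ restricted to ${\operatorname{cat}(K)}^{\mathrm{op}}$, using the remark that ${\operatorname{cat}(K)}^{\mathrm{op}}$ is the face category of $\comp'K$. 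A colimit in $\mathrm{C}^{\mathrm{op}}$ is a limit in $\mathrm{C}$. This gives
\[
\overrightarrow{\cal L}_n(K;\underline{f})=\Bigl(\text{canonical arrow } \cal z_n(\emptyset;\underline{f}^{\mathrm{op}})\to \colim_{\operatorname{cat}(\comp'K)}\cal z_n(-;\underline{f}^{\mathrm{op}}) \text{ in } \mathrm{C}^{\mathrm{op}}\Bigr)^{\mathrm{op}},
\]
and symmetrically for $\overrightarrow{\cal L}_n^c$ with the target $\cal z_n([n];\underline{f}^{\mathrm{op}})$.

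The difficulty is that $\comp'K$ is an upward complex, not a simplicial complex, so \Cref{theorem:algebra_subst} does not apply to it verbatim. We avoid this by rerunning the proofs of \Cref{theorem:algebra_subst} and \Cref{theorem:algebra_comp} directly with limits. Those proofs used only three facts: the domains (respectively codomains) agree by strictness; $\otimes$ commutes with the relevant (co)limits in each variable; and the indexing category of the composite decomposes as an iterated indexing over those of $K$ and $L$. For (i), the target is $\bigotimes_j X_j$ on both sides, by strictness. The source is
\[
\lim_{I\in\operatorname{cat}(K\circ_kL)^{\mathrm{op}}}\cal z_{n+m-1}(I;\underline{f}),
\]
which we rewrite, using continuity of $\otimes$ in the $k$th block, as
\[
\lim_{I\in\operatorname{cat}(K)^{\mathrm{op}}}\cal z(I;\dots,\lim_{J\in\operatorname{cat}(L)^{\mathrm{op}}}\cal z(J;\dots),\dots).
\]
The two sides then coincide by the universal property of the limit. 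Part (ii) is the same argument with $\circ_k^c$, and with the common source $\bigotimes_j Y_j$.

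The main obstacle is the decomposition of the indexing category in this opposite setting. For (i), the faces $I$ of $K\circ_kL$ with $k\in I$ become $I^{<k}\sqcup(J+k-1)\sqcup(I^{>k}+m-1)$, while faces with $k\notin I$ contribute $Y$-factors on the whole block. So at a vertex $k\notin I$ the block must carry the source of the inner arrow, and at $k\in I$ it must carry the limit over $L$. This is exactly why the inner arrow $\overrightarrow{\cal L}_m(L;-)$ has target $\bigotimes X_j$ and the limit as its source, which matches the substitution rule. We would verify this by computing the limit as a right Kan extension in two steps: first along the projection of $\operatorname{cat}(K\circ_kL)^{\mathrm{op}}$ onto $\operatorname{cat}(K)^{\mathrm{op}}$, which has fiber $\operatorname{cat}(L)^{\mathrm{op}}$ over faces containing $k$ and a point otherwise. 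We would then check that this projection is suitably initial, so that the limit can be computed fiberwise. The analogous check for $\circ_k^c$, where the block $[m]$ appears as a full face, gives (ii).
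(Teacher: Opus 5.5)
Your fallback plan, rerunning the colimit decomposition with limits, is what the paper does. As written, however, it does not close, because you never fix which diagram on ${\operatorname{cat}(K)}^{\mathrm{op}}$ is being limited.

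You use $I\mapsto\cal z_n(I;\underline{f})$, with $X_i$ at $i\in I$. That functor is covariant, so it is not a diagram on ${\operatorname{cat}(K)}^{\mathrm{op}}$. If you reverse the $f_i$ to make it one, the only canonical arrows become $\bigotimes_jX_j\to\lim\to\bigotimes_jY_j$, which is the reverse of what the statement requires. The symptoms are visible in your text:
\begin{enumerate}
\item Since $[n]\in\comp'K$ is terminal in $\operatorname{cat}(\comp'K)$, your displayed duality formula evaluates to $\bigotimes_jf_j$ for every $K$.
\item In your block matching, faces of $K\circ_kL$ coming from $I\in K$ with $k\notin I$ carry $Y$-factors on the block. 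This forces $\operatorname{dom}(h_k)=\bigotimes_jY_j$ and $\operatorname{cod}(h_k)$ to be the limit over $L$. That is the shape of $\overrightarrow{\cal L}^c_m$, so your next sentence, which gives $\overrightarrow{\cal L}_m(L;-)$ the limit as source and $\bigotimes_jX_j$ as target, contradicts what you just derived.
\end{enumerate}
The upward-complex obstacle that made you abandon duality is an artifact of this misreading. The fiberwise/initiality check is also left undone.

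The reading that makes the stated arrows canonical is the one you computed yourself in your first paragraph. The diagram is $I\mapsto\cal z_n(\comp I;\underline{f})$, i.e.\ the cubical functor $\cal z_n(-;\underline{f}^{\mathrm{op}})$ of $\mathrm{C}^{\mathrm{op}}$ restricted to $\operatorname{cat}(K)$ itself, with no passage to $\comp'K$. Its limit over ${\operatorname{cat}(K)}^{\mathrm{op}}$ in $\mathrm{C}$ is its colimit over $\operatorname{cat}(K)$ in $\mathrm{C}^{\mathrm{op}}$. The target $\cal z_n(\comp\emptyset;\underline{f})=\bigotimes_jX_j$ in (i) is $\cal z_n(\emptyset;\underline{f}^{\mathrm{op}})$. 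The source $\cal z_n(\comp{[n]};\underline{f})=\bigotimes_jY_j$ in (ii) is $\cal z_n([n];\underline{f}^{\mathrm{op}})$.

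So $\overrightarrow{\cal L}_n(K;\underline{f})$ and $\overrightarrow{\cal L}^c_n(K;\underline{f})$ are the opposites of $\overrightarrow{\cal Z}_n(K;\underline{f}^{\mathrm{op}})$ and $\overrightarrow{\cal Z}^c_n(K;\underline{f}^{\mathrm{op}})$ computed in $\mathrm{C}^{\mathrm{op}}$. Since $f\mapsto f^{\mathrm{op}}$ is a bijection $\mor(\mathrm{C})\to\mor(\mathrm{C}^{\mathrm{op}})$ intertwining these structure maps, (i) and (ii) follow from \Cref{theorem:algebra_subst} and \Cref{theorem:algebra_comp} applied to $(\mathrm{C}^{\mathrm{op}},\otimes,e)$. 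The same $K$ is used, and the decomposition is inherited from those proofs. This is the precise content of the paper's one-line ``colimits replaced by limits, arrows reversed''.

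With this reading your block matching also comes out right. For $I\in K$ with $k\notin I$, the block carries $\bigotimes_jX_j=\operatorname{cod}(h_k)$. For $k\in I$, it carries $\cal z_m(\comp J;\dots)$ for $J\in L$, whose limit is $\operatorname{dom}(h_k)$.
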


\begin{proof}
  The proof follows the same pattern as \Cref{theorem:algebra_subst} 
  and \Cref{theorem:algebra_comp}, with colimits replaced by limits and 
  arrows reversed throughout. The algebra axiom in both cases takes the form
  \[
    \overrightarrow{\cal L}_n^{(\mathrm{c})}\!\left(K\circ_k^{(\mathrm{c})} L;\,\underline{f}\right)
    \;=\;
    \overrightarrow{\cal L}_m^{(\mathrm{c})}\!\left(K;\,
      \underline{f}_{[k-1]},\;
      \overrightarrow{\cal L}_n^{(\mathrm{c})}(L;\,\underline{f}_{[k,k+n-1]}),\;
      \underline{f}_{[k+n,\,n+m-1]}
    \right),
  \]
  and its verification reduces to the same colimit decomposition of 
  $\operatorname{cat}(K\circ_k^{(\mathrm{c})} L)$, with limits in place 
  of colimits, which is valid since $\mathrm{C}$ is complete and continuous.
\end{proof}

	\section{An operad on relative simplicial complexes and its algebra}\label{section:operad_pairs}
  In this section, we reinterpret the polyhedral join product on (possibly empty) simplicial complexes introduced in~\cite{Ayzenberg_2014}, that we call here \emph{simplicial join product}, to obtain an operad on relative simplicial complexes.

  \begin{definition}[Relative simplicial complex]
	A \emph{relative simplicial complex} is a pair $(K,L)$ consisting of two simplicial complexes $K$ and $L$ on the same vertex set and such that $L$ is a subcomplex of $K$.
  \end{definition}

  \subsection{Simplicial join product}\label{section:oplyhedral_join_prod}
  Let $K_1$ and $K_2$ be two simplicial complexes on $[n_1]$ and $[n_2]$, respectively. Their \emph{join} is the simplicial complex on $[n_1+n_2]$:
  \[
    K_1\ast K_2 \coloneqq (\Delta_{[2]}\circ_2 K_2)\circ_1 K_1 
    = \binopleftright{\Delta_2}{K_1}{K_2},
  \]
  whose faces are of the form $I_1\sqcup(I_2+n_1)$ for $I_1\in K_1$ and 
  $I_2\in K_2$. Note that $K_1\ast\emptyset=\emptyset$ by convention.
  Since $(\mathsf{Scomp},\ast,\{\emptyset\})$ is a strict symmetric monoidal 
  category, \Cref{definition:cubical_prod_funct} applies and yields a cubical 
  functor
  \[
    \cal z_n(-;(\underline{M},\underline{N})) 
    \;\colon\; (\wp([n]),\subseteq) \longrightarrow \mathsf{Scomp},
    \qquad
    \cal z_n(I;(\underline{M},\underline{N})) \;\coloneqq\; \bigast_{i=1}^n P_i^I,
    \quad
    P_i^I \coloneqq \begin{cases} M_i & i\in I,\\ N_i & i\notin I,\end{cases}
  \]
  for any sequence of relative simplicial complexes 
  $(\underline{M},\underline{N})={((M_i,N_i))}_{i\in[n]}$.

  \begin{definition}[Simplicial join product~\cite{Ayzenberg_2014}]
    \label{definition:simplicial_join_product}
    Let $K$ be a simplicial complex on $[n]$ and 
    $(\underline{M},\underline{N})={((M_i,N_i))}_{i\in[n]}$ a sequence of 
    relative simplicial complexes.
	The \emph{simplicial join product of 
    $(\underline{M},\underline{N})$ with respect to $K$} is
    \[
      \cal Z_n^\ast(K;(\underline{M},\underline{N}))
      \;\coloneqq\;
      \bigcup_{I\in K}\,\cal z_n(I;(\underline{M},\underline{N})),
    \]
    which is a subcomplex of $\cal z_n([n];(\underline{M},\underline{N})) = \bigast_{i=1}^n M_i$, which contains $\cal z_n(\emptyset;(\underline{M},\underline{N})) = \bigast_{i=1}^n N_i$.
  \end{definition}

  First notice that 
  $\cal Z_n^\ast\!\left(K;{(\underline{\mathrm{pt}},\underline{\{\emptyset\}})}\right) = K$,
  where $\mathrm{pt}=\Delta_{[1]}$.

  \begin{remark}
	Note that applying \Cref{theorem:algebra_subst} or \Cref{theorem:algebra_comp} in the case of the symmetric monoidal category $(\Scomp,\ast,\{\emptyset\})$ yields either the inclusion $\bigast_{i=1}^n N_i\hookrightarrow \cal Z_n^\ast(K;(\underline{M},\underline{N}))$ or $\cal Z_n^\ast(K;(\underline{M},\underline{N}))\hookrightarrow \bigast_{i=1}^n M_i$, respectively, and hence an $(\Scomp_{\neq\emptyset},\{\circ_k\})$-algebra structure , respectively $(\Scomp_{\neq\emptyset},\{\circ_k^c\})$-algebra structure, on the set of morphisms of $(\Scomp,\ast,\{\emptyset\})$, that is the category of relative simplicial complexes.
  \end{remark}
  We will now generalize these two algebra structures to an operad on relative simplicial complexes.

  \subsection{Simplicial join operad}\label{section:simplicial_join_op}

  From the simplicial join product, we define a right action of a relative simplicial complex $(M,N)$ onto a simplicial complex $K$ on $[n]$ at 
  index $k\in[n]$ by
  \[
    K\triangleleft^\ast_k(M,N) \;\coloneqq\; 
    \cal Z_n^\ast(K;\,
      \underbrace{(\mathrm{pt},\{\emptyset\}),\ldots,(\mathrm{pt},\{\emptyset\})}_{k-1},\,
      (M,N),\,
      \underbrace{(\mathrm{pt},\{\emptyset\}),\ldots,(\mathrm{pt},\{\emptyset\})}_{n-k}
    ).
  \]

  \begin{lemma}\label{lemma:faces_join}
    The faces of $K\triangleleft^\ast_k(M,N)$ are
    \begin{itemize}
      \item $I\circ_k J$, for every $J\in M$ and $I\in K$ with $k\in I$,
      \item $I\circ_k^c J$, for every $J\in N$ and $I\in K$ with $k\notin I$.
    \end{itemize}
  \end{lemma}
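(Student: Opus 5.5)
The plan is to unfold \Cref{definition:simplicial_join_product} directly, compute the faces of each cubical piece $\cal z_n(I;\ldots)$, and then take the union over $I\in K$, using downward closure of $K$ to rewrite the result in terms of $\circ_k$ and $\circ_k^c$.

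\emph{Faces of each cube piece.} Let $M,N$ live on $[m]$, and write $P_i^I$ for the $i$th factor of $\cal z_n(I;\ldots)$ in the definition of $K\triangleleft^\ast_k(M,N)$. For $i\neq k$, the pair is $(\mathrm{pt},\{\emptyset\})$, both on the one-vertex set $[1]$. So $P_i^I$ has faces $\emptyset$ and $\{i\}$ when $i\in I$, and only $\emptyset$ when $i\notin I$. The factor at position $k$ is $M$ if $k\in I$ and $N$ if $k\notin I$. The vertex set of the join is therefore $[n+m-1]$, with the block of $M$ shifted by $k-1$ and the later singletons shifted by $m-1$. By the description of faces of a join, the faces of $\cal z_n(I;\ldots)$ are exactly the sets
\[
A^{<k}\sqcup (J+k-1)\sqcup (A^{>k}+m-1),
\qquad A\subseteq I\setminus\{k\},
\]
where $J\in M$ if $k\in I$ and $J\in N$ if $k\notin I$.

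\emph{Rewriting with \Cref{equation:compo_idem_com}.} For $k\notin A$, the displayed set equals $A\circ_k^c J$. It also equals $(A\sqcup\{k\})\circ_k J$, by the two formulas for $\circ_k$ and $\circ_k^c$ in $\opi{IdemCom}$ and $\opi{IdemCom}^c$.

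\emph{Union over $I\in K$.} There are two cases.
\begin{itemize}
\item If $k\in I$ and $J\in M$: set $I'\coloneqq A\sqcup\{k\}\subseteq I$. Then $I'\in K$ by downward closure, $k\in I'$, and the face is $I'\circ_k J$. This is of the first type.
\item If $k\notin I$ and $J\in N$: then $A\subseteq I$ gives $A\in K$ with $k\notin A$, and the face is $A\circ_k^c J$. This is of the second type.
\end{itemize}
Conversely, a set $I\circ_k J$ with $k\in I\in K$ and $J\in M$ lies in $\cal z_n(I;\ldots)$ by taking $A=I\setminus\{k\}$. A set $I\circ_k^c J$ with $k\notin I\in K$ and $J\in N$ lies in $\cal z_n(I;\ldots)$ by taking $A=I$. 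The two inclusions together give the claimed description.

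\emph{Main obstacle.} The only delicate point is bookkeeping: checking that the shifts in the join match the shifts in $\circ_k$ and $\circ_k^c$, i.e.\ that the vertex of $(\mathrm{pt},\{\emptyset\})$ at position $i$ is relabeled $i$ for $i<k$ and $i+m-1$ for $i>k$. Everything else is downward closure of $K$. It is also worth noting that faces coming from $N\subseteq M$ with $k\in I$ are already covered by the first bullet, so no case is missing.
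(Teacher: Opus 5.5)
Your proposal is correct. The paper states this lemma without proof, treating it as an immediate unfolding of \Cref{definition:simplicial_join_product}, and your argument supplies exactly that unfolding: you describe the faces of each cube $\mathcal{z}_n(I;\ldots)$ as $A^{<k}\sqcup(J+k-1)\sqcup(A^{>k}+m-1)$ with $A\subseteq I\setminus\{k\}$, you treat $(\mathrm{pt},\{\emptyset\})$ as living on $[1]$ so the vertex set comes out as $[n+m-1]$, and you use downward closure of $K$ to rewrite these faces as $(A\sqcup\{k\})\circ_k J$ or $A\circ_k^c J$.
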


  \begin{remark}\label{remark:formula_sub_operads}
    We recover both the substitution and the composition of simplicial 
    complexes from this right action: for $K$ on $[n]$, $L$ on $[m]$, 
    and $k\in[n]$,
    \[
      K\circ_k L = K\triangleleft_k^\ast(L,\{\emptyset\})
      \qquad\text{and}\qquad
      K\circ_k^c L = K\triangleleft_k^\ast(\Delta_{[m]},L).
    \]
	Note that this is how Ayzenberg recovered the substitution and composition operads on simplicial complexes in~\cite{Ayzenberg_2014}.
  \end{remark}

  From the actions $\{\triangleleft^\ast_k\}$, we recover classical 
  constructions on simplicial complexes.
  \begin{definition}[Link, star, restriction, wedge]
    Let $K$ be a simplicial complex on $[n]$ and $k$ a vertex of $K$. Then:
    \begin{enumerate}
      \item The \emph{link} of $k$ in $K$ is 
        $\operatorname{Lk}_k(K)\coloneqq \{I\in [n]\mid k\notin I,I\cup\{k\}\in K\}$,
      \item The \emph{(closed) star} of $k$ in $K$ is $\operatorname{St}_k(K)\coloneqq \{I\in [n]\mid I\cup\{k\}\in K\}$,
      \item The \emph{restriction of $K$ to $[n]\setminus\{k\}$} is 
        $K\setminus k\coloneqq \{I\cap ([n]\setminus\{k\})\mid I\in K\}$,
		\item The \emph{wedge} of $K$ at $k$ is the simplicial complex with facets: 
		\begin{itemize}
		\item $F^{<k}\sqcup\{k\}\sqcup (F^{>k}+1)$ for every facet $F$ of $K$ that do not contain $k$,
		\item $F^{<k}\sqcup\{k+1\}\sqcup (F^{>k}+1)$ for every facet $F$ of $K$ that do not contain $k$, and
		\item $F^{<k}\sqcup\{k,k+1\}\sqcup (F^{>k}+1)$ for every facets $F$ of $K$ that contain $k$.
		\end{itemize}
    \end{enumerate}
  \end{definition}

  \begin{proposition}
	For every simplicial complex $K$ on $[n]$ and $k\in[n]$, we have:
	 \begin{enumerate}
      \item $\operatorname{Lk}_k(K)=K\triangleleft_k^\ast(\{\emptyset\},\emptyset)$,
      \item $\operatorname{St}_k(K)= K\triangleleft_k^\ast(\mathrm{pt},\emptyset)$,
      \item $K\setminus k= K\triangleleft_k^\ast(\{\emptyset\},\{\emptyset\})$,
      \item $\operatorname{Wed}_k(K) = K\triangleleft_k^\ast (\Delta_{[2]},\partial\Delta_{[2]})$.
    \end{enumerate}
  \end{proposition}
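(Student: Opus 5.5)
The plan is to reduce everything to \Cref{lemma:faces_join}, which describes the faces of $K\triangleleft_k^\ast(M,N)$ as the sets $I\circ_k J$ with $k\in I$, $J\in M$, together with the sets $I\circ_k^c J$ with $k\notin I$, $J\in N$. In each of the four cases $M$ and $N$ live on $m$ vertices. For $m=1$, write $I\circ_k J$ explicitly as $I^{<k}\sqcup(J+k-1)\sqcup I^{>k}$, so no relabelling occurs. Each identity then becomes a direct comparison of face sets. Throughout, I read $\operatorname{Lk}_k(K)$, $\operatorname{St}_k(K)$ and $K\setminus k$ as complexes on $[n]$, with $k$ possibly becoming a ghost vertex. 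This is the only convention under which the right action lands in arity $n$.

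\textbf{(1) and (2): $N=\emptyset$.} The second family of faces is empty, so only faces $I\ni k$ contribute.

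For $M=\{\emptyset\}$, each contributed face is $I\setminus\{k\}$ with $I\in K$ and $k\in I$. This is exactly the set of $I'$ with $k\notin I'$ and $I'\cup\{k\}\in K$, i.e.\ the link.

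For $M=\mathrm{pt}=\{\emptyset,\{1\}\}$, each such $I$ contributes both $I\setminus\{k\}$ and $I$. Hence the result is $\{I'\mid I'\cup\{k\}\in K\}$, i.e.\ the closed star.

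\textbf{(3): $M=N=\{\emptyset\}$.} Faces with $k\in I$ give $I\setminus\{k\}$. For faces with $k\notin I$, the composition $\circ_k^c$ with $J=\emptyset$ gives $I^{<k}\sqcup(\emptyset+k-1)\sqcup I^{>k}=I$. Together these yield $\{I\setminus\{k\}\mid I\in K\}$, which is precisely $K\setminus k$.

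\textbf{(4): $(M,N)=(\Delta_{[2]},\partial\Delta_{[2]})$.} Here $m=2$, so vertices above $k$ shift by one.

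Faces $I\ni k$ give $I^{<k}\sqcup(A+k-1)\sqcup(I^{>k}+1)$ for every $A\subseteq[2]$. Faces $I\not\ni k$ give, via $\circ_k^c$, the set $I^{<k}\sqcup(J+k-1)\sqcup(I^{>k}+1)$ for $J\in\{\emptyset,\{1\},\{2\}\}$.

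Since both families are downward closed, it suffices to compare maximal elements against the wedge facets.
\begin{itemize}
\item A facet $F\ni k$ of $K$ yields $F^{<k}\sqcup\{k,k+1\}\sqcup(F^{>k}+1)$.
\item A facet $F\not\ni k$ yields $F^{<k}\sqcup\{k\}\sqcup(F^{>k}+1)$ and $F^{<k}\sqcup\{k+1\}\sqcup(F^{>k}+1)$.
\end{itemize}
These are exactly the wedge facets.

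The remaining check, which I expect to be the only real obstacle, is that nothing else is maximal and that every element listed above really is maximal. Every face is contained in one generated by a facet of $K$. So I would argue: a face coming from a non-facet $I\subsetneq F$ is dominated by the set obtained from $F$, using the full block $\{k,k+1\}$ when $k\in F$. I also have to rule out that a set listed from a facet $F\not\ni k$ is contained in the one from some $F'\ni k$. That containment would force $F\subsetneq F'$ after removing the block, which contradicts the maximality of $F$.
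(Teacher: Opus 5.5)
Your argument is correct and essentially matches the paper's. The paper calls the four identities immediate from \Cref{proposition:join_comp_formula}, i.e.\ from \Cref{lemma:faces_join} repackaged as $K\triangleleft_k^\ast(M,N)=(\operatorname{Lk}_k(K)\ast_k M)\cup((K\setminus k)\ast_k N)$; you unfold \Cref{lemma:faces_join} face by face, which in (4) makes explicit the match with the paper's facet-based definition of the wedge (a step the paper leaves implicit), though your maximality check there is unnecessary since showing each listed set is a face and every face lies in a listed set already proves the two complexes are equal.
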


  The proof is immediate when we use the following proposition which is a consequence of \Cref{lemma:faces_join}.
  \begin{proposition}\label{proposition:join_comp_formula}
    Let $K$ be a simplicial complex on $[n]$ and $(M,N)$ a relative simplicial complex on $[m]$. 
    For every $k\in[n]$:
    \[
      K\triangleleft_k^\ast(M,N) 
      = (\operatorname{Lk}_k(K)\ast_k M)\cup((K\setminus k)\ast_k N),
    \]
    where $A\ast_k B = (A_{<k})\ast(B+k-1)\ast(A_{>k}+m-1)$, with
    $A_{<k}=\{J\cap[k]\mid J\in A\}$ and 
    $A_{>k}+m-1=\{J\cap[k+1,n]+m-1\mid J\in A\}$,
    for simplicial complexes $A$ on $[n]$ and $B$ on $[m]$ such that 
    $\{k\}$ is not a face of $A$.
  \end{proposition}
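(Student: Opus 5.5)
Our plan is to derive the formula by splitting the face set of $K\triangleleft_k^\ast(M,N)$ from \Cref{lemma:faces_join} according to whether the face $I\in K$ it comes from contains $k$, and to match each part with one of the two joins on the right-hand side. By definition of $\cal Z_n^\ast$, a face of $K\triangleleft_k^\ast(M,N)$ is a face of $\cal z_n(I;\ldots)$ for some $I\in K$. This is the join of copies of $\mathrm{pt}$ (for $i\in I$, $i\neq k$), copies of $\{\emptyset\}$ (for $i\notin I$, $i\neq k$), and $M$ or $N$ in slot $k$ according to whether $k\in I$. With the conventions of the operads, its faces are therefore exactly the sets $A^{<k}\sqcup(J+k-1)\sqcup(A^{>k}+m-1)$ where $A\subseteq I\setminus\{k\}$, and $J\in M$ if $k\in I$, resp.\ $J\in N$ if $k\notin I$. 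The union over $I\in K$ is already downward closed in the $A$-part, so we may replace $A\subseteq I\setminus\{k\}$ by $A=I\setminus\{k\}$ up to taking faces.

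Next we reorganize by $A$ instead of $I$. The sets $A=I\setminus\{k\}$ with $I\in K$ and $k\in I$ are exactly the faces of $\operatorname{Lk}_k(K)$, by definition of the link. The sets $A=I\setminus\{k\}$ with $I\in K$ arbitrary, in particular with $k\notin I$, are exactly the faces of $K\setminus k$. Neither complex has $\{k\}$ as a face, so $\ast_k$ is defined. By the description of faces of a join recalled in \Cref{section:oplyhedral_join_prod}, the faces of $A\ast_k B$ are precisely $A'^{<k}\sqcup(J+k-1)\sqcup(A'^{>k}+m-1)$ with $A'\in A$ and $J\in B$. Hence the first family of faces equals $\operatorname{Lk}_k(K)\ast_k M$.

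The second family is contained in $(K\setminus k)\ast_k N$. The reverse inclusion needs one extra observation. If $A=I\setminus\{k\}$ comes from a face $I\ni k$, then $I\setminus\{k\}\in K$ by downward closure and does not contain $k$. So every face of $K\setminus k$ is already of the form $I'$ with $k\notin I'$, and pairing it with $J\in N$ lands in the second family. Combining the two parts gives the stated union.

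The main obstacle is bookkeeping rather than any real argument. We must check that the indexing in $A\ast_k B$, where $A_{<k}$ and $A_{>k}+m-1$ are obtained from the restriction and shift of vertices, matches the operadic insertion $I\circ_k J$ and $I\circ_k^c J$ of \Cref{lemma:faces_join}. We also need the degenerate conventions to be handled consistently: $N=\emptyset$ or $M=\{\emptyset\}$, and $K\ast\emptyset=\emptyset$, so that a join with an empty factor vanishes. We would verify these edge cases directly from the face description.
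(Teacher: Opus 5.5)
Your route is the paper's. The paper only records the formula as a consequence of \Cref{lemma:faces_join}. Your splitting of faces according to whether $k\in I$, with the outer parts identified as $\operatorname{Lk}_k(K)$ for $k\in I$ and $K\setminus k$ for $k\notin I$ (using downward closure to absorb the sets $I\setminus\{k\}$ with $k\in I$), is exactly what that amounts to.

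The step that fails as written is your claim that, ``by the description of faces of a join'', the faces of $A\ast_k B$ are exactly the sets $(A')^{<k}\sqcup(J+k-1)\sqcup((A')^{>k}+m-1)$ with a \emph{single} $A'\in A$. The statement defines $A\ast_k B$ as the triple join $(A_{<k})\ast(B+k-1)\ast(A_{>k}+m-1)$. Applying the face description of a join to it gives $A_1^{<k}\sqcup(J+k-1)\sqcup(A_2^{>k}+m-1)$ with $A_1,A_2\in A$ chosen \emph{independently}, i.e.\ $A_{<k}\ast A_{>k}\ast B$, which is in general strictly larger. Under that literal reading the proposition is false. Take $K=\{\emptyset,\{1\},\{2\},\{3\}\}$ on $[3]$, $k=2$ and $(M,N)=(\mathrm{pt},\{\emptyset\})$, so $m=1$; the left-hand side is $K$, since this pair is the unit. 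On the right, $K\setminus 2=\{\emptyset,\{1\},\{3\}\}$ has $(K\setminus 2)_{<2}=\{\emptyset,\{1\}\}$ and $(K\setminus 2)_{>2}=\{\emptyset,\{3\}\}$. So the literal $(K\setminus 2)\ast_2\{\emptyset\}$ contains the non-face $\{1,3\}$.

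What your argument does prove is the intended statement, with
$A\ast_k B\coloneqq\{I^{<k}\sqcup(J+k-1)\sqcup(I^{>k}+m-1)\mid I\in A,\ J\in B\}$.
This is the join of $A$ (viewed on $[n]\setminus\{k\}$) with $B$ relabelled into $[k,k+m-1]$. Since no face of $A$ contains $k$, it is just $A\circ_k^c B$. Then the deduction from \Cref{lemma:faces_join} is a one-liner, because $I\circ_k J=(I\setminus\{k\})\circ_k^c J$ whenever $k\in I$. The paper relies on this coupled reading later: in the proof of \Cref{proposition:neat_PL_pair_subop}, $\operatorname{Lk}_k(B)\ast_k B'$ is treated as the join of $\operatorname{Lk}_k(B)$ with $B'$. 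You should state this definition explicitly rather than derive it from the join face description. The ``bookkeeping'' you postponed in your last paragraph is exactly where the literal definition breaks; it is not a routine check.
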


  \begin{remark}
    We have the following pushout on relative simplicial complexes, where 
    an arrow between pairs $(K,L)$ and $(M,N)$ represents the two inclusions 
    $K\subseteq M$ and $L\subseteq N$:
    \begin{equation}\label{equation:canonical_pushout}
      \begin{tikzpicture}[baseline=(O)]
    \node (A) at (0,0)  {$(\{\emptyset\},\emptyset)$};
    \node (B) at (3,0)  {$(\mathrm{pt},\emptyset)$};
    \node (C) at (0,-2) {$(\{\emptyset\},\{\emptyset\})$};
    \node (D) at (3,-2) {$(\mathrm{pt},\{\emptyset\})$};
    \draw[->] (A) -- (B);
    \draw[->] (A) -- node(O){} (C);
    \draw[->] (C) -- (D);
    \draw[->] (B) -- (D);
    \node at ($(A)!0.3!(D)$) {$\ulcorner$};
\end{tikzpicture}
    \end{equation}
    Applying each element of this pushout to the right of $K$ via $\triangleleft_k^\ast$ recovers the classical pushout used in the homotopy theory of polyhedral product~\cite{Grbic_Theriault_2013}:
    \begin{center}
      \begin{tikzpicture}[baseline=(O)]
        \node (A) at (0,0)  {$\operatorname{Lk}_k(K)$};
        \node (B) at (3,0)  {$\operatorname{St}_k(K)$};
        \node (C) at (0,-2) {$K\setminus k$};
        \node (D) at (3,-2) {$K$};
        \draw[->] (A) -- (B);
        \draw[->] (A) -- node(O){} (C);
        \draw[->] (C) -- (D);
        \draw[->] (B) -- (D);
    \node at ($(A)!0.3!(D)$) {$\ulcorner$};
      \end{tikzpicture}.
    \end{center}
  \end{remark}

  Another consequence of \Cref{proposition:join_comp_formula} is as follows.
  \begin{corollary}
    Let $K$ be a simplicial complex on $[n]$ and $(M,N)$ a pair on $[m]$. 
    For every $k\in[n]$, the following is a pushout of simplicial complexes:
    \begin{center}
      \begin{tikzpicture}[baseline=(O)]
        \node (A) at (0,0)  {$\operatorname{Lk}_k(K)\ast_k N$};
        \node (B) at (3,0)  {$\operatorname{Lk}_k(K)\ast_k M$};
        \node (C) at (0,-2) {$(K\setminus k)\ast_k N$};
        \node (D) at (3,-2) {$K\triangleleft_k^\ast(M,N)$};
        \draw[->] (A) -- (B);
        \draw[->] (A) -- node(O){} (C);
        \draw[->] (C) -- (D);
        \draw[->] (B) -- (D);
    \node at ($(A)!0.3!(D)$) {$\ulcorner$};
      \end{tikzpicture}.
    \end{center}
  \end{corollary}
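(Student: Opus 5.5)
The plan is to read off the square directly from \Cref{proposition:join_comp_formula}, which already gives
\[
K\triangleleft_k^\ast(M,N) = (\operatorname{Lk}_k(K)\ast_k M)\cup((K\setminus k)\ast_k N).
\]
So the real content is two facts. First, the four arrows are inclusions of subcomplexes of $K\triangleleft_k^\ast(M,N)$ on the common vertex set $[n+m-1]$, and the square commutes. Second, the corner $\operatorname{Lk}_k(K)\ast_k N$ is exactly the intersection of the two middle terms. With both in hand, the square is the canonical union/intersection square of two subcomplexes, which is a pushout.

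\textbf{Arrows.} I would first note that $\operatorname{Lk}_k(K)\subseteq K\setminus k$: if $k\notin I$ and $I\cup\{k\}\in K$, then $I=(I\cup\{k\})\cap([n]\setminus\{k\})\in K\setminus k$. Together with $N\subseteq M$, this gives the horizontal and vertical arrows. The operation $A\ast_k B$ is monotone in each variable, since its faces are the sets $I^{<k}\sqcup(J+k-1)\sqcup(I^{>k}+m-1)$ with $I\in A$ and $J\in B$. Hence all four arrows are inclusions, and the square commutes because all its maps are inclusions into $K\triangleleft_k^\ast(M,N)$.

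\textbf{Intersection.} Next I would compute $(\operatorname{Lk}_k(K)\ast_k M)\cap((K\setminus k)\ast_k N)$. The key observation, already used in \Cref{proposition:reduced_substitution_suboperad}, is that the three blocks $[k-1]$, $[k,k+m-1]$ and $[k+m,n+m-1]$ are disjoint. Consequently any face $\sigma$ of either complex decomposes uniquely as $\sigma=I^{<k}\sqcup(J+k-1)\sqcup(I^{>k}+m-1)$ with $k\notin I$. If $\sigma$ lies in both complexes, uniqueness of this decomposition forces $I\in\operatorname{Lk}_k(K)\cap(K\setminus k)=\operatorname{Lk}_k(K)$ and $J\in M\cap N=N$. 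Therefore the intersection equals $\operatorname{Lk}_k(K)\ast_k N$; the reverse inclusion is the monotonicity already established.

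\textbf{Pushout.} Finally I would argue the universal property. In the poset $\Scomp(n+m-1)$ of complexes under inclusion, a pushout of $B\supseteq A\subseteq C$ is the join $B\cup C$, so \Cref{proposition:join_comp_formula} suffices. If one prefers to read the statement in simplicial complexes with simplicial maps, the standard fact applies: for subcomplexes $B,C$, the square with $B\cap C$ and $B\cup C$ is a pushout. Indeed, maps out of $B$ and $C$ that agree on $B\cap C$ glue vertex-wise, and they send every simplex of $B\cup C$ to a simplex, because each simplex lies in $B$ or in $C$. This is exactly where the intersection computation is used.

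The main obstacle I expect is bookkeeping rather than conceptual. One must check that the unique block decomposition of a face is compatible with the shifts in $\ast_k$, and treat the degenerate cases $N=\emptyset$ or $M=\{\emptyset\}$, where some terms are empty or trivial; these cases still satisfy the same formulas.
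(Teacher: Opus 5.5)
Your proposal is correct and follows the paper's route: the paper presents the corollary as a direct consequence of the union formula in \Cref{proposition:join_comp_formula}, and you make explicit what it leaves implicit. Namely, you show that the corner $\operatorname{Lk}_k(K)\ast_k N$ is exactly the intersection of the two middle terms (via the unique block decomposition over $[k-1]$, $[k,k+m-1]$, $[k+m,n+m-1]$), and that a union/intersection square of subcomplexes is a pushout. Your face-wise reading $A\ast_k B=\{I^{<k}\sqcup(J+k-1)\sqcup(I^{>k}+m-1)\mid I\in A,\ J\in B\}$ is the right one to use, since the literal triple join $A_{<k}\ast(B+k-1)\ast(A_{>k}+m-1)$ written in the paper can contain extra faces (e.g.\ $\{1,3\}$ for $A$ with facets $\{1\},\{3\}$ and $k=2$).
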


  We now state a result of Eldridge~\cite{Eldridge_2025} which is the cornerstone for constructing an operad on relative simplicial complexes.
  \begin{lemma}[{\cite[Lemma~3.2]{Eldridge_2025}}]
    \label{lemma:naturality_inclusion_join_pp}
    Let $(K,L)$ be a relative simplicial complex on $[n]$ and 
    $(\underline{M},\underline{N})$ a sequence of pairs. Then
    $\cal Z_n^\ast(L;(\underline{M},\underline{N}))$ is a subcomplex of 
    $\cal Z_n^\ast(K;(\underline{M},\underline{N}))$. In particuler, the pair
    $(\cal Z_n^\ast(K;(\underline{M},\underline{N})),\cal Z_n^\ast(L;(\underline{M},\underline{N})))$
    is a relative simplicial complex.
  \end{lemma}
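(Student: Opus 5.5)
The statement to prove is Eldridge's lemma: if $L\subseteq K$ are simplicial complexes on $[n]$ and $(\underline{M},\underline{N})$ is a sequence of relative simplicial complexes, then $\cal Z_n^\ast(L;(\underline{M},\underline{N}))\subseteq\cal Z_n^\ast(K;(\underline{M},\underline{N}))$, and the resulting pair is a relative simplicial complex. The plan is to argue directly from \Cref{definition:simplicial_join_product}, where $\cal Z_n^\ast(K;(\underline{M},\underline{N}))$ is the union $\bigcup_{I\in K}\cal z_n(I;(\underline{M},\underline{N}))$ inside the ambient complex $\bigast_{i=1}^n M_i$. I would proceed in three steps.

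\textbf{Step 1: each cube piece is a simplicial complex.} For every $I\subseteq[n]$, $\cal z_n(I;(\underline{M},\underline{N}))=\bigast_{i} P_i^I$ is an iterated join of simplicial complexes. It is therefore a simplicial complex on the vertex set $\bigsqcup_i [m_i]$, which is the same vertex set for every $I$. Its faces are exactly the disjoint unions $\bigsqcup_i (J_i+\text{shift}_i)$ with $J_i\in P_i^I$, and such a union is downward closed because each $P_i^I$ is.

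\textbf{Step 2: $\cal Z_n^\ast(K)$ is a subcomplex of $\bigast_{i}M_i$.} A union of simplicial complexes on a common vertex set is again a simplicial complex, since downward closure is preserved under unions. Each piece sits inside $\bigast_i M_i$ because $N_i\subseteq M_i$ and the join is monotone in each variable. If $K$ is empty the union is $\emptyset$, which is still a simplicial complex by the paper's convention. Hence $\cal Z_n^\ast(K)$ is a subcomplex of $\bigast_i M_i$ on the full vertex set.

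\textbf{Step 3: inclusion and the relative pair.} Since $L\subseteq K$, the index set of the union defining $\cal Z_n^\ast(L)$ is contained in the index set defining $\cal Z_n^\ast(K)$, so $\cal Z_n^\ast(L)\subseteq\cal Z_n^\ast(K)$ immediately. By Step 2 both are simplicial complexes on the same vertex set $[\sum_i m_i]$. This is exactly the definition of a relative simplicial complex, so the pair $(\cal Z_n^\ast(K),\cal Z_n^\ast(L))$ is one.

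I see no real obstacle in this argument. The only point requiring care is bookkeeping: the vertex sets must agree, which holds because every pair $(M_i,N_i)$ lives on a fixed $[m_i]$ regardless of whether $i\in I$, and the empty complex must be handled, which the convention in Step 2 covers. If a functorial statement were wanted instead, one could note that $I\mapsto\cal z_n(I)$ is monotone (via the cubical functor of \Cref{definition:cubical_prod_funct}). Then $\cal Z_n^\ast(K)$ is the colimit over $\operatorname{cat}(K)$ in the poset of subcomplexes, and restricting the diagram along $\operatorname{cat}(L)\subseteq\operatorname{cat}(K)$ gives the canonical map between colimits, which is the inclusion.
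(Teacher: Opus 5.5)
Your argument is correct. The paper does not prove this statement itself: it imports it from Eldridge's Lemma~3.2 and gives no argument of its own, so there is no internal proof to compare step by step.

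Your route is the natural self-contained one given how the paper sets things up. \Cref{definition:simplicial_join_product} defines $\cal Z_n^\ast(K;(\underline{M},\underline{N}))$ as a union indexed by the faces of $K$. So the inclusion for $L\subseteq K$ is just monotonicity of the index set. The only remaining content is that each piece $\bigast_i P_i^I$ is a simplicial complex on the common vertex set $[m_1+\cdots+m_n]$, and that a union of downward-closed families is downward closed. You check both, including the degenerate cases where some $N_i$ or $L$ is empty. The citation buys brevity and a link to Eldridge's setting. Your version buys the explicit bookkeeping (common vertex set, the convention $K_1\ast\emptyset=\emptyset$) that the paper silently relies on when it uses the lemma to show that $\circ_k^\ast$ lands in relative simplicial complexes.

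One small caution about your closing functorial aside. In the paper, $\operatorname{cat}(K)$ is the full subcategory on $K\cup\{\emptyset\}$. So for $K=\emptyset$ the colimit over $\operatorname{cat}(K)$ is $\cal z_n(\emptyset;(\underline{M},\underline{N}))=\bigast_i N_i$, not the empty union. The colimit description therefore agrees with the union definition only for non-empty complexes. This is presumably why the paper restricts to $L\neq\emptyset$ in \Cref{theorem:algebra_pair}. It does not affect your main argument, which works directly with the union.
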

	
	\begin{definition}[Join composition on relative simplicial complexes]
    Let $(K,L)$ and $(M,N)$ be relative simplicial complexes on $[n]$ and 
    $[m]$, respectively, and let $k\in[n]$. Their \emph{join composition} 
    at index~$k$ is
    \[
      (K,L)\circ_k^\ast(M,N)
      \;\coloneqq\;
      (K\triangleleft_k^\ast(M,N),\,L\triangleleft_k^\ast(M,N)).
    \]
  \end{definition}

  This endows the $\Sym$-collection of relative simplicial complexes 
  with an operad structure.

  \begin{proposition}
    The $\Sym$-collection of relative simplicial complexes, denoted 
    by $\RelScomp$, equipped with the join compositions $\{\circ^\ast_k\}$, 
    forms a unital operad with unit $(\mathrm{pt},\{\emptyset\})$.
  \end{proposition}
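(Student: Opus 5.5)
The plan is to work face by face, using \Cref{lemma:faces_join} as the working description of $K\triangleleft_k^\ast(M,N)$. Well-definedness comes first. By \Cref{lemma:faces_join}, $K\triangleleft_k^\ast(M,N)$ is a simplicial complex on $[n+m-1]$. By \Cref{lemma:naturality_inclusion_join_pp}, applied to the sequence with $(M,N)$ at position $k$ and $(\mathrm{pt},\{\emptyset\})$ elsewhere, $L\triangleleft_k^\ast(M,N)$ is a subcomplex of it. So $(K,L)\circ_k^\ast(M,N)$ is indeed a relative simplicial complex. Equivariance under $\Sym_n\times\Sym_m$ is immediate because the cubical functor is built from a symmetric monoidal product and the face formulas only relabel indices.

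For the unit, I check $(\mathrm{pt},\{\emptyset\})\circ_1^\ast(M,N)=(M,N)$. The faces of $\mathrm{pt}\triangleleft_1^\ast(M,N)$ are $\{1\}\circ_1 J=J$ for $J\in M$ together with $\emptyset\circ_1^c J=J$ for $J\in N\subseteq M$, which gives $M$. For $\{\emptyset\}\triangleleft_1^\ast(M,N)$ only the second kind of face occurs, which gives $N$. In the other direction, $K\triangleleft_k^\ast(\mathrm{pt},\{\emptyset\})=K$ holds because $\cal Z^\ast_n(K;(\underline{\mathrm{pt}},\underline{\{\emptyset\}}))=K$, and the same holds for $L$.

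The main step is the sequential and parallel axioms. Since each component of $\circ_k^\ast$ is the right action $-\triangleleft_k^\ast(M,N)$ applied separately to $K$ and to $L$, it suffices to prove two identities for an arbitrary simplicial complex $K$. The sequential identity is
\[
  K\triangleleft_k^\ast\bigl((M,N)\circ_j^\ast(P,Q)\bigr)=(K\triangleleft_k^\ast(M,N))\triangleleft_{j+k-1}^\ast(P,Q).
\]
The parallel identity says that actions at two distinct indices commute, with the usual index shift. To prove them, I expand both sides with \Cref{lemma:faces_join}. A face of either side is determined by a face $I\in K$ together with a local block on $[k,k+m+p-2]$. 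I then split into four cases, according to whether $k\in I$ and whether $j$ lies in the chosen face $J$. These correspond to the four possible blocks $R\in P$ or $R\in Q$ inserted into $J\in M$ or $J\in N$, and to the full/empty patterns of $\circ_j$ and $\circ_j^c$ from \Cref{equation:compo_idem_com}. In each case I will use that $\opi{IdemCom}$ and $\opi{IdemCom}^c$ are operads; that is, the elementwise identities $(I\circ_k J)\circ_{j+k-1}R=I\circ_k(J\circ_j R)$ and their mixed $\circ/\circ^c$ versions hold. These identities hold because each operation acts on disjoint index blocks. The parallel axiom is easier, since the two insertions touch disjoint blocks of indices and the choices made at $k$ and at $j$ are independent.

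I expect the sequential case with $k\notin I$ to be the main obstacle. There the inner block comes from $M\triangleleft_j^\ast(P,Q)$ on one side and from $N\triangleleft_j^\ast(P,Q)$ on the other. One has to check that the faces of the second component $N\triangleleft_j^\ast(P,Q)$ of $(M,N)\circ_j^\ast(P,Q)$ are exactly those produced in the iterated action when the outer index is not in $I$. This matching is precisely the reason the definition acts on both components of the pair. As an alternative if the bookkeeping becomes heavy, I would invoke \Cref{theorem:algebra_subst}-style colimit decomposition in $(\Scomp,\ast,\{\emptyset\})$: the simplicial join product is associative in the sense of Ayzenberg's nested formula, which is exactly the sequential axiom.
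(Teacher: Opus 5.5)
Your proposal is correct and follows essentially the same route as the paper. Well-definedness comes from \Cref{lemma:naturality_inclusion_join_pp}, both unit laws are checked by direct face computation, and the sequential (and parallel) axiom is proved by expanding both sides with \Cref{lemma:faces_join} into four cases and matching faces by block-wise associativity; your mixed $\circ/\circ^c$ identities are automatic, since in the cases singled out by \Cref{lemma:faces_join} both compositions reduce to the same block insertion.
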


  \begin{proof}
    We get a relative simplicial complex by 
    \Cref{lemma:naturality_inclusion_join_pp}.
    The right unit axiom is immediate from the definition of 
    $\triangleleft_k^\ast$.
    For the left unit, let $(K,L)$ be a pair on $[n]$ and compute 
    $(\mathrm{pt},\{\emptyset\})\circ_1^\ast(K,L)$. By 
    \Cref{lemma:faces_join} applied to $\mathrm{pt}$ on $[1]$:
    \begin{align*}
      \mathrm{pt}\triangleleft_1^\ast(K,L) 
      &= \cal Z_1^\ast(\mathrm{pt};\,(K,L))
       = \cal z_1(\{1\};\,(K,L)) \cup \cal z_1(\emptyset;\,(K,L))
       = K\cup L = K,\\
      \{\emptyset\}\triangleleft_1^\ast(K,L) 
      &= \cal Z_1^\ast(\{\emptyset\};\,(K,L))
       = \cal z_1(\emptyset;\,(K,L)) = L,
    \end{align*}
    so $(\mathrm{pt},\{\emptyset\})\circ_1^\ast(K,L) = (K,L)$.

    The parallel axiom follows directly from the definition of 
    $\cal Z_n^\ast$ and \Cref{lemma:naturality_inclusion_join_pp}.

    For the sequential axiom, let $(K,L)$, $(M,N)$, $(O,P)$ be pairs on 
    $[m]$, $[n]$, $[\ell]$, and let $i\in[m]$, $j\in[n]$. By 
    \Cref{lemma:faces_join} applied twice and the sequential axiom for 
    $\{\circ_k\}$:
    \begin{align*}
      K\triangleleft_i^\ast((M,N)\circ^\ast_j(O,P))
      &=\{H\circ_i(I\circ_j J)\mid H\in K,\,i\in H,\,I\in M,\,j\in I,\,J\in O\}\\
      &\phantom{=}\cup\{(H\cup\{i\})\circ_i(I\circ_j J)\mid H\in K,\,i\notin H,\,I\in M,\,j\in I,\,J\in P\}\\
      &\phantom{=}\cup\{H\circ_i(I\circ_j J)\mid H\in K,\,i\in H,\,I\in N,\,j\in I,\,J\in O\}\\
      &\phantom{=}\cup\{(H\cup\{i\})\circ_i(I\circ_j J)\mid H\in K,\,i\notin H,\,I\in N,\,j\in I,\,J\in P\}\\
      &=\{(H\circ_i I)\circ_{j+i-1} J\mid H\in K,\,i\in H,\,I\in M,\,j\in I,\,J\in O\}\\
      &\phantom{=}\cup\{((H\cup\{i\})\circ_i I)\circ_{j+i-1} J\mid H\in K,\,i\notin H,\,I\in M,\,j\in I,\,J\in P\}\\
      &\phantom{=}\cup\{(H\circ_i I)\circ_{j+i-1} J\mid H\in K,\,i\in H,\,I\in N,\,j\in I,\,J\in O\}\\
      &\phantom{=}\cup\{((H\cup\{i\})\circ_i I)\circ_{j+i-1} J\mid H\in K,\,i\notin H,\,I\in N,\,j\in I,\,J\in P\}\\
      &=(K\triangleleft_i^\ast(M,N))\triangleleft_{j+i-1}^\ast(O,P).
    \end{align*}
    The same computation with $K$ replaced by $L$ gives the second member, 
    and the sequential axiom follows.
  \end{proof}

  The following is a consequence of \Cref{remark:formula_sub_operads}.
  \begin{proposition}\label{proposition:subst_comp_subop}
    The $\Sym$-collection of relative simplicial complexes the form $(K,\{\emptyset\})$ 
    forms a suboperad of $\RelScomp$ isomorphic to the substitution operad through the forgetful functor dropping the second element of the pair.
    The $\Sym$-collection of relative simplicial complexes the form $(\Delta_{[n]},K)$ 
    with $K\neq\emptyset$ forms a suboperad isomorphic to the composition 
    operad through the forgetful functor dropping the first element of the pair.
  \end{proposition}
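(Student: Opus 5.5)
We check that each of the two $\Sym$-subcollections is closed under $\circ_k^\ast$, contains the unit $(\mathrm{pt},\{\emptyset\})$ where relevant, and that the forgetful projection intertwines $\circ_k^\ast$ with $\circ_k$, respectively $\circ_k^c$. Compatibility with the $\Sym$-actions is clear, since relabelling vertices acts diagonally on pairs and the projections commute with it. Everything reduces to \Cref{remark:formula_sub_operads} together with two elementary computations of $\triangleleft_k^\ast$ on the trivial complex $\{\emptyset\}$ and on the full simplex.

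\emph{Substitution case.} Let $(K,\{\emptyset\})$ on $[n]$ and $(L,\{\emptyset\})$ on $[m]$ be given. By definition,
\[(K,\{\emptyset\})\circ_k^\ast(L,\{\emptyset\}) = \bigl(K\triangleleft_k^\ast(L,\{\emptyset\}),\ \{\emptyset\}\triangleleft_k^\ast(L,\{\emptyset\})\bigr).\]
By \Cref{remark:formula_sub_operads}, the first entry is $K\circ_k L$. For the second entry, \Cref{lemma:faces_join} applies to the single face $I=\emptyset$ of $\{\emptyset\}$, which does not contain $k$. It gives the faces $\emptyset\circ_k^c J$ with $J\in\{\emptyset\}$, which reduce to $\emptyset$, so the second entry is $\{\emptyset\}$ on $[n+m-1]$. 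Hence the collection is closed under composition, and the projection to the first entry sends $\circ_k^\ast$ to $\circ_k$. The unit $(\mathrm{pt},\{\emptyset\})$ lies in the collection and projects to $\pm$. The projection is injective and surjective on the collection, so it is an isomorphism of operads.

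\emph{Composition case.} Let $(\Delta_{[n]},K)$ and $(\Delta_{[m]},L)$ be given with $K,L\neq\emptyset$. The second entry of the composite is $K\triangleleft_k^\ast(\Delta_{[m]},L)=K\circ_k^c L$, again by \Cref{remark:formula_sub_operads}. The key step is to check that $\Delta_{[n]}\triangleleft_k^\ast(\Delta_{[m]},L)=\Delta_{[n+m-1]}$, and by \Cref{remark:formula_sub_operads} this is $\Delta_{[n]}\circ_k^c L$. We prove it with \Cref{lemma:faces_join}. Since $\Delta_{[n]}$ contains every $I\ni k$, the faces $I\circ_k J$ with $J\subseteq[m]$ already give every subset of $[n+m-1]$. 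Conversely, every face is a subset of $[n+m-1]$, so equality holds. This uses nothing about $L$, but nonemptiness of $K$ and $L$ guarantees that $K\circ_k^c L\neq\emptyset$, so that we stay in the subcollection $\Scomp_{\neq\emptyset}$ for the composition operad.

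The unit is where we expect the main obstacle. The unit of $\RelScomp$ is $(\mathrm{pt},\{\emptyset\})=(\Delta_{[1]},\{\emptyset\})$, which is indeed of the form $(\Delta_{[1]},K)$, and it projects to $\{\emptyset\}$ on $[1]$, the unit of $(\Scomp,\{\circ_k^c\})$. So the subcollection is unital and the projection preserves units. The only delicate point is matching the conventions of \Cref{lemma:faces_join} with \Cref{equation:compo_composition}, namely that taking downward closures does not alter faces. We would verify this once on facets using \Cref{proposition:facets}.
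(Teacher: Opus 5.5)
Your proposal is correct and follows essentially the same route as the paper. Both arguments reduce everything to \Cref{remark:formula_sub_operads} together with the identities $\{\emptyset\}\triangleleft_k^\ast(L,\{\emptyset\})=\{\emptyset\}$ and $\Delta_{[n]}\triangleleft_k^\ast(\Delta_{[m]},L)=\Delta_{[n+m-1]}$; your version adds explicit face-level checks via \Cref{lemma:faces_join} and the unit verification, which the paper leaves implicit. One small precision: since $\{\emptyset\}\subseteq K$ forces $K\neq\emptyset$, the first suboperad is really identified with $(\Scomp_{\neq\emptyset},\{\circ_k\})$, so your ``surjective'' should be read onto that collection.
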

  \begin{proof}
    Let $K$ on $[n]$, $L$ on $[m]$, and $k\in[n]$. By 
    \Cref{remark:formula_sub_operads}:
    \[
      (K,\{\emptyset\})\circ_k^\ast(L,\{\emptyset\})
      = (K\triangleleft_k^\ast(L,\{\emptyset\}),\,
         \{\emptyset\}\triangleleft_k^\ast(L,\{\emptyset\}))
      = (K\circ_k L,\,\{\emptyset\}),
    \]
    which shows closure and identifies the composition with $\circ_k$.
    Similarly,
    \[
      (\Delta_{[n]},K)\circ_k^\ast(\Delta_{[m]},L)
      = (\Delta_{[n]}\triangleleft_k^\ast(\Delta_{[m]},L),\,
         K\triangleleft_k^\ast(\Delta_{[m]},L))
      = (\Delta_{[n+m-1]},\,K\circ_k^c L),
    \]
    which identifies the composition with $\circ_k^c$.
  \end{proof}

\subsection{A suboperad related to piecewise-linear topology}\label{section:PL_subop}
  In the context of polyhedral product and more precisely toric topology, piecewise-linear (PL) spheres are of particular interest.
  In fact, the topological objects from which toric manifolds can be built upon are the so-called \emph{moment-angle complexes}, introduced in~\cite{Bukhshtaber_Panov_1998,Buchstaber_Panov_1999}, which are the polyhedral products of the form $\cal Z_K\coloneqq \cal Z(K;(\underline{D^2},\underline{S^1}))$, where all pairs are the disk $D^2$ and its boundary circle $S^1$.
  Interestingly when $K$ is a  PL-sphere, its associated moment-angle complex is a topological manifold, and all associated toric spaces are also topological manifolds~\cite{Cai_2017}.
  Furthermore, there is a canonical toric action of $T^n = (S^1)^n$ onto $\cal Z_K$, and the quotient of $\cal Z_K$ by a subtorus that acts freely and properly yields what is called a \emph{topological toric manifold} when $K$ is a starshaped PL~sphere~\cite{Ishida_Fukukawa_Masuda_2013}, which is a canonical way of producing topological models for toric manifolds.
  It should also be noted that pairs of PL~manifolds appeared in~\cite[Chapter~4]{Rourke_Sanderson_1972}, but no compositions were described back then.

  Recall from~\cite{Hudson_1969,Rourke_Sanderson_1972} that a \emph{PL~sphere} is a simplicial complex whose geometric realization is PL~homeomorphic to a sphere, and such that the link of every face is also a PL~sphere.
  A \emph{PL~ball} is a simplicial complex whose geometric realization is PL~homeomorphic to a ball, and such that the link of every face is either a PL~sphere or a PL~ball.
  We include these facts in the following lemma.

  \begin{lemma}\label{lemma:link_PL}
	Let $S$ be a PL~sphere of dimension $p-1$ on $[n]$, with possibly ghost vertices.
	If $k$ is a ghost vertex of $B$, then $\operatorname{Lk}_k(S)=\emptyset$. Otherwise, $\operatorname{Lk}_k(S)$ is a PL~sphere of dimension $p-2$.
	
	Let $B$ be a PL~ball of dimension $p$, with possibly ghost vertices.
	If $k$ is a ghost vertex of $B$, then $\operatorname{Lk}_k(B)=\emptyset$.
	Otherwise, if $k$ is on the boundary of $B$, then $\operatorname{Lk}_k(B)$ is a PL~ball of dimension $p-1$, else $\operatorname{Lk}_k(B)$ is a PL~sphere of dimension $n-1$.
  \end{lemma}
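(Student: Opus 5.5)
The plan is to reduce everything to the definition of PL~spheres and balls recalled just above, together with the standard facts that links in PL~manifolds behave well and that ghost vertices play no role in the geometric realization. First I will treat the ghost-vertex case uniformly for $S$ and $B$. If $k$ is a ghost vertex, then $\{k\}$ is not a face, so no face $I$ with $k\notin I$ satisfies $I\cup\{k\}\in K$. This includes $I=\emptyset$. By the definition of $\operatorname{Lk}_k$, the link is therefore the empty simplicial complex $\emptyset$ (not the trivial complex $\{\emptyset\}$). This step is purely combinatorial.

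Next, for a non-ghost vertex $k$, I will identify $\operatorname{Lk}_k(K)$ with the link of the face $\{k\}$ in the usual sense. Removing ghost vertices does not change the geometric realization or any link of a face, so the PL hypotheses apply verbatim to the complex restricted to its actual vertex set. For a PL~sphere $S$ of dimension $p-1$, the definition says the link of every face is a PL~sphere. The dimension count is standard: every facet of $S$ has dimension $p-1$, since a PL~manifold is pure. Hence every facet through $k$ has size $p$, and removing $k$ gives facets of the link of size $p-1$, i.e.\ dimension $p-2$.

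For a PL~ball $B$ of dimension $p$, the definition gives that $\operatorname{Lk}_k(B)$ is a PL~sphere or a PL~ball, and purity gives dimension $p-1$. It remains to show that it is a ball exactly when $k\in\partial B$. For this I would invoke the standard characterization from \cite{Rourke_Sanderson_1972,Hudson_1969}. The boundary $\partial B$ is the subcomplex generated by the $(p-1)$-faces lying in exactly one facet. A codimension-one face $\sigma\ni k$ lies in one facet of $B$ if and only if $\sigma\setminus\{k\}$ lies in one facet of $\operatorname{Lk}_k(B)$. So $\operatorname{Lk}_k(B)$ has nonempty boundary, and is therefore a ball rather than a sphere, iff $k\in\partial B$. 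Otherwise the link is a sphere, of dimension $p-1$.

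I note that the statement's ``$n-1$'' should read $p-1$, and ``ghost vertex of $B$'' in the sphere part should read ``of $S$''. I would state these corrections explicitly.

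The main obstacle is this last dichotomy, which rests on the ball/sphere distinction via boundary faces rather than on the bare definition. I would cite it as a classical fact from \cite[Chapter~2]{Rourke_Sanderson_1972} instead of reproving PL invariance.
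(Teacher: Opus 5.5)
Your proposal is correct and matches the paper's approach. The paper gives no separate proof: it presents the lemma as a restatement of the recalled definitions of PL~spheres and balls from \cite{Hudson_1969,Rourke_Sanderson_1972}. Your argument makes those classical facts explicit, through the combinatorial ghost-vertex computation, purity for the dimension count, and the codimension-one-face description of $\partial B$ for the ball/sphere dichotomy. Your two corrections are also right: the sphere case should say ``ghost vertex of $S$'', and the link at an interior vertex is a PL~sphere of dimension $p-1$, not $n-1$.
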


  We know the PL~type of simplicial complexes obtained after deleting a vertex from the boundary of a PL~ball or a from a PL~sphere.
  \begin{lemma}[{\cite[Corollary~3.13]{Rourke_Sanderson_1972}}]\label{lemma:PL_restriction}
	Let $B$ be a PL~$p$-ball and let $k$ be a vertex of $B$. If $k$ is on the boundary of $B$, then $B\setminus k$ is a PL~$p$-ball.
	Let $S$ be a PL~$(p-1)$-sphere and let $K$ be a vertex of $S$ which is not a ghost vertex, then $S\setminus k$ is a PL~$(p-1)$-ball.
  \end{lemma}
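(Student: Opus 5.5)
We prove the sphere statement first and then deduce the ball statement from it. Throughout, $S\setminus k$ (resp.\ $B\setminus k$) is the deletion of $k$, i.e.\ the full subcomplex on the vertex set minus $k$. Its geometric realization is the closure of the complement of the open star of $k$. The main tools are three standard PL facts from~\cite{Hudson_1969,Rourke_Sanderson_1972}, which we treat as black boxes:
\begin{itemize}
\item a cone on a PL $(q-1)$-sphere or a PL $(q-1)$-ball is a PL $q$-ball;
\item the gluing lemma: two PL $q$-balls meeting in a PL $(q-1)$-ball contained in both boundaries have a PL $q$-ball as union;
\item Newman's theorem: the closure of the complement of a PL $q$-ball in a PL $q$-sphere is a PL $q$-ball.
\end{itemize}

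\textbf{Sphere case.} Let $k$ be a non-ghost vertex of the PL $(p-1)$-sphere $S$. By \Cref{lemma:link_PL}, $\operatorname{Lk}_k(S)$ is a PL $(p-2)$-sphere, so $\operatorname{St}_k(S)=k\ast\operatorname{Lk}_k(S)$ is a PL $(p-1)$-ball. The pushout of the previous remark gives
\[S=\operatorname{St}_k(S)\cup(S\setminus k),\qquad \operatorname{St}_k(S)\cap(S\setminus k)=\operatorname{Lk}_k(S).\]
Hence $S\setminus k$ is the closed complement of the ball $\operatorname{St}_k(S)$ in $S$, and Newman's theorem shows it is a PL $(p-1)$-ball.

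\textbf{Ball case.} Let $k$ be a boundary vertex of the PL $p$-ball $B$. We close up $B$ by coning off its boundary with a new vertex $w$:
\[S\coloneqq B\cup\bigl(w\ast\partial B\bigr).\]
This is a PL $p$-sphere, since it is the union of two $p$-balls along their common boundary $\partial B$. Put $D\coloneqq\operatorname{St}_k(S)\cup\operatorname{St}_w(S)$. The vertex $w$ is not a face of $B$, so the simplices of $S$ avoiding both $k$ and $w$ are exactly those of $B\setminus k$. Thus $B\setminus k$ is the closed complement of $D$ in $S$, and by Newman's theorem it suffices to show that $D$ is a PL $p$-ball.

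For this we check the following pieces.
\begin{itemize}
\item $\operatorname{St}_w(S)=w\ast\partial B$ is a $p$-ball.
\item $\operatorname{Lk}_k(S)$ is a $(p-1)$-sphere, so $\operatorname{St}_k(S)$ is a $p$-ball.
\item Since $k\in\partial B$, the intersection is
\[\operatorname{St}_k(S)\cap\operatorname{St}_w(S)=w\ast k\ast\operatorname{Lk}_k(\partial B).\]
Here $\operatorname{Lk}_k(\partial B)$ is a $(p-2)$-sphere by \Cref{lemma:link_PL} applied to the sphere $\partial B$. So the intersection is a cone on the $(p-1)$-ball $k\ast\operatorname{Lk}_k(\partial B)$, i.e.\ a $p$-ball. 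That is the wrong dimension for the gluing lemma.
\end{itemize}
So the plan must be adjusted. Instead of the full stars, we take $D'\coloneqq\operatorname{St}_k(S)\cup(w\ast\partial B)$. Since $w\ast\partial B$ contains the edge $\{k,w\}$, we have $\operatorname{St}_k(S)\subseteq D'$. We then analyze $D'$ via the regular-neighbourhood characterization: $D'$ is a regular neighbourhood of the collapsible subcomplex given by the edge $\{k,w\}$, taken in the second derived subdivision. Regular neighbourhoods of collapsible complexes in PL manifolds are PL balls, so $D'$ is a $p$-ball. The closed complement of its interior is again the deletion of $\{k,w\}$ from $S$, namely $B\setminus k$.

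The main obstacle is this identification of $B\setminus k$ as the complement of a ball. One must verify carefully, on the level of derived subdivisions, that the closed complement of the simplicial neighbourhood of $\{k,w\}$ is PL homeomorphic to $B\setminus k$. This is the standard fact that the deletion of a subcomplex is a collar-equivalent copy of the complement of its regular neighbourhood. Once that is in place, Newman's theorem finishes the argument exactly as in the sphere case.
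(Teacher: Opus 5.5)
Your sphere case is essentially right, but the ball case has a genuine gap. It cannot be closed, because the ball half of the statement is false as written. The paper gives no argument beyond the citation, so nothing there rescues it.

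\textbf{Counterexamples.} For $B=\Delta_{[p+1]}$ with $p\geq 1$, every vertex $k$ lies on $\partial B$, yet $B\setminus k=\Delta_{[p+1]\setminus\{k\}}$ is only a $(p-1)$-ball. For the strip $B$ with facets $\{1,2,3\},\{1,3,4\},\{3,4,5\}$ and $k=1$, the deletion $B\setminus 1$ is the triangle $\{3,4,5\}$ with the whisker $\{2,3\}$ attached, which is not a ball of any dimension. By contrast, $\operatorname{cl}(|B|\setminus|\operatorname{St}_1(B)|)$ is the triangle alone. Newman-type results only ever control the latter.

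\textbf{Where your argument breaks.} It fails exactly at the step you flagged, in three places.
\begin{enumerate}
\item $D'$ is literally $D$, since $\operatorname{St}_w(S)=w\ast\partial B$.
\item $D$ is the simplicial neighbourhood of the edge $\{k,w\}$ in $S$ itself, not a derived neighbourhood, and it need not be a ball. For $B=\Delta_{[p+1]}$, $S$ is the boundary of the simplex on $[p+1]\sqcup\{w\}$; each of its facets contains $k$ or $w$, so $D=S$.
\item The closed complement of a genuine regular neighbourhood of $\{k,w\}$ is a regular neighbourhood of the complementary full subcomplex $B\setminus k$. That is PL homeomorphic to $|B\setminus k|$ only when $B\setminus k$ is already a pure $p$-dimensional manifold, i.e.\ only when the conclusion already holds.
\end{enumerate}

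\textbf{What can be salvaged.} Your first attempt works once you add the missing hypothesis $\operatorname{St}_k(B)\cap\partial B=\operatorname{St}_k(\partial B)$, equivalently $\operatorname{Lk}_k(B)\cap\partial B=\operatorname{Lk}_k(\partial B)$. Your formula $\operatorname{St}_k(S)\cap\operatorname{St}_w(S)=w\ast k\ast\operatorname{Lk}_k(\partial B)$ holds precisely under this hypothesis, and both examples above violate it.
\begin{itemize}
\item Glue $\operatorname{St}_k(B)$ rather than $\operatorname{St}_k(S)$. Then $D=\operatorname{St}_k(B)\cup(w\ast\partial B)$ is a union of two $p$-balls meeting in the $(p-1)$-ball $\operatorname{St}_k(\partial B)$, which lies in both boundaries. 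So $D$ is a $p$-ball by the gluing lemma.
\item The hypothesis also makes $B\setminus k$ pure of dimension $p$. Links of interior faces are spheres, hence not cones with apex $k$. If $\operatorname{Lk}_\sigma(B)=k\ast L_0$ for some $\sigma\in\operatorname{Lk}_k(B)\cap\partial B$, take a facet $\rho$ of $L_0\subseteq\operatorname{Lk}_\sigma(\partial B)$. Then $\sigma\cup\rho\in\operatorname{St}_k(B)\cap\partial B$, but $\sigma\cup\rho\cup\{k\}$ is a $p$-simplex, so $\sigma\cup\rho\notin\operatorname{St}_k(\partial B)$, a contradiction.
\item Hence $|B\setminus k|=\operatorname{cl}(|S|\setminus|D|)$, and Newman's theorem finishes.
\end{itemize}

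\textbf{The sphere case.} The same purity check is the one line missing there. The decomposition $S=\operatorname{St}_k(S)\cup(S\setminus k)$ alone does not make $|S\setminus k|$ the closed complement of $\operatorname{St}_k(S)$; the ball examples show this can fail. What you need is that every face $\sigma$ of $\operatorname{Lk}_k(S)$ has a sphere as link, which is not a cone on $k$. So $\sigma$ lies in a facet of $S$ avoiding $k$, and $S\setminus k$ is pure of dimension $p-1$.
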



  In addition, we have the following on join of PL~balls and spheres.
  \begin{proposition}[{\cite[Proposition~2.23]{Rourke_Sanderson_1972}}]\label{proposition:join_PL}
	We have the following:
	\begin{itemize}
		\item the join of a PL~$p$-ball with a PL~$q$-ball is a PL~$(p+q+1)$-ball,
 		\item the join of a PL~$p$-ball with a PL~$(q-1)$-sphere is a PL~$(p+q)$-ball, and
		\item the join of a PL~$(p-1)$ sphere with a PL~$(q-1)$-sphere is a PL~$(p+q-1)$-sphere.
	\end{itemize}
  \end{proposition}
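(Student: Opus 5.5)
The plan is to reduce to standard models by PL invariance of the join, and then to handle the standard models by explicit convex-geometric or cone arguments. I will use the notation of the statement throughout.

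\emph{Step 1: PL invariance of the join.} Since the join is symmetric, the three cases are exactly the ones listed. The first step is to show that if $K\cong K'$ and $L\cong L'$ are PL homeomorphisms, then $K\ast L\cong K'\ast L'$ is a PL homeomorphism. I would take subdivisions on which the given homeomorphisms are simplicial isomorphisms. The simplexwise join of the two simplicial maps is then a simplicial isomorphism between joins of subdivisions. A subdivision of $K$ joined with a subdivision of $L$ is a subdivision of $K\ast L$: each simplex $\sigma\ast\tau$ is subdivided by the cells $\sigma'\ast\tau'$, which one triangulates without new vertices by ordering vertices. This reduces everything to taking $B\cong\Delta_{[p+1]}$ for a PL $p$-ball and $S\cong\partial\Delta_{[q+1]}$ for a PL $(q-1)$-sphere.

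\emph{Step 2: the three standard models.} For two balls, $\Delta_{[p+1]}\ast\Delta_{[q+1]}=\Delta_{[p+q+2]}$, which is a $(p+q+1)$-simplex, hence a ball. For a ball and a sphere, I would induct on $p$ using $\Delta_{[p+1]}\ast S=\mathrm{pt}\ast(\Delta_{[p]}\ast S)$, with the convention $\Delta_{[0]}\ast S=S$ when $p=0$. The inner join is a sphere (case $p=0$) or a ball (inductive step), and a cone over a PL sphere or ball is a PL ball. For two spheres, I would realize $\partial\Delta_{[p+1]}$ and $\partial\Delta_{[q+1]}$ as boundaries of simplices $P$ and $Q$ containing the origin in their interiors, placed in complementary linear subspaces of $\mathbb{R}^{p+q}$. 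The convex hull $P\oplus Q$ (their free sum) is a $(p+q)$-polytope whose proper faces are exactly the joins $F\ast G$ of proper faces $F$ of $P$ and $G$ of $Q$. Hence $\partial(P\oplus Q)=\partial P\ast\partial Q$ as complexes, and the boundary of a convex polytope is a PL sphere.

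\emph{Step 3: link conditions.} If the definitions recalled in the paper are used literally, namely that the links of faces must again be PL spheres or balls, one must also check links. For a face $\sigma\sqcup\tau$ of $K\ast L$ one has $\operatorname{Lk}_{\sigma\sqcup\tau}(K\ast L)=\operatorname{Lk}_\sigma K\ast\operatorname{Lk}_\tau L$, with the convention that the link of the empty face is the complex itself. By \Cref{lemma:link_PL}, each factor is a sphere or ball of one lower dimension. Induction on total dimension then identifies every link as a PL sphere or ball of the expected type. Equivalently, once Step 1 is done, combinatorial manifold structure is itself a PL invariant, so this step can be absorbed into Step 1.

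The main obstacle is Step 1: verifying that the join of subdivisions really is a subdivision, and that the joined maps are simplicial on a common refinement. For this I would cite or reproduce the standard fact that pseudo-radial projection commutes with joins, i.e.\ the join of linear maps on simplices is linear on each join simplex. I would handle that carefully before treating the convex-geometric Step 2 as routine.
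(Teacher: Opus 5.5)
\textbf{Verdict.} Your proposal is correct. The paper gives no argument of its own here; it simply quotes Proposition~2.23 of Rourke--Sanderson. Your outline is therefore a self-contained replacement, and it follows the standard pattern: PL invariance of the join reduces everything to the models $\Delta_{[p+1]}$ and $\partial\Delta_{[q+1]}$, which are then handled directly. The degenerate cases the paper relies on cause no trouble. These are the $(-1)$-sphere $\{\emptyset\}$ in the unit $(\mathrm{pt},\{\emptyset\})$, which is harmless because joining with $\{\emptyset\}$ is the identity, and ghost vertices.

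\textbf{Where the work actually lies.} You have misjudged this in a few places.

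\emph{Step~1 is not an obstacle.} Realize $|K|$ and $|L|$ in skew affine position and let $K',L'$ be subdivisions. For $\sigma'\subseteq\sigma\in K$ and $\tau'\subseteq\tau\in L$, the vertices of $\sigma'\cup\tau'$ are affinely independent, so $\sigma'\ast\tau'$ is already a simplex inside $\sigma\ast\tau$. Hence $K'\ast L'$ is literally a simplicial subdivision of $K\ast L$. Nothing needs to be triangulated, no pseudo-radial projection is involved, and the join of two simplicial isomorphisms is a simplicial isomorphism.

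\emph{The geometric input is in Step~2.} It is the fact that the boundary of a convex polytope is a PL sphere. You can even bypass this combinatorially. For $q\geq 1$ (the case $q=0$ is trivial), in $\partial\Delta_{[p+q+1]}$ the closed star of the face $\sigma=[p+1]$ is $\Delta_\sigma\ast\partial\Delta_{[p+q+1]\setminus\sigma}$. The stellar subdivision at $\sigma$ with new vertex $v$ is then $\partial\Delta_\sigma\ast\partial\Delta_{([p+q+1]\setminus\sigma)\sqcup\{v\}}$, which is isomorphic to $\partial\Delta_{[p+1]}\ast\partial\Delta_{[q+1]}$. So the sphere--sphere case is a single stellar subdivision of the boundary of a simplex. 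Similarly, $\mathrm{pt}\ast\partial\Delta_{[q+1]}$ is the stellar subdivision of $\Delta_{[q+1]}$ at its top face, which settles the base of your cone induction.

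\emph{Step~3 needs a small correction.} The link of a face $\sigma$ has dimension lowered by $|\sigma|$, not by one, and \Cref{lemma:link_PL} only concerns vertices. The induction still goes through, because the paper's definitions of PL balls and spheres impose the link condition on every face. More simply, as you note, links are PL invariants.
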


  Then, we have the following on union of PL~balls.
  \begin{proposition}[{\cite[Corollary~3.16, Exercise~2.24]{Rourke_Sanderson_1972}}]\label{proposition:union_PL}
	Given two PL~$p$-balls on the same vertex set (with possibly ghost vertices), we have the following concerning their union:
	\begin{itemize}
		\item if their intersection is a PL~$(p-1)$-ball inside the boundary of each then their union is a PL~$p$-ball,
		\item if their intersection is their common boundary, and hence a PL~$(p-1)$-sphere, then their union is a PL~$p$-sphere.
	\end{itemize}
  \end{proposition}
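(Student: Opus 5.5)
The plan is to reduce both statements to standard models, using the Alexander trick and Newman's theorem from~\cite{Hudson_1969,Rourke_Sanderson_1972}. Throughout, ghost vertices play no role, since they only enlarge the vertex set and do not change the geometric realizations. We may therefore work with the polyhedra $|B_1|$ and $|B_2|$ and their subpolyhedra. Two facts will be used at the end:
\begin{itemize}
\item a polyhedron PL~homeomorphic to a ball (resp.\ sphere), triangulated as a simplicial complex, is a PL~ball (resp.\ PL~sphere) in the combinatorial sense of the paper;
\item the link conditions are automatic for triangulations of PL~manifolds (Rourke--Sanderson, Chapter~2).
\end{itemize}
So it suffices to identify the PL~homeomorphism type of the union.

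\emph{Second item first (the easier one).} Suppose $B_1\cap B_2=\partial B_1=\partial B_2=S$, a PL~$(p-1)$-sphere. I would proceed in three steps.
\begin{enumerate}
\item Each PL~$p$-ball is PL~homeomorphic to the cone on its boundary, with the homeomorphism restricting to the identity on the boundary. Concretely, choose a PL~homeomorphism $h_1\colon |B_1|\to|\Delta^p|$. The Alexander trick (coning) extends $h_1|_{S}$ to a PL~homeomorphism $|B_1|\cong \operatorname{cone}(|S|)$ that is the identity on $|S|$.
\item Do the same for $B_2$.
\item Glue the two homeomorphisms along $|S|$. This gives $|B_1\cup B_2|\cong \Sigma|S|$, the suspension. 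Since $|S|\cong\partial\Delta^p$, the suspension $\Sigma\partial\Delta^p$ is PL~homeomorphic to $\partial\Delta^{p+1}$.
\end{enumerate}
This is essentially the join statement of \Cref{proposition:join_PL} applied to $S$ and the $0$-sphere.

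\emph{First item.} Let $D=B_1\cap B_2$ be a PL~$(p-1)$-ball lying in both $\partial B_1$ and $\partial B_2$. The key step is to show that the pair $(|B_i|,|D|)$ is standard, i.e.
\[
(|B_i|,|D|)\cong (I^{p-1}\times[0,1],\, I^{p-1}\times\{0\}).
\]
I would prove this in four steps.
\begin{enumerate}
\item By Newman's theorem, the closure of $\partial B_i\setminus D$ in the sphere $\partial B_i$ is a PL~$(p-1)$-ball $D'_i$, with $\partial D'_i=\partial D$.
\item Choose a PL~homeomorphism $\partial D\cong\partial I^{p-1}$. By the Alexander trick it extends to $D\cong I^{p-1}\times\{0\}$ and to $D'_i\cong(\partial I^{p-1}\times[0,1])\cup(I^{p-1}\times\{1\})$.
\item This gives a PL~homeomorphism $\partial B_i\cong\partial(I^{p-1}\times[0,1])$ carrying $D$ to the bottom face.
\item Extend conically once more, by the Alexander trick, to all of $B_i$.
\end{enumerate}
Having done this for both $i=1,2$, I will arrange that the two identifications agree on $D$: compose the one for $B_2$ with a reflection so that it uses $[-1,0]$, which keeps $D$ at height $0$. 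Gluing then yields
\[
|B_1\cup B_2|\cong I^{p-1}\times[-1,1],
\]
which is a PL~$p$-ball.

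The main obstacle is the standardness of the pair $(B_i,D)$, and specifically the use of Newman's theorem that the complement of a ball in a sphere is a ball. Everything else is coning (the Alexander trick) together with the invariance of PL~balls under PL~homeomorphism. I would simply cite Newman's theorem from \cite[Corollary~3.13]{Rourke_Sanderson_1972} rather than reprove it. This is consistent with \Cref{lemma:PL_restriction}, which is of the same nature.
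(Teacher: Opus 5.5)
Your argument is correct, and it takes a different route from the paper. The paper gives no argument for this statement at all; it imports both items from Rourke--Sanderson (Corollary~3.16 and Exercise~2.24). You instead reduce everything to explicit standard models, using only three inputs:
\begin{itemize}
\item a PL ball is PL homeomorphic, relative to its boundary, to the cone on its boundary, so boundary homeomorphisms extend by coning;
\item Newman's theorem (Corollary~3.13, which the paper already relies on for \Cref{lemma:PL_restriction});
\item every triangulation of a PL manifold satisfies the link conditions, so the paper's combinatorial definitions are recovered at the end.
\end{itemize}

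The citation is shorter. Your version makes the dependencies transparent, and it also yields more than the statement. In the model $|B_1\cup B_2|\cong I^{p-1}\times[-1,1]$, the pieces $|B_1|$, $|B_2|$, $|D|$ go to $I^{p-1}\times[0,1]$, $I^{p-1}\times[-1,0]$, $I^{p-1}\times\{0\}$. This shows directly that $\partial(B_1\cup B_2)=\overline{\partial B_1\setminus D}\cup\overline{\partial B_2\setminus D}$. That boundary description is exactly what the paper asserts without derivation in the proof of \Cref{proposition:neat_PL_pair_subop} (``the boundary of $D$ is $C_1\cup C_2$''). It is not part of the cited statement, so your approach fills that step too.

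Two points to tighten when writing it up.
\begin{itemize}
\item In the sphere case, the PL homeomorphism $|B_1|\to\operatorname{cone}(|S|)$ fixing $|S|$ is $\operatorname{cone}\bigl((h_1|_S)^{-1}\bigr)\circ h_1$. It does not ``extend $h_1|_S$'' as you phrase it.
\item In the ball case, fix once and for all a single PL homeomorphism $g\colon |D|\to I^{p-1}\times\{0\}$ extending $\varphi$, and use it for both $i=1,2$. Also record that $D\cap D_i'=\partial D$, i.e.\ the frontier of a codimension-zero ball in the closed manifold $\partial B_i$. These two facts make the gluings in your Step~3 and in the final step well defined and injective.
\end{itemize}
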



  We now describe the suboperad of the simplicial join operad generated by \emph{neat PL~pairs}, which are relative simplicial complexes the form $(B,\partial B)$, where $B$ is a PL~ball with no interior vertices and $\partial B$ is its boundary PL~sphere.
  In particular they have the exact same ghost vertices.
  This generalizes the fact that the $J$-construction~\cite{Bahri_Bendersky_Cohen_Gitler_2015} of a PL~sphere is again a PL~sphere.
  \begin{proposition}\label{proposition:neat_PL_pair_subop}
    The $\Sym$-collection of neat PL~pairs forms a unital suboperad of $(\RelScomp_{\neq(-,\emptyset)},\{\circ_k^\ast\})$.
	We denote this operad by $\opi{neat-PLpairs}$.
  \end{proposition}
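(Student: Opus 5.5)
We must show three things: the unit $(\mathrm{pt},\{\emptyset\})$ is a neat PL pair; for neat pairs $(B,\partial B)$ on $[n]$ and $(B',\partial B')$ on $[m]$ and $k\in[n]$, the pair $(B\triangleleft_k^\ast(B',\partial B'),\ \partial B\triangleleft_k^\ast(B',\partial B'))$ is again a neat PL pair; and the symmetric group action preserves neat pairs. The unit and the symmetric action are immediate, since $\mathrm{pt}$ is a PL $0$-ball with boundary $\{\emptyset\}$ and no interior vertex, and relabelling vertices preserves all PL notions. The operad axioms are inherited from $\RelScomp$, so only closure needs real work.

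For closure, I would use \Cref{proposition:join_comp_formula}, which writes
\[
K\triangleleft_k^\ast(M,N)=(\operatorname{Lk}_k(K)\ast_k M)\cup((K\setminus k)\ast_k N).
\]
Let $B$ be a $p$-ball and $B'$ a $q$-ball. Because $B$ has no interior vertices, a non-ghost vertex $k$ lies on $\partial B$. By \Cref{lemma:link_PL}, $\operatorname{Lk}_k(B)$ is then a PL $(p-1)$-ball, and by \Cref{lemma:PL_restriction}, $B\setminus k$ is a PL $p$-ball. Applying \Cref{proposition:join_PL}:
\begin{itemize}
\item $\operatorname{Lk}_k(B)\ast B'$ is a $(p+q)$-ball;
\item $(B\setminus k)\ast\partial B'$ is a $(p+q)$-ball, since $\partial B'$ is a $(q-1)$-sphere.
\end{itemize}
Their intersection is $\operatorname{Lk}_k(B)\ast\partial B'$, which by the pushout corollary after \Cref{proposition:join_comp_formula} is a $(p+q-1)$-ball. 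I then need to check that this intersection lies in the boundary of each piece. Since $\partial(X\ast Y)=\partial X\ast Y\cup X\ast\partial Y$, it sits in $\operatorname{Lk}_k(B)\ast\partial B'\subseteq\partial(\operatorname{Lk}_k(B)\ast B')$. It also sits in $\partial((B\setminus k)\ast\partial B')=\partial(B\setminus k)\ast\partial B'$, because $\operatorname{Lk}_k(B)\subseteq\partial(B\setminus k)$ for a boundary vertex $k$. \Cref{proposition:union_PL} then gives a $(p+q)$-ball.

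The same decomposition applies to the boundary. Here $\operatorname{Lk}_k(\partial B)$ is a $(p-2)$-sphere and $\partial B\setminus k$ is a $(p-1)$-ball. Hence:
\begin{itemize}
\item $\operatorname{Lk}_k(\partial B)\ast B'$ is a $(p+q-1)$-ball;
\item $(\partial B\setminus k)\ast\partial B'$ is a $(p+q-1)$-ball.
\end{itemize}
Their intersection $\operatorname{Lk}_k(\partial B)\ast\partial B'$ is a $(p+q-2)$-sphere and equals the common boundary of the two pieces. The second case of \Cref{proposition:union_PL} then yields a $(p+q-1)$-sphere.

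If $k$ is a ghost vertex of $B$, the link is empty. The composite then reduces to $(B\setminus k)\ast_k\partial B'$ and $(\partial B\setminus k)\ast_k \partial B'$, which is a ball–sphere join and a sphere–sphere join. This case needs a separate check: the boundary of a ball joined with a sphere is the boundary of the ball joined with the sphere, which is consistent. Ghost vertices of the composite are exactly those of $B$ and $B'$ shifted, so the two components still have the same ghost vertices.

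The main obstacle is showing that the computed sphere is exactly $\partial$ of the computed ball, and that the new ball has no interior vertices. I would compute the boundary of the union as the union of the boundaries of the two pieces minus the interior of the gluing. This gives
\[
\operatorname{Lk}_k(\partial B)\ast B'\ \cup\ (\partial B\setminus k)\ast\partial B',
\]
which is exactly $\partial B\triangleleft_k^\ast(B',\partial B')$ by \Cref{proposition:join_comp_formula}. For interior vertices: every vertex of $B'$ lies in $\partial B'$, so it lies in the boundary term $\operatorname{Lk}_k(\partial B)\ast B'$ provided $\operatorname{Lk}_k(\partial B)\neq\emptyset$. Every other vertex comes from $\partial B\setminus k$. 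Hence every non-ghost vertex of the composite ball lies on its boundary, and the pair is neat.
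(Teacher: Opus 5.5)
Your proposal follows the paper's proof essentially step for step. It uses the same splitting $\operatorname{Lk}_k(B)\ast_k B'\cup(B\setminus k)\ast_k\partial B'$ from \Cref{proposition:join_comp_formula}, and the same appeals to \Cref{lemma:link_PL}, \Cref{lemma:PL_restriction}, \Cref{proposition:join_PL} and \Cref{proposition:union_PL}. It also makes the same ghost/non-ghost split. Your explicit check that the composite has no interior vertices is a genuine addition; the paper omits it.

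However, the first-component step fails, and the defect is inherited from the paper: the first half of \Cref{lemma:PL_restriction} is false. Take $B=B'=\Delta_{[2]}$ and $k=1$. Then $B\setminus 1=\{\emptyset,\{2\}\}$ is a $0$-ball, not a $1$-ball, and it equals $\operatorname{Lk}_1(B)$ instead of lying in its boundary $\{\emptyset\}$. Accordingly $A_2=(B\setminus 1)\ast_1\partial B'$ is the path with edges $\{1,3\},\{2,3\}$, a $1$-ball contained in $A_1=\Delta_{[3]}$, so \Cref{proposition:union_PL} cannot be applied.

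This is not an edge case. Every neat $1$-ball is a single edge. For $p=2$, deleting the boundary vertex $3$ from the fan $\{1,2,3\},\{1,3,4\},\{1,4,5\}$ leaves a triangle with a dangling edge. So neither ``the first component is a $(p+q)$-ball'' nor ``its boundary is $C_1\cup C_2$'' is established. Your sphere computation for the second component is sound, since deleting a non-ghost vertex from a PL sphere does give a ball bounded by the link. But on its own it does not show that this sphere bounds the first component.

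A clean repair avoids deletions altogether. Since $B'$ is a PL $q$-ball, the identity of $\partial B'$ extends to a PL homeomorphism $h\colon|B'|\to|c\ast\partial B'|$ onto the cone with a new apex $c$. Consider the join homeomorphism $\mathrm{id}\ast h$ from $|(K\setminus k)\ast_k B'|$ onto $|(K\setminus k)\ast c\ast\partial B'|$. It is the identity on each $I\ast_k\partial B'$ with $k\notin I$. It sends each $(I\setminus k)\ast_k B'$ with $k\in I$ onto $(I\setminus k)\ast c\ast\partial B'$. Hence it restricts to a PL homeomorphism $|K\triangleleft^\ast_k(B',\partial B')|\cong|K\ast\partial B'|$, with $c$ in the role of $k$, compatible with passing to subcomplexes of $K$.

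Applied to $\partial B\subseteq B$, this shows directly that $B\triangleleft^\ast_k(B',\partial B')$ is a PL $(p+q)$-ball. Its boundary complex is $\partial B\triangleleft^\ast_k(B',\partial B')$, the preimage of $\partial(B\ast\partial B')=\partial B\ast\partial B'$. No case distinction is needed, and your vertex bookkeeping then completes the proof.

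Two smaller points:
\begin{itemize}
\item Your argument that vertices of $B'$ lie on the boundary only covers the case $\operatorname{Lk}_k(\partial B)\neq\emptyset$. The uniform reason is that $\{j\}\in\partial B'$ and $\emptyset\in\partial B\setminus k$, so $j$ lies in $(\partial B\setminus k)\ast_k\partial B'$.
\item $\mathrm{pt}$ does have an interior vertex: its vertex is a ghost of $\partial\,\mathrm{pt}=\{\emptyset\}$. So under the literal definition the unit is not a neat pair and has to be adjoined by hand. The paper's proof makes the same slip.
\end{itemize}
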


 \begin{proof}
  The unit $(\pt, \{\emptyset\})$ is a PL~$0$-ball with boundary sphere $\{\emptyset\}$,
  so the unit condition holds. We verify closure under~$\circ_k^\ast$.

  Let $(B, \partial B)$ and $(B', \partial B')$ be neat PL~pairs on $[n]$ and $[m]$,
  with $B$ a PL~$p$-ball and $B'$ a PL~$q$-ball, and fix $k \in [n]$.
  By \Cref{lemma:faces_join},
  \begin{align*}
    B \triangleleft_k^\ast(B', \partial B')
      &= \operatorname{Lk}_k(B) \ast_k B'
         \;\cup\; (B \setminus k) \ast_k \partial B', \\
    \partial B \triangleleft_k^\ast(B', \partial B')
      &= \operatorname{Lk}_k(\partial B) \ast_k B'
         \;\cup\; (\partial B \setminus k) \ast_k \partial B',
  \end{align*}
  where the intersections of the two unions are
  $\operatorname{Lk}_k(B) \ast_k \partial B'$ and
  $\operatorname{Lk}_k(\partial B) \ast_k \partial B'$, respectively.
  First, if $k$ is a ghost vertex of both $B$ and $\partial B$, the link of $k$ in each is the empty set by \Cref{lemma:link_PL}.
  Moreover, $(B \setminus k) \ast_k \partial B' = B\ast \partial B'$ is a PL~ball and $(\partial B \setminus k) \ast_k \partial B' = \partial B \ast_k \partial B'$ is a PL~sphere, by \Cref{proposition:join_PL}.

  Now, suppose that $k$ is not a ghost vertex of both.
  By \Cref{lemma:link_PL}, $\operatorname{Lk}_k(B)$ is a PL~$(p{-}1)$-ball with boundary
  $\operatorname{Lk}_k(\partial B)$, a PL~$(p{-}2)$-sphere.
  By \Cref{lemma:PL_restriction}, $B \setminus k$ is a PL~$p$-ball and
  $\partial B \setminus k$ is a PL~$(p{-}1)$-ball.

  \emph{First component.}
  Set $A_1 \coloneqq \operatorname{Lk}_k(B) \ast_k B'$ and
  $A_2 \coloneqq (B \setminus k) \ast_k \partial B'$.
  By \Cref{proposition:join_PL}, $A_1$ is a PL~$(p{+}q)$-ball (join of a
  $(p{-}1)$-ball with a $q$-ball) and $A_2$ is a PL~$(p{+}q)$-ball (join of a
  $p$-ball with a $(q{-}1)$-sphere).
  Their intersection $A_1 \cap A_2 = \operatorname{Lk}_k(B) \ast_k \partial B'$ is a PL
  $(p{+}q{-}1)$-ball (join of a $(p{-}1)$-ball with a $(q{-}1)$-sphere).
  Using the boundary formula for joins,
  \[
    \partial A_1
    = \underbrace{\operatorname{Lk}_k(\partial B) \ast_k B'}_{=:\,C_1}
      \;\cup\; \operatorname{Lk}_k(B) \ast_k \partial B',
  \]
  and, writing $\partial(B \setminus k)
  = (\partial B \setminus k) \cup \operatorname{Lk}_k(B)$ (with intersection
  $\operatorname{Lk}_k(\partial B)$, which is the PL~$(p{-}2)$-sphere bounding both
  pieces),
  \[
    \partial A_2
    = \underbrace{(\partial B \setminus k) \ast_k \partial B'}_{=:\,C_2}
      \;\cup\; \operatorname{Lk}_k(B) \ast_k \partial B'.
  \]
  Thus $A_1 \cap A_2$ lies inside the boundary of each $A_i$, and
  by \Cref{proposition:union_PL}, $D \coloneqq A_1 \cup A_2$ is a PL~$(p{+}q)$-ball.

  \emph{Boundary.}
  Since $A_1 \cap A_2$ is a shared boundary piece of $\partial A_1$ and $\partial A_2$,
  the boundary of $D$ is
  \[
    \partial D
    = C_1 \cup C_2
    = \operatorname{Lk}_k(\partial B) \ast_k B'
      \;\cup\; (\partial B \setminus k) \ast_k \partial B'.
  \]
  By \Cref{proposition:join_PL}, $C_1$ and $C_2$ are PL~$(p{+}q{-}1)$-balls
  (joins of a $(p{-}2)$-sphere with a $q$-ball, and of a $(p{-}1)$-ball with a
  $(q{-}1)$-sphere, respectively).
  Their intersection $C_1 \cap C_2 = \operatorname{Lk}_k(\partial B) \ast_k \partial B'$
  is a PL~$(p{+}q{-}2)$-sphere forming the common boundary of $C_1$ and $C_2$,
  so by \Cref{proposition:union_PL}, $\partial D$ is a PL~$(p{+}q{-}1)$-sphere.
  Finally, we have $\partial D = \partial B \triangleleft_k^\ast(B', \partial B')$, which is the desired second component of the pair.

\end{proof}
	
  \begin{example}
	One example of unital suboperad of $\opi{neat-PLpairs}$ is the $\Sym$-collection of pairs of the form $(\Delta_{[n]},\partial\Delta_{[n]})$: the $n$-simplex with its boundary sphere.
	We have $(\Delta_{[n]},\partial\Delta_{[n]})\circ_k^\ast (\Delta_{[m]},\partial\Delta_{[m]}) = (\Delta_{[n+m-1]},\partial\Delta_{[n+m-1]})$.
	Hence this suboperad is isomorphic to $\opi{Com}$ as it has one generator per arity.
	Note that by a result of~\cite{Choi_Park_2016}, this suboperad acts to the right on the $\Sym$-collection of PL~spheres via $\{\triangleleft_k^\ast\}$ to produce the $J$-construction of~\cite{Bahri_Bendersky_Cohen_Gitler_2015}.
  \end{example}
  \begin{remark}
	We expect that the suboperad of \Cref{proposition:neat_PL_pair_subop} extends to PL~pairs $(B,\partial B)$ where $B$ might have interior vertices.
	However, the methods involved from PL~topology are out of the scope of this article.
  \end{remark}

	\subsection{Algebras over the simplicial join operad}\label{section:Alg_join_op}
  Let $\RelScomp_{\neq(-,\emptyset)}$ be the $\Sym$-collection of 
  relative simplicial complexes $(K,L)$, where $L\neq \emptyset$; it is straightforward that it 
  forms a suboperad of $(\RelScomp,\{\circ_k^\ast\})$.
  We conclude with our last result, which generalizes \Cref{theorem:algebra_subst} and \Cref{theorem:algebra_comp}.

  \begin{theorem}\label{theorem:algebra_pair}
    Let $(\mathrm{C},\otimes,e)$ be a small strict cocomplete cocontinuous
    symmetric monoidal category. Then $\mor(\mathrm{C})$ carries a
    $(\RelScomp_{\neq(-,\emptyset)},\{\circ^\ast_k\})$-algebra structure
    given by the following structure map
    \[
      \overrightarrow{\cal M}_n \colon \RelScomp_{\neq(-,\emptyset)}(n)\times{\mor(\mathrm{C})}^n 
      \to \mor(\mathrm{C}),
    \]
    where
    \[
      \overrightarrow{\cal M}_n((K,L);\underline{f})
      \;\coloneqq\;
      \Bigl(
        \colim_{I\in\operatorname{cat}(L)}\cal z_n(I;\underline{f})
        \longrightarrow
        \colim_{I\in\operatorname{cat}(K)}\cal z_n(I;\underline{f})
      \Bigr)
    \]
    is the canonical morphism induced by the inclusion 
    $\operatorname{cat}(L)\hookrightarrow\operatorname{cat}(K)$.
  \end{theorem}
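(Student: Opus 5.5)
We will follow the pattern of the proofs of \Cref{theorem:algebra_subst} and \Cref{theorem:algebra_comp}. Write $\cal Z_n(K;\underline f)\coloneqq\colim_{I\in\operatorname{cat}(K)}\cal z_n(I;\underline f)$. The main task is a colimit decomposition adapted to the relative composition $\triangleleft^\ast_k$, followed by the observation that a morphism $(L\subseteq K)$ of simplicial complexes induces the canonical arrow between the colimits.

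\emph{Well-definedness.} Since $\operatorname{cat}(L)\hookrightarrow\operatorname{cat}(K)$ is a full subcategory inclusion, the colimit cocone of $\cal z_n(-;\underline f)$ over $\operatorname{cat}(K)$ restricts to a cocone over $\operatorname{cat}(L)$. This yields a unique canonical arrow $\overrightarrow{\cal M}_n((K,L);\underline f)$. Equivariance under $\Sym_n$ follows from the symmetry of $\otimes$. The unit axiom is immediate: $\cal Z_1(\mathrm{pt};f)=X_1$ and $\cal Z_1(\{\emptyset\};f)=Y_1$, so $\overrightarrow{\cal M}_1((\mathrm{pt},\{\emptyset\});f)=f$. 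The hypothesis $L\neq\emptyset$ guarantees that the colimits are nonempty diagrams containing $\emptyset$, as in the previous theorems.

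\emph{Key lemma (decomposition).} For any simplicial complex $K$ on $[n]$, any relative simplicial complex $(M,N)$ on $[m]$, and any $\underline f$ of length $n+m-1$, put $g\coloneqq\overrightarrow{\cal M}_m((M,N);\underline f_{[k,k+m-1]})$, so that $g\colon\cal Z_m(N)\to\cal Z_m(M)$. Let $\underline h$ be $\underline f$ with the block $[k,k+m-1]$ replaced by $g$. We want to show
\[
\cal Z_{n+m-1}(K\triangleleft^\ast_k(M,N);\underline f)\;\cong\;\cal Z_n(K;\underline h),
\]
naturally in $K$ with respect to inclusions. To prove it, use \Cref{lemma:faces_join}: the faces of $K\triangleleft^\ast_k(M,N)$ are $I\circ_k J$ with $J\in M$ when $k\in I$, and with $J\in N$ when $k\notin I$. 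Hence $\operatorname{cat}(K\triangleleft^\ast_k(M,N))$ is fibred over $\operatorname{cat}(K)$. The fibre over $I$ is $\operatorname{cat}(M)$ if $k\in I$ and $\operatorname{cat}(N)$ otherwise, and the transition functors are the inclusions $\operatorname{cat}(N)\hookrightarrow\operatorname{cat}(M)$. Compute the colimit iteratively, first over the fibres and then over $\operatorname{cat}(K)$; this is a Fubini argument for colimits over a Grothendieck construction. Because $\otimes$ is cocontinuous in each variable, the inner colimit over the fibre is
\[
\cal z_{k-1}(I^{<k})\otimes\cal Z_m(M\text{ or }N)\otimes\cal z_{n-k}(I^{>k}),
\]
which is exactly $\cal z_n(I;\underline h)$. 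Moreover, the transition maps for $I\subseteq I'$ with $k\in I'\setminus I$ are $\mathrm{id}\otimes g\otimes\mathrm{id}$, which matches the cubical functor of \Cref{definition:cubical_prod_funct} applied to $\underline h$. This is the step I expect to be the main obstacle. One must check carefully that the index category really is (equivalent to) this Grothendieck construction, in particular that faces $I\circ_kJ$ coming from different $I$ are identified correctly when $J=\emptyset$. I would first try to handle this by showing that the relevant functor into $\operatorname{cat}(K)$ is cofinal on each fibre, or alternatively that the colimit is preserved because every face has a unique minimal description $(I,J)$.

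\emph{Algebra axiom.} Apply the key lemma once with $K$ and once with $L$. Naturality in $K$ identifies the arrow induced by $L\triangleleft^\ast_k(M,N)\subseteq K\triangleleft^\ast_k(M,N)$ with the arrow $\cal Z_n(L;\underline h)\to\cal Z_n(K;\underline h)$. Both arrows are canonical maps between colimits compatible with the same cocone, so by the universal property they agree. This gives
\[
\overrightarrow{\cal M}_{n+m-1}\bigl((K,L)\circ^\ast_k(M,N);\underline f\bigr)=\overrightarrow{\cal M}_n\bigl((K,L);\underline h\bigr).
\]
As a consistency check, specializing to $(L,\{\emptyset\})$ and $(\Delta_{[m]},L)$ via \Cref{proposition:subst_comp_subop} recovers \Cref{theorem:algebra_subst} and \Cref{theorem:algebra_comp}.
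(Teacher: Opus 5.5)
Your proposal follows the paper's proof. Well-definedness comes from $\operatorname{cat}(L)\hookrightarrow\operatorname{cat}(K)$. The key step decomposes $\colim_{\operatorname{cat}(K\triangleleft^\ast_k(M,N))}$ as an iterated colimit over $\operatorname{cat}(K)$, with fibre $\operatorname{cat}(M)$ or $\operatorname{cat}(N)$ according to whether $k\in I$, using cocontinuity of $\otimes$. This is applied once to $K$ and once to $L$, and your naturality-in-$K$ formulation makes precise the paper's remark that both arrows are induced by the same inclusion of diagrams.

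The step you flag as the main obstacle is one the paper simply asserts, and your second idea settles it. Three corrections are needed along the way. First, the faces with $k\notin I$ are $I\circ^c_kJ$, not $I\circ_kJ$. Second, $\Phi(I,J)=I^{<k}\sqcup(J+k-1)\sqcup(I^{>k}+m-1)$ identifies $(I,J)$ with $(I\cup\{k\},J)$ for every $J\in N$, not only for $J=\emptyset$. Third, the poset of admissible pairs is therefore not equivalent to $\operatorname{cat}(K\triangleleft^\ast_k(M,N))$, and $\Phi$ is a map into that face poset, not into $\operatorname{cat}(K)$. What does hold is this: sending a face to its minimal lift is monotone and left adjoint to $\Phi$. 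Hence $\Phi$ is final, and the colimit is unchanged, naturally in $K$.
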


\begin{proof}
	We first prove that these maps are well-defined.
	Let $\underline{f}={(f_j)}_{j\in[n]}$. Since 
    $L\subseteq K$, the inclusion $\operatorname{cat}(L)\hookrightarrow\operatorname{cat}(K)$ 
    induces a natural inclusion of diagrams
    $\cal z_n(-;\underline{f})|_{\operatorname{cat}(L)}
    \hookrightarrow \cal z_n(-;\underline{f})|_{\operatorname{cat}(K)}$.
    By cocompleteness of $\mathrm{C}$, both colimits exist and the inclusion 
    of diagrams yields the canonical morphism $\overrightarrow{\cal M}_n((K,L);\underline{f})$.

    \smallskip
    We now prove the algebra axiom.
	Let $\underline{f}={(f_j)}_{j\in[n+m-1]}$.
    We must show:
    \[
      \overrightarrow{\cal M}_{n+m-1}\!\left((K,L)\circ^\ast_k(M,N);\,\underline{f}\right)
      \;=\;
      \overrightarrow{\cal M}_m\!\left((K,L);\,
        \underline{f}_{[k-1]},\;
        \overrightarrow{\cal M}_n((M,N);\,\underline{f}_{[k,k+n-1]}),\;
        \underline{f}_{[k+n,\,n+m-1]}
      \right).
    \]
    By \Cref{lemma:naturality_inclusion_join_pp}, the inclusion
    $\operatorname{cat}(L\triangleleft^\ast_k(M,N))
    \hookrightarrow
    \operatorname{cat}(K\triangleleft^\ast_k(M,N))$
    coincides with the inclusion obtained by composing
    $\operatorname{cat}(L)\hookrightarrow\operatorname{cat}(K)$
    and
    $\operatorname{cat}(N)\hookrightarrow\operatorname{cat}(M)$
    inside the join construction. Hence both sides are the canonical morphism
    induced by the same inclusion of diagrams, so it suffices to show that 
    their domains and codomains agree.
  Since $\otimes$ preserves colimits in each variable, and 
    $\operatorname{cat}(K\triangleleft^\ast_k(M,N))$ decomposes over 
    $\operatorname{cat}(K)$ and $\operatorname{cat}(M)$ or $\operatorname{cat}(N)$ 
    depending on whether $k\in I$, we have:
    \begin{align*}
      \colim_{I\in\operatorname{cat}(K\triangleleft^\ast_k(M,N))}
      \cal z_{n+m-1}(I;\underline{f})
      &= \colim_{I\in\operatorname{cat}(K)}\bigotimes_{\substack{j=1\\j\neq k}}^{m}
        \cal z_1(\{j\}\cap I;\,f_j)
        \;\otimes\;
        \begin{dcases}
          \colim_{J\in\operatorname{cat}(M)}\cal z_n(J;\,\underline{f}_{[k,k+n-1]}) & k\in I,\\
          \colim_{J\in\operatorname{cat}(N)}\cal z_n(J;\,\underline{f}_{[k,k+n-1]}) & k\notin I.
        \end{dcases}
    \end{align*}
    The two cases in the right-hand side are precisely 
    \[\operatorname{cod}(\overrightarrow{\cal M}_n((M,N);\,\underline{f}_{[k,k+n-1]}))\quad\text{and}\quad \operatorname{dom}(\overrightarrow{\cal M}_n((M,N);\,\underline{f}_{[k,k+n-1]}))\]
    respectively, which by definition of $\cal z_m$ are the values 
    taken by \[\cal z_m(I;\,\underline{f}_{[k-1]},\,
    \overrightarrow{\cal M}_n((M,N);\,\underline{f}_{[k,k+n-1]}),\,\underline{f}_{[k+n,n+m-1]})\]
    at position $k$ when $k\in I$ and $k\notin I$ respectively. Hence:
    \begin{align*}
      \colim_{I\in\operatorname{cat}(K\triangleleft^\ast_k(M,N))}
      \cal z_{n+m-1}(I;\underline{f})&= \colim_{I\in\operatorname{cat}(K)}\cal z_m\!\left(I;\,
        \underline{f}_{[k-1]},\;
        \overrightarrow{\cal M}_n((M,N);\,\underline{f}_{[k,k+n-1]}),\;
        \underline{f}_{[k+n,\,n+m-1]}
      \right).
    \end{align*}
    The domain computation is identical with $K$ replaced by $L$.
    Hence both sides coincide by the universal property of the colimit.
  \end{proof}

  \begin{remark}
	Note that the algebra from \Cref{theorem:algebra_subst}, respectively of \Cref{theorem:algebra_comp}, is hence identified with the algebra of \Cref{theorem:algebra_pair} restricted to the suboperad corresponding to the substitution operad, respectively to the composition operad.
  \end{remark}

	\bibliographystyle{plain}
	\bibliography{set_operad.bib}
	
\end{document}